\documentclass{article}
\title{Quasitubal Tensor Algebra Over Separable Hilbert Spaces}

 \author{Uria Mor
\thanks{School of Mathematical Sciences, Raymond and Beverly Sackler Faculty of Exact Sciences, Tel Aviv University, Tel Aviv 6997801, Israel.}
\and
Haim Avron$^*$
}
\date{\today}

\usepackage[utf8]{inputenc}
\usepackage[skip=0pt , indent=7pt]{parskip}
\usepackage[dvipsnames]{xcolor}
\usepackage{amsmath}
\usepackage{tikz}
\usetikzlibrary{shadows.blur}
\usetikzlibrary{arrows}
\usetikzlibrary{positioning}

\usepackage{caption}
\usepackage{subcaption}
\usepackage{float}

\usepackage[a4paper]{geometry}
\geometry{verbose,tmargin=3.0cm,bmargin=3.0cm,lmargin=2.2cm,rmargin=2.7cm}
\usepackage{algorithm}
\usepackage{algpseudocode}
\usepackage{algorithmicx}
\usepackage{amsthm}
\usepackage{thmtools}
\usepackage{enumitem}
\usepackage{upgreek}

\usepackage{mathtools}
\usepackage{amssymb}
\usepackage[mathscr]{euscript}

\usepackage[colorlinks=true]{hyperref}
\usepackage[nameinlink]{cleveref}
\usepackage{wrapfig}

\DeclareMathAlphabet{\mathbbb}{U}{bbold}{m}{n}
%
%
%
\usepackage{multicol}

\crefname{subsection}{subsection}{subsections}
\crefname{subsubsection}{subsubsection}{subsubsections}
\newtheorem{theorem}{Theorem}[section]
\newtheorem{corollary}[theorem]{Corollary}
\newtheorem{lemma}[theorem]{Lemma}
\newtheorem{claim}[theorem]{Claim}
\newtheorem{proposition}[theorem]{Proposition}
\newtheorem{definition}[theorem]{Definition}

\newtheorem{remark}[theorem]{Remark}


\crefname{req}{requirement}{requirements}
\Crefname{req}{Requirement}{Requirements}
\crefname{prprt}{property}{properties}
\Crefname{prprt}{Property}{Properties}
\crefname{def}{definition}{definitions}
\Crefname{def}{Definition}{Definitions}
\crefname{claim}{claim}{claims}
\Crefname{claim}{Claim}{Claims}
\crefname{q}{question}{questions}
\crefname{lemma}{lemma}{lemmas}
\Crefname{lemma}{Lemma}{Lemmas}
\crefname{obs}{observation}{observations}
\Crefname{obs}{Observation}{Observations}
\crefname{prop}{proposition}{proposition}
\Crefname{prop}{Proposition}{Propositions}
\crefname{cor}{corollary}{corollaries}
\Crefname{cor}{Corollary}{Corollaries}
\Crefname{equation}{Eq.}{Eqs.}

\crefname{example}{example}{examples}
\Crefname{example}{Example}{Examples}

\crefname{conj}{conjecture}{conjectures}
\Crefname{conj}{Conjecture}{Conjectures}

\providecommand{\examplename}{Example}
\newlength\Colsep
\setlength\Colsep{10pt}

\makeatletter

\newcounter{parentsubequation}

\makeatother

\makeatletter
\DeclareFontFamily{OMX}{MnSymbolE}{}
\DeclareSymbolFont{MnLargeSymbols}{OMX}{MnSymbolE}{m}{n}
\SetSymbolFont{MnLargeSymbols}{bold}{OMX}{MnSymbolE}{b}{n}
\DeclareFontShape{OMX}{MnSymbolE}{m}{n}{
    <-6>  MnSymbolE5
   <6-7>  MnSymbolE6
   <7-8>  MnSymbolE7
   <8-9>  MnSymbolE8
   <9-10> MnSymbolE9
  <10-12> MnSymbolE10
  <12->   MnSymbolE12
}{}
\DeclareFontShape{OMX}{MnSymbolE}{b}{n}{
    <-6>  MnSymbolE-Bold5
   <6-7>  MnSymbolE-Bold6
   <7-8>  MnSymbolE-Bold7
   <8-9>  MnSymbolE-Bold8
   <9-10> MnSymbolE-Bold9
  <10-12> MnSymbolE-Bold10
  <12->   MnSymbolE-Bold12
}{}

\let\llangle\@undefined
\let\rrangle\@undefined
\DeclareMathDelimiter{\llangle}{\mathopen}%
                     {MnLargeSymbols}{'164}{MnLargeSymbols}{'164}
\DeclareMathDelimiter{\rrangle}{\mathclose}%
                     {MnLargeSymbols}{'171}{MnLargeSymbols}{'171}
\makeatother

\usepackage{setspace}



\DeclareFontFamily{U}{mathb}{\hyphenchar\font45}
\DeclareFontShape{U}{mathb}{m}{n}{
<-6> mathb5 <6-7> mathb6 <7-8> mathb7
<8-9> mathb8 <9-10> mathb9
<10-12> mathb10 <12-> mathb12
}{}
\DeclareSymbolFont{mathb}{U}{mathb}{m}{n}
\DeclareMathSymbol{\llcurly}{\mathrel}{mathb}{"CE}
\DeclareMathSymbol{\ggcurly}{\mathrel}{mathb}{"CF}

\begin{document}
\maketitle

\newcommand{\noun}[1]{\textsc{#1}\xspace}
\newcommand{\bs}[1]{\ensuremath{\boldsymbol{#1}}}
\newcommand{\hbs}[1]{\ensuremath{\wh{\bs{#1}}}}
\newcommand{\rank}{\text{\ensuremath{\operatorname{rank}}}}
\newcommand{\argmin}{\text{\ensuremath{\arg\min}}}
\newcommand{\argmax}{\text{\ensuremath{\arg\max}}}
\newcommand{\stt}{\,\,\,\text{s.t.}\,\,\,}
\newcommand{\xx}{\ensuremath{{\times}}}
\newcommand{\code}[1]{\codex{#1}}
\newcommand{\pinv}{\ensuremath{\dagger}}
\renewcommand{\P}{\pinv}
\newcommand{\mpn}{\ensuremath{m \xx p \xx n}}
\newcommand{\pmn}{\ensuremath{p \xx m \xx n}}
\newcommand{\innerprod}[2]{\left\langle #1,#2 \right\rangle }
\newcommand{\mpd}{\ensuremath{m \xx p \xx \ddim}}
\newcommand{\pmd}{\ensuremath{p \xx m \xx \ddim}}
\newcommand{\dimsI}{\ensuremath{I_1 \xx \cdots \xx I_d}}
\newcommand{\dimsr}{\ensuremath{r_1 \xx \cdots \xx r_d}}
\newcommand{\scalemath}[2]{\scalebox{#1}{\mbox{\ensuremath{\displaystyle #2}}}}
\newcommand{\dbr}[1]{\ensuremath{\llbracket #1 \rrbracket}}

\newcommand{\tens}[1]{\ensuremath{\bs{\EuScript{#1}}}}
\newcommand{\tA}{\tens{A}}
\newcommand{\tAt}{\tA^{\T}}

\newcommand{\tT}{\tens{T}}

\newcommand{\btA}{\bar{\tens{A}}}
\newcommand{\ttA}{\tilde{\tens{A}}}
\newcommand{\ttAt}{\ttA^{\T}}
\newcommand{\thA}{\widehat{\tA}}
\newcommand{\tthA}{\widehat{\ttA}}
\newcommand{\thAt}{\thA^{\T}}
\newcommand{\tthAt}{\tthA^{\T}}
\newcommand{\tB}{\tens{B}}
\newcommand{\tBt}{\tB^{\T}}
\newcommand{\thB}{\widehat{\tB}}
\newcommand{\thBt}{\thB^{\T}}
\newcommand{\tR}{\tens{R}}
\newcommand{\thR}{\widehat{\tR}}
\newcommand{\tRt}{\tR^{\T}}
\newcommand{\tM}{\tens{M}}
\newcommand{\thM}{\widehat{\tM}}
\newcommand{\tMt}{\tM^{\T}}

\newcommand{\tV}{\tens{V}}
\newcommand{\tVt}{\tV^{\T}}
\newcommand{\tVH}{\tV^{\H}}
\newcommand{\thV}{\widehat{\tV}}
\newcommand{\thVt}{\thV^{\T}}
\newcommand{\tU}{\tens{U}}
\newcommand{\tUt}{\tU^{\T}}
\newcommand{\thU}{\widehat{\tU}}
\newcommand{\thUt}{\thU^{\T}}
\newcommand{\tX}{\tens{X}}
\newcommand{\thX}{\ensuremath{\widehat{\tX}}}
\newcommand{\tY}{\tens{Y}}
\newcommand{\tYt}{\tY^{\T}}
\newcommand{\thY}{\widehat{\tY}}
\newcommand{\thYt}{\thY^{\T}}

\newcommand{\qprm}{\ensuremath{q^{\prime}}}

\newcommand{\tS}{\tens{S}}
\newcommand{\tSt}{\tS^{\T}}
\newcommand{\thS}{\widehat{\tS}}
\newcommand{\thSt}{\thS^{\T}}
\newcommand{\teJ}{\tens{J}}
\newcommand{\theJ}{\widehat{\teJ}}
\newcommand{\tI}{\tens{I}}
\newcommand{\thI}{\widehat{\tI}}
\newcommand{\tC}{\tens{C}}
\newcommand{\tCt}{\tC^{\T}}
\newcommand{\thC}{\widehat{\tC}}
\newcommand{\thCt}{\thC^{\T}}
\newcommand{\tG}{\tens{G}}
\newcommand{\tGt}{\tG^{\T}}
\newcommand{\thG}{\widehat{\tG}}

\newcommand{\tVe}{\tV_{\epsilon}}
\newcommand{\tVet}{\tVe^{\T}}
\newcommand{\thVe}{\widehat{\tV}_\epsilon}
\newcommand{\thVet}{\thVe^{\T}}
\newcommand{\tUe}{\tU_{\epsilon}}
\newcommand{\tUet}{\tUe^{\T}}
\newcommand{\thUe}{\widehat{\tU}_\epsilon}
\newcommand{\thUet}{\thUe^{\T}}
\newcommand{\tSe}{\tS_{\epsilon}}
\newcommand{\tSet}{\tSe^{\T}}
\newcommand{\thSe}{\widehat{\tS}_{\epsilon}}
\newcommand{\thSet}{\thSe^{\T}}

\newcommand{\tCe}{\tC_{\epsilon}}
\newcommand{\tCet}{\tCe^{\T}}
\newcommand{\thCe}{\widehat{\tC}_\epsilon}
\newcommand{\thCet}{\thCe^{\T}}

\newcommand{\tE}{\tens{E}}
\newcommand{\thE}{\widehat{\tE}}
\newcommand{\tQ}{\tens{Q}}
\newcommand{\tQt}{\tQ^{\T}}
\newcommand{\tQh}{\tQ^{\H}}
\newcommand{\thQ}{\widehat{\tQ}}
\newcommand{\thQt}{\thQ^{\T}}
\newcommand{\hsigma}{\ensuremath{\hat{\sigma}}}
\newcommand{\tZ}{\tens{Z}}
\newcommand{\tZt}{\tZ^{\T}}
\newcommand{\thZ}{\widehat{\tZ}}
\newcommand{\thZt}{\thZ^{\T}}

\newcommand{\tW}{\tens{W}}
\newcommand{\tWt}{\tW^{\T}}
\newcommand{\thW}{\widehat{\tW}}
\newcommand{\thWt}{\thW^{\T}}

\newcommand{\tP}{\tens{P}}
\newcommand{\tPt}{\tP^{\T}}
\newcommand{\thP}{\widehat{\tP}}
\newcommand{\thPt}{\thP^{\T}}

\newcommand{\tH}{\tens{H}}
\newcommand{\tHt}{\tH^{\T}}
\newcommand{\thH}{\widehat{\tH}}
\newcommand{\thHt}{\thH^{\T}}

\newcommand{\upbb}[2]{\ensuremath{{#1}^{(#2)}}}

\newcommand{\tWh}{\upbb{\tW}{h}}
\newcommand{\tWht}{( \tWh )^{\T}}
\newcommand{\thWh}{\widehat{\upbb{\tW}{h}}}
\newcommand{\thWht}{(\thWh)^{\T}}

\newcommand{\tCh}{\upbb{\tC}{h}}
\newcommand{\tCht}{(\tCh)^{\T}}
\newcommand{\thCh}{\widehat{\upbb{\tC}{h}}}
\newcommand{\thCht}{(\thCh)^{\T}}

\newcommand{\tWl}{\upbb{\tW, \ell}}
\newcommand{\tWlt}{( \tWh )^{\T}}
\newcommand{\thWl}{\widehat{\upbb{\tW}{\ell}}}
\newcommand{\thWlt}{(\thWh)^{\T}}

\newcommand{\tCl}{\upbb{\tC}{\ell}}
\newcommand{\tClt}{(\tCh)^{\T}}
\newcommand{\thCl}{\widehat{\upbb{\tC}{\ell}}}
\newcommand{\thClt}{(\thCh)^{\T}}

\newcommand{\qb}{Q_{\tB}}
\newcommand{\qv}{Q_{\tV}}
\newcommand{\pb}{P_{\tB}}
\newcommand{\tpb}{\tilde{P}_{\tB}}
\newcommand{\pv}{P_{\tV}}
\newcommand{\rbb}{R_{\matb}}
\newcommand{\rvv}{R_{\matr}}

\newcommand{\mat}[1]{\ensuremath{\mathbf{#1}}}
\newcommand{\matA}{\mat{A}}
\newcommand{\matX}{\mat{X}}
\newcommand{\matR}{\mat{R}}
\newcommand{\matRt}{\matR^\T}
\newcommand{\matQ}{\mat{Q}}
\newcommand{\matE}{\mat{E}}
\newcommand{\matQt}{\matQ^{\T}}

\newcommand{\matQip}{\ensuremath{{\matQ_i}_\perp}}
\newcommand{\matQipt}{\ensuremath{\matQip^\T}}

\newcommand{\matP}{\mat{P}}
\newcommand{\matPt}{\matP^{\T}}
\newcommand{\mattP}{\tilde{\mat{P}}}
\newcommand{\mattPt}{\mattP^{\T}}
\newcommand{\matXt}{\matX^{\T}}
\newcommand{\matAt}{\matA^{\T}}
\newcommand{\bmatA}{\bar{\matA}}
\newcommand{\bmatAt}{\bmatA^{\T}}
\newcommand{\mata}{\mat{a}}
\newcommand{\matV}{\mat{V}}
\newcommand{\matVt}{\ensuremath{\matV^{\T}}}
\newcommand{\matU}{\mat{U}}
\newcommand{\matUt}{\matU^{\T}}
\newcommand{\matS}{\mat{S}}
\newcommand{\matSt}{\ensuremath{\matS^{\T}}}
\newcommand{\matT}{\mat{T}}
\newcommand{\matTt}{\ensuremath{\matT^{\T}}}
\newcommand{\mats}{\mat{s}}
\newcommand{\hmats}{\hat{\mats}}
\newcommand{\matr}{\mat{r}}
\newcommand{\maty}{\mat{y}}
\newcommand{\amatr}{\vec{\matr}}
\newcommand{\matC}{\mat{C}}
\newcommand{\matCt}{\matC^{\T}}

\newcommand{\matAr}{\mat{A}_r}

\newcommand{\matD}{\mat{D}}
\newcommand{\matd}{\mat{d}}
\newcommand{\matds}[1]{\matd^{(#1)}}
\newcommand{\ddi}{\matds{i}}
\newcommand{\ddim}{\ensuremath{D^{\xx}}}

\newcommand{\matG}{\mat{G}}
\newcommand{\matGt}{\ensuremath{\matG^{\T}}}

\newcommand{\matZ}{\mat{Z}}
\newcommand{\matZt}{\ensuremath{\matZ^{\T}}}
\newcommand{\matY}{\mat{Y}}
\newcommand{\matYt}{\ensuremath{\matY^{\T}}}

\newcommand{\matH}{\mat{H}}
\newcommand{\matHt}{\matH^\T}

\newcommand{\matSigma}{\boldsymbol{\Sigma}}
\newcommand{\matOmega}{\boldsymbol{\Omega}}
\newcommand{\matOmegat}{\matOmega^\T}
\newcommand{\matPsi}{\boldsymbol{\Psi}}
\newcommand{\matPsit}{\matPsi^\T}

\newcommand{\matpi}{\boldsymbol{\pi}}
\newcommand{\matpit}{\matpi^\T}

\newcommand{\matK}{\mat{K}}
\newcommand{\matM}{\mathbf{M}}
\newcommand{\matMi}{\matM^{-1}}
\newcommand{\matMt}{\matM^{\T}}
\newcommand{\matW}{\mathbf{W}}
\newcommand{\matWt}{\ensuremath{\matW^{\T}}}
\newcommand{\T}{\mat{T}}
\newcommand{\CT}{\mat{H}}
\newcommand{\bell}{{(\ell)}}

\newcommand*\wh[1]{\widehat{#1}}
\newcommand*\wt[1]{\widetilde{#1}}
\renewcommand{\u}{\mat{u}}
\newcommand{\ut}{\ensuremath{\u^{\T}}}
\newcommand{\hu}{\ensuremath{\hat{\u}}}
\renewcommand{\v}{\mat{v}}
\newcommand{\hv}{\ensuremath{\hat{\v}}}
\newcommand{\rrho}{\ensuremath{\bs{\rho}}}
\newcommand{\eeta}{\ensuremath{\bs{\eta}}}
\newcommand{\pphi}{\ensuremath{\bs{\varphi}}}
\newcommand{\rb}{\mat{r}}
\newcommand{\w}{\mat{w}}
\renewcommand{\b}{\mat{b}}
\newcommand{\bt}{\ensuremath{\b^{\T}}}
\newcommand{\x}{\mat{x}}
\newcommand{\h}{\mat{h}}
\newcommand{\g}{\mat{g}}
\newcommand{\e}{\mat{e}}
\newcommand{\xt}{\x^{\T}}
\newcommand{\gt}{\g^{\T}}
\newcommand{\vt}{\v^{\T}}
\newcommand{\q}{\mat{q}}
\newcommand{\qt}{\q^{\T}}
\newcommand{\p}{\mat{p}}
\newcommand{\y}{\mat{y}}
\newcommand{\bmaty}{\bar{\y}}
\newcommand{\bmatx}{\bar{\x}}
\newcommand{\z}{\mat{z}}
\newcommand{\matB}{\mat{B}}
\newcommand{\matI}{\mat{I}}
\newcommand{\matF}{\mat{F}}
\newcommand{\matFt}{\matF^{\T}}
\newcommand{\matBt}{\matB^{\T}}
\newcommand{\si}{\ensuremath{^{(i)}}}
\newcommand{\vpsi}{\boldsymbol{\psi}}
\newcommand{\vtau}{\boldsymbol{\tau}}
\newcommand{\vepsilon}{\boldsymbol{\epsilon}}
\newcommand{\vmu}{\boldsymbol{\mu}}
\newcommand{\vom}{\boldsymbol{\omega}}
\newcommand{\matGamma}{\boldsymbol{\Gamma}}
\newcommand{\TT}{\mathcal{T}}
\newcommand{\DD}{\mathcal{D}}
\newcommand{\HH}{\mathcal{H}}
\newcommand{\FF}{\mathcal{F}}
\newcommand{\LL}{\mathcal{L}}
\newcommand{\LT}{{\LL^2}}

\newcommand{\ovec}{\operatorname{vec}}
\newcommand{\reshape}{\operatorname{reshape}}

\newcommand{\matb}{\mat{b}}
\newcommand{\matbt}{\matb^{\T}}
\newcommand{\mate}{\mat{e}}
\newcommand{\matet}{\mate^{\T}}
\newcommand{\matXi}{\boldsymbol{\Xi}}

\newcommand{\bsc}{\bs{c}}
\newcommand{\bsct}{\bsc^{\T}}

\newcommand{\tlX}{\tilde{X}}
\newcommand{\tlY}{\tilde{Y}}

\newcommand{\RR}{\mathbb{R}}
\newcommand{\NN}{\mathbb{N}}
\newcommand{\CC}{\mathbb{C}}
\newcommand{\FFF}{\mathbb{F}}
\newcommand{\EE}{\mathbb{E}}
\newcommand{\ZZ}{\mathbb{Z}}
\newcommand{\sgnErr}{E_{SGN}}
\newcommand{\tsgnErr}{\tE_{SGN}}
\newcommand{\Ei}{E^{(i)}}
\newcommand{\spacex}{\boldsymbol{\mathcal{X}}}
\newcommand{\spacey}{\boldsymbol{\mathcal{Y}}}

\newcommand{\dotp}[1]{\langle #1 \rangle}
\newcommand{\dotps}[1]{\dotp{#1}^2}
\newcommand{\FDot}[1]{\dotp{#1}_{F}}
\newcommand{\FDotS}[1]{\dotps{#1}_{F}}
\newcommand{\LTdotp}[1]{\dotp{#1}_{\LL^2}}
\newcommand{\LTdotps}[1]{\LTdotp{#1}^2}

\newcommand{\HNorm}[1]{{\left\vert\kern-0.25ex\left\vert\kern-0.25ex\left\vert #1 \right\vert\kern-0.25ex\right\vert\kern-0.25ex\right\vert}}
\newcommand{\HNormS}[1]{\HNorm{#1}}

\newcommand{\TNorm}[1]{\|#1\|_{2}}
\newcommand{\TNormS}[1]{\TNorm{#1}^2}
\newcommand{\GNorm}[1]{\|#1\|}
\newcommand{\GNormS}[1]{\GNorm{#1}^2}
\newcommand{\GNormd}[1]{\left\|#1\right\|}
\newcommand{\GNormSd}[1]{\GNormd{#1}^2}
\newcommand{\FNorm}[1]{\|#1\|_{F}}
\newcommand{\FNormS}[1]{\FNorm{#1}^2}
\newcommand{\FNormd}[1]{\lVert#1\rVert_{F}}
\newcommand{\FNormSd}[1]{\FNormd{#1}^2}
\newcommand{\NNorm}[1]{\|#1\|_{*}}

\newcommand{\LTNorm}[1]{\|#1\|_{\LT}}
\newcommand{\LTNormS}[1]{\LTNorm{#1}^2}

\newcommand{\tNorm}[1]{\|#1\|_{top}}
\newcommand{\tNormS}[1]{\tNorm{#1}^2}

\newcommand{\war}[1]{\ensuremath{\overrightarrow{\vphantom{A}{#1}}}}
\newcommand{\bwar}[1]{\war{\bs{#1}}}

\newcommand{\trc}{\ensuremath{\operatorname{Tr}}}
\newcommand{\trace}[1]{\ensuremath{\operatorname{Tr}\left(#1 \right)}}
\newcommand{\ftr}[1]{\ensuremath{\operatorname{f-Tr}(#1)}}

\newcommand{\clr}{\noun{clr}}
\newcommand{\rclr}{\noun{rclr}}
\newcommand{\tsvdm}{\textsc{tsvdm}\xspace}
\newcommand{\tcam}{\textsc{tcam}\xspace}
\newcommand{\tca}{\textsc{tca}\xspace}
\newcommand{\tsvdmii}{\ensuremath{t}-\textsc{svdmii}\xspace}

\newcommand{\calf}{{\cal F}}
\newcommand{\mx}[1]{\ensuremath{\times_{#1}}}
\newcommand{\tsub}[1]{\ensuremath{\times_{#1}}}
\newcommand{\tsM}{\ensuremath{\tsub{3}\matM}}
\newcommand{\tsMinv}{\ensuremath{\tsub{3}\matM^{-1}}}
\newcommand{\ttprod}[1]{{\star}_{{\scriptscriptstyle{\hspace{-1pt}#1}\hspace{1pt} }}}
\newcommand{\Mprod}{\ttprod{\matM}}
\newcommand{\mm}{\ttprod{\matM}}
\newcommand{\ff}{\ttprod{{F}}}
\newcommand{\pp}{\ttprod{{\Phi}}}
\newcommand{\muni}{\ensuremath{\Mprod{\textnormal{-unitary}}}\xspace}
\newcommand{\morth}{\ensuremath{\Mprod{\textnormal{-orthogonal}}}\xspace}
\newcommand{\pmorth}{\ensuremath{\textnormal{pseudo }\Mprod{\textnormal{-orthogonal}}}\xspace}
\newcommand{\Pmorth}{\ensuremath{\textnormal{Pseudo }\Mprod{\textnormal{-orthogonal}}}\xspace}
\newcommand{\rnk}{\ensuremath{\operatorname{rank}}}
\newcommand{\Npr}{\ensuremath{N^{\prime}}}
\newcommand{\unf}[2]{\ensuremath{{#1}_{(#2)}} }
\newcommand{\supl}[1]{\ensuremath{{#1}^{[\ell]}} }
\newcommand{\fld}[2]{\ensuremath{\operatorname{fold}({#1}, #2)}}

\newcommand{\XX}{\hspace*{1pt}{\textstyle\bigtimes}}

\newcommand{\OX}{{\textstyle \bigotimes}}
\newcommand{\Mpinv}{\mathbf{+}}
\newcommand{\mmpinv}{\ensuremath{\Mprod}\textnormal{-pseudo inverse}\xspace}

\newcommand{\fM}{\ensuremath{\mathfrak{M}} }

\algnewcommand{\IfThenElse}[3]{
	\State \algorithmicif\ #1\ \algorithmicthen\ #2\ \algorithmicelse\ #3}
\algnewcommand\Input{\item[\textbf{Input:}]}%
\algnewcommand\algorithmicinput{\textbf{Input:}}
\algnewcommand\INPUT{\item[\algorithmicinput]}

\algnewcommand\Noln{\item[\hspace{28pt}]}%
\algnewcommand\algorithmicnoln{\hspace{28pt}}
\algnewcommand\NOLN{\item[\algorithmicnoln]}

\algnewcommand\SState{\State \hskip-1.em }
\algnewcommand\algorithmicswitch{\textbf{switch}}
\algnewcommand\algorithmiccase{\textbf{case}}
\algnewcommand\algorithmicassert{\texttt{assert}}
\algnewcommand\Assert[1]{\State \algorithmicassert(#1)}

\algnewcommand\Output{\item[\textbf{Output:}]}%

\algnewcommand\Params{\item[\textbf{Parameters:}]}%
\algnewcommand\algorithmicparams{\textbf{Parameters:}}
\algnewcommand\PARAMS{\item[\algorithmicparams]}

\algnewcommand\Outputline{\item[]}%
\newcommand{\samplemode}{\emph{sample mode} }
\newcommand{\featuremode}{\emph{feature mode} }
\newcommand{\omx}{{\it omics} }
\newcommand{\LFB}{LFB }
\newcommand{\DFB}{DFB}
\newcommand{\supp}[1]{\ensuremath{\operatorname{supp}\{#1\}}}

\newcommand{\sigmaset}{{\scalebox{1.4}{$\upsigma$}}}


\newcommand{\pbxstd}{Fig.1b\xspace}
\newcommand{\pbxtca}{Fig.1c\xspace}
\newcommand{\pbxfunnel}{Fig.1d\xspace}
\newcommand{\pbxbars}{Fig.1e\xspace}

\newcommand{\fibersctf}{Fig.fb\xspace}
\newcommand{\fiberstca}{Fig.1g\xspace}
\newcommand{\fibersfunnel}{Fig.1h\xspace}
\newcommand{\fiberstimeseries}{Fig.1i\xspace}
\newcommand{\fibersheatmap}{Fig.1j\xspace}

\newcommand{\ibdrocs}{Fig.2a\xspace}
\newcommand{\ibdloadings}{Fig.2b\xspace}
\newcommand{\ibdtca}{Fig.2c\xspace}
\newcommand{\ibdtimeseries}{Fig.2d\xspace}

\newcommand{\snydertca}{Fig.2e\xspace}
\newcommand{\snyderheatmap}{Fig.2f\xspace}

\newcommand{\Uria}[1]{\textcolor{blue}{[Uria: #1]}}
\newcommand{\Haim}[1]{\textcolor{red}{[Haim: #1]}}

\definecolor{PinkC}{HTML}{ff66ff}
\definecolor{GreenC}{HTML}{33cc33}
\definecolor{BlueC}{HTML}{0099ff}
\definecolor{PurpleC}{HTML}{cc00cc}

\newcommand{\cpink}[1]{\textcolor{PinkC}{#1}}
\newcommand{\cgreen}[1]{\textcolor{GreenC}{#1}}
\newcommand{\cblue}[1]{\textcolor{BlueC}{#1}}
\newcommand{\cpurple}[1]{\textcolor{PurpleC}{#1}}

\newcommand{\matMs}{\ensuremath{\matM}^*}
\newcommand{\YY}{\ensuremath{\mathbb{Y}}}
\newcommand{\hYY}{\ensuremath{\widehat{\mathbb{Y}}}}
\newcommand{\YYr}{\ensuremath{\YY_r}}
\newcommand{\hYYr}{\ensuremath{\hYY_r}}

\newcommand{\spc}{\ensuremath{\texttt{spec}}}
\newcommand{\wc}{\ensuremath{\rightharpoonup}}

\newcommand{\doublecref}[2]{%
  \hyperref[#2]{\namecref{#1}~\labelcref*{#1}~\ref*{#2}}%
}
\newcommand{\doubleCref}[2]{%
  \hyperref[#2]{\labelcref*{#1}~\Cref{#2}}%
}

\newif\ifuriaread 
\uriareadfalse 
\newcommand{\ureminder}[1]{%
    \Uria{%
        Set \textbackslash{}uriaread to false if you want to remove this reminder.\newline%
        #1%
    }}
\newcommand{\urif}[1]{%
    \ifuriaread%
        \ureminder{#1}%
    \else%
    \fi}

\newcommand{\alg}[1]{\ensuremath{\mathcal{#1}} }
\newcommand{\TA}{\alg{A}}
\newcommand{\TB}{\alg{B}}
\newcommand{\TM}{\alg{M}}
\newcommand{\TE}{\alg{E}}
\newcommand{\FT}{\alg{T}}
\newcommand{\FQ}{\alg{Q}}
\newcommand{\FR}{\alg{R}}

\newcommand{\ellinf}{\ell_\infty}
\newcommand{\elltwo}{\ell_2}

\newcommand{\hspc}[1]{\ensuremath{\boldsymbol{ #1}}}
\newcommand{\cH}{\ensuremath{{\alg H}}}
\newcommand{\cE}{\ensuremath{{\hspc E}}}
\newcommand{\chH}{\ensuremath{\widehat{\cH}}}
\newcommand{\chHp}{\ensuremath{\widehat{\cH}^p}}

\newcommand{\cX}{\ensuremath{{\hspc X}}}

\newcommand{\cB}{\ensuremath{{\hspc B}}}
\newcommand{\chB}{\ensuremath{\widehat{\cB}}}
\newcommand{\chBp}{\chB^p}
\newcommand{\chBpd}{(\chB^p)^*}
\newcommand{\fphi}{\FF_{\bs{\varphi}}}
\newcommand{\fphis}{{\fphi}^*}

\newcommand{\hhat}[1]{\hat{\hspace{-1pt}{#1}}}
\newcommand{\dotmp}[1]{\left\llangle #1 \right\rrangle}
\newcommand{\dotph}[1]{\dotp{#1}_{\cH}}
\newcommand{\dotphh}[1]{\dotp{#1}_{\chH}}
\newcommand{\dotphhp}[1]{\dotp{#1}_{\chH^p}}
\newcommand{\mpT}{\ensuremath{m \xx p \xx {\hspc T}}}
\newcommand{\mpzo}{\ensuremath{m \xx p \xx [0,1]}}
\newcommand{\cHNorm}[1]{\GNorm{#1}_{\cH}}
\newcommand{\cHNormS}[1]{\cHNorm{#1}^2}
\newcommand{\cHsNorm}[1]{\GNorm{#1}_{\cHs}}
\newcommand{\cHsNormS}[1]{\cHsNorm{#1}^2}
\newcommand{\cHsNormd}[1]{\GNormd{#1}_{\cHs}}
\newcommand{\cHsNormSd}[1]{\cHsNormd{#1}^2}

\newcommand{\dotpx}[1]{\dotp{#1}_{\cX}}
\newcommand{\cXNorm}[1]{\GNorm{#1}_{\cX}}
\newcommand{\cXNormS}[1]{\cXNorm{#1}^2}

\newcommand{\HSNorm}[1]{\GNorm{#1}_{\operatorname{HS}}}
\newcommand{\HSNormS}[1]{\HSNorm{#1}^2}
\newcommand{\dotphs}[1]{\dotp{#1}_{\operatorname{HS}}}

\newcommand{\Blt}{\ensuremath{{\hspc B}(\ell_2)}}
\newcommand{\BltNorm}[1]{\GNorm{#1}_{\Blt}}
\newcommand{\BltNormS}[1]{\BltNorm{#1}^2}

\newcommand{\ltNorm}[1]{\GNorm{#1}_{\ell_2}}
\newcommand{\ltNormS}[1]{\ltNorm{#1}^2}
\newcommand{\dotplt}[1]{\dotp{#1}_{\ell_2}}

\newcommand{\cY}{\ensuremath{{\hspc Y}}}
\newcommand{\cYd}{\cY^*}
\newcommand{\cYNorm}[1]{\GNorm{#1}_{\cY}}
\newcommand{\cYNormS}[1]{\cYNorm{#1}^2}

\newcommand{\cYp}{\cY'}
\newcommand{\cYpNorm}[1]{\GNorm{#1}_{\cYp}}
\newcommand{\cYpNormS}[1]{\cYpNorm{#1}^2}

\newcommand{\infNorm}[1]{\GNorm{#1}_{\infty}}
\newcommand{\infNormS}[1]{\infNorm{#1}^2}

\newcommand{\chHNorm}[1]{\GNorm{#1}_{\chH}}
\newcommand{\chHNormS}[1]{\chHNorm{#1}^2}

\newcommand{\chHpNorm}[1]{\GNorm{#1}_{\chHp}}
\newcommand{\chHpNormS}[1]{\chHpNorm{#1}^2}

\newcommand{\op}{\ensuremath{{\operatorname{op}}}}

\newcommand{\opNorm}[1]{\GNorm{#1}_{\op}}
\newcommand{\opNormS}[1]{\opNorm{#1}^2}
\newcommand{\opNormd}[1]{\GNormd{#1}_{\op}}
\newcommand{\opNormSd}[1]{\opNormd{#1}^2}

\newcommand{\ropNorm}[1]{\GNorm{#1}_{\FFF{-}\op}}
\newcommand{\ropNormS}[1]{\ropNorm{#1}^2}

\newcommand{\cHopNorm}[1]{\GNorm{#1}_{\cH{-}\op}}
\newcommand{\cHopNormS}[1]{\cHopNorm{#1}^2}

\newcommand{\fqopNorm}[1]{\GNorm{#1}_{\FQ{-}\op}}
\newcommand{\fqopNormS}[1]{\fqopNorm{#1}^2}

\newcommand{\cali}{\ensuremath{\hspc I}}

\newcommand{\cPo}{\ensuremath{{\hspc P}_1}}
\newcommand{\cHz}{\ensuremath{{\hspc H}_0}}
\newcommand{\cHo}{\ensuremath{{\hspc H}_1}}
\newcommand{\cHs}{\cH_{*}}
\newcommand{\cHsn}{\cH_{\divideontimes}}
\newcommand{\cHsq}[1]{\cH_{*^{#1}}}

\newcommand{\calfi}{\ensuremath{{\calf}^{-1}}}

\renewcommand{\Re}[1]{\operatorname{Re}(#1)}
\renewcommand{\Im}[1]{\operatorname{Im}(#1)}

\newcommand{\Hom}{\operatorname{Hom}}
\newcommand{\Aut}{\operatorname{Aut}}
\newcommand{\var}{\mathtt{var}}
\newcommand{\cov}{\mathtt{cov}}
\newcommand{\cor}{\mathtt{cor}}
\newcommand{\mean}{\mathtt{mean}}

\newcommand{\fqp}{\FQ^p}
\newcommand{\chp}{\cH^p}
\newcommand{\qrnk}{\ensuremath{\FQ\operatorname{{-}rank}}}

\newcommand{\fqmpNorm}[1]{\GNorm{#1}_{\FQ^{m \xx p}}}
\newcommand{\fqmpNormS}[1]{\GNormS{#1}_{\FQ^{m \xx p}}}

\begin{abstract}
The tubal tensor framework provides a clean and effective algebraic setting for tensor computations, supporting matrix-mimetic features like Singular Value Decomposition and Eckart–Young-like optimality results. Underlying the tubal tensor framework is a view of a tensor as a matrix of finite sized tubes. In this work, we lay the mathematical and computational foundations for working with tensors with infinite size tubes: matrices whose elements are elements from a separable Hilbert space. A key challenge is that existence of important desired matrix-mimetic features of tubal tensors rely on the existence of a unit element in the ring of tubes. Such unit element cannot exist for tubes which are elements of an infinite-dimensional Hilbert space. We sidestep this issue by embedding the tubal space in a commutative unital C*-algebra of bounded operators. The resulting {\em quasitubal algebra}  recovers the structural properties needed for decomposition and low-rank approximation. In addition to laying the theoretical groundwork for working with tubal tensors with infinite dimensional tubes, we discuss computational aspects of our construction, and provide a numerical illustration where we compute a finite dimensional approximation to a infinitely-sized synthetic tensor using our theory.  
We believe our theory opens new exciting avenues for applying matrix mimetic tensor framework in the context of inherently infinite dimensional problems.
\end{abstract}

\section{Introduction}
Multilinear data is ubiquitous in modern scientific and computational domains.
In many contemporary applications, such as signal processing with multi-sensor arrays~\cite{Sidiropoulos2017,Miron2020}, analysis of high dimensional dynamical systems ~\cite{Mao2024,Kutz2016}, and operator-valued regression in infinite-dimensional feature spaces~\cite{Jin2022} , one encounters data with inherent multi-dimensional structure. 
When doing computations in such settings, it is essential to employ representations and computational frameworks that preserve the dimensional integrity of the data, namely, its multilinear structure, in order to extract meaningful and computationally tractable insights.
Tensor decomposition frameworks have emerged as indispensable tools for this task, offering principled methods for compressing, analyzing, and approximating multi-way data in a manner that respects its multilinear structure~\cite{KoldaBader2009}.

However, since classical tensor algebra lacks the elegance and prowess of matrix algebra, traditionally there was a steep price in using tensor based method in lieu of matrix based methods. Initially developed in a series of seminal papers~\cite{kilmer2008, Braman10,KilmerMartin11, KBHH13,Kernfeld2015} between 2008-2015, the tubal tensor promises to alleviate this issue by offering a tensor algebra that is {\em matrix mimetic} by design: it equips higher-order tensors with an algebraic structure that closely parallels matrix algebra.

In the tubal tensor framework, higher order tensors are regarded matrices of ``tubes'', that are elements of a vector space over which, in addition to the usual vector space operations, a binary multiplication is defined -- the tubal product~\cite{Braman10}. 
The notion of tubal multiplication provides the space of tubes with commutative ring structure. 
The scalar-like role played tubes give rise to the possibility of multiplying matrices of tubes, and more importantly, factorizing such matrices. 
Importantly, this structure supports a version of the singular value decomposition (tSVD), along with an Eckart–Young-like theorem that guarantees optimality of low-rank approximations under several, natural notions ranks~\cite{KilmerPNAS}.
These optimality results, together with the matrix mimetic operations which are familiar to most, have made the tubal tensor framework a powerful and elegant model for tensor computations~\cite{siamCongratulations2025}. 

Tubal tensors, much like matrices, have been defined with finite modes: there is a finite set of rows, columns, and tubes are of finite size. However, many real-world scenarios are best modeled via continuous processes. This has promoted the introduction of {\em quasimatrices} as a useful algebra for working with data in which one mode is finite, while the other is a continuous function (typically in a Hilbert spae)~\cite{Trefethen2009,Stewart1998,FShustin2022}. 

Recently there has been interest in introducing functional modes in the context of tensors. Han et al. ~\cite{Han2023}, introduced the concept of  {\em functional tensor}, which is a tensor in which there exists a mode which is indexed by a continuous domain.  Larsen et al. in a concurrent work~\cite{LARSEN2024RKHS}, introduced the concept of {\em quasitensors} as a tensorial generalization of quasimatrices, which they define as a tensor whose one or more modes are indexed by an infinite set. Both of these work draw on classical tensor algebra, e.g., consider continuous analogs of CP-decomposition.  

The goal of this paper is present the mathematical and computational foundations of working with tubal tensors in which tubes are continuous functions. Specifically, we assume tubes are elements of a separable infinite dimensional Hilbert space. The resulting framework, which we refer to as  {\em quasitubal tensor algebra}\footnote{Why we use the term ``quasitubal tensor'' and not ``tubal quasitensor'' will become evident once we introduce our construction. A more accurate descriptive name might have been ``quasitubal quasitensor'', but we preferred to avoid the double use of ``quasi''.}, recovers the key algebraic features of the finite theory while extending its merits, namely, the optimality of low rank truncations, to infinite-dimensional settings. 
This provides a principled model for representing and decomposing infinite dimensional tensors in a way that retains dimensional integrity, orthogonality, and approximation optimality. 

The cornerstone of tubal tensor algebra appeal is it's matrix-mimetic SVD decomposition, and Eckart-Young-like optimality results.  Obviously, any continuous generalization must maintain these qualities. Here a complication occurs: SVD relies on the notion of unitarity, while, as we shall see,  an algebra over infinite tubes cannot be unital. The reason is that tubal tensor-tensor product assigns each tubal tensor with a Hilbert-Schmidt (HS) operator. So, the existence of an identity tensor would imply that the identity operator is a HS operator, a situation that can only occur in finite dimensional spaces. 

We sidestep this issue by embedding the set of tubes in a larger set, of {\em quasitubes}, in which we can define a commutative unital C*-algebra. Formally, quasitubes forms a subset of bounded linear operator on the separable Hilbert space which forms the tubes. This space is shown to be isometrically isomorphic to $\ell_\infty$ sequences, which is a Banach space and not a Hilbert space.

Once we have the basic infrastructure in place, we go ahead and prove the existence of quasitubal SVD (\Cref{thm:quasitubal.svd.exists}) and a Eckart-Young-like optimality result. We actually have two Eckart-Young-like results. The first result (\Cref{thm:qt.best.lowrank.op}), the simpler of the two, works completely in the realm of quasitubal tensors. The second result (\Cref{thm:best.finite.approx.cH}) goes further, and shows that tubal tensors over seperable Hilbert space can be well approximated using finite tubal tensors. Though both the approximated and the approximation tensors are not quasitubal, we need to go through quasitubes in the proof. 

Though our paper is mostly theoretical in nature, aimed at laying the groundwork for working with tubal tensors with infinite dimensional tubes, we discuss computational aspects of our construction, and provide a numerical illustration where we compute a finite dimensional approximation to a infinitely-sized synthetic tensor using our theory.
We believe our theory opens new exciting avenues for applying matrix mimetic tensor framework in inherently infinite dimensional problems in areas such as functional analysis, operator learning, and analysis and data driven modeling of dynamical systems.

\section{Preliminaries}
\subsection{Notations and Basic Definitions}
We use lowercase Latin and Greek letters to denote scalars in a field $\FFF$ (either $\RR$ or $\CC$), e.g., $a,\beta \in \FFF$.
The $n$-dimensional vector space over a field $\FFF$ is denoted by $\smash{\FFF^n}$, and the elements are denoted by lowercase, bold Latin letters, e.g., $\smash{\mat{x} \in \FFF^n}$.
We use uppercase boldface letters for matrices over $\FFF$, e.g., $\smash{\mat{X} \in \FFF^{m \xx p}}$, and Euler script letters for higher order tensors, e.g., $\tens{X} \in \FFF^{d_1 \xx \cdots \xx d_N}$ is an $N$-order tensor with dimensions $d_1, \ldots, d_N$.
The entries of an $N$-order tensor $\tens{X} \in \FFF^{d_1 \xx \cdots \xx d_N }$ are indexed by $N$-tuples of integers $(i_1, \ldots, i_N)$ with $i_k \in [d_k]$ for $k \in [N]$, where $[K] \coloneqq \{1, \ldots, K\}$.
Consequently, the $j$-th entry of $\mat{x} \in \FFF^n$ is $x_j \in \FFF$, the $(i,j)$-th entry of $\mat{X} \in \FFF^{m \xx p}$ is $x_{i,j} \in \FFF$, and $x_{i_1, \ldots, i_N}$ is the entry of $\tens{X}$ at the $N$-tuple $(i_1, \ldots, i_N)$.

The mode-$k$ tensor-times-matrix product  of a tensor $\smash{\tens{X} \in \FFF^{d_1 \xx \cdots \xx d_N}}$ and a matrix $\smash{\mat{M} \in \FFF^{r \xx d_k}}$ is denoted by $\smash{\tens{X} \xx_k \mat{M} \in \FFF^{d_1 \xx \cdots \xx d_{k-1} \xx r \xx d_{k+1} \xx \cdots \xx d_N}}$, that is the tensor whose fiber $(i_1, \ldots, i_{k-1}, :, i_{k+1}, \ldots, i_N)$ is given by  $\smash{\mat{M} \mat{x}_{i_1, \ldots, i_{k-1}, :, i_{k+1}, \ldots, i_N} \in \FFF^r}$.
The Frobenius norm of an $N$-order tensor $\tens{X}$ is
\[\smash{\FNorm{\tens{X}} \coloneqq \sqrt{{\sum}_{i_1=1}^{d_1} \cdots {\sum}_{i_N=1}^{d_N} |x|_{i_1, \ldots, i_N}^2}}\]
which coincides with the spectral norm when $N=1$.
To avoid confusion, we use $\FNorm{\cdot}$ to denote the Frobenius norm of tensors of any order.

Sequences over $\FFF$, which are mappings from $\ZZ$ to $\FFF$, are also denoted by boldsymbol lowercase letters, e.g., $\bs{x} \in \FFF^{\ZZ}$. We also write $x_j \coloneqq \bs{x}(j) \in \FFF$.
Unless otherwise stated, it is assumed that the sequences are indexed by integers ($\ZZ$) and that their elements are scalars in $\FFF$, where, again, unless otherwise stated $\FFF = \CC$.
The operations of addition, scalar multiplication, and complex conjugation are defined element-wise, e.g., the $j$-th element values for $\bs{x} + \bs{y}, \alpha \bs{x}, \overline{\bs{x}}$ are $x_j + y_j, \alpha x_j, \overline{x_j}$ respectively. All the sums in this paper are to be interpreted as unordered sums, i.e., given an indexed set $\{ \bs{x}_\alpha \in X | \alpha \in A  \}$ then $\sum_{\alpha \in A} \bs{x}_{\alpha}$  is the limit of finite partial sums independently of the way in which the terms are ordered and summed. 

For real positive $p$, we denote by $\ell_p(\ZZ)$ the space of $p$-power summable sequences, and the $p$-norm of $\bs{x} \in \ell_p(\ZZ)$ is $\smash{\GNorm{\bs{x}}_{\ell_p} \coloneqq \left( \sum_{j \in \ZZ} |x_j|^p \right) {}^{1/p}}$. Since the only index set we use is $\ZZ$, we drop it and just write $\ell_p$. 
Of particular interest is the space $\ell_2$, which is a Hilbert space when equipped the canonical inner-product  $\smash{\dotplt{\bs{x}, \bs{y}} \coloneqq \sum_{j \in \ZZ} x_j \overline{y_j} }$, and the space $\ell_\infty$ of sequences $\bs{x}$ such that $\smash{\infty > \infNorm{\bs{x}} \coloneqq \lim_{p \to \infty} \GNorm{\bs{x}}_{\ell_p} = \sup_{j\in \ZZ} | x_j |  } $.




General infinite dimensional vector spaces are denoted by boldsymbol, capital English letters, e.g., $\alg{H}$, and the elements are also denoted by boldface lowercase letters, e.g., $\bs{x} \in \alg{H}$.
Additional, context dependent structure and properties of a space $\alg{H}$ will be stated explicitly in the relevant sections.

\subsection{Functional Tensors and Quasitensors}
Borrowing the terminology of Han et al. ~\cite{Han2023}, a functional tensor $\tX$ is a tensor in which there exists a mode which is indexed by a continuous domain.  
For example, let ${\mathfrak J} \subset \RR$ be a connected segment, then a functional tensor $\tX \in \FFF^{m \xx p \xx {\mathfrak J}}$ is a tensor whose entries are $x_{i,j,t} \in \FFF$ for $i \in [m], j \in [p], t \in {\mathfrak J}$.
Han et al. referred to the first and second modes of $\tX$ in the above example as `tabular modes', and the third mode as a `functional mode'.
A functional mode's location in the tensor is not restricted to the last mode, and the tensor can have multiple functional modes, e.g., $\tX \in \FFF^{{\mathfrak I} \xx m \xx {\mathfrak J} }$ is also a functional tensor with two functional modes, and one tabular mode.

The term quasitensor, used by Larsen et al. in a concurrent work~\cite{LARSEN2024RKHS}, inspired by the definition of quasimatrix~\cite{Trefethen2009,Stewart1998,FShustin2022}, refers to a tensor whose one or more modes are indexed by an infinite set, e.g., $\tX \in \FFF^{m \xx p \xx \infty}$.
Both \cite{Han2023} and \cite{LARSEN2024RKHS} focus on the case where modes fibers whose index sets are infinite, represent functions in a reproducing kernel Hilbert space (RKHS), and investigate the problem of constructing functional tensors $\tX \in \RR^{m \xx p \xx \mathfrak{I}}$ from finite samples, indexed by $\Omega \subseteq [m]\xx[p]\xx {\mathfrak I}$ (without loss of generality ${\mathfrak I} = [0,1]$). 
Namely, these works consider the CP factorization as a model for generating the data by minimizing the reconstruction error $\FNorm{P_{\Omega} (\tX - \sum_{\ell=1}^r \mat{v}_\ell \circ \mat{u}_\ell \circ \bs{\xi}_\ell )}$ over $\mat{v}_{\ell}, \mat{u}_{\ell}$ in $\RR^m,\RR^p$ respectively, where $\bs{\xi}_{\ell} \colon {\mathfrak I} \to \RR$ being functions in some RKHS $\alg{H}_{K} $,   and $P_{\Omega}$ is the orthogonal projection onto the set of observed entries $\Omega$, $\circ$ denotes the outer product of arrays.

\subsection{Tubal Tensor Algebra}\label{sec:mm.tubal.tensor.algebra}
The tubal-tensor paradigm ~\cite{KilmerMartin11,Braman10,KilmerPNAS,Kernfeld2015} provides a matrix mimetic framework for tensors by viewing tensors as ``matrices of tubes'', where a tube is scalar-like object which we can scale, add another tube to it, and - most importantly - multiply by another tube to obtain a third one.
For example, a order-3 tensor $\tens{X} \in \FFF^{m \xx p \xx n}$ is considered as an $m \xx p$ matrix of vectors in $\FFF^n$, each one being a ``tube''.
The tubes form a commutative ring  that is very close to having all the properties of a field. 

We now define the ring of tubes. The elements are vectors $\FFF^n$. Addition is given using the usual vector addition. As for multiplying two tubes, given an invertible $n \xx n$ matrix $\matM$, define 
\begin{equation}\label{eq:F.sup.n.star.m}
    \mat{x} \mm \mat{y} \coloneqq \matMi(\hat{\mat{x}} \odot \hat{\mat{y}}) , \quad \mat{x}, \mat{y} \in \FFF^n
\end{equation}
where $\hat{\mat{x}} = \matM \mat{x}$, and $\odot$ denotes the Hadamard product of two vectors \cite{KilmerPNAS}. 
For every invertible $\matM$, $\FFF^n$ equipped with vector addition for addition and $\mm$ for multiplication is a commutative ring. 

We denote $\smash{\FFF^n}$ equipped with this additional ring structure  by $\FFF_n$, and the elements are denoted by lowercase boldsymbol letters, e.g., $\bs{x} \in \FFF_n$ and are thought of as scalars. We can write vectors and matrices of such scalars, e.g., $\mat{X} \in \FFF_n^p$ and $\tens{X} \in \FFF_n^{m \xx p}$. The former can be thought of as matrices twisted inwards, and the latter as matrices of vectors twisted inwards, hence the use of matrix and tensor notation.

\begin{figure}[H]
    \centering
    \subcaptionbox{\label{fig:ttube.graph} $\bs{x} \in \FFF_n$ - a ``tube scalar''}[.3\linewidth]
    {

\begin{tikzpicture}
    \def\width{0.35}    
    \def\height{\width}   
    \def\depth{2.3*\width}   
    \def\scl{0.8} 
    \def\spacing{.45}     
    \def\shadowx{0.25}
    \def\shadowy{0.15}

    \foreach \y in {2} {
        \begin{scope}[shift={( \y * \spacing,0)}]
            %
            \fill[ opacity=0.0, blur shadow={shadow blur steps=10,shadow scale = .82, shadow yshift=\y*\spacing, shadow blur radius=1.5ex}] (\width, 0) -- ++(\width, 0) -- ++(\depth*\scl, \depth) -- ++(-\width, 0) -- cycle;

        \end{scope}
    }
            
    \foreach \y in {2} {
        \begin{scope}[shift={(0, \y * \spacing)}]
            \fill[blue!60] (0, 0) -- ++(\width, 0) -- ++(0, \height) -- ++(-\width, 0) -- cycle;
            
            \fill[blue!40] (0, \height) -- ++(\width, 0) -- ++(\depth*\scl, \depth) -- ++(-\width, 0) -- cycle;

            \fill[blue!20] (\width, 0) -- ++(0, \height) -- ++(\depth*\scl, \depth) -- ++(0,-\height) -- cycle;

            \draw[thick] (0, 0) -- ++(\width, 0) -- ++(\depth*\scl, \depth) -- ++(0, \height) 
                        -- ++(-\width, 0) -- ++(-\depth*\scl, -\depth) -- cycle; 
            \draw[thick] (0, 0) -- ++(0, \height);    
            \draw[thick] (\width, 0) -- ++(0, \height); 
            \draw[thick] (0, \height) -- ++(\width, 0) -- ++(\depth*\scl, \depth); 
        \end{scope}
    }
\end{tikzpicture}}
    \subcaptionbox{\label{fig:tslice.graph} $\mat{X} \in \FFF_n^p$  - ``vector''}[.3\linewidth]
    {

\begin{tikzpicture}
    \def\width{0.35}    
    \def\height{\width}   
    \def\depth{2.3*\width}   
    \def\scl{0.8} 
    \def\spacing{.45}     
    \def\shadowx{0.25}
    \def\shadowy{0.15}

    \foreach \y in {0,1,2,3} {
        \begin{scope}[shift={( \y * \spacing,0)}]
            %
            \fill[ opacity=0.0, blur shadow={shadow blur steps=10,shadow scale = .82, shadow yshift=\y*\spacing, shadow blur radius=1.5ex}] (\width, 0) -- ++(\width, 0) -- ++(\depth*\scl, \depth) -- ++(-\width, 0) -- cycle;

        \end{scope}
    }
            
    \foreach \y in {0,1,2,3} {
        \begin{scope}[shift={(0, \y * \spacing)}]
            \fill[blue!60] (0, 0) -- ++(\width, 0) -- ++(0, \height) -- ++(-\width, 0) -- cycle;
            
            \fill[blue!40] (0, \height) -- ++(\width, 0) -- ++(\depth*\scl, \depth) -- ++(-\width, 0) -- cycle;

            \fill[blue!20] (\width, 0) -- ++(0, \height) -- ++(\depth*\scl, \depth) -- ++(0,-\height) -- cycle;

            \draw[thick] (0, 0) -- ++(\width, 0) -- ++(\depth*\scl, \depth) -- ++(0, \height) 
                        -- ++(-\width, 0) -- ++(-\depth*\scl, -\depth) -- cycle; 
            \draw[thick] (0, 0) -- ++(0, \height);    
            \draw[thick] (\width, 0) -- ++(0, \height); 
            \draw[thick] (0, \height) -- ++(\width, 0) -- ++(\depth*\scl, \depth); 
        \end{scope}
    }
\end{tikzpicture}}
    \subcaptionbox{\label{fig:ttensor.graph} $\tens{X} \in \FFF_n^{m \xx p}$  - ``matrix of tubes''}[.3\linewidth]
    {

\begin{tikzpicture}
    \def\width{0.35}    
    \def\height{\width}   
    \def\depth{2.3*\width}   
    \def\scl{0.8} 
    \def\spacing{.45}     
    
    \foreach \x in {4} {
    \begin{scope}[shift={(\x*\spacing,0)}]
        \foreach \y in {0,1,2,3} {
            \begin{scope}[shift={( \y * \spacing,0)}]
                \fill[ opacity=0.0, blur shadow={shadow blur steps=10,shadow scale = .82, shadow yshift=\y*\spacing, shadow blur radius=1.5ex}] (\width, 0) -- ++(\width, 0) -- ++(\depth*\scl, \depth) -- ++(-\width, 0) -- cycle;
            \end{scope}
        }
    \end{scope}
    }
    
    \foreach \x in {0,1,2,3,4} {
    \begin{scope}[shift={(\x*\spacing,0)}]
        \foreach \y in {0,1,2,3} {
            \begin{scope}[shift={(0, \y * \spacing)}]
                \fill[blue!60] (0, 0) -- ++(\width, 0) -- ++(0, \height) -- ++(-\width, 0) -- cycle;
                
                \fill[blue!40] (0, \height) -- ++(\width, 0) -- ++(\depth*\scl, \depth) -- ++(-\width, 0) -- cycle;

                \fill[blue!20] (\width, 0) -- ++(0, \height) -- ++(\depth*\scl, \depth) -- ++(0,-\height) -- cycle;

                \draw[thick] (0, 0) -- ++(\width, 0) -- ++(\depth*\scl, \depth) -- ++(0, \height) 
                            -- ++(-\width, 0) -- ++(-\depth*\scl, -\depth) -- cycle; 
                \draw[thick] (0, 0) -- ++(0, \height);    
                \draw[thick] (\width, 0) -- ++(0, \height); 
                \draw[thick] (0, \height) -- ++(\width, 0) -- ++(\depth*\scl, \depth); 
            \end{scope}
        }
    \end{scope}
    }
\end{tikzpicture}}
    \caption{Graphical illustration of tubal-tensor as a matrix of tubes.}
    \label{fig:illustration}
\end{figure}

Going from scalars to matrices, the tensor-tensor $\mm$-product definition adheres to the usual multiplication of matrices over a field. For $\tX \in \FFF_n^{m \xx p}$ and $\tY\in \FFF^{p \xx q \xx n}$, we have $\tZ = \tX \mm \tY \in \FFF_n^{m \xx q}$, where 
    $\tZ_{k,j,:} \coloneqq {\sum}_{\ell=1}^p \tX_{k,\ell,:} \mm \tY_{\ell,j,:}$.
Equivalently, by extension of ~\Cref{eq:F.sup.n.star.m}, if we define $\wh{\tX}\coloneqq \tX \xx_3 \matM$ and $\wh{\tY}\coloneqq \tY \xx_3 \matM$, we have 
\begin{equation}\label{eq:tens.tens.mm.fwp}
    \tX \mm \tY = \left( \thX \vartriangle \thY \right) \xx_3 \matM^{-1}
\end{equation}
where $\vartriangle$ denotes the \textbf{facewise product} of tubal-tensors $(\tA \vartriangle \tB)_{:,:,j} = \tA_{:,:,j} \tB_{:,:,j}$.

The tubal-tensor $\mm$-product thus naturally views an $m \xx p \xx n$ tensor as a linear mapping from $\FFF^{p \xx 1 \xx n}$ to $\FFF^{m \xx 1 \xx n}$.
Accordingly, the \textbf{identity $m \xx m$ tensor} $\tI$ is such that $\tI \mm \tX = \tX, \tY \mm \tI = \tY$ for all $\tX \in \FFF_n^{m \xx p}, \tY \in \FFF_n^{p \xx q}$.
A tensor $\tU \in \FFF_n^{m \xx m}$ is said to be \textbf{$\mm$-unitary} tensor if $\FNorm{\tU \mm \tX} = \FNorm{\tX}$ for all $\tX \in \FFF_n^{m \xx p}$.
Equivalently, $\tU$ is $\mm$-unitary if $\tU \mm \tU^\CT = \tU^\CT \mm \tU = \tI$ where  $\tA^\CT \in \FFF_n^{p \xx m}$ denotes the \textbf{$\mm$-conjugate transpose} of $\tA \in \FFF_n^{m \xx p}$, defined by $\widehat{\tA^\CT}_{i,j,k} \coloneqq \overline{\widehat{\tA}_{j,i,k}}$. 

The space of tubes $\CC_n$ is isomorphic to a subset of linear operators on $\CC^n$.
To see this, associate with $\bs{x} \in \CC^n$ the operator $T_{\bs{x}}:\CC_n \to \CC_n$ defined by $T_{\bs{x}} \bs{y} \coloneqq \bs{x} \mm \bs{y}$. When viewing $T_{\bs{x}}$ as a mapping from $\CC^n$ to $\CC^n$, we immediately see from the ring axioms that it is linear. 
However, due to commutativity of $\CC_n$, we get that the operator $T_{\bs{x}}$ is not only linear, but in fact a \textbf{t-linear operator}~\cite{KBHH13}, i.e., $T_{\bs{x}} (\bs{a} \mm \bs{y} + \bs{b} \mm \bs{z}) = \bs{a} \mm T_{\bs{x}} \bs{y} + \bs{b} \mm T_{\bs{x}} \bs{z}$ for all $\bs{a}, \bs{b}, \bs{y}, \bs{z} \in \CC^n$.
\begin{lemma}\label{lem:tubes.tlin.121}
    Let $T \colon \CC^n \to \CC^n$ be a t-linear operator, then there exists a unique $\bs{x} \in \CC^n$ such that $T = T_{\bs{x}}$.
    As a corollary, we have a bijection between t-linear operators and tubes in $\CC^n$.
\end{lemma}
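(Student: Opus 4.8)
The plan is to exploit the fact that, because $\matM$ is invertible, the ring $\CC_n$ possesses a multiplicative unit, and that evaluating any t-linear operator at that unit recovers its multiplier. First I would record that $\bs{1}_n \coloneqq \matMi \mathbf{1}$, where $\mathbf{1} \in \CC^n$ is the all-ones vector, is the unit of $\CC_n$: passing to the hat domain turns $\mm$ into the Hadamard product $\odot$, for which $\mathbf{1}$ is manifestly the identity, so that $\bs{1}_n \mm \bs{y} = \bs{y}$ for every $\bs{y} \in \CC^n$. This is the only place where finite-dimensionality enters, and it is precisely the property that will fail in the infinite-dimensional setting treated later in the paper.

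For the existence part, given a t-linear $T$ I would set $\bs{x} \coloneqq T\bs{1}_n$ and show $T = T_{\bs{x}}$. Taking $\bs{b} = \bs{0}$ in the defining identity of t-linearity collapses it to the homogeneity relation $T(\bs{a}\mm\bs{y}) = \bs{a}\mm T\bs{y}$, since $\bs{0}\mm\bs{z} = \bs{0}\mm T\bs{z} = \bs{0}$. Substituting $\bs{a} = \bs{y}$ and taking $\bs{1}_n$ as the argument then yields $T\bs{y} = T(\bs{y}\mm\bs{1}_n) = \bs{y}\mm T\bs{1}_n = \bs{y}\mm\bs{x}$, and commutativity of $\CC_n$ rewrites this as $\bs{x}\mm\bs{y} = T_{\bs{x}}\bs{y}$. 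Since $\bs{y}$ is arbitrary, $T = T_{\bs{x}}$.

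For uniqueness, I would observe that any representation $T = T_{\bs{x}}$ forces $\bs{x} = \bs{x}\mm\bs{1}_n = T_{\bs{x}}\bs{1}_n = T\bs{1}_n$, so the multiplier is pinned down by $T$ itself. The asserted bijection is then immediate: the assignment $\bs{x}\mapsto T_{\bs{x}}$ maps onto the set of t-linear operators by the existence argument and is one-to-one by uniqueness.

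I do not anticipate a serious obstacle, as the content amounts to the standard fact that an endomorphism of a commutative unital ring viewed as a rank-one free module over itself is multiplication by the image of the unit. The only point that genuinely requires care is the existence of $\bs{1}_n$, which hinges on the invertibility of $\matM$; it is worth flagging explicitly, precisely because its breakdown in infinite dimensions is what motivates the \emph{quasitubal} construction developed in the remainder of the paper.
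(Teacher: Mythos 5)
Your proof is correct and follows essentially the same route as the paper's: both define the unit $\bs{e} = \matMi\bs{1}_n$, set $\bs{x} = T\bs{e}$, and use t-linearity together with $\bs{y} = \bs{e}\mm\bs{y}$ to conclude $T = T_{\bs{x}}$, with uniqueness pinned down by evaluation at the unit. The only cosmetic difference is that the paper phrases the computation via the difference operator $T - T_{\bs{x}}$ while you compute $T\bs{y} = \bs{y}\mm T\bs{1}_n$ directly (and your uniqueness step is in fact spelled out slightly more explicitly than the paper's).
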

\begin{proof}
    Let $\bs{e} = \matMi \bs{1}_n$ be the multiplicative identity in $(\CC^n, \mm)$, and $\bs{x} = T \bs{e}$.
    Denote by $T_{\bs{x}}$ the operator associated with $\bs{x}$, i.e., $T_{\bs{x}} \bs{y} = \bs{x} \mm \bs{y}$.
    Since both $T$ and $T_{\bs{x}}$ are t-linear, the mapping $T - T_{\bs{x}}$ is t-linear as well.
    For all $\bs{y} \in \CC^n$, we have $(T - T_{\bs{x}}) \bs{y} = (T - T_{\bs{x}}) (\bs{e} \mm \bs{y}) $. 
    By t-linearity, $(T - T_{\bs{x}}) (\bs{e} \mm \bs{y}) = ((T - T_{\bs{x}}) \bs{e}) \mm \bs{y}$, and note that $(T - T_{\bs{x}}) \bs{e} = T \bs{e} - T_{\bs{x}} \bs{e}  = \bs{x} - \bs{x}\mm \bs{e} = 0$, thus $(T - T_{\bs{x}}) \bs{y} = 0$ for all $\bs{y} \in \CC^n$ and we have $T = T_{\bs{x}}$.
    Since $T_{\bs{x}}$ is uniquely determined by $\bs{x}$, the result follows.
\end{proof}
Importantly, the following structure emerges.
\begin{proposition}[{\cite[Proposition 4.4]{Kernfeld2015}}]\label{prop:mm.tubal.algebra}
    Let $\matM \in \FFF^{n \xx n}$ be an invertible matrix, and $\mm$ as in~\Cref{eq:F.sup.n.star.m},
    then $(\FFF^n, \mm)$ augmented with the $\mm$-conjugate transpose operation and $\GNorm{\bs{x}}_{\mm {-} max} \coloneqq \max_{d} |x_d|$ is a commutative, unital, C*-algebra over $\FFF$.
\end{proposition}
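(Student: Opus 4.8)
The plan is to show that the transform $\bs x \mapsto \widehat{\bs x} = \matM \bs x$ is an isometric $*$-isomorphism from $(\FFF^n, \mm)$ onto the \emph{diagonal algebra} $(\FFF^n, \odot)$ of $n$-tuples under the Hadamard product, equipped with entrywise complex conjugation as involution and the sup-norm $\infNorm{\cdot}$. The latter is the prototypical finite-dimensional commutative unital C*-algebra (it is $C(X)$ for $X$ a set of $n$ points, equivalently $\ell_\infty^n$), so all the required structure is inherited by transport along an isometric $*$-isomorphism. The only genuine work is to check that the three ingredients --- product, involution, and norm --- are correctly intertwined by $\matM$.

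First I would record that $\Phi(\bs x) \coloneqq \widehat{\bs x} = \matM \bs x$ is a linear bijection (since $\matM$ is invertible) and that, directly from the definition in \Cref{eq:F.sup.n.star.m}, $\Phi(\bs x \mm \bs y) = \matM \matMi(\widehat{\bs x} \odot \widehat{\bs y}) = \Phi(\bs x) \odot \Phi(\bs y)$. Hence $\Phi$ carries $\mm$ to the Hadamard product, and commutativity, associativity, and bilinearity of $\mm$, together with distributivity over vector addition, follow at once from the corresponding (obvious) properties of $\odot$; this re-proves that $(\FFF^n,\mm)$ is a commutative $\FFF$-algebra. The multiplicative unit is obtained by pulling back the unit $\bs{1}_n = (1,\dots,1)$ of $(\FFF^n,\odot)$: setting $\bs e \coloneqq \matMi \bs{1}_n$ gives $\Phi(\bs e) = \bs{1}_n$, so $\bs e \mm \bs x = \bs x$ for every $\bs x$, consistent with the identity element used in the proof of \Cref{lem:tubes.tlin.121}.

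Next I would treat the involution. Specializing the definition of the $\mm$-conjugate transpose to a single tube gives an involution $\bs x \mapsto \bs x^\ast$ satisfying $\widehat{\bs x^\ast} = \overline{\widehat{\bs x}}$, i.e. $\bs x^\ast = \matMi \overline{\matM \bs x}$, so that $\Phi(\bs x^\ast) = \overline{\Phi(\bs x)}$: the map $\Phi$ intertwines the $\mm$-conjugate transpose with entrywise conjugation. Because entrywise conjugation is a conjugate-linear, product-reversing involution on $(\FFF^n,\odot)$ (and the product is commutative), the involution axioms $(\bs x^\ast)^\ast = \bs x$, $(\bs x \mm \bs y)^\ast = \bs y^\ast \mm \bs x^\ast$, and conjugate-linearity transfer verbatim to $(\FFF^n,\mm)$.

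Finally, for the analytic part, the norm is read in the transform domain, $\GNorm{\bs x}_{\mm\text{-}max} = \infNorm{\widehat{\bs x}} = \max_d |\widehat{x}_d|$, which is precisely the assertion that $\Phi$ is an isometry onto $(\FFF^n, \infNorm{\cdot})$. Completeness is automatic in finite dimension, submultiplicativity reads $\infNorm{\widehat{\bs x}\odot\widehat{\bs y}} \le \infNorm{\widehat{\bs x}}\,\infNorm{\widehat{\bs y}}$, and the C*-identity follows from $\infNorm{\overline{\widehat{\bs x}}\odot \widehat{\bs x}} = \max_d |\widehat{x}_d|^2 = \infNormS{\widehat{\bs x}}$. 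The main (and essentially only) subtlety is this last point: the C*-identity forces the norm to be the sup-norm of $\matM\bs x$ rather than of $\bs x$ itself, so the $\max_d$ in the statement must be understood over the transform-domain coordinates $\widehat{x}_d = (\matM\bs x)_d$. Once the norm is pinned down this way, $\Phi$ is a genuine isometric $*$-isomorphism and every C*-axiom is inherited from the diagonal algebra.
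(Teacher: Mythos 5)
Your proof is correct, and there is in fact nothing in the paper to compare it against: the proposition is imported without proof from \cite[Proposition 4.4]{Kernfeld2015}, and your transport-of-structure argument via $\Phi(\bs{x}) = \matM\bs{x}$ is precisely the standard argument underlying that cited result. The map $\Phi$ intertwines $\mm$ with the Hadamard product, the $\mm$-conjugate transpose with entrywise conjugation, and the norm with $\infNorm{\cdot}$, so every C*-axiom is inherited from $\ell_\infty^n$; commutativity, unitality ($\bs{e} = \matMi \bs{1}_n$), and completeness are immediate. Your closing caveat deserves emphasis, because it is a necessary reading of the statement rather than a stylistic preference: if $\GNorm{\bs{x}}_{\mm {-} max} = \max_d |x_d|$ were taken over the \emph{original} coordinates of $\bs{x}$, both submultiplicativity and the C*-identity would fail for general invertible $\matM$. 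Concretely, for $n=2$, $\matM = \bigl(\begin{smallmatrix} 1 & 1 \\ 1 & -1 \end{smallmatrix}\bigr)$ and $\bs{x} = (1,2)$, one has $\hat{\bs{x}} = (3,-1)$ and $\bs{x}^* \mm \bs{x} = \matMi (9,1) = (5,4)$, so in original coordinates $\GNorm{\bs{x}^* \mm \bs{x}}_{\mm{-}max} = 5 \neq 4 = \GNormS{\bs{x}}_{\mm{-}max}$, whereas in transform coordinates both sides equal $9$. Hence the norm must be read as $\infNorm{\matM\bs{x}}$, exactly as you conclude; this reading is also the one consistent with the infinite-dimensional analogue developed later in the paper, where the quasitubal norm corresponds to $\infNorm{\cdot}$ under $\uppi$ (\Cref{lemma:hstar.ellinf.isoiso.cstar.uppi}).
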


\subsubsection{Tubal Singular Value Decomposition}\label{sec:tSVD.mm}
Perhaps the most important result in the tubal-tensor framework is the tubal existence of a singular value decomposition (tSVDM) of a tensor $\tX \in \FFF^{m \xx p \xx n}~$\cite{KilmerPNAS,KBHH13}, which is a factorization of the form
\begin{equation}\label{eq:tSVDM}
    \tX = \tU \mm \tS \mm \tV^\CT
\end{equation}
where $\tU \in \FFF^{m \xx m \xx n}$ and $\tV \in \FFF^{p \xx p \xx n}$ are $\mm$-unitary tensors, and $\tS \in \FFF^{m \xx p \xx n}$ is an f-diagonal tensor, i.e., $\tS_{i,j,:} = 0$ for $i \neq j$.
The significance of the tSVDM is that it is the first, and so far the only, tensor factorization that entertains an Eckart-Young-Mirsky type results for tensors, with respect to specific tensor-rank definitions. 

\paragraph{Tensor-ranks and Truncations.}
Definitions of tubal-tensor ranks are stated for order-3 tensors; generalization to higher order is straightforward.
Let $\smash{\tX = \tU \mm \tS \mm \tV^\CT \in \FFF^{m \xx p \xx n}}$. 
The \textbf{t-rank} of $\tX$ under $\mm$, is the number of non-zero tubes in $\tS$.
The \textbf{multi-rank} of $\tX$  under $\mm$ is a vector $\rrho \in \NN^n$ where $\smash{\rho_j = \rank(\thX_{:,:,j})}$ for $j \in [n]$.
Let $\rrho$ be a vector such that $\rho_j \leq \min(m,p)$ for all $j \in [n]$, then the \textbf{multi-rank $\rrho$ of truncation} of $\tX$ is the tensor $[\tX]_{\rrho}$ such that 
$$\wh{[\tX]_{\rrho}}_{:,:,j} \coloneqq \wh{\tU}_{:,1:\rho_j,j} \, \wh{\tS}_{1:\rho_j,1:\rho_j,j} \, {\wh{\tV}_{:,1:\rho_j,j}}^\CT$$
for $j \in [n]$.
Consequently the \textbf{t-rank $r$ truncation} of $\tX$ is the tensor $[\tX]_r$ obtained by a multi-rank $\rrho$ truncation  of $\tX$ under $\mm$ where $\rho_j = r$ for all $j \in [n]$.

\paragraph{Best-Rank Approximation.}
Suppose that $\matM \in \FFF^{n \xx n}$ is a nonzero multiple of a unitary matrix and let $\tX \in \FFF^{m \xx p \xx n}$.
Then for all $\tY \in \FFF^{m \xx p \xx n}$ with t-rank $r$ (respectively, multi-rank $\rrho$) under $\mm$, we have that $\FNorm{\tX - \tY} \geq \FNorm{\tX - [\tX]_r}$ (respectively, $\FNorm{\tX - \tY} \geq \FNorm{\tX - [\tX]_{\rrho}}$)~\cite{KBHH13,Kernfeld2015,KilmerPNAS}.
Therefore, the best low t-rank (multi-rank) approximation to $\tX$ under $\mm$ is $[\tX]_r$ ($[\tX]_{\rrho}$).

\section{Infinite Dimensional Tubes and Quasitubes}\label{sec:infinite.tubes}

\subsection{From Finite Dimensional to Infinite Dimensional Tubes}

Given an infinite dimensional linear space $\cH$, we denote by $\tX \in \cH^{m \xx p}$ a $m \xx p$ tubal matrix over $\cH$, i.e., a mapping from index pairs to elements in $\cH$. 
We impose the additional requirement that $\cH$ is a separable Hilbert space over $\FFF$. 
With this requirement, the tubal product can be naturally extended to infinite dimensional Hilbert spaces by replacing the matrix $\matM$ in~\Cref{eq:F.sup.n.star.m} with a bounded, invertible linear operator $F \colon \cH \to \ell_2$.
The separability assumption guarantees the existence of such an operator $F$ and even allows us to assume that $F$ is an isometric isomorphism.
Indeed,~\Cref{def:F-transform} below of the $F$-transform is restricted to isometric isomorphisms, and all the results in this work are derived under this assumption. 
Note that unlike functional tensors~\cite{Han2023} and  quasitensors~\cite{LARSEN2024RKHS}, our theory does not require $\cH$ to be a RKHS nor even a space of functions, but rather it can be any separable Hilbert space over $\FFF$.
\begin{definition}[$F$-transform]\label{def:F-transform}
    Let $\smash{F = \{ \bs{\phi}^{(j)} \}_{j \in \ZZ}}$ be an orthonormal basis of a separable, infinite dimensional Hilbert space $\cH$ over $\FFF$. 
    Define the \textbf{$F$-transform} of $\bs{x} \in \cH$ as the sequence $\smash{F(\bs{x}) = \widehat{\bs{x}}}$ where $\smash{\wh{x}_j = \dotph{\bs{x}, \bs{\phi}^{(j)}}}$.
    Denote by $F(\cH) \coloneqq \{ F (\bs{x}) \in \FFF^\ZZ ~|~  \bs{x} \in \cH \} $ the \textbf{transform domain} of $\cH$ under $F$, and note that $\smash{\ell_2 = F(\cH)}$.
    The \textbf{inverse} $F$-transform of a sequence $\wh{\bs{x}} \in \ell_2$ is  $F^{-1}(\wh{\bs{x}}) = \sum_{j \in \ZZ} \wh{x}_j \bs{\phi}^{(j)}$.
    Since $\wh{x}_j = \dotph{\bs{x}, \bs{\phi}^{(j)} }$ for all $j \in \ZZ$, we get that $F^{-1}(\hbs{x}) = \sum_{j \in \ZZ} \dotph{\bs{x}, \bs{\phi}^{(j)} } \bs{\phi}^{(j)} = \bs{x}$.
    
\end{definition}

Overloading the symbol $F$ to denote both the basis of $\cH$ and the operator $F \colon \cH \to \ell_2$ is justified by the fact that an orthonormal basis $\{ \bs{\phi}^{(j)} \}_{j\in\ZZ}$ uniquely determines an isometric isomorphism between $\cH$ and $\smash{\ell_2}$.
Given $\{ \bs{\phi}^{(j)} \}_{j\in\ZZ}$, the mapping $\bs{\phi}^{(j)} \mapsto \bs{e}^{(j)}=F(\bs{\phi}^{(j)})$, where $\bs{e}^{(j)}$ is a sequence which is $1$ at position $j$ and zero elsewhere, is an isometric isomorphism. Conversely, given an isometric isomorphism $T \colon \cH \to \ell_2$, the basis $\{ T^{-1}(\bs{e}^{(j)}) \}_{j\in\ZZ}$ is an orthonormal basis of $\cH$.

Continuing the analogy with the finite dimensional case, the $F$ transform of a tubal tensor $\tX$ over $\cH$ is a tubal tensor $\thX = \tX \xx_3 F$ over $\ell_2$ defined by $\thX_{j,k} \coloneqq \wh{\tX_{jk}}$, and  we define the $\ff$-product for $\cH$ tubes as 
\begin{center}
\begin{minipage}{.5\linewidth}
    \begin{equation}\label{def:ff.qtm}
        \bs{x} \ff \bs{y} \coloneqq F^{-1}(\wh{\bs{x}} \odot \wh{\bs{y}}) = {\sum}_{j \in \ZZ} \wh{x}_j \wh{y}_j \bs{\phi}^{(j)}
    \end{equation}
\end{minipage}%
\begin{minipage}{.5\linewidth}
    \begin{equation}\label{def:ff.tensor}
        \tX \ff \tY \coloneqq  (\thX \vartriangle \thY) \xx_3 F^{-1}
    \end{equation}
\end{minipage}
\end{center}
where $\odot$ denotes element-wise product of two sequences. 
Note that the expression in \Cref{def:ff.qtm} defines an element in $\cH$, since element-wise product of two $\ell_2$ sequences is also square summable, hence, $\bs{z} = \sum_{j \in \ZZ} \wh{x}_j \wh{y}_j \bs{\phi}^{(j)}$ is indeed in $\cH$ as it is the limit of the sequence $\bs{z}^{(J)} = \sum_{j = -J}^{J} \wh{x}_j \wh{y}_j \bs{\phi}^{(j)}$, which is a Cauchy sequence in a complete vector space. 

While all separable Hilbert spaces are isomorphic to $\ell_2$, the choice of orthonormal basis $F$ determines the structure of the algebra on $\cH$.
To see this, consider the orthonormal basis $\smash{F = \{ \bs{\phi}_j \}_{j \in \ZZ}}$ of $\cH$, 
and set $\smash{G = \{\bs{\gamma}_j\}_{j\in \ZZ}}$ 
where 
$\bs{\gamma}_1 = \smash{(\bs{\phi}_1 + \bs{\phi}_2)/\sqrt{2}}, \smash{\bs{\gamma}_2 = (\bs{\phi}_1 - \bs{\phi}_2)/\sqrt{2}}$, 
and $\bs{\gamma}_j = \bs{\phi}_j$ for all integers $j \neq 1,2$.
Then $G$ is an orthonormal basis of $\cH$ and the corresponding $G$-transform is an isometry $\smash{G \colon \cH \to  \ell_2}$.
However,
\begin{align*}
    \bs{\phi}_1 \ttprod{G} \bs{\phi}_2 
    &= 1/2 G^{-1}  \left( (  \bs{e}_1 - \bs{e}_2  ) \odot (  \bs{e}_1 + \bs{e}_2 ) \right) \\
    &= 1/2 G^{-1} \left(  \bs{e}_1 - \bs{e}_2  \right) 
    = 1/2 (\bs{\gamma}_1 - \bs{\gamma}_2) \\
    &\neq 0 = F^{-1} \left(  \bs{e}_1 \odot \bs{e}_2 \right) = \bs{\phi}_1 \ff \bs{\phi}_2
\end{align*}

Two important properties of~\Cref{def:ff.qtm} is that the basis elements are idempotent, i.e., $\bs{\phi}^{(j)} \ff \bs{\phi}^{(j)}=\bs{\phi}^{(j)}$ for all $j\in\ZZ$, and the following identity:
$$
\bs{x} = \sum_{j \in \ZZ} \bs{\phi}^{(j)} \ff \bs{x}
$$

So far, the extension from finite tubes to infinite tubes seems trivial and not worthy of much discussion. However, a complication arises: \Cref{lem:tubes.tlin.121} no longer holds once the tubes are over infinite dimensional $\cH$.  This is due to the following \Cref{lem:cH.ffops.are.HS}. 
An important corollary of \Cref{lem:cH.ffops.are.HS} is that the resulting algebra is not unital, unlike the finite case.  The existence of $\ff$-unitary tensors depends on that of a multiplicative identity in $\cH$, thus a factorization such as~\Cref{eq:tSVDM} cannot exist for tubal tensors over infinite dimensional $\cH$.

\begin{lemma}\label{lem:cH.ffops.are.HS}
    Let $\cH$ be a separable Hilbert space
    equipped with the $\ff$-product  (\Cref{def:ff.qtm}) defined for $F=\{\bs{\phi}_j\}_{j\in\ZZ}$ as in~\Cref{def:F-transform}.
    Then, for any $\bs{x} \in \cH$, the associated operator $T_{\bs{x}}$ defined by $T_{\bs{x}}(\bs{y}) \coloneqq \bs{x}\ff \bs{y}$ for all $\bs{y} \in \cH$, is a Hilbert-Schmidt operator and $\HSNorm{T_{\bs{x}}} = \cHNorm{\bs{x}}$.

    As a corollary, if $\cH$ is infinite dimensional, then there cannot be an element $\bs{e} \in \cH$ such that $\bs{e} \ff \bs{x} = \bs{x} $ for all $\bs{x} \in \cH$. 
\end{lemma}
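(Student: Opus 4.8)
The plan is to exploit the fact that the defining orthonormal basis $F = \{\bs{\phi}^{(j)}\}_{j\in\ZZ}$ simultaneously diagonalizes \emph{every} operator $T_{\bs{x}}$, which reduces the whole statement to a Parseval computation. First I would record that $T_{\bs{x}}$ is a well-defined linear map of $\cH$ into itself: linearity is inherited from the linearity of $F$ and $F^{-1}$ together with the bilinearity of $\odot$, and the fact that $T_{\bs{x}}$ actually lands in $\cH$ is precisely the square-summability observation already made just after \Cref{def:ff.qtm}. The crucial computation is the action of $T_{\bs{x}}$ on a basis vector: since $F(\bs{\phi}^{(k)}) = \bs{e}^{(k)}$, the pointwise product collapses to $\wh{\bs{x}} \odot \bs{e}^{(k)} = \wh{x}_k\, \bs{e}^{(k)}$, whence $T_{\bs{x}}(\bs{\phi}^{(k)}) = F^{-1}(\wh{x}_k \bs{e}^{(k)}) = \wh{x}_k\, \bs{\phi}^{(k)}$. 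Thus $T_{\bs{x}}$ is diagonal in the basis $F$, with diagonal entries $\{\wh{x}_k\}_{k\in\ZZ}$.

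With the diagonal form in hand, the Hilbert--Schmidt norm is immediate. Evaluating the Hilbert--Schmidt sum in the basis $F$ and using $\cHNorm{\bs{\phi}^{(k)}} = 1$ gives $\HSNormS{T_{\bs{x}}} = \sum_{k\in\ZZ} \cHNormS{T_{\bs{x}}(\bs{\phi}^{(k)})} = \sum_{k\in\ZZ} |\wh{x}_k|^2$. By Parseval's identity for the orthonormal basis $F$ (equivalently, because the $F$-transform is an isometric isomorphism onto $\ell_2$), the right-hand side equals $\cHNormS{\bs{x}}$, which is finite. This establishes at once that $T_{\bs{x}}$ is Hilbert--Schmidt and that $\HSNorm{T_{\bs{x}}} = \cHNorm{\bs{x}}$. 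I would add a word confirming that computing the Hilbert--Schmidt sum in this particular basis is legitimate: finiteness of the sum in one orthonormal basis certifies membership in the Hilbert--Schmidt class, and the resulting value is then basis-independent.

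For the corollary, I would argue by contradiction. If some $\bs{e} \in \cH$ satisfied $\bs{e} \ff \bs{x} = \bs{x}$ for all $\bs{x} \in \cH$, then $T_{\bs{e}}$ would be the identity operator on $\cH$. But when $\cH$ is infinite dimensional the identity is \emph{not} Hilbert--Schmidt, since evaluating its Hilbert--Schmidt sum in $F$ yields $\sum_{k\in\ZZ} \cHNormS{\bs{\phi}^{(k)}} = \sum_{k\in\ZZ} 1 = \infty$. This contradicts the first part of the lemma, which forces $T_{\bs{e}}$ to be Hilbert--Schmidt with $\HSNorm{T_{\bs{e}}} = \cHNorm{\bs{e}} < \infty$. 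Hence no multiplicative identity can exist.

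In terms of difficulty, there is no substantial obstacle: the entire content is the observation that $F$ diagonalizes $T_{\bs{x}}$, after which the norm identity is a one-line Parseval computation and the corollary is a one-line appeal to the divergence of $\sum_{k} 1$. The only point deserving a word of care is the justification that the Hilbert--Schmidt norm may be read off in the convenient basis $F$, which I addressed above.
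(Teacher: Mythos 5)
Your proposal is correct and follows essentially the same route as the paper's proof: compute $T_{\bs{x}}(\bs{\phi}^{(j)}) = \wh{x}_j \bs{\phi}^{(j)}$, sum the Hilbert--Schmidt series over the basis $F$ to get $\cHNormS{\bs{x}}$ via Parseval, and derive the corollary from the fact that the identity operator on an infinite dimensional space is not Hilbert--Schmidt. The only additions are minor points of care (basis-independence of the Hilbert--Schmidt norm, the explicit divergent sum $\sum_k 1$) that the paper leaves implicit.
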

\begin{proof}
    For any integer $j$, we have $\smash{T_{\bs{x}} (\bs{\phi}^{(j)}) = F^{-1} (\wh{\bs{x}} \odot \bs{e}^{(j)}) = F^{-1} (\wh{x}_j \bs{e}^{(j)}) = \wh{x}_j \bs{\phi}^{(j)}}$.
    So $\cHNormS{T_{\bs{x}}( \bs{\phi}^{(j)})} = |\wh{x}_j|^2 $ 
    and $\HSNorm{T_{\bs{x}}}^2 
    =\sum_{j\in\ZZ} \cHNormS{T_{\bs{x}}(\bs{\phi}^{(j)})}
    = \sum_{j\in\ZZ} |\wh{x}_j|^2 
    = \dotph{F^{-1}\wh{\bs{x}}, F^{-1}\wh{\bs{x}}}
    = \cHNormS{\bs{x}}< \infty$, so $T_{\bs{x}}$ is Hilbert-Schmidt.

    As for the corollary, suppose that $\cH$ contained a unity $\bs{e}$ then the associated operator $T_{\bs{e}} $ would have been the identity operator, which, by the above, is a Hilbert-Schmidt operator. However, the identity operator is not Hilbert-Schmidt for any infinite dimensional Hilbert space.
\end{proof}

In the next subsection we introduce the notion of quasitubes to address the lack of multiplicative identity, and allow a infinite dimensional analog of tubal SVD. To that end, we appeal to the notion of t-linearity from finite dimensional tubal algebra, and use it as a basis for defining quasitubes. Before doing so, we first introduce the notion of $\cH$-linearity. 

We say that an operator $T:\cH\to\cH$ is $\cH$-linear with respect to $\ff$ if $T(\bs{a}\ff \bs{x}+\bs{b}\ff \bs{y}) = \bs{a} \ff  T(\bs{x}) + \bs{b} \ff T(\bs{y})$ for all $\bs{a},\bs{b},\bs{x},\bs{y}\in\cH$. The notion of ``$\cH$-linear'' replaces ``t-linear'' from the finite case, since for the infinite case it is important to specify the Hilbert space from which tubes are drawn.
Together with the $\ff$-product, the space $\cH$ forms a commutative algebra over $\FFF$, and similarly to the finite dimensional case, each $\bs{x} \in \cH$ is associated with a $\cH$-linear operator $T_{\bs{x}}$ defined by $T_{\bs{x}}(\bs{y}) \coloneqq \bs{x} \ff \bs{y}$.  

For finite dimensional tubes, it is easy to verify that every t-linear operator is also simply linear (with respect to $\FFF$). A one line argument based on the multiplicative identity $\bs{e}$ establishes it: for every t-linear operator $T$, $\bs{x}\in\FFF_n$ and $\alpha\in\CC$ we have $T(\alpha\bs{x})=T(\alpha(\bs{e}\mm\bs{x})) = T((\alpha\bs{e}) \mm \bs{x}) = (\alpha\bs{e})\mm T(\bs{x}) = \alpha T(\bs{x})$. However, the lack of multiplicative identity for infinite dimensional tubes makes that corresponding assertion that $\cH$-linearity implies $\FFF$-linearity much less trivial. Nevertheless, the following lemma shows that it still holds.
\begin{lemma}\label{lem:cHs.are.F.linear}
    An $\cH$-linear operator $\bs{q} \colon \cH \to \cH$ is $\FFF$-linear, i.e., for all $\bs{x},\bs{y} \in\cH$ and $\alpha,\beta \in \FFF$ we have $\bs{q}  (\alpha \bs{x} + \beta \bs{y}) = \alpha \bs{q} ( \bs{x}) + \beta \bs{q}( \bs{y})$.
\end{lemma}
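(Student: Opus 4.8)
The plan is to show that $\cH$-linearity already forces $\bs{q}$ to act \emph{diagonally} in the $F$-transform domain --- that is, $\widehat{\bs{q}(\bs{x})}_j$ depends only on $\wh{x}_j$, and linearly so --- after which $\FFF$-linearity is immediate because $F$ is itself $\FFF$-linear. The conceptual substitute for the missing global identity $\bs{e}$ is the family of idempotent basis elements $\bs{\phi}^{(j)}$: each one is a ``partial identity'' in the sense that $\bs{\phi}^{(j)} \ff \bs{x} = \wh{x}_j \bs{\phi}^{(j)}$ extracts the $j$-th transform coefficient, and $\bs{\phi}^{(j)} \ff \bs{\phi}^{(j)} = \bs{\phi}^{(j)}$. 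The whole argument will be purely algebraic and, notably, will not require any continuity assumption on $\bs{q}$.

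First I would specialize the defining identity of $\cH$-linearity by setting $\bs{b} = 0$, which yields $\bs{q}(\bs{a} \ff \bs{x}) = \bs{a} \ff \bs{q}(\bs{x})$ for all $\bs{a}, \bs{x} \in \cH$; that is, $\bs{q}$ commutes with $\ff$-multiplication by any fixed tube. Taking $\bs{a} = \bs{\phi}^{(j)}$ and applying $\bs{\phi}^{(j)} \ff \bs{z} = \wh{z}_j \bs{\phi}^{(j)}$ to both sides turns this into
\[
    \bs{q}\!\left( \wh{x}_j \bs{\phi}^{(j)} \right) = \widehat{\bs{q}(\bs{x})}_j \, \bs{\phi}^{(j)},
\]
valid for every $j \in \ZZ$ and every $\bs{x} \in \cH$. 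In particular $\bs{q}(\bs{\phi}^{(j)})$ is a scalar multiple of $\bs{\phi}^{(j)}$, and I write $\bs{q}(\bs{\phi}^{(j)}) = \mu_j \bs{\phi}^{(j)}$.

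Next I would pin down the scalar action. Using idempotency, every $c \in \FFF$ satisfies $c \bs{\phi}^{(j)} = (c\bs{\phi}^{(j)}) \ff \bs{\phi}^{(j)}$, so the commutation identity with $\bs{a} = c\bs{\phi}^{(j)}$ and $\bs{x} = \bs{\phi}^{(j)}$ gives $\bs{q}(c\bs{\phi}^{(j)}) = (c\bs{\phi}^{(j)}) \ff \bs{q}(\bs{\phi}^{(j)}) = c\mu_j \bs{\phi}^{(j)}$. Feeding $c = \wh{x}_j$ into the displayed identity then reads off the coordinatewise formula $\widehat{\bs{q}(\bs{x})}_j = \mu_j \wh{x}_j$, holding for all $j$ and all $\bs{x}$.

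Finally, $\FFF$-linearity falls out by inspection in the transform domain: since $F$ is $\FFF$-linear, the $j$-th coefficient of $\alpha\bs{x} + \beta\bs{y}$ is $\alpha\wh{x}_j + \beta\wh{y}_j$, hence $\widehat{\bs{q}(\alpha\bs{x}+\beta\bs{y})}_j = \mu_j(\alpha\wh{x}_j + \beta\wh{y}_j) = \alpha\widehat{\bs{q}(\bs{x})}_j + \beta\widehat{\bs{q}(\bs{y})}_j$ for every $j$; applying $F^{-1}$ (injectivity of the transform) gives $\bs{q}(\alpha\bs{x}+\beta\bs{y}) = \alpha\bs{q}(\bs{x}) + \beta\bs{q}(\bs{y})$. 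The one place to stay alert is the temptation to imitate the finite-dimensional one-line proof by approximating the absent identity with the partial sums $\sum_{|j| \le J} \bs{\phi}^{(j)}$ and letting $J \to \infty$; that route secretly demands continuity of $\bs{q}$, which is not assumed. The diagonalization above avoids this altogether, so establishing the coordinatewise formula $\widehat{\bs{q}(\bs{x})}_j = \mu_j \wh{x}_j$ --- rather than any limiting argument --- is the real crux.
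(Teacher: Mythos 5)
Your proof is correct, and it rests on the same two pillars as the paper's: idempotency of the basis elements and the extraction identity $\bs{\phi}^{(j)} \ff \bs{x} = \wh{x}_j \bs{\phi}^{(j)}$, used to show that $\bs{q}$ acts diagonally on the basis (your $\mu_j$ is the paper's $\wh{q}_j$ in \Cref{lem:q.alphaphij.alpha.qphij,lem:linearity.wrt.expansion}). The difference is in the endgame. The paper splits $\FFF$-linearity into additivity plus homogeneity: homogeneity comes from the expansion $\bs{q}(\bs{x}) = \sum_j \wh{x}_j \bs{q}(\bs{\phi}^{(j)})$ of \Cref{lem:linearity.wrt.expansion} (equivalent in content to your coordinatewise formula $\widehat{\bs{q}(\bs{x})}_j = \mu_j \wh{x}_j$), while additivity, $\bs{q}(\alpha\bs{x}+\beta\bs{y}) = \bs{q}(\alpha\bs{x}) + \bs{q}(\beta\bs{y})$, is asserted to follow immediately from $\cH$-linearity. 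That assertion is actually delicate: the defining identity of $\cH$-linearity applies to sums of the form $\bs{a}\ff\bs{x} + \bs{b}\ff\bs{y}$, and not every tube factors as an $\ff$-product (a factorization $\wh{u}_j = \wh{a}_j\wh{x}_j$ with $\hbs{a},\hbs{x} \in \ell_2$ forces $\sum_j |\wh{u}_j| < \infty$ by Cauchy--Schwarz), so plain additivity has to be read into the definition rather than derived from it. Your assembly sidesteps this entirely: the coordinatewise formula is proved for every $\bs{x} \in \cH$ using only the $\bs{b}=0$ instance of the definition, and applying it to $\alpha\bs{x}+\beta\bs{y}$ delivers additivity and homogeneity in one stroke, closing with injectivity of $F$. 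This makes your version slightly more economical and, on the additivity point, more airtight than the paper's; neither argument requires any continuity assumption on $\bs{q}$.
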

Before proving~\Cref{lem:cHs.are.F.linear}, we need a couple of technical lemmas.
\begin{lemma}\label{lem:q.alphaphij.alpha.qphij}
    Let $\bs{q} $ be a $\cH$-linear operator. For any basis element $\bs{\phi}^{(j)} \in F$ and all $\alpha \in \FFF$ we have $\bs{q}(\alpha \bs{\phi}^{(j)}) = \alpha \bs{q}( \bs{\phi}^{(j)})$.
\end{lemma}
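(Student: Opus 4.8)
The plan is to prove scalar homogeneity on a single basis element by exploiting the idempotency of the basis elements under $\ff$, which serves as a local substitute for the missing global multiplicative identity. Recall from \Cref{def:ff.qtm} that $\bs{\phi}^{(j)} \ff \bs{\phi}^{(j)} = \bs{\phi}^{(j)}$, and that since $(\cH, \ff)$ is a commutative algebra over $\FFF$ the product $\ff$ is $\FFF$-bilinear, so in particular $(\alpha \bs{x}) \ff \bs{y} = \alpha(\bs{x} \ff \bs{y})$ for all $\alpha \in \FFF$. I would also make explicit at the outset that the single-term form of $\cH$-linearity, $\bs{q}(\bs{a} \ff \bs{x}) = \bs{a} \ff \bs{q}(\bs{x})$, is a legitimate specialization of the two-term definition obtained by setting the second coefficient to $\bs{0}$ (using bilinearity of $\ff$), so it is available without assuming anything we are trying to prove.

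First I would record the fixed-point identity $\bs{\phi}^{(j)} \ff \bs{q}(\bs{\phi}^{(j)}) = \bs{q}(\bs{\phi}^{(j)})$. This follows immediately by writing $\bs{q}(\bs{\phi}^{(j)}) = \bs{q}(\bs{\phi}^{(j)} \ff \bs{\phi}^{(j)})$ via idempotency and then applying the single-term form of $\cH$-linearity with $\bs{a} = \bs{x} = \bs{\phi}^{(j)}$. Intuitively, $\bs{\phi}^{(j)}$ acts as an identity on its own coordinate, so $\bs{q}(\bs{\phi}^{(j)})$ is fixed by $\ff$-multiplication against $\bs{\phi}^{(j)}$.

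Next I would express $\alpha \bs{\phi}^{(j)}$ as a $\ff$-product: using idempotency and $\FFF$-bilinearity, $\alpha \bs{\phi}^{(j)} = \alpha(\bs{\phi}^{(j)} \ff \bs{\phi}^{(j)}) = (\alpha \bs{\phi}^{(j)}) \ff \bs{\phi}^{(j)}$. Applying $\bs{q}$ and invoking the single-term $\cH$-linearity gives $\bs{q}(\alpha \bs{\phi}^{(j)}) = (\alpha \bs{\phi}^{(j)}) \ff \bs{q}(\bs{\phi}^{(j)})$. By $\FFF$-bilinearity this equals $\alpha\bigl(\bs{\phi}^{(j)} \ff \bs{q}(\bs{\phi}^{(j)})\bigr)$, and the fixed-point identity collapses the parenthesized term to $\bs{q}(\bs{\phi}^{(j)})$, yielding $\bs{q}(\alpha \bs{\phi}^{(j)}) = \alpha \bs{q}(\bs{\phi}^{(j)})$, as claimed.

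The one point requiring genuine care—and the only place where the infinite-dimensional setting actually bites—is that I cannot pull the scalar $\alpha$ through $\bs{q}$ via a global unit, as the one-line finite-dimensional argument does. Everything instead routes through the idempotent $\bs{\phi}^{(j)}$, which behaves as an identity only on its own coordinate; the fixed-point identity is precisely the device that lets this local idempotency perform the role of the absent multiplicative identity. I would therefore foreground that identity as the crux of the argument, after which the computation is routine.
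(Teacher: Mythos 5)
Your proof is correct and takes essentially the same route as the paper's: both exploit idempotency to write $\alpha \bs{\phi}^{(j)} = (\alpha \bs{\phi}^{(j)}) \ff \bs{\phi}^{(j)}$ and then apply single-term $\cH$-linearity with the tube $\alpha \bs{\phi}^{(j)}$, so the scalar never needs to be pulled through $\bs{q}$ directly. The only cosmetic difference is the finish: the paper explicitly computes $\bs{q}(\bs{\phi}^{(j)}) = \wh{q}^{(j)}_j \bs{\phi}^{(j)}$ and evaluates the final $\ff$-product in the transform domain, whereas you close via $\FFF$-bilinearity of $\ff$ and the fixed-point identity $\bs{\phi}^{(j)} \ff \bs{q}(\bs{\phi}^{(j)}) = \bs{q}(\bs{\phi}^{(j)})$, which the paper also derives along the way.
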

\begin{proof}
    Let $\bs{\phi}^{(j)} \in F$ be a basis element and write $\bs{q}^{(j)} \coloneqq  \bs{q} (\bs{\phi}^{(j)}) \in \cH$. Denote $\wh{\bs{q}}^{(j)} \coloneqq F(\bs{q}^{(j)})$. Then by \Cref{def:ff.qtm}:
    \begin{align*}
        \bs{\phi}^{(j)} \ff \bs{q}^{(j)} 
        &= \sum_{k \in \ZZ} e^{(j)}_k \wh{q}^{(j)}_k \bs{\phi}^{(k)} 
        = \wh{q}^{(j)}_j \bs{\phi}^{(j)}
    \end{align*}
    Recalling that basis elements are idempotent, we have 
    $\bs{q}^{(j)} = \bs{q}(\bs{\phi}^{(j)}) = \bs{q}(\bs{\phi}^{(j)} \ff \bs{\phi}^{(j)}) =\bs{\phi}^{(j)} \ff \bs{q}^{(j)} = \wh{q}^{(j)}_j \bs{\phi}^{(j)}$ with $\wh{q}^{(j)}_j = \dotph{\bs{q}^{(j)}, \bs{\phi}^{(j)}}$. 
    By~\Cref{def:ff.qtm}, in addition to idempotentcy, we have $(\alpha \bs{\phi}^{(j)}) \ff \bs{\phi}^{(j)}= \alpha \bs{\phi}^{(j)} =  \alpha (\bs{\phi}^{(j)} \ff \bs{\phi}^{(j)})$, so
    \begin{align*}
        \bs{q} (\alpha \bs{\phi}^{(j)}) 
        &=  \bs{q} ((\alpha \bs{\phi}^{(j)}) \ff \bs{\phi}^{(j)} )  
        =  (\alpha \bs{\phi}^{(j)}) \ff \bs{q} (  \bs{\phi}^{(j)})   
        = (\alpha \bs{\phi}^{(j)}) \ff \bs{q}^{(j)} 
        = (\alpha \bs{\phi}^{(j)}) \ff (\wh{q}^{(j)}_j \bs{\phi}^{(j)}) ~.
    \end{align*}
    and by \Cref{def:ff.qtm} we obtain that $(\alpha \bs{\phi}^{(j)}) \ff (\wh{q}^{(j)}_j \bs{\phi}^{(j)}) = \alpha \wh{q}^{(j)}_j \bs{\phi}_j = \alpha \bs{q}^{(j)} = \alpha \bs{q}(\bs{\phi}^{(j)})$.
\end{proof}
\begin{lemma}\label{lem:linearity.wrt.expansion}
    Let $\bs{q}$ be an $\cH$-linear operator, then for any $\bs{x} = \sum_{j \in \ZZ} \wh{x}_j \bs{\phi}^{(j)} \in \cH$ we have $\bs{q}(\bs{x}) = \sum_{j \in \ZZ} \wh{x}_j \bs{q}(\bs{\phi}^{(j)})$.
    As a corollary, for any basis element $\bs{\phi}^{(j)} \in F$, it holds that $\bs{q} (\bs{\phi}^{(j)}) = \wh{q}_j \bs{\phi}^{(j)}$ for some $\wh{q}_j \in \FFF$. 
\end{lemma}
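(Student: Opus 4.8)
The plan is to prove the displayed identity \emph{without} ever passing $\bs{q}$ through the infinite sum $\sum_j \wh{x}_j \bs{\phi}^{(j)}$. The naive move ``$\bs{q}(\sum_j \wh{x}_j \bs{\phi}^{(j)}) = \sum_j \wh{x}_j \bs{q}(\bs{\phi}^{(j)})$ by linearity'' is illegitimate here, precisely because $\bs{q}$ is only assumed $\cH$-linear and not continuous; this is the one genuinely delicate point in the lemma. Instead I would work through the idempotent basis and reconstruct the claimed sum from the Fourier expansion of the vector $\bs{q}(\bs{x})$, which already lives in $\cH$ and hence expands convergently with no continuity input. The starting observation is that for any $\bs{z} \in \cH$ the product $\bs{\phi}^{(j)} \ff \bs{z}$ collapses to a single coordinate: by \Cref{def:ff.qtm}, $\bs{\phi}^{(j)} \ff \bs{z} = F^{-1}(\bs{e}^{(j)} \odot \wh{\bs{z}}) = \wh{z}_j \bs{\phi}^{(j)}$.

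Next I would record the single-term form of $\cH$-linearity. Setting all inputs to $\bs{0}$ in the defining identity gives $\bs{q}(\bs{0}) = \bs{0}$, and then taking $\bs{y} = \bs{0}$ specializes the definition to $\bs{q}(\bs{a} \ff \bs{x}) = \bs{a} \ff \bs{q}(\bs{x})$ for all $\bs{a}, \bs{x} \in \cH$. Applying this with $\bs{a} = \bs{\phi}^{(j)}$ and using the coordinate collapse from the first paragraph,
\[
\bs{q}\bigl( \wh{x}_j \bs{\phi}^{(j)} \bigr) = \bs{q}\bigl( \bs{\phi}^{(j)} \ff \bs{x} \bigr) = \bs{\phi}^{(j)} \ff \bs{q}(\bs{x}) = \dotph{\bs{q}(\bs{x}), \bs{\phi}^{(j)}} \, \bs{\phi}^{(j)} .
\]
On the left-hand side, \Cref{lem:q.alphaphij.alpha.qphij} lets me pull the scalar $\wh{x}_j$ out of $\bs{q}$, so I obtain the coordinate-wise identity $\wh{x}_j \, \bs{q}(\bs{\phi}^{(j)}) = \dotph{\bs{q}(\bs{x}), \bs{\phi}^{(j)}} \, \bs{\phi}^{(j)}$, valid for every $j \in \ZZ$.

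To finish, I would expand $\bs{q}(\bs{x}) \in \cH$ in the orthonormal basis $F$, namely $\bs{q}(\bs{x}) = \sum_{j \in \ZZ} \dotph{\bs{q}(\bs{x}), \bs{\phi}^{(j)}} \, \bs{\phi}^{(j)}$, which is automatically convergent since $\bs{q}(\bs{x})$ is a genuine element of $\cH$. Substituting the coordinate-wise identity term by term yields $\bs{q}(\bs{x}) = \sum_{j \in \ZZ} \wh{x}_j \, \bs{q}(\bs{\phi}^{(j)})$, exactly the claim. The corollary then follows from idempotency and single-term $\cH$-linearity: $\bs{q}(\bs{\phi}^{(j)}) = \bs{q}(\bs{\phi}^{(j)} \ff \bs{\phi}^{(j)}) = \bs{\phi}^{(j)} \ff \bs{q}(\bs{\phi}^{(j)}) = \wh{q}_j \, \bs{\phi}^{(j)}$, where $\wh{q}_j = \dotph{\bs{q}(\bs{\phi}^{(j)}), \bs{\phi}^{(j)}}$. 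The essential feature of this argument, and the reason it sidesteps the continuity obstacle, is that the infinite sum is never produced by applying $\bs{q}$; it is the pre-existing Fourier series of $\bs{q}(\bs{x})$, read off one idempotent coordinate at a time.
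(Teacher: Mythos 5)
Your proposal is correct and follows essentially the same route as the paper's proof: both identify the $j$-th Fourier coefficient of $\bs{q}(\bs{x})$ by combining the coordinate collapse $\bs{\phi}^{(j)} \ff \bs{z} = \wh{z}_j \bs{\phi}^{(j)}$ with the $\cH$-linearity swap $\bs{\phi}^{(j)} \ff \bs{q}(\bs{x}) = \bs{q}(\bs{\phi}^{(j)} \ff \bs{x})$ and \Cref{lem:q.alphaphij.alpha.qphij}, then read off the result from the convergent basis expansion of $\bs{q}(\bs{x}) \in \cH$ rather than pushing $\bs{q}$ through an infinite sum. The only differences are cosmetic (you write the coefficients as inner products where the paper names them $\wh{y}_j$), and your corollary argument via idempotency is if anything a cleaner justification than the one-line expansion the paper gives.
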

\begin{proof}
    By~\Cref{def:ff.qtm}, $\bs{\phi}^{(k)} \ff \bs{x} = \sum_{j \in \ZZ}  e^{(k)}_j \wh{x}_j \bs{\phi}^{(j)} = \wh{x}_k \bs{\phi}^{(k)}$.
    Next, write $\bs{y} = \bs{q} (\bs{x}) = \sum_{j\in \ZZ} \wh{y}_j \bs{\phi}^{(j)}$, then 
    \begin{align*}
        \wh{y}_k \bs{\phi}^{(k)} 
        &= \bs{\phi}^{(k)} \ff \bs{y} 
        = \bs{\phi}^{(k)} \ff \bs{q} (\bs{x}) \\
        &=  \bs{q} (\bs{\phi}^{(k)} \ff \bs{x}) 
        = \bs{q}( \wh{x}_k \bs{\phi}^{(k)})
    \end{align*}
    Apply~\Cref{lem:q.alphaphij.alpha.qphij} on the last expression and get that $\wh{y}_k \bs{\phi}^{(k)} = \wh{x}_k \bs{q}( \bs{\phi}^{(k)})$, which, when plugged in to the series expressing $\bs{y}$, results in:
    \begin{align*}
        \bs{y} = {\sum}_{j\in \ZZ} \wh{y}_j \bs{\phi}^{(j)}
        &= {\sum}_{j\in \ZZ} \wh{x}_j \bs{q}( \bs{\phi}^{(j)})
    \end{align*}
    Now recall that $\bs{y} = \bs{q} (\bs{x})$ to conclude the proof.
    
    For the corollary, let $\bs{\phi}^{(j)} \in F$ be a basis element in $\cH$. We write, $q(\bs{\phi}^{(j)})=\sum_{k\in \ZZ} \hat{\bs{\phi}}_k^{(j)} q(\bs{\phi}^{(k)})$. Now, $\hat{\bs{\phi}}_k^{(j)}$ is the inner product between $\bs{\phi}^{(j)}$ and $\bs{\phi}^{(k)}$, which is 1 for $k$ equal to $j$, and 0 otherwise, proving the corollary.
\end{proof}

    

\begin{proof}[Proof of~\Cref{lem:cHs.are.F.linear}]
    Since $\bs{q}$ is $\cH$-linear, and  $\alpha \bs{\x} $ and $\beta \bs{y} $ are both in $\cH$, we have $\bs{q} (\alpha \bs{x} + \beta \bs{y}) = \bs{q} (\alpha \bs{x}) +  \bs{q} (\beta \bs{y})$.
    Therefore, it is enough to show that $\bs{q}  (\alpha \bs{x}) = \alpha \bs{q} (\bs{x})$ for all $\bs{x} \in \cH$ and $\alpha \in \FFF$.

    For any $\bs{x} \in \cH, \alpha \in \FFF$, write $\alpha \bs{x} = \sum_{j \in \ZZ} (\alpha \wh{x}_j) \bs{\phi}^{(j)}$.
    Apply~\Cref{lem:linearity.wrt.expansion} to $\bs{q}(\alpha \bs{x})$ and get that 
    \begin{align*}
        \bs{q}(\alpha \bs{x})
        &= \bs{q} \left( {\sum}_{j \in \ZZ} (\alpha \wh{x}_j) \bs{\phi}^{(j)} \right) 
        = {\sum}_{j \in \ZZ} (\alpha \wh{x}_j)\bs{q}(\bs{\phi}^{(j)}) 
        = \alpha  {\sum}_{j \in \ZZ} \wh{x}_j \bs{q}(\bs{\phi}^{(j)})
    \end{align*}
    Apply~\Cref{lem:linearity.wrt.expansion} once again and have that $\alpha  \sum_{j \in \ZZ} \wh{x}_j \bs{q}(\bs{\phi}^{(j)}) {=} \alpha \bs{q}(\bs{x})$ thus we conclude that 
    $\bs{q}(\alpha \bs{x}) {=} \alpha \bs{q}(\bs{x})$.
\end{proof}

\if0
\subparagraph{Concrete choices for $F$ and applications.}
The particular choice of a basis $F$ depends on the structure of $\cH$ and the problem at hand.
For example, in case $\cH = \{ \bs{f} \colon [-1,1] \to \FFF ; \bs{f} \in L^2 \}$ the choice $\bs{\phi}_j(t) = \exp(i \pi j t)$  results in the Fourier basis, hence the $\ff$-product is the convolution of functions. 
In this example, the operation $\bs{y} \mapsto \bs{x} \ff \bs{y}$ is a linear operator that acts on functions by altering their frequencies.
Such operations are called multiplier operators, or filters (depending on the context), and are widely used in signal processing and control theory.
\fi


\subsection{Quasitubes}
\label{subsec:quasitubes}

To allow a tubal SVD over infinite dimensional $\cH$, we introduce the notion of \textbf{quaistubes} as an extension of $(\cH, \ff)$ to a commutative unital C*-algebra. The main idea is as follows.  Recall that for any $\bs{x} \in \cH$ we can write $\bs{x} = \sum_{j\in\ZZ} \bs{\phi}^{(j)}\ff \bs{x}$. Now, if $T$ is a $\cH$-linear operator, then for any $\bs{x} \in \cH$ we have 
$$T (\bs{x}) = T \left(\sum_{j\in\ZZ} \bs{\phi}^{(j)} \ff \bs{x} \right) = \left(\sum_{j\in\ZZ} T (\bs{\phi}^{(j)})\right) \ff \bs{x},$$ 
where the last equality holds only if $\sum_{j\in\ZZ} T(\bs{\phi}^{(j)})$ converges  in $\cH$. 
Even when it does not converge, we can express $T(\bs{x})$ as the $\ff$-product of $\bs{x}$ with a certain ``object'', which we call {\em quasitubes}. 
A formal definition of quasitubes is given in~\Cref{def:quasitube}.
First, we need to define {\em dual modules}.
\begin{definition}[Dual Module]\label{def:dual.module}
    Let $\alg{E}$ be a module over a ring $\alg{A}$,
    The dual module of $\alg{E}$ is the set $\alg{E}_* = \Hom_{\alg{A}}(\alg{E}, \alg{A})$ of module homomorphisms from $\alg{E}$ to $\alg{A}$, where $\alg{A}$ is considered as a module over itself.
    Clearly $\alg{E}_*$ is an $\alg{A}$-module as well.
\end{definition}

Consider $\cHs$, the set of module homomorphisms from $\cH$ to $\cH$, where we consider $\cH$ as a $\cH$-module with respect to $\ff$. Any $\bs{q}\in \cHs$, being a module homomorphism, respects the module structure, so it must be a $\cH$-linear operator. 
The following lemma shows that any $\cH$-linear operator is also bounded.
\begin{lemma}[Quasitubes are Bounded Linear Operators]\label{lem:cHs.subset.BcH}
    Let $\bs{q} \in \cHs$. Then $\bs{q}$ is bounded linear operator. 
    More generally, $\cHs \subseteq B(\cH)$ where $B(\cH)$ denotes the space of bounded $\cH \to \cH$ linear operators. 
\end{lemma}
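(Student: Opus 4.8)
The plan is to exploit the diagonal structure that the preceding lemmas have already handed us. By the corollary to \Cref{lem:linearity.wrt.expansion}, every $\cH$-linear $\bs{q}$ acts diagonally on the orthonormal basis, $\bs{q}(\bs{\phi}^{(j)}) = \wh{q}_j \bs{\phi}^{(j)}$ for scalars $\wh{q}_j = \dotph{\bs{q}(\bs{\phi}^{(j)}), \bs{\phi}^{(j)}} \in \FFF$, and by \Cref{lem:cHs.are.F.linear} it is $\FFF$-linear. Feeding this back into \Cref{lem:linearity.wrt.expansion} yields the closed form $\bs{q}(\bs{x}) = \sum_{j\in\ZZ} \wh{q}_j \wh{x}_j \bs{\phi}^{(j)}$ for every $\bs{x} = \sum_{j\in\ZZ}\wh{x}_j\bs{\phi}^{(j)} \in \cH$. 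In other words, $\bs{q}$ is just the diagonal multiplier with symbol $(\wh{q}_j)_{j\in\ZZ}$, so the entire statement reduces to showing that this symbol lies in $\ell_\infty$, since a diagonal multiplier is bounded precisely when its symbol is, with operator norm equal to $\sup_j |\wh{q}_j|$.

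The crux — and the only genuine obstacle — is establishing $\sup_{j} |\wh{q}_j| < \infty$; once this is in hand, boundedness is immediate. I would argue by contradiction, using the one structural fact not yet exploited: $\bs{q}$ maps $\cH$ \emph{into} $\cH$, so for every square-summable $(\wh{x}_j)$ the image sequence $(\wh{q}_j \wh{x}_j)$ must again be square-summable. Suppose the symbol were unbounded. Then one extracts indices $j_1 < j_2 < \cdots$ with $|\wh{q}_{j_k}| \ge 2^k$ and sets $\bs{x} = \sum_{k} \tfrac1k \bs{\phi}^{(j_k)}$. Here $\bs{x} \in \cH$ because $\sum_k 1/k^2 < \infty$, yet the closed form gives $\bs{q}(\bs{x}) = \sum_k \tfrac{\wh{q}_{j_k}}{k}\bs{\phi}^{(j_k)}$ with $\sum_k |\wh{q}_{j_k}|^2/k^2 \ge \sum_k 4^k/k^2 = \infty$, so $\bs{q}(\bs{x}) \notin \cH$, contradicting $\bs{q}\colon\cH\to\cH$. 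Hence $(\wh{q}_j) \in \ell_\infty$.

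Finally, writing $M \coloneqq \sup_j|\wh{q}_j| < \infty$, Parseval gives $\cHNormS{\bs{q}(\bs{x})} = \sum_j |\wh{q}_j|^2|\wh{x}_j|^2 \le M^2 \sum_j |\wh{x}_j|^2 = M^2 \cHNormS{\bs{x}}$, so $\bs{q}$ is a bounded $\FFF$-linear operator with $\opNorm{\bs{q}} \le M$; testing on a single basis element $\bs{\phi}^{(j)}$ forces $\opNorm{\bs{q}} \ge |\wh{q}_j|$, hence $\opNorm{\bs{q}} = M$. As $\bs{q} \in \cHs$ was arbitrary, this proves $\cHs \subseteq B(\cH)$. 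A cleaner-sounding but less self-contained alternative would be to apply the Closed Graph Theorem to the diagonal operator (everywhere-defined diagonal operators are automatically bounded); I prefer the explicit $\ell_2$ construction above, both because it avoids invoking heavy machinery and because it simultaneously delivers the sharp norm identity $\opNorm{\bs{q}} = \infNorm{(\wh{q}_j)}$.
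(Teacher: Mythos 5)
Your proposal is correct and follows essentially the same route as the paper's proof: reduce $\bs{q}$ to a diagonal multiplier via \Cref{lem:cHs.are.F.linear} and \Cref{lem:linearity.wrt.expansion}, then rule out an unbounded symbol by exhibiting a square-summable coefficient sequence whose image fails to be square-summable, and finally read off boundedness from $\sup_j|\wh{q}_j|$. Your handling of the contradiction is in fact slightly cleaner than the paper's (you place the test vector's mass explicitly on the subsequence indices $j_k$, where the paper's ``without loss of generality'' step is left informal), and the sharp identity $\opNorm{\bs{q}} = \infNorm{(\wh{q}_j)}$ you obtain is a nice bonus that the paper only establishes later, but these are refinements of the same argument rather than a different approach.
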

\begin{proof}
    Let $\bs{q} \in \cHs$ then $\bs{q} $ is ($\FFF$) linear by \Cref{lem:cHs.are.F.linear}. 

    Recall that $\bs{q} \in \cHs$ is an homomorphism, therefore $\bs{q}(\bs{x}) \in \cH$ for all $\bs{x} \in \cH$.
    By \Cref{lem:linearity.wrt.expansion}, we have $\bs{q}(\bs{\phi}^{(j)} ) = \wh{q}_{j} \bs{\phi}^{(j)}$ for all $j \in \ZZ$. 
    Suppose $\sup_{j \in \ZZ} |\wh{q}_{j}|$ does not exist, then we can assume without loss of generality that $|\wh{q}_{j}| > |j|^2 $ (otherwise take a subsequence $j_n$ such that $|\wh{q}_{j_n}| > |n|^2 $).
    Since the sequence $\{n^{-1} \}_{n=1}^\infty$ is square summable, we have that  $\bs{x} = \sum_{n=1}^\infty n^{-1} \bs{\phi}^{(n)}$ is an element of $\cH$. 
    By applying the corollary from
    \Cref{lem:linearity.wrt.expansion} to $\bs{q} (\bs{x})$ we get that 
    $$
    \cHNormS{\bs{q} (\bs{x})} = \sum_{n=1}^\infty |\wh{q}_n|^2 |\wh{x}_n|^2 \geq \sum_{n=1}^\infty n^2
    $$
    meaning that $\bs{q} (\bs{x}) \notin \cH$ in contradiction to $\bs{q}$ being a homomorphism. 
    Hence, $\sup_{j \in \ZZ} |\wh{q}_{j}| $ exists. 
    Boundedness immediately follows since $\cHNorm{\bs{q} (\bs{x})} \leq \cHNorm{\bs{x}} \sup_{j \in \ZZ} |\wh{q}_{j}| $.

\end{proof}

The set $\cHs$ may be naturally considered as a subspace of $B(\cH)$. 
Since $\cHs$ is closed to composition of operators, we get that $\cHs$ is a commutative, unital subalgebra of $B(\cH)$. Thus, we define quasitubes as bounded $\cH$-linear operators.
\begin{definition}[Quasitubes]\label{def:quasitube}
    Let $\cH$ be a separable Hilbert space, and $\ff$ as defined in \Cref{def:F-transform} and \Cref{def:ff.qtm}.
    A $\cH$-quasitube, or simply a quasitube, is an element $\bs{q} \in \cHs$, that is a bounded $\cH$-linear operator with respect to $\ff$.
    We endow quasitubes with the norm $\cHsNorm{\bs{q}}\coloneqq \sup_{\cHNorm{\bs{x}}=1}\cHNorm{\bs{q}(\bs{x})}$.

    As any quasitube $\bs{q} \in \cHs$ is a homomorphism, it is required that the domain of $\bs{q}$ is the whole space $\cH$, i.e., $\bs{q} (\bs{x}) \in \cH$ for all $\bs{q} \in \cHs$ and $\bs{x} \in \cH$.
\end{definition}

Every quasitube $\bs{q} \in \cHs$ is a bounded $\cH$-linear operator, and in particular a linear operator, so it has a unique adjoint operator with respect inner product in $\cH$, i.e., an operator $\bs{p}$ such that for every $\bs{x},\bs{y}\in\cH$ we have $\dotph{\bs{q}(\bs{x}),\bs{y}} = \dotph{\bs{x},\bs{p}(\bs{y})}$. This operator is also in $\cHs$ (this is proven in the next section). We define the conjugate of $\bs{q}$, denoted by $\bs{q}^*$, as this operator. 

\begin{remark}
In our construction, we required $F$ to be an orthonormal basis, while finite tubal algebra is defined of a matrix $\matM$ which is not necessarily orthonormal. We required $F$ to be orthnormal so that there will be meaningful relation between the adjoint of  $T_{\bs{x}}$ and the conjugate of a tube. Since $\cH$ is a Hilbert space, the adjoint to an operator $T \colon \cH \to \cH$ is defined as the operator $T^* \colon \cH \to \cH$ such that $\dotph{T \bs{y}, \bs{z}} = \dotph{\bs{y}, T^* \bs{z}}$ for all $\bs{y}, \bs{z} \in \cH$.
In the finite-dimensional case, we have that $T_{\bs{x}} = \matMi \operatorname{diag} (\wh{\bs{x}}) \matM$ and thus $T_{\bs{x}}^* = T_{\bs{x}}^\CT =  \matM^\CT \operatorname{diag} (\wh{\bs{x}})^H {\matMi}^\CT $.
Observe that $T_{\bs{x}}^\CT$ is t-linear if an only if it commutes with $T_{\bs{x}}$ which is the case if and only if $\matM \matM^\CT = \matM^\CT \matM $ is a non-zero multiple of the identity matrix.
Thus, the orthogonality assumption on the basis $F$ is crucial for having $T_{\bs{x}^\CT} = T_{\bs{x}}^\CT$, and have that $\dotph{\bs{x} \ff \bs{y}, \bs{z}} = \dotph{\bs{y}, \bs{x}^\CT \ff \bs{z}}$ for all $\bs{x}, \bs{y}, \bs{z} \in \cH$.
\end{remark}

\begin{remark}
Seemingly, the set of quasitubes depends on the particular choice of $F$. However, we later see that the set of quasitubes is the same regardless of the choice of $F$, hence our decision to refrain from making the dependence on $F$ explicit in the notation $\cHs$.
\end{remark}

\section{Quasitubal Algebra}\label{sec:quasitubal.algebra}
Before delving into the topic of this section, we need to recall some basic terms from abstract algebra. In general, a \textbf{*-ring} is a ring $\alg{R}$ with additional `star-operation' $\bs{a} \mapsto \bs{a}^*$ that is an involutive antiautomorphism ($(\bs{x} + \bs{y})^* = \bs{x}^* + \bs{y}^*$,$(\bs{xy})^* = \bs{y}^* \bs{x}^* $,$\bs{e}^* = \bs{e}$, ${\bs{x}^*}^* = \bs{x}$). 
A \textbf{*-algebra} $\alg{A}$ over $\FFF$,  is a *-ring with star operation $\bs{x} \mapsto \bs{x}^*$ such that $(\alpha \bs{x})^* = \bar{\alpha} \bs{x}^*$ for all $\bs{x} \in \alg{A} $ and $\alpha \in \FFF$, where the complex conjugation $\alpha \mapsto \bar{\alpha}$ defines a *-ring structure over $\FFF$. 
A subset $A \subseteq \alg{A}$ of a *-algebra $\alg{A}$  is called \textbf{*-closed} (or *-invariant) if $\bs{a}^* \in A$ for all $\bs{a} \in A$. 
An element $\bs{a}$ in a *-algebra $\alg{A}$ is called \textbf{Hermitian} if $\bs{a}^* = \bs{a}$.

Let $\alg{A}, \alg{B}$ be *-algebras over $\FFF$ with products and *-operations respectively denoted  by $\cdot,\bs{a} \mapsto \bs{a}^*$ for $\alg{A}$ and $\odot,\bs{b} \mapsto \bs{b}'$ for $\alg{B}$.
A mapping $f\colon \alg{A} \to \alg{B}$ is called a \textbf{*-morphism} (star-morphism) if $f(\bs{a}^*) = f(\bs{a})'$.
A \textbf{*-homomorphism} $f\colon \alg{A} \to \alg{B}$ is a *-morphism such that $f(\bs{a} \odot \bs{b}) = f(\bs{a})\odot f(\bs{b})$ and following the properties of *-operations, we have $f((\bs{a})\odot f(\bs{b})^*) = f(\bs{b})'\odot f(\bs{a})'$.
A *-homomorphism $f\colon \alg{A} \to \alg{B}$ is  \textbf{*-isomorphism} if and only if $f$ is bijective, i.e., there exists a *-homomorphism $g\colon \alg{B} \to \alg{A}$ such that $f(g(\bs{b})) = \bs{b}$ and $g(f(\bs{a})) = \bs{a}$ for all $\bs{a} \in \alg{A}$ and $\bs{b} \in \alg{B}$.
A *-isomorphism $f\colon \alg{A} \to \alg{B}$ between normed algebras is said to be \textbf{isometric} if $\GNorm{\bs{a}}_{\alg{A}} = \GNorm{f(\bs{a})}_{\alg{B}}$ for all $\bs{a} \in \alg{A}$ where $\GNorm{\cdot}_{\alg{A}},\GNorm{\cdot}_{\alg{B}}$ are the norms defined on $\alg{A}$ and $\alg{B}$ respectively.

$\alg{A}$ is a \textbf{Banach algebra} over $\FFF$, if $\alg{A}$ is an associative algebra,  equipped with a norm $\GNorm{\cdot}$ such that $(\alg{A}, \GNorm{\cdot})$ is a complete normed vector space and $\GNorm{\bs{a} \bs{b}} \leq \GNorm{\bs{a}}\GNorm{\bs{b}}$. 
An algebra $\alg{A}$ is a \textbf{C*-algebra} if $\alg{A}$ is both a star and Banach algebra, with a norm such that the \textbf{C*-identity} holds: $\GNorm{\bs{x}^* \bs{x}} = \GNorm{\bs{x}^*} \GNorm{\bs{x}} = \GNormS{\bs{x}}$.

Let $\cH$ be a Hilbert space over $\FFF$,
and consider the algebra  $B(\cH)$ of bounded linear operators defined on $\cH$, together with addition and multiplication operations defined by the usual addition and operator composition respectively.
Further equipping $B(\cH)$ with the operator norm and a *-operation defined by taking the adjoint of an operator, makes it a C*-algebra.
The Gelfand-Naimark theorem (GNT) states that any C*-algebra $\alg{A}$ is *-isomorphic to a norm-closed, *-closed  subalgebra of $B(\cal{G})$ for a suitable choice of Hilbert space $\cal{G}$.
In a sense, $B(\cH)$ is the canonical example of a C*-algebra.

Let $\bs{a}$ be an element in a C*-algebra  $\alg{A}$.  In light of GNT, we call $\bs{a}^*$ the \textbf{adjoint} element to $\bs{a}$ (due to the *-correspondence with some C*-algebra of operators).
Correspondingly, Hermitian elements in C*-algebras are called \textbf{self-adjoint}.

The representation of $\cH$ elements in the transform domain $\ell_2$ (\Cref{def:F-transform}) motivates us to seek a correspondence between $\cHs$ and some space of sequences, in which $\ell_2$  can be embedded.
The following construction and results add up to the conclusion, stated in~\Cref{lem:cHs.star.alg}, that $(\cHs, \circ)$ (where $\circ$ denotes composition) is a commutative, unital C*-algebra over $\FFF$, that is isometrically *-isomorphic to $(\ell_\infty, \odot)$.

Consider the mapping 
\begin{equation}\label{eq:pi.chs.linf}
    \uppi({\bs{q}}) \coloneqq \sum_{j \in \ZZ} \dotph{\bs{q}(\bs{\phi}^{(j)}), \bs{\phi}^{(j)}} \bs{e}^{(j)} = \sum_{j \in \ZZ} \wh{q}^{(j)}_j \bs{e}^{(j)}\quad \text{for all } \bs{q} \in \cHs
\end{equation}
where $\bs{e}^{(j)}$ is the $j$th element in the standard basis of $\ell_\infty$, and $\bs{q}^{(j)} \coloneqq \bs{q}(\bs{\phi}^{(j)}_j)$.
We have already shown in the proof of \Cref{lem:cHs.subset.BcH} that the sequence $\{ \wh{q}^{(j)}_j\}_{j\in\ZZ}$ is bounded, so we have $\uppi({\bs{q}}) \in \ell_\infty$.

\begin{lemma}\label{lem:pi.is.homo}
    The mapping $\uppi \colon \cHs \to \ell_\infty$ is a homomorphism  between the $\FFF$-algebras $(\cHs, \circ)$ and $(\ell_\infty, \odot)$.
\end{lemma}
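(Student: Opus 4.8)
The plan is to verify the two defining properties of an $\FFF$-algebra homomorphism separately: additivity together with $\FFF$-linearity, and multiplicativity (that $\uppi$ sends composition $\circ$ to the coordinatewise product $\odot$). The entire argument rests on a single structural fact already available to us, namely the corollary of \Cref{lem:linearity.wrt.expansion}: every quasitube $\bs{q} \in \cHs$ acts diagonally on the orthonormal basis, $\bs{q}(\bs{\phi}^{(j)}) = \wh{q}_j \bs{\phi}^{(j)}$ for scalars $\wh{q}_j \in \FFF$, and by orthonormality $\wh{q}_j = \dotph{\bs{q}(\bs{\phi}^{(j)}), \bs{\phi}^{(j)}}$ is exactly the $j$-th coordinate of $\uppi(\bs{q})$. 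Thus $\uppi$ is nothing but the map reading off the diagonal of a quasitube, and the lemma reduces to the observation that the diagonal of a composite of diagonal operators is the coordinatewise product of the diagonals.

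For multiplicativity I would fix $\bs{q}, \bs{p} \in \cHs$ and compute the action of $\bs{q}\circ\bs{p}$ on each basis vector. Using the diagonal form of $\bs{p}$ first gives $\bs{p}(\bs{\phi}^{(j)}) = \wh{p}_j \bs{\phi}^{(j)}$; then applying $\bs{q}$ and pulling the scalar $\wh{p}_j$ out through the $\FFF$-linearity established in \Cref{lem:cHs.are.F.linear} (equivalently \Cref{lem:q.alphaphij.alpha.qphij}) yields $(\bs{q}\circ\bs{p})(\bs{\phi}^{(j)}) = \wh{p}_j\,\bs{q}(\bs{\phi}^{(j)}) = \wh{q}_j \wh{p}_j\, \bs{\phi}^{(j)}$. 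Taking the inner product against $\bs{\phi}^{(j)}$ and using $\dotph{\bs{\phi}^{(j)}, \bs{\phi}^{(j)}} = 1$ shows that the $j$-th coordinate of $\uppi(\bs{q}\circ\bs{p})$ equals $\wh{q}_j\wh{p}_j$, which is precisely the $j$-th coordinate of $\uppi(\bs{q})\odot\uppi(\bs{p})$. Since this holds for every $j \in \ZZ$, we conclude $\uppi(\bs{q}\circ\bs{p}) = \uppi(\bs{q})\odot\uppi(\bs{p})$.

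Linearity is then essentially immediate: since $(\alpha\bs{q} + \beta\bs{p})(\bs{\phi}^{(j)}) = \alpha\,\bs{q}(\bs{\phi}^{(j)}) + \beta\,\bs{p}(\bs{\phi}^{(j)})$ and the inner product $\dotph{\cdot, \bs{\phi}^{(j)}}$ is linear in its first argument, each coordinate of $\uppi$ depends $\FFF$-linearly on the quasitube, giving $\uppi(\alpha\bs{q}+\beta\bs{p}) = \alpha\,\uppi(\bs{q}) + \beta\,\uppi(\bs{p})$. I would also remark in passing that $\uppi$ carries the identity operator to the constant sequence $\bs{1} \in \ell_\infty$, the unit of $(\ell_\infty,\odot)$, so $\uppi$ is in fact unital. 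I do not anticipate a genuine obstacle: the only subtlety worth flagging is that the scalar $\wh{p}_j$ must be pulled through $\bs{q}$ using $\FFF$-linearity rather than $\cH$-linearity (the two coincide precisely because \Cref{lem:cHs.are.F.linear} has already reconciled them), and that well-definedness of $\uppi$ into $\ell_\infty$ — the boundedness of $\{\wh{q}_j\}_{j\in\ZZ}$ — was already secured in the proof of \Cref{lem:cHs.subset.BcH}.
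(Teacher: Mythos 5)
Your proposal is correct and follows essentially the same route as the paper: both arguments reduce multiplicativity to the diagonal action $\bs{q}(\bs{\phi}^{(j)}) = \wh{q}_j \bs{\phi}^{(j)}$ and then pull the scalar $\wh{p}_j$ through $\bs{q}$ via the $\FFF$-linearity of \Cref{lem:cHs.are.F.linear}, with linearity of $\uppi$ noted as immediate. The only difference is cosmetic: you invoke the corollary of \Cref{lem:linearity.wrt.expansion} directly for the diagonal action, whereas the paper re-derives that fact inside the proof (its Eq.~\labelcref{eq:p.phi_j.final}) from idempotency of the basis elements and $\cH$-linearity.
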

\begin{proof}
    It is easy to see that $\uppi$ is linear: $\uppi(\alpha \bs{q} + \beta \bs{p}) = \alpha \uppi(\bs{q}) + \beta \uppi(\bs{p})$ for all $\bs{q}, \bs{p} \in \cHs$ and $\alpha, \beta \in \FFF$. 
    
    For $\bs{q}, \bs{p} \in \cHs$ we have 
    \begin{equation}\label{eq:pi.composition.q.p}
        \uppi(\bs{q} \circ \bs{p})
        = \sum_{j \in \ZZ} \dotph{\bs{q}(\bs{p} (\bs{\phi}^{(j)})), \bs{\phi}^{(j)}} \bs{e}^{(j)}
    \end{equation}
    Write 
    \begin{equation}\label{eq:p.phi_j.sum}
        \bs{p} (\bs{\phi}^{(j)}) = \sum_{k \in \ZZ} \dotph{\bs{p} (\bs{\phi}^{(j)}), \bs{\phi}^{(k)}} \bs{\phi}^{(k)}
    \end{equation}
    then since $\bs{\phi}^{(j)}$ is idempotent and $\bs{p}$ is $\cH$-linear, we have $\bs{p} (\bs{\phi}^{(j)}) = \bs{\phi}^{(j)} \ff \bs{p} (\bs{\phi}^{(j)})$, thus $\dotph{\bs{p} (\bs{\phi}^{(j)}), \bs{\phi}^{(k)}} = \dotph{\bs{\phi}^{(j)} \ff \bs{p} (\bs{\phi}^{(j)}), \bs{\phi}^{(k)}} $ and by \Cref{def:ff.qtm} 
    \begin{align*} 
    \dotph{\bs{\phi}^{(j)} \ff \bs{p} (\bs{\phi}^{(j)}), \bs{\phi}^{(k)}} &= \delta_{j,k} \dotph{\bs{\phi}^{(j)} \ff \bs{p} (\bs{\phi}^{(j)}), \bs{\phi}^{(k)}}\\ 
    \end{align*}
    Since $\bs{\phi}^{(j)} \ff \bs{z} = \wh{z}_j \bs{\phi}^{(j)}$ 
    for all $\bs{z} \in \cH$, we have that 
    $\dotph{\bs{\phi}^{(j)} \ff \bs{z}, \bs{\phi}^{(k)}} = \wh{z}_j \dotph{\bs{\phi}^{(j)}, \bs{\phi}^{(k)}} = \delta_{jk} \wh{z}_j$. 
    Therefore 
    \begin{align*} 
    \dotph{\bs{\phi}^{(j)} \ff \bs{p} (\bs{\phi}^{(j)}), \bs{\phi}^{(k)}} 
    &= \delta_{j,k} \dotph{\bs{\phi}^{(j)} \ff \bs{p} (\bs{\phi}^{(j)}), \bs{\phi}^{(k)}}  \\
    &= \delta_{j,k} \dotph{\bs{p} (\bs{\phi}^{(j)}), \bs{\phi}^{(j)}}
    \end{align*}
    and~\Cref{eq:p.phi_j.sum} becomes 
    \begin{equation}\label{eq:p.phi_j.final}
        \bs{p} (\bs{\phi}^{(j)}) = \wh{p}_j \bs{\phi}^{(j)} 
    \end{equation}
    where $\wh{\bs{p}} = \uppi(\bs{p})$.
    By noting that \Cref{eq:p.phi_j.final} holds for all quasitubes in $\cHs$, and plugging it into~\Cref{eq:pi.composition.q.p} we get
    \begin{align*}
        \uppi(\bs{q} \circ \bs{p})
        &= {\sum}_{j \in \ZZ} \dotph{\bs{q}(\wh{p}^{(j)} \bs{\phi}^{(j)}), \bs{\phi}^{(j)}} \bs{e}^{(j)} 
        = {\sum}_{j \in \ZZ} \dotph{\wh{p}_j  \bs{q}(\bs{\phi}^{(j)}), \bs{\phi}^{(j)}} \bs{e}^{(j)} \\
        &= {\sum}_{j \in \ZZ} \dotph{\wh{p}_j  \wh{q}_j \bs{\phi}^{(j)} , \bs{\phi}^{(j)}} \bs{e}^{(j)} 
        = {\sum}_{j \in \ZZ} \wh{q}_j \wh{p}_j \bs{e}^{(j)} = \wh{\bs{q}} \odot \wh{\bs{p}} = \uppi(\bs{q}) \odot \uppi(\bs{p})
    \end{align*}
    where the first and third transitions are made by applying~\Cref{eq:p.phi_j.final} to $\bs{p}$ and $\bs{q}$ respectively.
\end{proof}
The following establishes the homomorphism $\uppi $ defined in~\Cref{eq:pi.chs.linf} as an isometric isomorphism between $\cHs$ and $\ell_\infty$.
\begin{lemma}\label{lemma:hstar.ellinf.isoiso.cstar.uppi}
    $\cHs$ and $\ell_\infty$ are isometrically isomorphic via $\uppi$.
\end{lemma}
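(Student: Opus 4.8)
The plan is to upgrade the homomorphism $\uppi$ of \Cref{lem:pi.is.homo} to an isometric isomorphism by establishing three things: injectivity, surjectivity, and the norm identity $\cHsNorm{\bs{q}} = \infNorm{\uppi(\bs{q})}$. The structural fact underlying all three is that every quasitube acts as a \emph{multiplier} in the transform domain: by the corollary to \Cref{lem:linearity.wrt.expansion}, $\bs{q}(\bs{\phi}^{(j)}) = \wh{q}_j \bs{\phi}^{(j)}$, so $\uppi(\bs{q}) = (\wh{q}_j)_{j \in \ZZ}$, and applying \Cref{lem:linearity.wrt.expansion} to a general $\bs{x} = \sum_j \wh{x}_j \bs{\phi}^{(j)}$ gives $\bs{q}(\bs{x}) = \sum_j \wh{q}_j \wh{x}_j \bs{\phi}^{(j)}$. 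Thus a quasitube is completely determined by the bounded sequence $\uppi(\bs{q})$, and conversely such a sequence is exactly the data needed to build one.

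Injectivity is then immediate: if $\uppi(\bs{q}) = 0$ then every $\wh{q}_j = 0$, whence $\bs{q}(\bs{x}) = \sum_j \wh{q}_j \wh{x}_j \bs{\phi}^{(j)} = 0$ for all $\bs{x}$, so $\bs{q} = 0$. For surjectivity, given an arbitrary $\wh{\bs{q}} = (\wh{q}_j)_{j \in \ZZ} \in \ell_\infty$, I define the candidate operator $\bs{q}(\bs{x}) \coloneqq \sum_{j \in \ZZ} \wh{q}_j \wh{x}_j \bs{\phi}^{(j)}$ and verify it is a genuine quasitube. It lands in $\cH$ and is bounded because $\sum_j |\wh{q}_j \wh{x}_j|^2 \le \infNormS{\wh{\bs{q}}} \sum_j |\wh{x}_j|^2 = \infNormS{\wh{\bs{q}}}\,\cHNormS{\bs{x}}$, so the defining series converges in $\cH$ and $\cHNorm{\bs{q}(\bs{x})} \le \infNorm{\wh{\bs{q}}}\,\cHNorm{\bs{x}}$. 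It is $\cH$-linear because $\ff$ acts coordinatewise in the transform domain and commutes with coordinatewise multiplication by $\wh{q}_j$; concretely both $\bs{q}(\bs{a}\ff\bs{x})$ and $\bs{a}\ff\bs{q}(\bs{x})$ have $j$-th transform coordinate $\wh{q}_j \wh{a}_j \wh{x}_j$. Hence $\bs{q} \in \cHs$ and $\uppi(\bs{q}) = \wh{\bs{q}}$.

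For the isometry I prove two matching inequalities. The bound $\cHNorm{\bs{q}(\bs{x})} \le \sup_j |\wh{q}_j|\,\cHNorm{\bs{x}}$ from the surjectivity computation gives $\cHsNorm{\bs{q}} \le \infNorm{\uppi(\bs{q})}$. Conversely, testing on the unit-norm basis elements yields $\cHNorm{\bs{q}(\bs{\phi}^{(j)})} = |\wh{q}_j|$, so $\cHsNorm{\bs{q}} \ge \sup_j |\wh{q}_j| = \infNorm{\uppi(\bs{q})}$. The two inequalities give the claimed identity, and isometry in turn re-confirms injectivity.

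The routine parts are the algebra of the multiplier representation; the one place demanding care is surjectivity, where I must check that the operator built from an $\ell_\infty$ sequence genuinely satisfies the defining requirements of a quasitube from \Cref{def:quasitube} — in particular that its domain is all of $\cH$ (guaranteed by the $\ell_\infty$ bound giving square-summability of the image coordinates) and that it is $\cH$-linear and not merely $\FFF$-linear. Once the multiplier picture is in place, the norm identity follows from evaluating on basis vectors, which is the cleanest route to the supremum.
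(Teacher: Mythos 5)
Your proof is correct and takes essentially the same route as the paper's: both arguments rest on the multiplier representation $\bs{q}(\bs{x})=\sum_{j\in\ZZ}\wh{q}_j\wh{x}_j\bs{\phi}^{(j)}$, and your surjectivity construction is precisely the paper's inverse map $\eta[\hbs{q}]=F^{-1}\circ T_{\hbs{q}}\circ F$ written out as a series. The only differences are presentational: you organize the bijection as injectivity plus surjectivity and prove the isometry via two inequalities (testing on the unit-norm basis elements for the lower bound), whereas the paper exhibits $\eta$ as a two-sided inverse and reduces the norm identity to the $\ell_2$-multiplier fact $\sup_{\ltNorm{\hbs{y}}=1}\ltNorm{\hbs{q}\odot\hbs{y}}=\infNorm{\hbs{q}}$.
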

\begin{proof}
    For any sequence $\hbs{a} \in \ell_\infty$ the operator $T_{\hbs{a}}:\ell_2\to\ell_2$ defined by $T_{\hbs{a}}(\hbs{x}) \coloneqq \hbs{a} \odot \hbs{x}$ is an $\ell_2$-linear operator on $(\ell_2, \odot)$.
    Thus, the mapping 
    \begin{equation}\label{eq:eta.linf.to.chs}
        \eta[\hbs{a}] \coloneqq F^{-1} \circ T_{\hbs{a}} \circ F
    \end{equation}
    is a bounded linear operator on $\cH$.
    Let $\hbs{q} \in \ell_\infty$ and write $\bs{q} \coloneqq \eta[\hbs{q}]$.
    Then 
    \begin{align*}
        \uppi(\bs{q}) 
        &= \sum_{j \in \ZZ} \dotph{\bs{q}(\bs{\phi}^{(j)}), \bs{\phi}^{(j)}} \bs{e}^{(j)} 
        = \sum_{j \in \ZZ} \dotph{\eta[\hbs{q}](\bs{\phi}^{(j)}), \bs{\phi}^{(j)}} \bs{e}^{(j)} \\
        &= \sum_{j \in \ZZ} \dotph{F^{-1}(T_{\hbs{q}}( \bs{e}^{(j)})), \bs{\phi}^{(j)}} \bs{e}^{(j)} 
        = \sum_{j \in \ZZ} \dotph{\wh{q}_j  \bs{\phi}^{(j)}, \bs{\phi}^{(j)}} \bs{e}^{(j)} 
        = \sum_{j \in \ZZ} \wh{q}_j \bs{e}^{(j)} = \hbs{q}
    \end{align*}
    Thus $\uppi \circ \eta = \operatorname{id}_{\ell_\infty}$.
    Similarly, given $\bs{q} \in \cHs$, we write $\hbs{q}\coloneqq \uppi(\bs{q})$, and for any $\bs{x} \in \cH$ we have
    \begin{align*}
        \eta[\hbs{q}](\bs{x}) 
        &= F^{-1} (\hbs{q} \odot F(\bs{x})) 
        = F^{-1} (\uppi(\bs{q})\odot F(\bs{x}))\\
        &= F^{-1} \left(\left({\sum}_{j \in \ZZ} \dotph{\bs{q} (\bs{\phi}^{(j)}) , \bs{\phi}^{(j)}} \bs{e}^{(j)}\right) \odot F(\bs{x})\right)  \\
        &=  \sum_{j \in \ZZ} \wh{x}_j \dotph{\bs{q} (\bs{\phi}^{(j)}) , \bs{\phi}^{(j)}} F^{-1}(\bs{e}^{(j)})   \\
        &=  \sum_{j \in \ZZ}  \dotph{\bs{q} (\wh{x}_j \bs{\phi}^{(j)}) , \bs{\phi}^{(j)}}  \bs{\phi}^{(j)}     
    \end{align*}
    Use that 
    $\dotph{\bs{q} (\wh{x}_k \bs{\phi}^{(k)}) , \bs{\phi}^{(j)}} 
    = \dotph{\wh{q}_k \wh{x}_k \bs{\phi}^{(k)} , \bs{\phi}^{(j)}}
    = \delta_{j,k} \dotph{\wh{q}_k \wh{x}_k \bs{\phi}^{(k)} , \bs{\phi}^{(j)}}
    = \delta_{j,k} \dotph{\bs{q} (\wh{x}_j \bs{\phi}^{(j)}) , \bs{\phi}^{(j)}}$ 
    by \Cref{eq:p.phi_j.final} to write 
    $$\dotph{\bs{q} (\wh{x}_j \bs{\phi}^{(j)}) , \bs{\phi}^{(j)}} = \dotph{\bs{q} \left(\sum_{k \in \ZZ } \wh{x}_k \bs{\phi}^{(k)}\right) , \bs{\phi}^{(j)}} = \dotph{\bs{q} (\bs{x}) , \bs{\phi}^{(j)}}$$
    and thus 
    $\eta[\hbs{q}](\bs{x}) = {\sum}_{j \in \ZZ}  \dotph{\bs{q} (\bs{x}) , \bs{\phi}^{(j)}}  \bs{\phi}^{(j)} = \bs{q} (\bs{x})$.
    So $\eta \circ \uppi = \operatorname{id}_{\cHs}$.
    And we write $\uppi^{-1} \coloneqq \eta$.

    Next, let $\hbs{q} \in \ell_\infty$ and $\bs{q} = \uppi^{-1}_{\hbs{q}}$, then
    \begin{align*}
        \cHsNorm{\bs{q}}
        &= {\sup}_{\cHNorm{\bs{x}} = 1} \cHNorm{\bs{q} (\bs{x})} 
        = {\sup}_{\cHNorm{\bs{x}} = 1} \cHNorm{F^{-1} ( \hbs{q} \odot F( \bs{x}))} \\
        &= {\sup}_{\cHNorm{\bs{x}} = 1} \ltNorm{\hbs{q} \odot F (\bs{x})} 
        = {\sup}_{\ltNorm{\hbs{y}} = 1} \ltNorm{\hbs{q} \odot \hbs{y}} 
        = \infNorm{\hbs{q}}
    \end{align*}
    and thus, the isomorphism $\uppi$ in \Cref{eq:pi.chs.linf} is an isometry. 
\end{proof}

\begin{lemma}\label{lem:pi.star.iso}
    The isomorphism $\uppi$ defined in~\Cref{eq:pi.chs.linf} is a *-isomorphism between $\cHs$ and $\ell_\infty$.
\end{lemma}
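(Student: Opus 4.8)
The plan is to build directly on the two facts already in hand: $\uppi$ is an algebra homomorphism (\Cref{lem:pi.is.homo}) and an isometric isomorphism with inverse $\eta = \uppi^{-1}$ (\Cref{lemma:hstar.ellinf.isoiso.cstar.uppi}). Since a bijective algebra homomorphism is a *-isomorphism precisely when it intertwines the two involutions, the only thing left to check is the *-morphism identity $\uppi(\bs{q}^*) = \overline{\uppi(\bs{q})}$, where $\bs{q}^*$ is the adjoint of $\bs{q}$ with respect to the inner product of $\cH$ and $\overline{\,\cdot\,}$ denotes entrywise complex conjugation, the involution of the C*-algebra $\ell_\infty$. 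Along the way this also confirms the claim deferred from \Cref{subsec:quasitubes}, that the adjoint of a quasitube is again a quasitube, so that the involution on $\cHs$ is well defined.

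I would argue constructively. Fix $\bs{q} \in \cHs$ and write $\wh{\bs{q}} = \uppi(\bs{q}) \in \ell_\infty$. Because $\overline{\wh{\bs{q}}} \in \ell_\infty$ and $\eta$ maps $\ell_\infty$ into $\cHs$, the operator $\bs{r} := \eta[\overline{\wh{\bs{q}}}]$ is itself a quasitube. I would then show that $\bs{r}$ is exactly the Hilbert-space adjoint of $\bs{q}$, which simultaneously places $\bs{q}^*$ in $\cHs$ and identifies $\uppi(\bs{q}^*) = \uppi(\bs{r}) = \overline{\wh{\bs{q}}}$.

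The heart of the argument is a short computation carried out in the transform domain, using that $F$ is an isometric isomorphism so $\dotph{\bs{a},\bs{b}} = \dotplt{F(\bs{a}),F(\bs{b})}$, and that by \Cref{eq:eta.linf.to.chs} both $\bs{q}$ and $\bs{r}$ act in that domain as entrywise multiplication, by $\wh{\bs{q}}$ and $\overline{\wh{\bs{q}}}$ respectively. For all $\bs{x},\bs{y}\in\cH$,
\begin{equation*}
\dotph{\bs{q}(\bs{x}),\bs{y}} = \sum_{j\in\ZZ}\wh{q}_j\,\wh{x}_j\,\overline{\wh{y}_j} = \sum_{j\in\ZZ}\wh{x}_j\,\overline{\overline{\wh{q}_j}\,\wh{y}_j} = \dotph{\bs{x},\bs{r}(\bs{y})},
\end{equation*}
and uniqueness of the adjoint gives $\bs{r} = \bs{q}^*$, hence $\uppi(\bs{q}^*) = \overline{\uppi(\bs{q})}$ as required.

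I expect no deep obstacle here; everything is routine once $\eta$ and the isometry of $F$ are available. The single point demanding care is the bookkeeping of conjugations in the inner product, which is linear in its first argument and conjugate-linear in its second: one must verify that entrywise multiplication by $\overline{\wh{\bs{q}}}$ on $\ell_2$ is genuinely the adjoint of multiplication by $\wh{\bs{q}}$. This is transparent from the diagonal multiplier structure, and is precisely the reason entrywise conjugation is the correct involution to place on $\ell_\infty$.
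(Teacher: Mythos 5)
Your proposal is correct and follows essentially the same route as the paper's proof: both identify the adjoint of $\bs{q}$ as $\eta$ applied to the entrywise conjugate of $\uppi(\bs{q})$, by transferring the inner-product computation to the transform domain via the isometry of $F$ and invoking uniqueness of the Hilbert-space adjoint. Your version merely writes out the multiplier computation as an explicit sum, which the paper expresses abstractly through $\dotplt{\hbs{q}\odot\hbs{x},\hbs{y}} = \dotplt{\hbs{x},\hbs{q}^*\odot\hbs{y}}$.
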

\begin{proof}
    Given $\hbs{q} \in \ell_\infty$ we have $\dotplt{\hbs{q} \odot \hbs{x}, \hbs{y}} = \dotplt{\hbs{x}, \hbs{q}^* \odot \hbs{y}}$ for all $\hbs{x}, \hbs{y} \in \ell_2$, where $\hbs{q}^*$ is obtained by applying elementwise complex conjugation to $\hbs{q}$.
    Since the $F$-transform is an isometry, we have that $\dotph{\bs{q} (\bs{x}), \bs{y}} = \dotplt{\hbs{q} \odot \hbs{x}, \hbs{y}}$ for all $\bs{x}, \bs{y} \in \cH$,  where $\bs{q} = \eta[\hbs{q}] $. Thus $\dotph{\bs{q} (\bs{x}), \bs{y}} = \dotplt{\hbs{x}, \hbs{q}^* \odot \hbs{y}}$.
    By applying  $F^{-1}$ to both sides we obtain $\dotph{\bs{q} (\bs{x}), \bs{y}} = \dotph{\bs{x},  \eta[\hbs{q}^*](\bs{y})}$ for all $\bs{x}, \bs{y} \in \cH$.
    Thus, 
    $$
    \bs{q}^* = \eta[\hbs{q}^*]
    $$ and after applying $\uppi$ on both sides we have
    \begin{equation}\label{eq:pi.star.iso}
        \uppi(\bs{q}^*) = \uppi(\bs{q})^* \textnormal{ for all } \bs{q} \in \cHs
    \end{equation}
    so $\uppi$ is a *-isomorphism.
\end{proof}

With the above setup, we have the following result.
\begin{theorem}\label{lem:cHs.star.alg}
    $\cHs$ is a commutative, unital C*-algebra, isometrically *-isomorphic to $\ell_\infty$.
\end{theorem}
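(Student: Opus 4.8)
The plan is to observe that the three preceding lemmas already contain all the analytic substance, so that the theorem is obtained by transporting the C*-algebra structure of $\ell_\infty$ along the map $\uppi$. First I would record the standard fact that $(\ell_\infty, \odot)$, with elementwise complex conjugation as involution and the supremum norm, is itself a commutative, unital C*-algebra: the product $\odot$ is commutative with unit the constant sequence $\bs{1}$; the sup norm is complete and submultiplicative, $\infNorm{\hbs{x} \odot \hbs{y}} \le \infNorm{\hbs{x}}\,\infNorm{\hbs{y}}$; conjugation is norm-preserving; and the C*-identity holds because $\hbs{q}^* \odot \hbs{q}$ is the sequence $(|\wh q_j|^2)_{j}$, whose supremum is exactly $\infNorm{\hbs{q}}^2$.

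Next I would assemble \Cref{lem:pi.is.homo}, \Cref{lemma:hstar.ellinf.isoiso.cstar.uppi}, and \Cref{lem:pi.star.iso} into the single statement that $\uppi \colon \cHs \to \ell_\infty$ is an isometric *-isomorphism of $\FFF$-algebras: it is a bijective homomorphism carrying $\circ$ to $\odot$, an isometry from $\cHsNorm{\cdot}$ to $\infNorm{\cdot}$, and it intertwines the involutions via $\uppi(\bs{q}^*) = \uppi(\bs{q})^*$, with explicit inverse $\uppi^{-1} = \eta$. In particular \Cref{lem:pi.star.iso} also certifies that the adjoint $\bs{q}^* = \eta[\hbs{q}^*]$ lands back in $\cHs$, so the involution is a genuine internal operation on the algebra.

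The core of the argument is then the transfer: every defining axiom of a commutative unital C*-algebra passes through an isometric *-isomorphism. Concretely, commutativity follows from $\bs{q}\circ\bs{p} = \uppi^{-1}(\uppi(\bs{q})\odot\uppi(\bs{p})) = \uppi^{-1}(\uppi(\bs{p})\odot\uppi(\bs{q})) = \bs{p}\circ\bs{q}$; the unit is $\uppi^{-1}(\bs{1}) = \eta[\bs{1}] = F^{-1}\circ T_{\bs{1}}\circ F = F^{-1}\circ \mathrm{id}\circ F = \mathrm{id}_{\cH}$, which lies in $\cHs$; completeness of $\cHs$ follows because an isometry maps Cauchy sequences to Cauchy sequences and $\ell_\infty$ is complete; submultiplicativity and the C*-identity transfer verbatim, e.g. $\cHsNorm{\bs{q}^*\circ\bs{q}} = \infNorm{\uppi(\bs{q})^*\odot\uppi(\bs{q})} = \infNorm{\uppi(\bs{q})}^2 = \cHsNorm{\bs{q}}^2$; and the involution axioms (additivity, conjugate-homogeneity $(\alpha\bs{q})^* = \bar\alpha\,\bs{q}^*$, $\bs{q}^{**}=\bs{q}$, and the antiautomorphism property) follow from the corresponding identities for complex conjugation on $\ell_\infty$ together with $\uppi(\bs{q}^*)=\uppi(\bs{q})^*$.

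I do not anticipate a genuine obstacle here, since the heavy lifting — boundedness of quasitubes, the homomorphism property, the isometry, and the *-structure — is already done in the cited lemmas. The only points needing care are bookkeeping ones: confirming that the transported unit is precisely the identity operator (so that ``unital'' is meant with respect to composition), and noting that, because $\cHs$ is commutative, the antiautomorphism requirement $(\bs{q}\circ\bs{p})^* = \bs{p}^*\circ\bs{q}^*$ coincides with $\bs{q}^*\circ\bs{p}^*$, so no orientation subtlety arises. After these checks, the isometric *-isomorphism to $\ell_\infty$ is exactly the content of the assembled lemmas.
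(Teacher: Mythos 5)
Your proposal is correct and takes essentially the same route as the paper: both proofs transport the commutative, unital C*-algebra structure of $\ell_\infty$ back along the isometric *-isomorphism $\uppi$, obtaining commutativity and unitality by transfer, the C*-identity from $\cHsNorm{\bs{q}^* \circ \bs{q}} = \infNorm{\uppi(\bs{q})^* \odot \uppi(\bs{q})} = \infNormS{\uppi(\bs{q})}$, and completeness by pulling Cauchy sequences through the isometry. The additional bookkeeping you supply (checking $\ell_\infty$ is itself a C*-algebra, and identifying the unit as $\uppi^{-1}(\bs{1}) = \mathrm{id}_{\cH}$) is left implicit in the paper but is consistent with it.
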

\begin{proof}
    Commutativity and unitality of $\cHs$ follow from the corresponding properties of $\ell_\infty$.
    By \Cref{lem:pi.star.iso}, we have that $\cHs$ the mapping $\bs{q} \mapsto \uppi^{-1}[\uppi [\bs{q}]^*]$, where $\hbs{q}^* \in \ell_\infty$ is the elementwise complex conjugate of $\hbs{q} \in \ell_\infty$,  is an involutive automorphism. 

    Since $\uppi$ is an isometric isomorphism, we have that $\cHsNorm{\bs{q}^* \circ \bs{q}} = \infNorm{\uppi(\bs{q}^* \circ \bs{q}) } = \infNorm{\uppi(\bs{q}) \odot \uppi(\bs{q}^*)}=\infNorm{\uppi(\bs{q}) \odot \uppi(\bs{q})^*}$ and by the C*-property of $\ell_\infty$ we have that $\infNorm{\uppi(\bs{q}) \odot \uppi(\bs{q})^*} = \infNormS{\uppi[\bs{q}]} $ hence $\cHsNorm{\bs{q}^* \circ \bs{q}} = \cHsNorm{\bs{q}}^2$. 

    It remains to show that $\cHs$ is complete with respect to $\cHsNorm{\cdot}$, which immediately follows from the fact that $\ell_\infty$ is complete with respect to $\infNorm{\cdot}$ and that $\uppi$ is an isometry. Indeed,
    for any Cauchy sequence $\{\bs{q}^{(n)}\}_{n \in \NN}$ in $\cHs$ we have that $\{\uppi(\bs{q}^{(n)})\}_{n \in \NN}$ is a Cauchy sequence in $\ell_\infty$ and thus converges to some $\hbs{q} \in \ell_\infty$, hence $\bs{q} = \uppi^{-1}[\hbs{q}] \in \cHs$ and   $\cHsNorm{\bs{q}^{(n)} - \bs{q}} \to 0 $.
\end{proof}

Recall that $\cHs$ is the dual module of $\cH$.
As such, it is a module over $\cH$ and hence, the tubal multiplication of a $\cH$-quasitubes $\bs{q} \in \cHs$  by a tube $ \bs{a} \in \cH$, denoted by $\bs{a} \ff \bs{q}$, is well-defined as the quasitube $\bs{a} \ff \bs{q} \in \cHs$ such that $(\bs{a} \ff \bs{q}) (\bs{x}) = \bs{a} \ff (\bs{q} (\bs{x})) = \bs{q}( \bs{x} \ff \bs{a})$ for all $\bs{x} \in \cH$.

On the other hand, we have the view of $\cHs$ as a commutative, unital C*-algebra over $\FFF$.
Here, $\cHs$ is a rank-1 module over itself, in which $\cH$ is embedded as a subspace $\smash{\cH  \xhookrightarrow{~\uptau~} \cHs}$ where $\uptau \colon \cH \to \cHs$ maps $\bs{x} \in \cH$ to the quasitube $\uptau[\bs{x}]\in \cHs$ such that $\uptau[\bs{x}] (\bs{y}) \coloneqq \bs{x} \ff \bs{y}$ for all $\bs{y} \in \cH$.
We can also use the fact that $\cH$ is 
isometrically isomorphic to $\ell_2$, in combination with the fact that $\ell_2 \subseteq \ell_\infty$ and the isometric isomorphism between $\cHs$ and $\ell_\infty$, to define $\uptau$ more explicitly as 
\begin{equation}\label{eq:tau.cH.to.cHs}
    \uptau[\bs{x}] \coloneqq \uppi^{-1}[F(\bs{x})]
\end{equation}
with $\uppi$ defined in~\Cref{eq:pi.chs.linf}, i.e., $\uptau 
= \uppi^{-1} \circ F$.
Denote the image of $\cH$ under $\uptau$ by $\cH' \subseteq \cHs$, and we have the following diagram:
\begin{figure}[H]
    \label{fig:commdiag}
    \centering
    \begin{tikzpicture}
        \node (cH) at (0,0) {$\cH$};
        \node (cHs) at (2.35,0) {\vphantom{$\cHs'$}$\cHs$};
        \node (cHt) at (1.65,0) {\vphantom{$\cHs'$}$\cH' \subseteq$};
        \node (ell2)  at (0.825,-1.45) {$\ell_2$};
        \node (ellinf) at (3.175,-1.45) {$\ell_\infty$};
        \draw[->] (cH) -- node[left]{$F~$} (ell2);
        \draw[->] (cHs) -- node[right]{$\uppi$} (ellinf);
        \draw[->, right hook-latex] (cH) -- node[above]{$\uptau$} (cHt);
        \draw[->, right hook-latex] (ell2) -- node[above]{$\iota$} (ellinf);
    \end{tikzpicture}
    \caption{Diagram of the mappings and embeddings between $\cH, \cHs, \ell_2, \ell_\infty$.}
\end{figure}
Moreover, we have the following result, which will come into play later when discussing identities and properties related to the optimality of approximations obtained by truncating the quasitubal tensor SVD.
\begin{lemma}\label{lem:cH.2sided.star.ideal.in.cHs}
    Consider $\cHs$ as a commutative, unital C*-algebra over $\FFF$, and let $\cH' \subseteq \cHs$ be the embedding of $\cH$ into $\cHs$ as defined in~\Cref{eq:tau.cH.to.cHs}.
    Then $\cH'$ is a two-sided *-ideal in $\cHs$, i.e., $\cH'$ is a subset closed to addition, multiplication and involution such that $\bs{q} \ff \bs{p} \in \cH'$ for any $\bs{q} \in \cH'$ and $\bs{p} \in \cHs$. 
\end{lemma}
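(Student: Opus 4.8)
The plan is to transport the entire statement through the isometric *-isomorphism $\uppi \colon \cHs \to \ell_\infty$ established in \Cref{lem:cHs.star.alg}, under which the C*-algebra multiplication (composition) and the involution on $\cHs$ correspond respectively to the Hadamard product $\odot$ and elementwise complex conjugation on $\ell_\infty$, by \Cref{lem:pi.is.homo} and \Cref{lem:pi.star.iso}. Since $\uppi$ is a *-isomorphism, it carries ideals to ideals and preserves all the closure conditions at stake; hence it suffices to identify the image $\uppi(\cH')$ and verify that this image is a two-sided *-ideal in $(\ell_\infty, \odot)$.

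First I would pin down the image. Because $\uptau = \uppi^{-1} \circ F$ (\Cref{eq:tau.cH.to.cHs}), we have $\uppi \circ \uptau = F$, so $\uppi(\cH') = \uppi(\uptau(\cH)) = F(\cH) = \ell_2$, where the last equality is \Cref{def:F-transform}. Thus, under $\uppi$, the inclusion $\cH' \subseteq \cHs$ becomes exactly the standard inclusion $\ell_2 \subseteq \ell_\infty$ (the embedding $\iota$ of the diagram). I would also record, for bookkeeping, that the module action $\bs{a} \ff \bs{q}$ of a tube $\bs{a} \in \cH$ on a quasitube $\bs{q} \in \cHs$ coincides with the C*-composition $\uptau[\bs{a}] \circ \bs{q}$, since both send $\bs{x}$ to $\bs{a} \ff (\bs{q}(\bs{x}))$; consequently the absorption condition phrased via $\ff$ and the one phrased via the algebra multiplication of $\cHs$ are the same, and under $\uppi$ both become multiplication by an $\ell_2$ sequence inside $\ell_\infty$.

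It then remains to check that $\ell_2$ is a two-sided *-ideal in $(\ell_\infty, \odot)$. Closure under addition is the vector-space structure of $\ell_2$, and closure under involution holds since $\GNorm{\overline{\hbs{a}}}_{\ell_2} = \ltNorm{\hbs{a}}$. The one substantive point is the absorption property: for $\hbs{a} \in \ell_2$ and $\hbs{q} \in \ell_\infty$, the estimate
\[ \ltNormS{\hbs{a} \odot \hbs{q}} = \sum_{j \in \ZZ} |\wh{a}_j|^2 |\wh{q}_j|^2 \leq \infNormS{\hbs{q}} \, \ltNormS{\hbs{a}} < \infty \]
shows $\hbs{a} \odot \hbs{q} \in \ell_2$, i.e. $\ell_2 \odot \ell_\infty \subseteq \ell_2$; in particular $\ell_2 \odot \ell_2 \subseteq \ell_2$ (as $\ell_2 \subseteq \ell_\infty$), which gives closure under multiplication. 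Transporting these facts back through $\uppi^{-1}$ yields that $\cH'$ is closed under addition, multiplication, and involution, and that $\bs{q} \ff \bs{p} \in \cH'$ whenever $\bs{q} \in \cH'$ and $\bs{p} \in \cHs$.

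The argument has no real obstacle beyond the Hölder-type boundedness estimate displayed above; the only care needed is the bookkeeping of the reduction, namely confirming that $\uppi(\cH') = \ell_2$ and that $\uppi$ genuinely intertwines composition and involution with $\odot$ and conjugation, so that the concrete ideal computation in $\ell_\infty$ can be pulled back faithfully to $\cHs$.
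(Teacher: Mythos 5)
Your proposal is correct, but it takes a genuinely different route from the paper's proof. Both proofs handle the *-invariance of $\cH'$ the same way (via $\uppi$ and the fact that elementwise conjugation preserves $\ell_2$), but they diverge on the substantive absorption step. The paper argues intrinsically at the operator level: given $\bs{x}' = \uptau[\bs{x}] \in \cH'$ and $\bs{q} \in \cHs$, it writes $(\bs{q} \circ \bs{x}')(\bs{z}) = \bs{q}(\bs{x} \ff \bs{z}) = \bs{z} \ff \bs{q}(\bs{x})$ using the $\cH$-linearity of $\bs{q}$, and then observes that $\bs{y} \coloneqq \bs{q}(\bs{x})$ lies in $\cH$ simply because a quasitube is, by definition, a homomorphism from $\cH$ into $\cH$; hence $\bs{q} \circ \bs{x}' = \uptau[\bs{y}] \in \cH'$ with no estimate whatsoever. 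You instead transport the whole statement through the *-isomorphism $\uppi$ of \Cref{lem:cHs.star.alg}, identify $\uppi(\cH') = F(\cH) = \ell_2$ (correct, since $\uptau = \uppi^{-1} \circ F$ by \Cref{eq:tau.cH.to.cHs}), and prove the concrete fact that $\ell_2$ is a two-sided *-ideal in $(\ell_\infty, \odot)$ via the H\"older-type bound $\ltNormS{\hbs{a} \odot \hbs{q}} \leq \infNormS{\hbs{q}}\, \ltNormS{\hbs{a}}$. Your bookkeeping point that the module action $\bs{a} \ff \bs{q}$ agrees with the composition $\uptau[\bs{a}] \circ \bs{q}$ is genuinely needed for the statement to transport faithfully, and you handle it correctly (the paper treats it implicitly by working with $\circ$ throughout). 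What your route buys is a completely concrete sequence-space picture, where the ideal property is a one-line inequality and the transfer is justified by the general principle that *-isomorphisms carry *-ideals to *-ideals; what the paper's route buys is that absorption requires no norm estimate at all — it falls directly out of the definition of quasitubes as homomorphisms mapping tubes to tubes, which is arguably the conceptual reason the lemma is true.
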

\begin{proof}
    To show that $\cH'$ is *-invariant,
    let $\bs{x} \in \cH$ and consider $\uptau[\bs{x}]^* = \uppi^{-1}[F(\bs{x})]^*$.
    By~\Cref{eq:pi.star.iso}, we have that $\uppi^{-1}[F(\bs{x})]^* = \uppi^{-1}[\hbs{x}^*]$ where $\hbs{x}^*$ is the elementwise complex conjugate of $\hbs{x} = F(\bs{x})$. 
    Since $\smash{F^{-1}(\hbs{x}^*) \in \cH}$ we have that $\smash{\uptau[\bs{x}]^* = \uptau[F^{-1}(\hbs{x}^*)]\in \cH'}$.
    
    To establish that \smash{$\cH'$} is a two-sided ideal, we need to show that for all $\bs{x}' \in \cH'$ and $\bs{q} \in \cHs$ we have $\bs{q} \circ \bs{x}' \in \cH'$ (then $\bs{x}' \circ \bs{q} \in \cH'$ follows from commutativity).  
    For any $\bs{x}' \in \cH'$ and $\bs{q} \in \cHs$ we have that $\bs{q} \circ \bs{x}' \in \cH'$ if there exists $\bs{y} \in \cH$ such that $\uptau[\bs{y}] = \bs{q} \circ \bs{x}'$, i.e.,  $\bs{y} \ff \bs{z} = (\bs{q} \circ \bs{x}')(\bs{z})$ for all $\bs{z} \in \cH$.
    Using the associativity of linear operators composition, we get that $(\bs{q} \circ \bs{x}')(\bs{z}) = \bs{q} (\bs{x}'(\bs{z}))$, and since  $\bs{x}' \in \cH'$, there is a tube $\bs{x} \in \cH$ such that $\bs{x}'(\bs{z}) = \uptau[\bs{x}](\bs{z}) = \bs{x} \ff \bs{z}$.
    Now use the fact that $\bs{q} \in \cHs$ is a $\cH$-linear operator to write 
    $$
    \bs{q} (\bs{x}'(\bs{z})) = \bs{q} (\bs{x} \ff \bs{z}) = \bs{z} \ff \bs{q} (\bs{x})
    $$
    where $\bs{q} (\bs{x}) \in \cH$. 
    Set $\bs{y} \coloneqq \bs{q} (\bs{x})$ and observe that $\bs{y} \ff \bs{z} = (\bs{q} \circ \bs{x}')(\bs{z}) $ for all $\bs{z} \in \cH$, and thus $\uptau[\bs{y}] = \bs{q} \circ \bs{x}' $ is in $\cH'$.
    Thus, $\cH'$ is a two-sided *-ideal in $\cHs$
\end{proof}

Henceforth, the notation $\cH$ will be synonymous with $\cH'$, thus we will refer to $\cH$ as an embedded subspace of $\cHs$. To avoid confusion, unities of $\cHs$ and $\ell_\infty$ will be denoted by $\bs{e}$ and $\bs{1}$, respectively.

\begin{table}[H]
    \centering
    \begin{tabular}{c|lccc}
        \textbf{Symbol}& 
        \textbf{Definition}& 
        \textbf{Norm}& 
        \textbf{Multiplication} & 
        \textbf{Unity} \\
        \hline
        $\cH$ & Hilbert space &$\cHNorm{\cdot}$ & $\ff$ & \\
        $\ell_2$ & Square Summable Sequences & $\ltNorm{\cdot}$ & $\odot$ &  \\
        $\cHs$ & Quasitubes (subspace of $B(\cH)$) & $\cHsNorm{\cdot}$ & $\ff$ & $\bs{e}$ \\
        $\ell_\infty$ & Bounded Sequences &$\infNorm{\cdot}$ & $\odot$ & $\bs{1}$ 
    \end{tabular}
\end{table}

\paragraph{Positive elements.}
Consider the SVD of a matrix $\mat{X} = \matU \matSigma \matV^\CT \in \FFF^{m \xx p}$, and recall that $\sigma_1 \geq \sigma_2 \geq \cdots \geq \sigma_{\min(m,p)} \geq 0$. 
In order to show that a similar result hold for a factorization such as~\Cref{eq:tSVDM}, we need a notion of positiveness in $\cHs$.

We define positiveness in a general commutative, unital C*-algebra $(\alg{A}, +, \cdot)$ over $\FFF$, and then discuss its manifestation in $\cHs$.
An element $\bs{a} \in \alg{A}$ is invertible if there exists $\bs{b} \in \alg{A}$ such that $\bs{a} \cdot \bs{b} = \bs{b} \cdot \bs{a} = \bs{e}$, where $\bs{e}$ denote the unit element in $\alg{A}$. The spectrum of $\bs{a} \in \alg{A}$, denoted by $\spc(\bs{a})$, is the set of all $\lambda \in \FFF$ such that $\bs{a} - \lambda \bs{e}$ is not invertible.
Note that if $\bs{a} \in \alg{A}$ is self-adjoint then $\spc(\bs{a}) \subseteq \RR$. 
To see this, suppose that $\bs{a} = \bs{a}^*$ and $\lambda \in \spc(\bs{a})$  scalar, then there exists $\bs{z} \in \alg{A}$ such that $\bs{a} \cdot \bs{z} = \lambda \bs{z}$, hence $\bs{z}^* \cdot \bs{a} = \overline{\lambda} \bs{z}^*$. 
Following right-multiplication by $\bs{z}$, we get $\lambda \bs{z}^* \cdot \bs{z} = \overline{\lambda} \bs{z}^* \cdot \bs{z}$ thus $\lambda = \overline{\lambda}$ and conclude that $\spc(\bs{a}) \subseteq \RR$.

\begin{definition}[Positive Element]\label{def:cstar.positive}
 We say that an element $\bs{a} \in \alg{A}$ is \textbf{positive} or \textbf{non-negative}, and write $\bs{a} \geq_{\alg{A}} 0 $ if $\bs{a}$ is self adjoint and $\spc(\bs{a}) $ is contained in non-negative half of the real line.
\textbf{Strictly positive} element $\bs{a} \in \alg{A}$ is an element such that $\spc(\bs{a}) $ is contained in the positive half of the real line, and are denoted by $\bs{a} >_{\alg{A}} 0$.
For $\bs{a}, \bs{b} \in \alg{A}$ we write $\bs{a} \geq_{\alg{A}} \bs{b}$ if $\bs{a} - \bs{b} \geq_{\alg{A}} 0$.
\end{definition}
For any unital C* algebra $\alg{A}$, the relation $\geq_{\alg{A}}$ is (non-strict) partial order, or, an antisymmetric preorder on $\alg{A}$.
Note that $\bs{a} \in \alg{A}$ is positive if and only if $\bs{a} = \bs{b} \cdot \bs{b}^* $ for some $\bs{b} \in \alg{A}$ \cite[Theorem 3.6]{Conway2007}. 

For the concrete C*-algebra $\ell_\infty$, one can show that   $\spc(\hbs{a}) = \operatorname{cl} \{ \wh{a}_j \}_{j \in \ZZ}$, where $\operatorname{cl}$ denotes the closure of a set with respect to the uniform norm $\infNorm{\cdot}$.
For example, the sequence $\hbs{a} $ with $\wh{a}_j = 1/(j^{2}+1)$ for all $j \in \ZZ$ is non-negative, but not strictly positive, since $0 = \lim_{|j| \to \infty} \wh{a}_j$ thus $0 \in \operatorname{cl} \{ \wh{a}_j \}_{j \in \ZZ} = \spc(\hbs{a})$.
 $\spc(\hbs{a}) = \operatorname{cl} \{ \wh{a}_j \}_{j \in \ZZ}$ implies that a sequence $\hbs{a} \in \ell_\infty$ is non-negative if and only if all its elements are non-negative real numbers.

\begin{theorem}\label{lem:rel.invariance}
    A quasitube $\bs{q} \in \cHs$ is non-negative if and only if $\hbs{q} = \uppi(\bs{q}) \in \ell_\infty$ is non-negative.
\end{theorem}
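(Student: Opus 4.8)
The plan is to exploit the fact, established in \Cref{lem:pi.star.iso} and \Cref{lem:cHs.star.alg}, that $\uppi$ is an isometric *-isomorphism between the unital C*-algebras $(\cHs, \circ)$ and $(\ell_\infty, \odot)$. Positivity in a C*-algebra (\Cref{def:cstar.positive}) is an intrinsic notion, depending only on the *-operation together with the multiplicative and spectral structure; since $\uppi$ respects all of these, it must carry the positive cone of $\cHs$ onto that of $\ell_\infty$. Concretely, I would run the equivalence through the characterization that in any unital C*-algebra an element is non-negative if and only if it factors as $\bs{b} \cdot \bs{b}^*$ \cite[Theorem 3.6]{Conway2007}, which applies verbatim to both $\cHs$ and $\ell_\infty$.

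For the forward direction, suppose $\bs{q} \geq_{\cHs} 0$, so there is $\bs{b} \in \cHs$ with $\bs{q} = \bs{b} \circ \bs{b}^*$. Applying $\uppi$ and using that it is a homomorphism taking $\circ$ to $\odot$ (\Cref{lem:pi.is.homo}) together with the *-morphism identity $\uppi(\bs{b}^*) = \uppi(\bs{b})^*$ from \Cref{eq:pi.star.iso}, I obtain
\[
\uppi(\bs{q}) = \uppi(\bs{b} \circ \bs{b}^*) = \uppi(\bs{b}) \odot \uppi(\bs{b})^* .
\]
Hence $\uppi(\bs{q})$ factors as $\hbs{b} \odot \hbs{b}^*$ with $\hbs{b} = \uppi(\bs{b}) \in \ell_\infty$, giving $\uppi(\bs{q}) \geq_{\ell_\infty} 0$. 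The converse is symmetric: if $\uppi(\bs{q}) = \hbs{c} \odot \hbs{c}^*$ for some $\hbs{c} \in \ell_\infty$, then setting $\bs{c} = \uppi^{-1}(\hbs{c})$ yields $\uppi(\bs{q}) = \uppi(\bs{c} \circ \bs{c}^*)$, and injectivity of $\uppi$ forces $\bs{q} = \bs{c} \circ \bs{c}^*$, so $\bs{q} \geq_{\cHs} 0$.

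As an alternative route that stays closer to \Cref{def:cstar.positive}, I could instead check directly that $\uppi$ preserves self-adjointness and the spectrum. Self-adjointness is immediate from $\uppi(\bs{q}^*) = \uppi(\bs{q})^*$ and the bijectivity of $\uppi$. For the spectrum, because $\uppi$ is a unital algebra isomorphism it maps $\bs{q} - \lambda \bs{e}$ to $\uppi(\bs{q}) - \lambda \bs{1}$ and sends invertible elements to invertible elements (the inverse of $\uppi(\bs{a})$ being $\uppi(\bs{a}^{-1})$), whence $\spc(\bs{q}) = \spc(\uppi(\bs{q}))$; the condition ``self-adjoint with spectrum in the non-negative reals'' then transfers term by term.

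I expect no serious obstacle here: the substantive work has already been front-loaded into the earlier structural lemmas that make $\uppi$ an isometric *-isomorphism, so this statement is essentially a corollary. The only point that warrants care is confirming that the chosen characterization of positivity is genuinely \emph{intrinsic} — phrased purely in terms of data ($*$, product, unit, invertibility) that $\uppi$ preserves — rather than appealing to the concrete operator realization $\cHs \subseteq B(\cH)$. Framing the argument through the $\bs{b} \cdot \bs{b}^*$ factorization makes this transparent and keeps the whole proof to a few lines.
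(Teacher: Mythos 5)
Your proposal is correct, but your primary argument takes a different route from the paper. The paper proves the statement by showing directly that $\uppi$ preserves spectra: since $\uppi$ is a unital *-isomorphism, $\bs{q} - \alpha\bs{e}$ is invertible in $\cHs$ exactly when $\hbs{q} - \alpha\bs{1}$ is invertible in $\ellinf$, whence $\spc(\bs{q}) = \spc(\hbs{q})$; combined with preservation of self-adjointness this gives the equivalence by \Cref{def:cstar.positive} alone. That is precisely the argument you sketch as your ``alternative route,'' so that part coincides with the paper. Your main route instead runs through the characterization $\bs{a} \geq_{\alg{A}} 0 \iff \bs{a} = \bs{b}\cdot\bs{b}^*$, transported across $\uppi$ via the homomorphism and *-morphism identities of \Cref{lem:pi.is.homo} and \Cref{eq:pi.star.iso}. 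This is valid: the paper itself cites this characterization (Conway, Theorem 3.6) for unital C*-algebras, and both $\cHs$ and $\ellinf$ are such by \Cref{lem:cHs.star.alg}, which precedes the statement, so there is no circularity. The trade-off is that your route is shorter but leans on the factorization theorem as a black box (itself a nontrivial consequence of the continuous functional calculus), whereas the paper's route is more elementary and self-contained, and it establishes the strictly stronger fact $\spc(\bs{q}) = \spc(\hbs{q})$ — spectral invariance that the paper implicitly reuses later, e.g.\ when ordering the singular quasitubes in \Cref{thm:quasitubal.svd.exists}. Your factorization argument yields only the positivity equivalence, not the full spectral identification.
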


\begin{proof}
    Let $\alpha \notin \spc(\bs{q})$ then $\bs{q}_\alpha = \bs{q} - \alpha \bs{e}$ is invertible, i.e., there is a $\bs{q}_\alpha^{-1} \in \cHs$.
    Since $\uppi$ is a *-isomorphism, $\hbs{q}_\alpha = \hbs{q} - \alpha \bs{1}$, and $\hbs{q}_\alpha^{-1} \in \ell_\infty$ is the inverse of $\hbs{q}_\alpha$.
    Therefore, $\alpha \notin \spc(\hbs{q})$, and we have $\spc(\hbs{q}) \subseteq \spc(\bs{q})$.
    By the same argument, $\spc(\bs{q}) \subseteq \spc(\hbs{q})$, so $\spc(\bs{q}) = \spc(\hbs{q})$.
    It clearly follows that $\bs{q} \geq_{\cHs} 0$ if and only if $\hbs{q} \geq_{\ell_\infty} 0$.
\end{proof}

The relation $\geq_{\cHs}$ forms a partial order that allows discussion about `best' upper and lower bounds for subsets of $\cHs$, and, in particular, state and prove a result that shows that the `singular quasitubes' in a decomposition of the form of~\Cref{eq:tSVDM} may be sorted in a decreasing order.

\section{Quasitubal Tensors}
In previous sections, we have established the algebraic structure of $\cHs$ as a commutative, unital C*-algebra over $\FFF$, in which $\cH$ is embedded as a *-ideal. 
In this section, we define quasitubal tensors and present their properties in analogy to the finite-dimensional case.
Our construction is concluded in~\Cref{sec:quasitubal.svd} where the existence of quasitubal SVD is established together with optimality results for low-rank approximations obtained by truncating the decomposition.

\subsection{Quasitubal Tensor Algebra}

\begin{definition}[Quasitubal Tensors]\label{def:quasitubal.tensor}
    A $m \xx p$ quasitubal tensor is an element $\tX \in \cH_*^{m \xx p}$, i.e., an $m \xx p$ matrix of $\cH$-quasitubes.
    A $p$-dimensional quasitubal slice $\mat{X} \in \cH_*^p$ is considered as a ``column vector'' of quasitubes, that is $\matX \in \cHs^{p \xx 1}$
\end{definition}
In analogy to the $F$-transform, we use the mapping $\uppi$ from~\Cref{eq:pi.chs.linf} and the result of~\Cref{lem:cHs.star.alg} to represent quasitubes and quasitubal tensors in the transform domain, i.e., in $\ell_\infty$: 
\begin{center}
    \setlength\belowdisplayskip{0pt}
    \setlength\abovedisplayskip{-10pt}
    \setlength\belowdisplayshortskip{0pt}
    \setlength\abovedisplayshortskip{-10pt}
    \begin{minipage}[t]{.35\textwidth}
        \begin{equation}\label{def:pi-transform.quasitube}
            \smash{\wh{\bs{q}} \coloneqq \uppi[\bs{q}] \in \ell_\infty \text{ for all } \bs{q} \in \cHs}
        \end{equation}
    \end{minipage}%
    \begin{minipage}[t]{.39\textwidth}
        \begin{equation}\label{def:pi-transform.quasitensor}
            \smash{\wh{\tX} \coloneqq \tX \xx_3\hspace{2pt}\uppi \text{ for all } \tX \in \cH_*^{m \xx p}}
        \end{equation}
    \end{minipage}
\end{center}
We call the mappings in~\Cref{def:pi-transform.quasitube,def:pi-transform.quasitensor} the $\boldsymbol{\uppi}$\textbf{-transform} of quasitubes and quasitubal tensors, respectively.
The hat notation is used both for denoting $F$-transformed tubes and the $\uppi$-transformed quasitubes. 
Even though the mappings differ in their domains, they are consistent in the sense that for a tube $\bs{x}\in\cH$ the $\uppi$-transform of the associated quasitube $\uptau[\bs{x}]$ is the $F$-transform of $\bs{x}$. So, we can  overload $\uppi$ by defining $\uppi(\bs{x}) \coloneqq \uppi(\uptau[x])$, and with this definition $\uppi$ can be viewed as extending $F$ to quasitubes that cannot be represented by tubes. Hence there is no ambiguity in using the same notation for both transforms.

For convenience and clarity, we re-define the $\ff$-product (\cref{def:ff.qtm,def:ff.tensor}), originally defined for tubes in $\cH$, to apply to quasitubes in $\cHs$.
Note that the redefined $\ff$-product is consistent with the $\circ$ operation in $\cHs$, so it is a homomorphism of algebras.
\begin{definition}[Quasitubal \texorpdfstring{$\ff$}{F}-Product]\label{def:quasi.ff.redef}
    We define the quasitubal $\ff$-product as 
    \begin{center}
        \setlength\belowdisplayskip{0pt}
        \setlength\abovedisplayskip{-10pt}
        \setlength\belowdisplayshortskip{0pt}
        \setlength\abovedisplayshortskip{-10pt}
        \begin{minipage}{.3\linewidth}
            \begin{equation}\label{def:quasi.ff.redef.scalar}
                \bs{x} \ff \bs{y} \coloneqq \uppi^{-1}(\wh{\bs{x}} \odot \wh{\bs{y}}) 
            \end{equation}
        \end{minipage}%
        \begin{minipage}{.5\linewidth}
            \begin{equation}\label{def:quasi.ff.redef.matmul}
                \tX \ff \tY \coloneqq (\thX \vartriangle \thY) \xx_3 \pi^{-1}
            \end{equation}
        \end{minipage}
    \end{center}
    for all $\bs{x},\bs{y} \in \cHs, \tX \in \cH_*^{m \xx p}$, $\tY \in \cHs^{p \xx q}$, and
     $\vartriangle$ is the facewise multiplication of matrices over $\ell_\infty$.
\end{definition}
Let $\tX \in \cHs^{m \xx p} $ then
by \Cref{lem:Rlin.iff.matmul}, the mapping $T_{\tX} \colon \cHs^p \to \cHs^m $ defined by $T_{\tX} \mat{Z} = \tX \ff \mat{Z}$  for all $\mat{Z} \in \cHs^p$, is an $\cHs$-linear (in particular $\cH$-linear) mapping from $\cHs^p $ to $ \cHs^m$.

Let  $\thX \in \ell_\infty^{m \xx p }$, then for all $k \in \ZZ$ we denote the $k$-th frontal slice of $\thX$ by $\thX_{:,:,k} \in \FFF^{m \xx p}$. 
The $\boldsymbol{\ff}$\textbf{-conjugate transpose} of $\tX \in \cH_*^{m \xx p}$ is the quasitubal tensor 
\begin{equation}\label{eq:def.ff.conj.transpose}
    \tX^* \coloneqq \thX^{\CT} \xx_3 \uppi^{-1} \in \cH_*^{p \xx m} ~~~\text{ with } (\thX^\CT)_{:,:,k} = (\thX_{:,:,k})^\CT \in \FFF^{m \xx p}  \quad \text{for all } k \in \ZZ
\end{equation}
It is easy to verify that $(\tX \ff \tY)^* = \tY^* \ff \tX^*$ for all $\tX$ and $\tY$ of compatible sizes.
\if0
Let 
$\tX \in \cHs^{m \xx p}, \tY \in \cHs^{p \xx r}$, then by the above,
    $(\tX \ff \tY)^* 
    = (\thX \triangle \thY)^\CT \xx_3 \uppi^{-1} $
where $\thZ^\CT = (\thX \triangle \thY)^\CT$ is the quasitubal tensor whose $k$-th frontal slice is 
\begin{align*}
    \thZ_{:,:,k} 
    &= (\thX_{:,:,k}  \thY_{:,:,k})^\CT \\
    &= \thY^\CT_{:,:,k} \thX^\CT_{:,:,k} \\
    &= [\thY^\CT  \triangle \thX^\CT ]_{:,:,k}
\end{align*}
therefore $\thZ^\CT = \thY^\CT  \triangle \thX^\CT$ and thus $(\tX \ff \tY)^* = \tY^* \ff \tX^*$.
\fi

A quasitubal slice $\mat{A} \in \cH_*^p$ is said to be $\boldsymbol{\ff}$\textbf{-unit-normalized} if $\mat{A}^* \ff \mat{A} = \bs{e}$, and two quasitubal slices $\mat{A}, \mat{B} \in \cH_*^p$ are said to be $\boldsymbol{\ff}$\textbf{-orthogonal} if $\mat{A}^* \ff \mat{B} = 0$.
The $p \xx p$ \textbf{identity} quasitubal tensor in $\cH_*^{p \xx p}$ is denoted by $\tens{I}_p$ (or $\tens{I}$ when the dimension is clear from the context), and is such that $\tens{I} \ff \tens{X} = \tens{X} $ and $\tens{Y} \ff \tens{I} = \tens{Y}$ for all quasitubal tensors $\tens{X}, \tens{Y} $ of appropriate dimensions.
Concretely, $\tens{I}_p$ is an f-diagonal tensor whose diagonal entries are $\bs{e}$.

The reason for denoting the $\ff$ conjugate transpose by the star symbol, usually reserved for denoting the adjoint of an operator, is clarified in~\Cref{sec:quasitubal.tensor.alg} where we study quasitubal tensors as operators and module homomorphisms, and discuss the above definitions in detail.
For now, it suffices to note that $[\tX^*]_{j,k} = \bs{x}_{kj}^*$. In the last sentence, the adjoint of the right hand side is the adjoint of a quasitube, as defined in \Cref{subsec:quasitubes}. We also see that the two definitions are consistent.

In consistency with current naming convention, 
we use the \textit{f-} prefix to describe properties of matrices that hold for all frontal slices of a quasitubal tensor in the transform domain. 
A quasitubal tensor $\tX \in \cHs^{m \xx p}$ is f-diagonal if for all $n \in \ZZ$ the matrix $\thX_{:,:,n}$ is a (possibly rectangular) diagonal matrix.
Correspondingly, a quasitubal tensor $\tU \in \cHs^{p \xx p}$ is said to be f-unitary (or f-orthogonal) if for all $n$, the $n$-th frontal slice of $\thU$, $\thU_{:,:,n} $, is a $p \xx p$ unitary matrix.

\paragraph{Norms and \texorpdfstring{$\ff$}{star F}-Unitarity.}
Since $\tA \in \cH_*^{m \xx p}$ if and only if the mapping $\mat{X} \mapsto \tA \ff \mat{X}$ is a bounded $\cHs$-linear thus, in particular, $\cH$-linear operator from $\cH^p$ to $\cH^m$,
the space $\cH_*^{m \xx p}$ is isomorphic to a subspace of bounded linear operators from $\cH^p$ to $\cH^m$. 
As such, it admits an operator norm that is induced by the respective norms of the domain and codomain spaces, namely $\cH^p$ and $\cH^m$.
The space $\cH^{m \xx p} $ is a direct sum of $mp$ copies of the space $\cH$, $\cH^{m \xx p} = \bigoplus_{j=1}^{m} \bigoplus_{k=1}^{p} \cH$, thus in addition to the operator norm of the ambient space $\cH_*^{m \xx p}$, in which $\cH^{m \xx p}$ is embedded, we have the norm induced by the direct sum structure of $\cH^{m \xx p}$ (see~\Cref{sec:quasitubal.tensor.alg}).

\begin{definition}[Quasitubal Tensor Norm]\label{def:quasitubal.tensor.norm}
    The $\bs{\cH}$\textbf{-norm} of a quasitubal tensor $\tA \in \cH^{m \xx p}$ is 
    \begin{equation}\label{eq:qt.frob.norm}
        \cHNorm{\tA} = \sqrt{{\sum}_{j=1}^{m} {\sum}_{k=1}^{p} \cHNormS{\bs{a}_{kj}}}
    \end{equation}
    And the $\bs{\cH}$\textbf{-induced}, or \textbf{operator norm}, of $\tB \in \cHs^{m \xx p}$ under $\ff$ is defined as
    \begin{equation}\label{eq:qt.op.norm}
        \opNorm{\tB} = {\sup}_{\mat{X} \in \cH^p, \cHNorm{\mat{X}} = 1} \cHNorm{\tB \ff \mat{X}}
    \end{equation}
    where in the above is based on $\cH^{m \xx p} \xhookrightarrow{~} \cHs^{m \xx p}$ being an embedding of $\cH$ into $\cHs$ as defined in~\Cref{eq:tau.cH.to.cHs}.

    When $\mat{X} \in \cH^p$ is such that $\cHNorm{\mat{X}} = 1$ we call $\mat{X}$ a \textbf{unit-length} slice (not to be confused with $\ff$-unit-normalized slices).
\end{definition}

For $\tA \in \cH^{m \xx p}$, by definition $\bs{a}_{jk} \in \cH$, hence the sum in~\Cref{eq:qt.frob.norm} is finite. 
For~\Cref{eq:qt.op.norm}, let $\tB \in \cHs^{m \xx p}$ and $\matX \in \cH^p$.
Write $\matY = \tB \ff \matX $, and observe that for all $h \in [m]$
we have $\bs{y}_h = \tB_{h,:} \ff \mat{X} = \sum_{j=1}^p \bs{b}_{hj} \ff \bs{x}_j$, then
\begin{align*}
    {\sum}_{k \in \ZZ} |\wh{y}_{h}(k)|^2 
    &=  {\sum}_{k \in \ZZ} |{\sum}_{j=1}^p \wh{b}_{hj}(k)  \wh{x}_{j}(k)|^2 \\
    &\leq {\sum}_{k \in \ZZ} |{\sum}_{j=1}^p \wh{b}_{hj}(k)|^2 |{\sum}_{j=1}^p \wh{x}_{j}(k)|^2 \\
    &\leq {\sum}_{k \in \ZZ} p \max_{j' \in [p]} | \wh{b}_{hj'}(k)|^2 p{\sum}_{j=1}^p |\wh{x}_{j}(k)|^2 \\
    &\leq p^2 \sup_{k' \in \ZZ} \max_{j' \in [p]} | \wh{b}_{hj'}(k')|^2 {\sum}_{k \in \ZZ} {\sum}_{j=1}^p |\wh{x}_{j}(k)|^2 \\
    &\leq p^2 \max_{j' \in [p],h' \in [m]} \infNorm{\hbs{b}_{h'j'}}^2 \cHNormS{\mat{X}}
\end{align*}
Hence $\bs{y}_h \in \cH$ and $\matY \in \cH^m$.
It follows that for any $\tB \in \cHs^{m \xx p}$, there exists a constant $A \in \RR$ such that $\cHNorm{\tB \ff \matX} \leq A \cHNorm{\matX}$  for all $\matX \in \cH^p$, therefore, the supremum in~\Cref{eq:qt.op.norm} exists.

Furthermore, note that for all $k \in \ZZ$, we have $\TNorm{\wh{\tB}_{:,:,k}} = \max_{\TNorm{\wh{\mat{x}}} \leq 1 } \TNorm{\wh{\tB}_{:,:,k} \wh{\mat{x}}}$, and 
denote by $\wh{\mat{x}}^{(k)} \in \FFF^p$ the (unit length)  vector such that $\TNorm{\wh{\tB}_{:,:,k}} = \TNorm{\wh{\tB}_{:,:,k} \wh{\mat{x}}^{(k)}}$.
Correspondingly, let $\matX^{(k)} $ denote the quasitubal slice such that $\wh{\mat{X}}^{(k)}_{:,:,k'} = \delta_{kk'} \wh{\mat{x}}^{(k)}$ then $\cHNorm{\matX^{(k)}} = \FNorm{\wh{\mat{x}}^{(k)}} = 1$.
By definition, $\cHNorm{\tB\ff \matX^{(k)}} \leq \opNorm{\tB}$ hence $\{ \TNorm{\wh{\tB}_{:,:,k} } ~|~ k \in \ZZ \}$ is a bounded set in $\RR$.
In particular, $\sup_{k \in \ZZ}\TNorm{\wh{\tB}_{:,:,k} } $ exists and we have the following.





\begin{lemma}\label{lem:qtopnorm.and.linf2norm}
    Let $\tA \in \cHs^{m \xx p}$, then $\opNorm{\tA} = \sup_{j \in \ZZ} \TNorm{\wh{\tA}_{:,:,j}}$.
\end{lemma}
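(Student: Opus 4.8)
The plan is to pass to the transform domain, where the $\ff$-product becomes facewise matrix--vector multiplication and both the $\cH$-norm and the operator norm decouple across frontal slices. Since $\uppi$ (and hence $F$) is an isometric isomorphism (\Cref{lemma:hstar.ellinf.isoiso.cstar.uppi}), for any $\matX \in \cH^p$ I can write $\cHNormS{\matX} = \sum_{k \in \ZZ} \TNormS{\wh{\mat{x}}^{(k)}}$, where $\wh{\mat{x}}^{(k)} \coloneqq \wh{\matX}_{:,:,k} \in \FFF^p$ is the $k$-th frontal slice of $\wh{\matX}$; this follows from \Cref{def:quasitubal.tensor.norm} together with the slicewise isometry $\cHNormS{\bs{x}_j} = \sum_k |\wh{x}_j(k)|^2$ and reordering the (nonnegative) double sum. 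By \Cref{def:quasi.ff.redef}, the $k$-th frontal slice of $\wh{\tA \ff \matX}$ is exactly $\wh{\tA}_{:,:,k}\,\wh{\mat{x}}^{(k)}$, so that $\cHNormS{\tA \ff \matX} = \sum_{k \in \ZZ} \TNormS{\wh{\tA}_{:,:,k}\,\wh{\mat{x}}^{(k)}}$. This reduces the claim to a statement about a direct sum of finite-dimensional matrix actions.

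Write $M \coloneqq \sup_{j \in \ZZ} \TNorm{\wh{\tA}_{:,:,j}}$, which exists by the discussion preceding the lemma. For the upper bound $\opNorm{\tA} \leq M$, I would take an arbitrary unit-length $\matX \in \cH^p$ and bound each summand by the slicewise spectral norm: $\TNormS{\wh{\tA}_{:,:,k}\,\wh{\mat{x}}^{(k)}} \leq \TNormS{\wh{\tA}_{:,:,k}}\,\TNormS{\wh{\mat{x}}^{(k)}} \leq M^2 \TNormS{\wh{\mat{x}}^{(k)}}$. Summing over $k$ and using $\sum_{k\in\ZZ} \TNormS{\wh{\mat{x}}^{(k)}} = \cHNormS{\matX} = 1$ gives $\cHNormS{\tA \ff \matX} \leq M^2$, and taking the supremum over unit $\matX$ yields $\opNorm{\tA} \leq M$.

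For the reverse inequality I would reuse the single-slice test vectors already constructed just before the lemma. For each fixed $k$, let $\wh{\mat{x}}^{(k)} \in \FFF^p$ be a unit right singular vector attaining $\TNorm{\wh{\tA}_{:,:,k}\,\wh{\mat{x}}^{(k)}} = \TNorm{\wh{\tA}_{:,:,k}}$, and let $\matX^{(k)}$ be the slice determined by $\wh{\matX}^{(k)}_{:,:,k'} = \delta_{kk'}\,\wh{\mat{x}}^{(k)}$. Because it is finitely supported in the transform domain, $\matX^{(k)}$ is a genuine tube slice in $\cH^p$ (not merely in $\cHs^p$) with $\cHNorm{\matX^{(k)}} = 1$. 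The slicewise formula then collapses to the single surviving term, $\cHNorm{\tA \ff \matX^{(k)}} = \TNorm{\wh{\tA}_{:,:,k}}$, so $\opNorm{\tA} \geq \TNorm{\wh{\tA}_{:,:,k}}$ for every $k$; taking the supremum over $k$ gives $\opNorm{\tA} \geq M$, and combining the two bounds establishes the equality.

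The step I expect to require the most care is the justification that the test slices $\matX^{(k)}$ genuinely lie in $\cH^p$ rather than only in the larger space $\cHs^p$: this is exactly where the embedding $\cH \hookrightarrow \cHs$ matters, and it is guaranteed because a finitely supported sequence is square summable. Everything else is a clean consequence of the isometry of $\uppi$ and the facewise form of the $\ff$-product, and the infinite sums cause no difficulty since all terms are nonnegative.
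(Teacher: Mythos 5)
Your proposal is correct and follows essentially the same route as the paper's proof: the identical slicewise decomposition $\cHNormS{\tA \ff \matX} = \sum_{k}\TNormS{\wh{\tA}_{:,:,k}\,\wh{\mat{x}}^{(k)}}$, the same spectral-norm bound per frontal slice for the upper estimate, and the same single-slice (delta-supported) unit test vectors attaining each $\TNorm{\wh{\tA}_{:,:,k}}$ for the lower estimate. Your added remark that these finitely supported test slices lie in $\cH^p$ (not merely $\cHs^p$) is a point the paper uses implicitly, so nothing is missing.
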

\begin{proof}
    For all unit-length $\mat{X} \in \cH^p$ we have 
    \begin{align*}
        \cHNormS{\tA \ff \mat{X}} 
        &= \ltNormS{\wh{\tA} \vartriangle \wh{\mat{X}}}
        = {\sum}_{k \in \ZZ} \FNormS{\wh{\tA}_{:,:,k} \wh{\mat{X}}_{:,k}}\\
        &\leq {\sum}_{k \in \ZZ} \TNormS{\wh{\tA}_{:,:,k}} \FNormS{\wh{\mat{X}}_{:,k}} 
        \leq ({\sup}_{j \in \ZZ} \TNormS{\wh{\tA}_{:,:,j}}){\sum}_{k \in \ZZ} \FNormS{\wh{\mat{X}}_{:,k}} \\
        &= {\sup}_{j \in \ZZ} \TNormS{\wh{\tA}_{:,:,j}}
    \end{align*}
Hence $\opNorm{\tA} \leq {\sup}_{j \in \ZZ} \TNorm{\wh{\tA}_{:,:,j}}$.
Next, note that for all $j \in \ZZ$ the supremum of 
$\sup_{\FNorm{\wh{\mat{z}}} = 1 } \FNorm{\wh{\tA}_{:,:,j} \wh{\mat{z}}}$ 
is attained, and we have that 
$\FNorm{\wh{\tA}_{:,:,j} \wh{\mat{x}}_j} = \TNorm{\wh{\tA}_{:,:,j}}$ for some unit norm $\wh{\mat{x}}_j \in \FFF^p$.
Define $\mat{X}_{(j)} \in \cH^p$ as the tubal tensor with 
$$
{\wh{\mat{X}_{(j)}}}_{:,k} = \delta_{j,k} \wh{\mat{x}}_j
$$
Then 
$
    \cHNorm{\mat{X}_{(j)}}
    = \ltNorm{\wh{\mat{X}_{(j)}}} 
    = \FNorm{\wh{\mat{x}}_j} = 1
$.
Thus, for all $j \in \ZZ$ we have that $\TNorm{\wh{\tA}_{:,:,j}} = \cHNorm{\tA \ff \mat{X}_{(j)}} \leq \opNorm{\tA}$, and conclude that $\opNorm{\tA} = \sup_{j \in \ZZ} \TNorm{\wh{\tA}_{:,:,j}}$.

\end{proof}
Recall that in~\Cref{sec:mm.tubal.tensor.algebra} we defined the notion of unitarity for tubal tensors as the property of preserving the Frobenius norm.
For infinite dimensional quasitubal tensors, we have the following definition.
\begin{definition}
    A quasitubal tensor $\tens{U} \in \cHs^{p \xx p}$ is said to be $\boldsymbol{\ff}$\textbf{-unitary} if $\tens{U}^* \ff \tens{U} = \tens{U} \ff \tens{U}^* = \tens{I}$.
\end{definition}

The following lemma is a direct consequence of algebraic identities. 
\begin{lemma}\label{lem:ff.unitary.identities}
    Let $\tens{I}_p \in \cHs^{p \xx p}$ be the identity quasitubal tensor, then $[\wh{\tens{I}_p}]_{:,:,k} = \matI_p$ for all $k \in \ZZ$, where $\matI_p$ is the $p \xx p$ identity matrix. 
    As a consequence, a quasitubal tensor $\tens{U} \in \cHs^{p \xx p}$ is $\ff$-unitary if and only if it is f-unitary. 
\end{lemma}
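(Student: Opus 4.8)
The plan is to prove both claims by pushing everything through the $\uppi$-transform, where (by \Cref{def:quasi.ff.redef.matmul} and \Cref{eq:def.ff.conj.transpose}) the quasitubal $\ff$-product becomes facewise matrix multiplication and the $\ff$-conjugate transpose becomes facewise conjugate transposition, and then exploiting that $\uppi$ is injective (indeed an isometric isomorphism, by \Cref{lemma:hstar.ellinf.isoiso.cstar.uppi}).

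For the first claim, recall that $\tens{I}_p$ is the f-diagonal quasitubal tensor whose diagonal entries are the unit quasitube $\bs{e}$ and whose off-diagonal entries are $0$. By \Cref{lem:cHs.star.alg} the map $\uppi$ is a unital algebra isomorphism, so $\uppi(\bs{e}) = \bs{1}$, the multiplicative unit of $\ell_\infty$, i.e.\ the constant-one sequence, while $\uppi(0) = 0$ by linearity. Since $[\wh{\tens{I}_p}]_{i,j,k} = \uppi\big((\tens{I}_p)_{i,j}\big)(k)$, this equals $\bs{1}(k) = 1$ when $i = j$ and $0$ when $i \neq j$. Hence $[\wh{\tens{I}_p}]_{:,:,k} = \matI_p$ for every $k \in \ZZ$.

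For the equivalence, I would apply the transform-domain formulas: for each $k \in \ZZ$ one has $[\wh{\tens{U}^* \ff \tens{U}}]_{:,:,k} = (\thU_{:,:,k})^\CT \thU_{:,:,k}$ and $[\wh{\tens{U} \ff \tens{U}^*}]_{:,:,k} = \thU_{:,:,k} (\thU_{:,:,k})^\CT$. Because the $\uppi$-transform acts facewise and $\uppi$ is injective, the tensor identities $\tens{U}^* \ff \tens{U} = \tens{U} \ff \tens{U}^* = \tens{I}_p$ hold if and only if the corresponding frontal-slice identities hold for every $k$; by the first claim these read $(\thU_{:,:,k})^\CT \thU_{:,:,k} = \thU_{:,:,k}(\thU_{:,:,k})^\CT = \matI_p$. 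For a square matrix this pair of identities is exactly the statement that $\thU_{:,:,k}$ is unitary, which is the definition of f-unitarity. Thus $\tens{U}$ is $\ff$-unitary if and only if it is f-unitary.

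I expect no serious obstacle here: the only point requiring care is the reduction of a tensor identity over $\cHs$ to a family of frontal-slice matrix identities, which rests on the injectivity of $\uppi$ together with the facewise nature of both the $\ff$-product and the conjugate transpose. Once that reduction is in place, the elementary matrix fact that $\matA^\CT \matA = \matA \matA^\CT = \matI_p$ characterizes unitarity completes the argument.
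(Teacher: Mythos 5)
Your proof is correct, and it is precisely the argument the paper has in mind: the paper states this lemma without proof, calling it ``a direct consequence of algebraic identities,'' and your transform-domain expansion (using that $\uppi$ is a unital *-isomorphism, so $\uppi(\bs{e})=\bs{1}$, together with the facewise form of the $\ff$-product and the $\ff$-conjugate transpose from \Cref{def:quasi.ff.redef} and \Cref{eq:def.ff.conj.transpose}) is exactly the intended fleshing-out of that remark. No gaps: the reduction to frontal slices via injectivity of $\uppi$ and the elementary fact that $\matA^\CT\matA=\matA\matA^\CT=\matI_p$ characterizes unitary matrices are all that is needed.
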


Similarly to the finite-dimensional case, $\ff$ multiplication by an $\ff$-unitary tensor is an isometry.

\begin{lemma}\label{lemma:ffuni.norm.preserve}
    Let $\tens{U} \in \cHs^{m \xx m}$ be an $\ff$-unitary tensor, then for all $\tens{X} \in \cHs^{m \xx p}$ we have that $\opNorm{\tens{U} \ff \tens{X}} = \opNorm{\tens{X}}$.
    Moreover, if $\tX \in \cH^{m \xx p}$, then $\cHNorm{\tens{U} \ff \tens{X}} = \cHNorm{\tens{X}}$.
 \end{lemma}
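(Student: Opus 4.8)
The plan is to push both equalities through the $\uppi$-transform, where by \Cref{def:quasi.ff.redef} the $\ff$-product becomes facewise matrix multiplication and, by \Cref{lem:ff.unitary.identities}, the $\ff$-unitary tensor $\tens{U}$ is represented by a family of genuine unitary matrices $\{\thU_{:,:,k}\}_{k \in \ZZ}$. Both claims then reduce to the elementary fact that left-multiplication by a unitary matrix preserves the spectral norm and the Frobenius norm, applied slice by slice.

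For the operator-norm identity, I would start from \Cref{lem:qtopnorm.and.linf2norm}, which gives $\opNorm{\tens{U} \ff \tens{X}} = \sup_{k \in \ZZ} \TNorm{\wh{(\tens{U} \ff \tens{X})}_{:,:,k}}$. By \Cref{def:quasi.ff.redef} the transform-domain representation of the $\ff$-product is the facewise product, so $\wh{(\tens{U} \ff \tens{X})}_{:,:,k} = \thU_{:,:,k}\,\thX_{:,:,k}$ for every $k$. Since each $\thU_{:,:,k}$ is unitary, $\TNorm{\thU_{:,:,k}\,\thX_{:,:,k}} = \TNorm{\thX_{:,:,k}}$, and taking the supremum over $k$ while invoking \Cref{lem:qtopnorm.and.linf2norm} once more yields $\opNorm{\tens{U} \ff \tens{X}} = \sup_{k \in \ZZ} \TNorm{\thX_{:,:,k}} = \opNorm{\tens{X}}$.

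For the $\cH$-norm identity, I would first note that since $\cH$ is a two-sided ideal in $\cHs$ (\Cref{lem:cH.2sided.star.ideal.in.cHs}), $\tens{U} \ff \tens{X} \in \cH^{m \xx p}$ whenever $\tens{X} \in \cH^{m \xx p}$, so $\cHNorm{\tens{U} \ff \tens{X}}$ is well-defined. Using the isometry of the $F$-transform to write the $\cH$-norm slicewise, $\cHNormS{\tens{A}} = \sum_{k \in \ZZ} \FNormS{\wh{\tens{A}}_{:,:,k}}$ (the same identity used inside the proof of \Cref{lem:qtopnorm.and.linf2norm}), I obtain $\cHNormS{\tens{U} \ff \tens{X}} = \sum_{k \in \ZZ} \FNormS{\thU_{:,:,k}\,\thX_{:,:,k}}$. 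Again each $\thU_{:,:,k}$ is unitary, and unitary multiplication preserves the Frobenius norm, so $\FNormS{\thU_{:,:,k}\,\thX_{:,:,k}} = \FNormS{\thX_{:,:,k}}$ termwise; summing gives $\cHNormS{\tens{U} \ff \tens{X}} = \sum_{k \in \ZZ} \FNormS{\thX_{:,:,k}} = \cHNormS{\tens{X}}$.

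The argument is essentially bookkeeping once the transform-domain picture is in place, so I do not expect a serious obstacle. The only points requiring care are (i) that the supremum (respectively the infinite sum) commutes with the slicewise norm identities, which is handled by the facewise structure of the $\ff$-product together with the already-established norm formulas, and (ii) confirming that the $\cH$-valued claim stays inside $\cH$, which is guaranteed by the ideal property of \Cref{lem:cH.2sided.star.ideal.in.cHs}.
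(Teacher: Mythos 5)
Your proposal is correct and follows essentially the same route as the paper's proof: both parts reduce to \Cref{lem:ff.unitary.identities} plus \Cref{lem:qtopnorm.and.linf2norm} for the operator norm, and to the slicewise Frobenius decomposition $\cHNormS{\tX}=\sum_{k\in\ZZ}\FNormS{\thX_{:,:,k}}$ (which the paper obtains by rearranging the absolutely convergent series) combined with facewise unitary invariance for the $\cH$-norm. Your explicit appeal to the ideal property of \Cref{lem:cH.2sided.star.ideal.in.cHs} to confirm $\tU\ff\tX\in\cH^{m\xx p}$ is a small extra care step the paper leaves implicit, but it does not change the argument.
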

 \begin{proof}
    According to~\Cref{lem:ff.unitary.identities},  $\wh{\tens{U}}_{:,:,j}$ is a unitary matrix for all $j \in \ZZ$, hence $\TNorm{\wh{\tens{U}}_{:,:,j} \wh{\mat{Y}} } = \TNorm{\wh{\mat{Y}}}$ for any $\wh{\mat{Y}} \in \FFF^{m \xx p}$, and in particular,  $\TNorm{\wh{\tens{U}}_{:,:,j} \wh{\tens{X}}_{:,:,j}} = \TNorm{\wh{\tens{X}}_{:,:,j}}$ for all $j \in \ZZ$.
    Apply~\Cref{lem:qtopnorm.and.linf2norm} to obtain that $\opNorm{\tens{U} \ff \tens{X}} = \sup_{j \in \ZZ} \TNorm{\wh{\tens{U}}_{:,:,j} \wh{\tens{X}}_{:,:,j}} = \sup_{j \in \ZZ} \TNorm{\wh{\tens{X}}_{:,:,j}} = \opNorm{\tens{X}}$, and the result follows.

    In case $\tX \in \cH^{m \xx p}$, by~\Cref{eq:qt.frob.norm}
    we have that 
    \begin{align*}
        \cHNormS{\tens{X}}
        &= {\sum}_{j=1}^{m} {\sum}_{k=1}^{p} \cHNormS{\bs{x}_{kj}} \\
        &= {\sum}_{j=1}^{m} {\sum}_{k=1}^{p} \ltNormS{\wh{\bs{x}_{kj}}}\\
        &= {\sum}_{j=1}^{m} {\sum}_{k=1}^{p} {\sum}_{n \in \ZZ} |\wh{x}_{kj}(n)|^2 
    \end{align*}
    where $\wh{x}_{kj}(n)$ is the $n$-th element of the sequence $\wh{\bs{x}_{kj}}$.
    Since the series is absolutely convergent, we can rearrange the terms and obtain that $\cHNormS{\tens{X}} = {\sum}_{n \in \ZZ} \FNormS{\wh{\tens{X}}_{:,:,n}}$.
    And for any sequence $\{ \wh{\mat{U}}_{(n)} \}_{n \in \ZZ}$ of unitary $m \xx m$ matrices, we have that $\cHNormS{\tens{X}} = {\sum}_{n \in \ZZ} \FNormS{\wh{\mat{U}}_{(n)} \wh{\tens{X}}_{:,:,n}}$.
    Apply~\Cref{lem:ff.unitary.identities} to obtain that $\cHNormS{\tens{U} \ff \tens{X}} = \cHNormS{\tens{X}}$ for any $\ff$-unitary quasitubal tensor $\tens{U}$.
 \end{proof}


\subsection{Quasitubal SVD}\label{sec:quasitubal.svd}
\begin{definition}[q-SVD]\label{def:quasitubal.svd}
    Consider a quasitubal tensor $\tX \in \cHs^{m \xx p}$.
    A \textbf{quasitubal SVD}, or \textbf{q-SVD}$\boldsymbol{\ff}$ of $\tX$ is a factorization $\tX = \tU \ff \tS \ff \tV^*$ where $\tU \in \cHs^{m \xx m}$ and $\tV \in \cHs^{p \xx p}$ are $\ff$-unitary, and $\tS \in \cHs^{m \xx p}$ is f-diagonal with diagonal entries $\bs{s}_{1} \geq_{\cHs} \bs{s}_{2} \geq_{\cHs} \ldots \geq_{\cHs} \bs{s}_{\min(m,p)} \geq_{\cHs} 0$.
\end{definition}
Existence of a quasitubal SVD decomposition for any quasitubal tensor is established in the following theorem. 
\begin{theorem}\label{thm:quasitubal.svd.exists}
    Any $\tX \in \cHs^{m \xx p}$ admits a quasitubal SVD $\tX = \tU \ff \tS \ff \tV^*$ as defined in~\Cref{def:quasitubal.svd}.

    We call the `column slices' of $\tU$ and $\tV$ the left and right \textbf{singular quasitubal slices} of $\tX$, respectively, and the diagonal entries of $\tS$ the \textbf{singular quasitubes} of $\tX$.
\end{theorem}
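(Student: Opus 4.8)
The plan is to reduce the existence of a quasitubal SVD to a countable family of ordinary matrix SVDs carried out slice-by-slice in the transform domain. Using the isometric *-isomorphism $\uppi \colon \cHs \to \ell_\infty$ of \Cref{lem:cHs.star.alg}, I would first pass to $\thX = \tX \xx_3 \uppi \in \ell_\infty^{m \xx p}$, whose $k$-th frontal slice $\thX_{:,:,k} \in \FFF^{m \xx p}$ is an ordinary matrix for each $k \in \ZZ$. For every $k$ take a matrix SVD $\thX_{:,:,k} = \matU_k \matSigma_k \matV_k^\CT$ with the diagonal of $\matSigma_k$ in nonincreasing order, and define quasitubal tensors $\tU, \tS, \tV$ through their transform-domain slices by $\thU_{:,:,k} \coloneqq \matU_k$, $\thS_{:,:,k} \coloneqq \matSigma_k$, $\thV_{:,:,k} \coloneqq \matV_k$, so that $\tU = \thU \xx_3 \uppi^{-1}$, and likewise for $\tS$ and $\tV$.

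The main point to verify, and the step I expect to require the most care, is that this construction actually returns legitimate quasitubal tensors, i.e., that each reassembled entry is a bounded sequence and hence an element of $\cHs$. For $\tU$ and $\tV$ this is immediate, since every entry of a unitary matrix has modulus at most $1$, so each sequence $\{(\matU_k)_{ij}\}_{k \in \ZZ}$ lies in $\ell_\infty$ with $\infNorm{\cdot} \leq 1$. For the singular-value sequences the key input is \Cref{lem:qtopnorm.and.linf2norm}: since $\opNorm{\tX} = \sup_{k \in \ZZ} \TNorm{\thX_{:,:,k}}$ is finite and the largest singular value of $\thX_{:,:,k}$ equals $\TNorm{\thX_{:,:,k}}$, every diagonal sequence $\{ (\matSigma_k)_{jj} \}_{k \in \ZZ}$ is bounded by $\opNorm{\tX}$ and thus defines a quasitube $\bs{s}_j \in \cHs$. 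Note that the nonuniqueness of the per-slice SVD causes no difficulty: boundedness holds for any choice, so $\uppi^{-1}$ maps each reassembled sequence back into $\cHs$ regardless.

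It then remains to confirm the three structural requirements. Each $\thU_{:,:,k}$ and $\thV_{:,:,k}$ is unitary by construction, so by \Cref{lem:ff.unitary.identities} the tensors $\tU$ and $\tV$ are $\ff$-unitary; each $\thS_{:,:,k} = \matSigma_k$ is (rectangular) diagonal, so $\tS$ is f-diagonal. For the ordering, the within-slice inequalities $(\matSigma_k)_{11} \geq (\matSigma_k)_{22} \geq \cdots \geq 0$ hold for every $k$, which is exactly the statement that each $\wh{\bs{s}}_j - \wh{\bs{s}}_{j+1} \in \ell_\infty$ is a nonnegative sequence; invoking the elementwise characterization of $\ell_\infty$-positivity together with \Cref{lem:rel.invariance} yields $\bs{s}_1 \geq_{\cHs} \bs{s}_2 \geq_{\cHs} \cdots \geq_{\cHs} 0$. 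Finally, to check the factorization I would compute in the transform domain using \Cref{def:quasi.ff.redef.matmul} and \Cref{eq:def.ff.conj.transpose}: the $k$-th slice of $\tU \ff \tS \ff \tV^*$ is the facewise product $\thU_{:,:,k}\, \thS_{:,:,k}\, (\thV_{:,:,k})^\CT = \matU_k \matSigma_k \matV_k^\CT = \thX_{:,:,k}$. Since $\xx_3 \uppi$ is a bijection, agreement of all transform-domain slices gives $\tX = \tU \ff \tS \ff \tV^*$, completing the proof.
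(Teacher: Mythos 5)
Your proposal is correct and takes essentially the same route as the paper's own proof: apply the matrix SVD to each frontal slice of $\thX$ in the transform domain, reassemble $\tU,\tS,\tV$ via $\uppi^{-1}$, obtain $\ff$-unitarity from \Cref{lem:ff.unitary.identities}, f-diagonality of $\tS$ by construction, and the ordering $\bs{s}_1 \geq_{\cHs} \cdots \geq_{\cHs} 0$ by transferring the slice-wise singular-value inequalities through \Cref{lem:rel.invariance}. Your explicit check that the reassembled entries are bounded sequences (hence genuine elements of $\cHs$) is a detail the paper leaves implicit, but it does not alter the argument.
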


\begin{proof}
    For existence, consider the $\uppi$-transform of $\tX$ as defined in~\Cref{def:pi-transform.quasitensor}, and apply the SVD to each frontal slice of $\wh{\tX}$, i.e., $\wh{\tX}_{:,:,k} = \wh{\matU}_k \wh{\matSigma}_k \wh{\matV}_k^\CT$.
    Set $\wh{\tens{U}},\wh{\tens{S}},\wh{\tens{V}}$ as the quasitubal tensor over $\ell_\infty$ with frontal slices $\wh{\matU}_k, \wh{\matSigma}_k, \wh{\matV}_k$, respectively, for all $k \in \ZZ$.
    Define $\tU = \wh{\tens{U}} \xx_3 \uppi^{-1}, \tS = \wh{\tens{S}} \xx_3 \uppi^{-1}, \tV = \wh{\tens{V}} \xx_3 \uppi^{-1}$, then by \Cref{lem:ff.unitary.identities} we have that $\tU, \tV$ are $\ff$-unitary.
    Since $\thS$ is f-diagonal, we have that $\tS$ is f-diagonal as well.
    Consider the quasitube $\hbs{s}_{j} \coloneqq \wh{\tS}_{j, j,:} \in \ell_\infty$ that is the $j$-th diagonal entry of $\thS$, whose $k$-th element is denoted by $\wh{s}^{(k)}_{j}$.
    By the properties of matrix SVD, we have that $\wh{s}^{(k)}_{j} \geq \wh{s}^{(k)}_{j+1}$.
    It follows that $\hbs{s}_{j} \geq_{\ell_\infty} \hbs{s}_{j+1}$,
    and thus, by~\Cref{lem:rel.invariance} we have that $\bs{s}_{1} \geq_{\cHs} \bs{s}_{2} \geq_{\cHs} \cdots \geq_{\cHs} \bs{s}_{\min(m,p)} \geq_{\cHs} 0$.
    Hence $\tX = \tU \ff \tS \ff \tV^*$ is a q-SVD$\ff$ of $\tX$.
\end{proof}

Having established a notion of singular value decomposition, we can further develop notions of ranks and rank truncation. Let $\tX \in \cHs^{m \xx p}$ be a quasitubal tensor, and $\tU \ff \tS \ff \tV^*$ be a q-SVD$\ff$ of $\tX$.
The \textbf{q-rank} of $\tX$ under $\ff$ is the number of non-zero singular quasitubes in the q-SVD of $\tX$.
The \textbf{multi-rank} of $\tX$ under $\ff$ is the sequence $\bs{\rho} $ of integers such that $\rho_k = \rnk(\wh{\tX}_{:,:,k})$ for all $k \in \ZZ$.

\begin{definition}[Rank Truncations]\label{def:qt.rank.truncations}
    Let $\tX \in \cHs^{m \xx p}$ be a quasitubal tensor, and $\tU \ff \tS \ff \tV^*$ be a q-SVD$\ff$ of $\tX$. The \textbf{q-rank-$\mathbf{r}$ truncation} of $\tX$ is the quasitubal tensor $[\tX]_{r} \coloneqq \tU_{r} \ff \tS_{r} \ff \tV_{r}^*$ where $\tU_{r} \coloneqq \tU_{:,1:r} \in \cHs^{m \xx r}, \tV_{r} \coloneqq \tV_{:,1:r} \in \cHs^{p \xx r}$ and $\tS_{r} \coloneqq \tS_{1:r,1:r} \in \cHs^{r \xx r}$.
    The \textbf{multi-rank-$\bs{\rho}$ truncation} of $\tX$ is the quasitubal tensor $[\tX]_{\bs{\rho}} $ such that $\wh{\tX}_{:,:,k} = \wh{\tU}_{:,1:\rho_k,k} \vartriangle \wh{\tS}_{1:\rho_k,1:\rho_k,k} \vartriangle (\wh{\tV}_{:,1:\rho_k,k})^\CT$.
\end{definition}

\subsection{Optimality of Multi-rank Truncations}
The success and usefulness of any rank truncations of tensors rely on Eckart-Young-like results that guarantee the optimality of low-rank approximations~\cite{KilmerPNAS}. 
Here we establish that a optimal low q(multi)-rank approximation of a quasitubal tensor $\tX$ under $\ff$ is obtained by the truncations defined in~\Cref{def:qt.rank.truncations}, providing our first Eckart-Young-like result for quasitubal tensors.


\begin{theorem}\label{thm:qt.best.lowrank.op}
    Let $\tX \in \cHs^{m \xx p}$ be a quasitubal tensor, and $[\tX]_{r}, [\tX]_{\bs{\rho}}$ be the q-rank $r$ and multi-rank $\bs{\rho}$ truncations of $\tX$ under $\ff$ (as defined in~\Cref{def:qt.rank.truncations}).
    
    Then, for any $\tens{Y} \in \cHs^{m \xx p}$ we have that 
    (1) if the multi-rank of $\tY$ is $\bs{\varphi} \leq_{\ell_\infty} \bs{\rho}$, then $\opNorm{\tX - [\tX]_{\bs{\rho}}} \leq \opNorm{\tX - \tens{Y}}$,  and 
    (2) if the q-rank of $\tY$ is $\varphi \leq r$, then $\opNorm{\tX - [\tX]_{r}} \leq \opNorm{\tX - \tens{Y}}$.
\end{theorem}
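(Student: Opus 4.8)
The plan is to transport the entire problem into the transform domain $\ell_\infty$, where the operator norm decouples into a supremum of frontal-slice spectral norms, and then invoke the classical matrix Eckart--Young--Mirsky theorem one frontal slice at a time. Two earlier results do the heavy lifting: by \Cref{lem:qtopnorm.and.linf2norm}, $\opNorm{\tens{Z}} = \sup_{k \in \ZZ} \TNorm{\wh{\tens{Z}}_{:,:,k}}$ for any $\tens{Z} \in \cHs^{m \xx p}$, where $\TNorm{\cdot}$ is the matrix spectral norm; and because $\uppi$ turns $\ff$ into facewise multiplication, the truncations of \Cref{def:qt.rank.truncations} act slice by slice, so that $\wh{[\tX]_{\bs{\rho}}}_{:,:,k}$ is precisely the rank-$\rho_k$ truncated SVD of the matrix $\wh{\tX}_{:,:,k}$. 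In particular $\TNorm{\wh{\tX}_{:,:,k} - \wh{[\tX]_{\bs{\rho}}}_{:,:,k}} = \sigma_{\rho_k+1}(\wh{\tX}_{:,:,k})$, the $(\rho_k+1)$-th singular value of the $k$-th slice, with the convention $\sigma_j = 0$ once $j$ exceeds the rank of that slice.

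First I would prove part (1). Fix $k \in \ZZ$. The hypothesis $\bs{\varphi} \leq_{\ell_\infty} \bs{\rho}$ means $\varphi_k \leq \rho_k$, and the multi-rank of $\tens{Y}$ being $\bs{\varphi}$ means $\rnk(\wh{\tens{Y}}_{:,:,k}) = \varphi_k \leq \rho_k$. The matrix Eckart--Young--Mirsky theorem in the spectral norm then gives $\TNorm{\wh{\tX}_{:,:,k} - \wh{\tens{Y}}_{:,:,k}} \geq \sigma_{\rho_k+1}(\wh{\tX}_{:,:,k}) = \TNorm{\wh{\tX}_{:,:,k} - \wh{[\tX]_{\bs{\rho}}}_{:,:,k}}$. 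Since this pointwise inequality holds for every $k$, taking the supremum over $k$ on both sides and applying \Cref{lem:qtopnorm.and.linf2norm} yields $\opNorm{\tX - [\tX]_{\bs{\rho}}} \leq \opNorm{\tX - \tens{Y}}$.

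Next I would reduce part (2) to part (1) by converting the scalar q-rank bound into a per-slice rank bound. From the q-SVD construction of \Cref{thm:quasitubal.svd.exists}, the $j$-th singular quasitube of $\tens{Y}$ has $\uppi$-image whose $k$-th coordinate is $\sigma_j(\wh{\tens{Y}}_{:,:,k})$; hence, since $\uppi$ is an isometric isomorphism (\Cref{lem:cHs.star.alg}), a singular quasitube vanishes in $\cHs$ iff the corresponding singular value of every frontal slice vanishes. Thus a q-rank of $\varphi \leq r$ forces $\rnk(\wh{\tens{Y}}_{:,:,k}) \leq \varphi \leq r$ for all $k$, i.e. the multi-rank of $\tens{Y}$ is $\leq_{\ell_\infty}$ the constant sequence $r$. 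Since $[\tX]_r$ coincides with the multi-rank truncation at the constant sequence $r$, part (1) applies verbatim and gives $\opNorm{\tX - [\tX]_r} \leq \opNorm{\tX - \tens{Y}}$.

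The hard part is not the analysis but the bookkeeping in the last step: one must verify that the abstract rank notions (a singular quasitube being zero as an element of the C*-algebra $\cHs$, and the count of nonzero singular quasitubes) really do translate into the concrete slicewise statement ``every frontal slice has rank at most $r$.'' This rests on the transparency of the $\uppi$-transform, established in \Cref{lem:rel.invariance} and \Cref{lem:cHs.star.alg}, together with the facewise description of the q-SVD. Once that translation is secured, both optimality claims are immediate from the matrix Eckart--Young--Mirsky theorem applied slice by slice, the supremum preserving every pointwise inequality.
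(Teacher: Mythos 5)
Your proposal is correct and follows essentially the same route as the paper's proof: transport to the transform domain, apply the spectral-norm Eckart--Young--Mirsky theorem slice by slice, take the supremum via \Cref{lem:qtopnorm.and.linf2norm}, and reduce the q-rank case to the multi-rank case with the constant sequence $r\bs{1}$. Your only addition is that you spell out the bookkeeping showing q-rank $\varphi$ forces every frontal slice to have rank at most $\varphi$, a step the paper asserts more tersely via $\tens{Y}_{\bs{\varphi}'} = \tens{Y}$.
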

\begin{proof}
    For all $j \in \ZZ$ and $k = 1, \ldots, p$,
    write $\wh{\mat{v}}_{k}^{(j)} \coloneqq \wh{\tV}_{:,k,j}$, where $\tV$ is the right singular quasitubal slices of $\tX$.
 Let $\tens{Y} \in \cHs^{m \xx p}$ be a quasitubal tensor with multi-rank $\bs{\varphi} $  under $\ff$ and assume that $\bs{\varphi} \leq_{\ell_\infty} \bs{\rho}$, i.e., $\varphi_j \leq \rho_j$ for all $j \in \ZZ$.
    By the spectral norm formulation of the  Eckart-Young theorem for matrices, we have that $\TNorm{\wh{\tX}_{:,:,j} - \wh{\tY}_{:,:,j}} \geq \TNorm{\wh{\tX}_{:,:,j} - \wh{[\tX]_{\bs{\rho}}}_{:,:,j}}$ for all $j \in \ZZ$.
    Hence, by~\Cref{lem:qtopnorm.and.linf2norm} we have $\opNorm{\tX - \tY} = \sup_{j \in \ZZ} \TNorm{\wh{\tX}_{:,:,j} - \wh{\tY}_{:,:,j}} \geq \sup_{j \in \ZZ} \TNorm{\wh{\tX}_{:,:,j} - \wh{[\tX]_{\bs{\rho}}}_{:,:,j}} = \opNorm{\tX - [\tX]_{\bs{\rho}}}$.

    For the second part, suppose $\tY$ has q-rank $\varphi \leq r$ under $\ff$.
    Note that $\bs{\varphi}' \coloneqq \varphi \bs{1} \leq_{\ell_\infty} r \bs{1} \eqqcolon \bs{\rho}'$, and that $\tens{Y}_{\bs{\varphi}'} = \tens{Y}$. 
    By applying the same argument as above we have that $\opNorm{\tX - [\tX]_{r}} = \opNorm{\tX - [\tX]_{\bs{\rho}'}} \leq  \opNorm{\tX - \tens{Y}}$.
\end{proof}

\begin{corollary}\label{cor:qt.best.lowrank.frob}
    Let $\tX \in \cHs^{m \xx p}$ be a quasitubal tensor, and $\tX = \tens{U} \ff \tens{S} \ff \tens{V}^*$ be a q-SVD$\ff$ of $\tX$.
    Then $\opNorm{\tX} = \opNorm{\tS}$ and if $\tX \in \cH^{m \xx p}$ then $\cHNorm{\tX} = \cHNorm{\tS}$.
\end{corollary}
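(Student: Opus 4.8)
The plan is to reduce everything to the transform domain, where the $\ff$-product becomes facewise matrix multiplication and the q-SVD of $\tX$ becomes a family of ordinary matrix SVDs, and then to invoke the unitary invariance of the spectral and Frobenius norms slice by slice. The guiding principle is the one already captured by \Cref{lemma:ffuni.norm.preserve}: $\ff$-multiplication by an $\ff$-unitary tensor is an isometry in both the operator and the $\cH$ norm.

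For the operator-norm identity, I would start from \Cref{lem:qtopnorm.and.linf2norm}, which gives $\opNorm{\tX} = \sup_{j \in \ZZ} \TNorm{\wh{\tX}_{:,:,j}}$. Transforming the factorization $\tX = \tU \ff \tS \ff \tV^*$ facewise yields $\wh{\tX}_{:,:,j} = \wh{\tU}_{:,:,j}\,\wh{\tS}_{:,:,j}\,(\wh{\tV}_{:,:,j})^\CT$ for every $j$, and by \Cref{lem:ff.unitary.identities} the matrices $\wh{\tU}_{:,:,j}$ and $\wh{\tV}_{:,:,j}$ are unitary. Since the spectral norm is invariant under multiplication by a unitary matrix on either side, $\TNorm{\wh{\tX}_{:,:,j}} = \TNorm{\wh{\tS}_{:,:,j}}$ for each $j$; taking the supremum over $j$ and applying \Cref{lem:qtopnorm.and.linf2norm} once more gives $\opNorm{\tX} = \opNorm{\tS}$.

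For the $\cH$-norm identity, assume $\tX \in \cH^{m \xx p}$. First I would observe that $\tS = \tU^* \ff \tX \ff \tV$, so by the two-sided ideal property of $\cH$ in $\cHs$ (\Cref{lem:cH.2sided.star.ideal.in.cHs}) the f-diagonal tensor $\tS$ also lies in $\cH^{m \xx p}$; hence $\cHNorm{\tS}$ is well defined through \Cref{eq:qt.frob.norm}. Reusing the computation from the proof of \Cref{lemma:ffuni.norm.preserve}, the $\cH$-norm expands in the transform domain as $\cHNormS{\tX} = \sum_{n \in \ZZ} \FNormS{\wh{\tX}_{:,:,n}}$, and the same facewise factorization together with the unitary invariance of the Frobenius norm gives $\FNorm{\wh{\tX}_{:,:,n}} = \FNorm{\wh{\tS}_{:,:,n}}$ for each $n$. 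Summing over $n$ yields $\cHNormS{\tX} = \sum_{n \in \ZZ}\FNormS{\wh{\tS}_{:,:,n}} = \cHNormS{\tS}$.

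The main obstacle is essentially notational rather than conceptual: \Cref{lemma:ffuni.norm.preserve} is phrased for left multiplication by an $\ff$-unitary tensor, whereas the q-SVD carries the factor $\tV^*$ on the right, so one cannot peel off both unitary factors by a single application of that lemma. Passing to the transform domain sidesteps this entirely, since there both unitary factors act as ordinary unitary matrices on the two sides of each frontal slice and standard unitary invariance of $\TNorm{\cdot}$ and $\FNorm{\cdot}$ applies directly. The only other point worth verifying is that $\tS \in \cH^{m \xx p}$ whenever $\tX$ is, which is exactly the ideal property recorded in \Cref{lem:cH.2sided.star.ideal.in.cHs}.
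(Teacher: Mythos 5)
Your proof is correct and follows essentially the route the paper intends: the corollary is stated without an explicit proof, and the intended derivation is exactly your transform-domain argument, peeling off the unitary factors slice by slice via \Cref{lem:qtopnorm.and.linf2norm}, \Cref{lem:ff.unitary.identities}, and the expansion $\cHNormS{\tX} = \sum_{n \in \ZZ}\FNormS{\wh{\tX}_{:,:,n}}$ from the proof of \Cref{lemma:ffuni.norm.preserve}. Your additional observation that $\tS = \tU^* \ff \tX \ff \tV$ lies in $\cH^{m \xx p}$ by the ideal property (\Cref{lem:cH.2sided.star.ideal.in.cHs}, applied entrywise) is a point the paper glosses over, and it is needed for $\cHNorm{\tS}$ to be finite.
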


\section{Quasitubal Tensor Algebraic Framework}\label{sec:quasitubal.tensor.alg}

\Cref{thm:qt.best.lowrank.op} established an Eckart-Young-like theorem for quasitubal tensors. However, since it uses multi-rank truncation, when $\cHs$ is infinite dimensional then typically the approximation does not have a finite representation. 

Arguably, it is desirable to be able to approximate a quasitubal tensor with a tensor which posses a {\em finite} representation. We propose such an approximation, and prove an Eckart-Young-like result for it, in the next section. To be able to do so, we first need to elaborate on the algebraic structure and properties of quasitubal tensors.

\subsection{Commutative Unital C*-algebra Structures}
\paragraph{General observations.}
Let $\alg{A}$ be a commutative, unital C*-algebra over $\FFF$, and $\bs{x} \in \alg{A}$. 
Then for any $\lambda \in \FFF$ we have $(\bs{x} - \lambda \cdot \bs{e})^* = \bs{x}^* - \overline{\lambda} \cdot \bs{e}$. In general, an element $\bs{y} \in \alg{A}$ is invertible if and only if $\bs{y}^*$ is invertible. Thus, a scalar $\lambda \in \FFF$ is in the spectrum of $\bs{x}$ if and only if $\overline{\lambda} \in \spc(\bs{x}^*)$.

The following quantity is defined for elements in general Banach algebras. 
\begin{definition}[Spectral Radius]\label{def:spcrad}
    Let $\alg{A}$ be a Banach algebra and $\bs{a} \in \alg{A}$.
    The \textbf{spectral radius} of $\bs{a}$ is defined as 
    \begin{equation*}
        r(\bs{a}) = \sup \{|\alpha| :\, \alpha \in \spc(\bs{a}) \}
    \end{equation*}
\end{definition}
We use the standard notion of natural exponents to mark repeated multiplication, e.g., for $\bs{a} \in \alg{A}$ we write $\bs{a}^0 \coloneqq \bs{e}$ and $\bs{a}^k \coloneqq \bs{a} \odot \bs{a}^{k-1}$ for all $k \in \NN$. 
In particular, $\bs{a}^2 = \bs{a}\odot\bs{a}$.
\begin{lemma}[\texorpdfstring{\cite[Chapter VII, Proposition 3.8 and Chapter VIII, Proposition 1.11]{Conway2007}}{\cite[Chapter VII, Proposition 3.8 and Chapter VIII, Proposition 1.11]{Conway2007}}]\label{lem:spcrad.exists.norms}
    Suppose that $\alg{A}$ is a Banach algebra with identity, then for any $\bs{a} \in \alg{A}$, the spectral radius defined in \Cref{def:spcrad} exists and  $r(\bs{a}) = \lim_{n \to \infty} \GNorm{\bs{a}^n}_{\alg{A}}^{1/n}$.
    If, in addition, $\alg{A}$ is a C*-algebra, then for a self-adjoint $\bs{a} = \bs{a}^* \in \alg{A}$ it holds that 
    $\GNorm{\bs{a}}_{\alg{A}} = r(\bs{a})$.
\end{lemma}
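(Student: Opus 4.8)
The plan is to prove the two assertions in turn, establishing first the existence of $r(\bs{a})$ together with Gelfand's spectral radius formula, and then specializing to the C*-setting; since the statement is classical I would ultimately defer to~\cite{Conway2007}, but the argument I have in mind is the following. Throughout I take the spectrum in $\CC$ (complexifying $\alg{A}$ when $\FFF = \RR$), since the nonemptiness of the spectrum genuinely requires the complex field.

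First I would show that $r(\bs{a})$ is well-defined. For any $\lambda$ with $|\lambda| > \GNorm{\bs{a}}_{\alg{A}}$ the element $\lambda \bs{e} - \bs{a} = \lambda(\bs{e} - \lambda^{-1}\bs{a})$ is invertible, with inverse given by the norm-convergent Neumann series $\sum_{k \geq 0} \lambda^{-k-1}\bs{a}^k$; hence $\spc(\bs{a})$ is contained in the closed disc of radius $\GNorm{\bs{a}}_{\alg{A}}$. Since the invertible elements form an open set, $\spc(\bs{a})$ is closed, and it is nonempty by Liouville's theorem applied to the resolvent $\lambda \mapsto (\lambda \bs{e} - \bs{a})^{-1}$. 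Thus $\spc(\bs{a})$ is a nonempty compact set, $r(\bs{a}) = \sup\{|\lambda| : \lambda \in \spc(\bs{a})\}$ exists, and $r(\bs{a}) \leq \GNorm{\bs{a}}_{\alg{A}}$.

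For the formula $r(\bs{a}) = \lim_{n} \GNorm{\bs{a}^n}_{\alg{A}}^{1/n}$ I would establish two inequalities. For the lower estimate of the norms, the polynomial spectral mapping theorem gives $\spc(\bs{a}^n) = \{\lambda^n : \lambda \in \spc(\bs{a})\}$, so $r(\bs{a})^n = r(\bs{a}^n) \leq \GNorm{\bs{a}^n}_{\alg{A}}$ and therefore $r(\bs{a}) \leq \liminf_{n} \GNorm{\bs{a}^n}_{\alg{A}}^{1/n}$. For the reverse, the resolvent is analytic on $\{|\lambda| > r(\bs{a})\}$ and there agrees with the Laurent expansion $\sum_{k} \lambda^{-k-1}\bs{a}^k$; applying an arbitrary $\varphi \in \alg{A}^*$, the scalar coefficients satisfy $\lambda^{-k-1}\varphi(\bs{a}^k) \to 0$ for each fixed $|\lambda| > r(\bs{a})$, so $\{\lambda^{-k-1}\bs{a}^k\}_k$ is weakly bounded and, by the uniform boundedness principle, norm bounded, giving $\GNorm{\bs{a}^n}_{\alg{A}} \leq M_\lambda |\lambda|^{n+1}$. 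Hence $\limsup_{n} \GNorm{\bs{a}^n}_{\alg{A}}^{1/n} \leq |\lambda|$, and letting $|\lambda| \downarrow r(\bs{a})$ forces the limit to exist and equal $r(\bs{a})$.

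Finally, for the C*-refinement I would use that $(\bs{a}^n)^* = (\bs{a}^*)^n = \bs{a}^n$, so every power of a self-adjoint $\bs{a}$ is again self-adjoint; the C*-identity then gives $\GNorm{\bs{a}^2}_{\alg{A}} = \GNorm{\bs{a}^*\bs{a}}_{\alg{A}} = \GNorm{\bs{a}}_{\alg{A}}^2$, and iterating yields $\GNorm{\bs{a}^{2^k}}_{\alg{A}} = \GNorm{\bs{a}}_{\alg{A}}^{2^k}$ for all $k$. Evaluating the limit already proved along the subsequence $n = 2^k$ gives $r(\bs{a}) = \lim_k \GNorm{\bs{a}^{2^k}}_{\alg{A}}^{1/2^k} = \GNorm{\bs{a}}_{\alg{A}}$. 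The one genuinely delicate step is the reverse inequality in the spectral radius formula: it rests on the spectrum being nonempty (hence on Liouville over $\CC$) and on upgrading the scalar-wise boundedness of $\{\lambda^{-n-1}\bs{a}^n\}$ to norm boundedness via the uniform boundedness principle; the remaining steps are elementary.
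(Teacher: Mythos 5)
Your proposal is correct: the paper does not prove this lemma at all, but simply cites it from Conway's book, and your argument (Neumann series and Liouville for compactness and nonemptiness of the spectrum, polynomial spectral mapping for the lower bound, Laurent expansion plus uniform boundedness for the upper bound, and the C*-identity iterated along $n = 2^k$ for the self-adjoint case) is precisely the classical Gelfand--Beurling proof found in that reference. No discrepancy to report.
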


\begin{lemma}[Uniqueness of Norms on C*-Algebras \texorpdfstring{\cite[Chapter VIII, Proposition 1.8]{Conway2007}}{\cite[Chapter VIII, Proposition 1.8]{Conway2007}}]\label{lem:general.cstar.unique.norm}

    Let $\alg{A}$ be a commutative, unital C*-algebra.
    The norm associated with $\alg{A}$ is uniquely determined by the algebra structure, and is given by
    \begin{align*}
        \GNorm{\bs{a}}_{\alg{A}} 
        &= \sup \{ \GNorm{\bs{a} \cdot \bs{x}}_{\alg{A}} : \bs{x} \in \alg{A} , \GNorm{ \bs{x}}_{\alg{A}} \leq 1 \} \\
        &= \sup \{ \GNorm{\bs{x} \cdot \bs{a}}_{\alg{A}} : \bs{x} \in \alg{A} , \GNorm{ \bs{x}}_{\alg{A}} \leq 1 \}
    \end{align*}
\end{lemma}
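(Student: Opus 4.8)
The plan is to prove the two assertions separately: first the explicit supremum formula, then the uniqueness of the norm, which is the substantive part. I would begin with the supremum formula, which in fact holds in any unital Banach algebra once we know that $\GNorm{\bs{e}}_{\alg{A}} = 1$. To see the latter, the C*-identity gives $\GNorm{\bs{e}}_{\alg{A}} = \GNorm{\bs{e}^* \cdot \bs{e}}_{\alg{A}} = \GNormS{\bs{e}}_{\alg{A}}$, and since $\bs{e} \neq 0$ we conclude $\GNorm{\bs{e}}_{\alg{A}} = 1$. Then for fixed $\bs{a}$, submultiplicativity yields $\GNorm{\bs{a} \cdot \bs{x}}_{\alg{A}} \leq \GNorm{\bs{a}}_{\alg{A}} \GNorm{\bs{x}}_{\alg{A}} \leq \GNorm{\bs{a}}_{\alg{A}}$ whenever $\GNorm{\bs{x}}_{\alg{A}} \leq 1$, so the supremum is at most $\GNorm{\bs{a}}_{\alg{A}}$; taking $\bs{x} = \bs{e}$ shows it is at least $\GNorm{\bs{a} \cdot \bs{e}}_{\alg{A}} = \GNorm{\bs{a}}_{\alg{A}}$. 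The two displayed suprema coincide because $\alg{A}$ is commutative. In operator-theoretic language, this simply says that the C*-norm equals the operator norm of the left-regular representation $\bs{x} \mapsto \bs{a} \cdot \bs{x}$.

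The uniqueness is proved via the spectral radius. The key observation is that the spectrum $\spc(\bs{a})$ — the set of $\lambda \in \FFF$ for which $\bs{a} - \lambda \bs{e}$ fails to be invertible — is defined purely in terms of the ring structure of $\alg{A}$, since invertibility is an algebraic notion; consequently the spectral radius $r(\bs{a}) = \sup\{ |\lambda| : \lambda \in \spc(\bs{a}) \}$ does not depend on the choice of norm. Now suppose $\GNorm{\cdot}_1$ and $\GNorm{\cdot}_2$ are two norms each making $\alg{A}$ into a C*-algebra. For a self-adjoint element $\bs{a} = \bs{a}^*$, \Cref{lem:spcrad.exists.norms} gives $\GNorm{\bs{a}}_i = r(\bs{a})$ for $i = 1,2$, so the two norms agree on all self-adjoint elements.

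For a general $\bs{a} \in \alg{A}$, the element $\bs{a}^* \cdot \bs{a}$ is self-adjoint, and the C*-identity yields $\GNormS{\bs{a}}_i = \GNorm{\bs{a}^* \cdot \bs{a}}_i = r(\bs{a}^* \cdot \bs{a})$ for both $i$. Hence $\GNorm{\bs{a}}_1 = r(\bs{a}^* \cdot \bs{a})^{1/2} = \GNorm{\bs{a}}_2$, and the norm is determined entirely by the $*$-algebra structure through the intrinsic formula $\GNorm{\bs{a}}_{\alg{A}} = r(\bs{a}^* \cdot \bs{a})^{1/2}$.

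I expect the delicate point to be justifying that the spectral radius is genuinely norm-independent: one must argue that invertibility in $\alg{A}$ is intrinsic to the algebraic structure, and that the quantity appearing in \Cref{lem:spcrad.exists.norms} (stated there via the limit $\lim_n \GNorm{\bs{a}^n}_{\alg{A}}^{1/n}$) indeed coincides with the purely algebraic $\sup\{ |\lambda| : \lambda \in \spc(\bs{a}) \}$, so that the value is the same for any admissible norm. The other point requiring care is that it is precisely the C*-identity that reduces the general case to the self-adjoint one; for a general Banach $*$-algebra lacking this identity the conclusion need not hold.
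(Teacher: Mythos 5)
Your proof is correct and takes essentially the same route as the paper: the paper defers this lemma to Conway's book and, in the remark immediately following it, justifies non-circularity by exactly your key identity $\GNorm{\bs{a}}_{\alg{A}} = \sqrt{r(\bs{a} \cdot \bs{a}^*)}$, relying on the fact that the spectral radius of \Cref{def:spcrad} is purely algebraic and on \Cref{lem:spcrad.exists.norms} to evaluate the norm of the self-adjoint element $\bs{a} \cdot \bs{a}^*$. Your additional verification of the supremum formula (showing $\GNorm{\bs{e}}_{\alg{A}} = 1$ from the C*-identity, bounding via submultiplicativity, and taking $\bs{x} = \bs{e}$) simply fills in the portion the paper leaves entirely to the citation.
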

Note that the expression for $\GNorm{\cdot}_{\alg{A}}$ in \Cref{lem:general.cstar.unique.norm} is a `self-induced'  norm, i.e., the norm is defined as an upper bound of some constrained set under the image of the same norm. 
Nevertheless, this result is not circular:
for any $\bs{a} \in \alg{A}$ we have that $\bs{a} \cdot \bs{a}^* $ is self-adjoint and from \Cref{lem:spcrad.exists.norms} we have $\GNorm{\bs{a} \cdot \bs{a}^*}_{\alg{A}} = r(\bs{a} \cdot \bs{a}^*)$, hence, by the C* identity, $\GNorm{\bs{a}}_{\alg{A}} = \sqrt{\GNorm{\bs{a} \cdot \bs{a}^*}_{\alg{A}}} = \sqrt{r(\bs{a} \cdot \bs{a}^*)}$.
Since \Cref{def:spcrad} of the spectral radius is independent of the norm, we have that the seemingly self-inducing norm expressed in \Cref{lem:general.cstar.unique.norm} is well defined.


\begingroup
{

}
\endgroup

\begin{lemma}\label{lem:pos.geq.norm.bound}
    Let $\alg{A}$ be a unital C*-algebra, then 
    $\bs{a} \geq_{\alg{A}} \bs{b} \geq_{\alg{A}} 0$ implies $\GNorm{\bs{a}}_{\alg{A}} \geq \GNorm{\bs{b}}_{\alg{A}} $.
\end{lemma}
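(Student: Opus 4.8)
The plan is to reduce the inequality to a statement about spectra and then invoke the identity ``norm equals spectral radius'' for self-adjoint elements. Since $\bs{a} \geq_{\alg{A}} \bs{b} \geq_{\alg{A}} 0$, transitivity of the partial order gives $\bs{a} \geq_{\alg{A}} 0$, so both $\bs{a}$ and $\bs{b}$ are positive, hence self-adjoint by \Cref{def:cstar.positive}, with $\spc(\bs{a}), \spc(\bs{b}) \subseteq [0,\infty)$. By \Cref{lem:spcrad.exists.norms}, for a self-adjoint element the norm coincides with the spectral radius, and positivity lets me drop the absolute value in \Cref{def:spcrad}, so $\GNorm{\bs{a}}_{\alg{A}} = r(\bs{a}) = \sup \spc(\bs{a})$ and likewise $\GNorm{\bs{b}}_{\alg{A}} = \sup \spc(\bs{b})$. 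In particular $\spc(\bs{a}) \subseteq [0, \GNorm{\bs{a}}_{\alg{A}}]$, and the target inequality becomes the assertion $\sup \spc(\bs{b}) \leq \GNorm{\bs{a}}_{\alg{A}}$.

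First I would show that $\GNorm{\bs{a}}_{\alg{A}} \bs{e} - \bs{a} \geq_{\alg{A}} 0$. This element is self-adjoint, and by the affine spectral mapping property (valid in any unital Banach algebra) its spectrum is $\{ \GNorm{\bs{a}}_{\alg{A}} - \lambda : \lambda \in \spc(\bs{a}) \}$, which is contained in $[0, \GNorm{\bs{a}}_{\alg{A}}] \subseteq [0,\infty)$ precisely because $\spc(\bs{a}) \subseteq [0, \GNorm{\bs{a}}_{\alg{A}}]$. Hence it is positive.

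Next, I would write $\GNorm{\bs{a}}_{\alg{A}} \bs{e} - \bs{b} = (\GNorm{\bs{a}}_{\alg{A}} \bs{e} - \bs{a}) + (\bs{a} - \bs{b})$. Both summands are positive: the first by the previous step, and the second by the hypothesis $\bs{a} \geq_{\alg{A}} \bs{b}$. Positivity of the sum then gives $\GNorm{\bs{a}}_{\alg{A}} \bs{e} - \bs{b} \geq_{\alg{A}} 0$, and applying the affine spectral mapping property once more yields $\GNorm{\bs{a}}_{\alg{A}} - \mu \geq 0$ for every $\mu \in \spc(\bs{b})$. Taking the supremum over $\mu \in \spc(\bs{b})$ gives $\GNorm{\bs{b}}_{\alg{A}} = \sup \spc(\bs{b}) \leq \GNorm{\bs{a}}_{\alg{A}}$, as required.

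The step I expect to be the main obstacle is the fact used above that the sum of two positive elements is again positive (equivalently, that $\geq_{\alg{A}}$ is translation-invariant and transitive). This is the closedness of the positive cone of a C*-algebra, a standard but genuinely non-trivial structural result, and it is exactly the property invoked when the excerpt asserts that $\geq_{\alg{A}}$ is a partial order. I would cite it from the same reference as the other C*-algebra facts, e.g.\ \cite{Conway2007}, rather than reprove it; the remaining ingredients (the affine spectral mapping property and $\GNorm{\bs{c}}_{\alg{A}} = \sup \spc(\bs{c})$ for positive $\bs{c}$) are routine once self-adjointness is established.
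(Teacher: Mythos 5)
You have given a correct proof, and it follows essentially the same route as the paper's: the affine spectral-mapping step (the paper's \Cref{lem:helper.for.geqnormbound}), positivity of $\GNorm{\bs{a}}_{\alg{A}}\bs{e} - \bs{a}$, the order structure to obtain $\GNorm{\bs{a}}_{\alg{A}}\bs{e} - \bs{b} \geq_{\alg{A}} 0$, and the norm-equals-spectral-radius identity for self-adjoint elements (\Cref{lem:spcrad.exists.norms}). Your execution is in fact tighter than the paper's: you state explicitly the sum-of-two-positives (positive cone) step that the paper leaves implicit in its closing sentence, and your final supremum over $\spc(\bs{b})$ sidesteps the reversed inequality in \Cref{eq:genupbound.shift}, which asserts $\GNorm{\bs{a}}_{\alg{A}} - \GNorm{\bs{b}}_{\alg{A}} \geq \GNorm{\bs{a}}_{\alg{A}} - \beta$ for all $\beta \in \spc(\bs{b})$ --- equivalent to $\beta \geq \GNorm{\bs{b}}_{\alg{A}}$, which is backwards; the paper's conclusion survives only because $\GNorm{\bs{b}}_{\alg{A}} = \sup\spc(\bs{b})$ is attained, so that $\GNorm{\bs{a}}_{\alg{A}} - \GNorm{\bs{b}}_{\alg{A}}$ is itself a point of the non-negative set $\spc(\GNorm{\bs{a}}_{\alg{A}}\bs{e} - \bs{b})$.
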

The following lemma will serve us in proving~\Cref{lem:pos.geq.norm.bound}.
\begin{lemma}\label{lem:helper.for.geqnormbound}
    Let $\alg{A}$ be a unital C*-algebra.
    For all $\bs{a} \in \alg{A}$ and $\alpha \in \CC$ it holds that 
    $\lambda \in \spc(\bs{a})$ if and only if $\alpha - \lambda \in \spc(\alpha \bs{e} - \bs{a})$.
    Equivalently, $\lambda \in \spc(\alpha \bs{e} - \bs{a})$ if and only if $\alpha - \lambda \in \spc(\bs{a}) $.
\end{lemma}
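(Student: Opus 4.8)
The plan is to reduce the claim to a single algebraic identity together with the elementary fact that an element of a ring is invertible precisely when its negative is. Recall that by definition $\lambda \in \spc(\bs{a})$ means exactly that $\bs{a} - \lambda \bs{e}$ fails to be invertible in $\alg{A}$. The entire lemma is therefore a statement purely about invertibility, and no C*-structure, norm, spectral radius, or self-adjointness is needed — only the unital ring structure of $\alg{A}$.

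First I would compute the relevant shift. For any $\bs{a} \in \alg{A}$ and any scalars $\alpha, \lambda$, expanding gives
\[
(\alpha \bs{e} - \bs{a}) - (\alpha - \lambda) \bs{e} = \alpha \bs{e} - \bs{a} - \alpha \bs{e} + \lambda \bs{e} = -(\bs{a} - \lambda \bs{e}).
\]
This is the crux: the element whose invertibility decides membership of $\alpha - \lambda$ in $\spc(\alpha \bs{e} - \bs{a})$ is exactly the negative of the element whose invertibility decides membership of $\lambda$ in $\spc(\bs{a})$.

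Next I would invoke the trivial observation that $\bs{y} \in \alg{A}$ is invertible if and only if $-\bs{y}$ is invertible, with $(-\bs{y})^{-1} = -\bs{y}^{-1}$. Chaining this with the displayed identity yields the equivalence $\lambda \in \spc(\bs{a})$ if and only if $\alpha - \lambda \in \spc(\alpha \bs{e} - \bs{a})$, which is the first assertion. For the ``equivalently'' reformulation, I would substitute $\mu = \alpha - \lambda$ (so that $\lambda = \alpha - \mu$) into the first statement and rename $\mu$ back to $\lambda$, obtaining $\lambda \in \spc(\alpha \bs{e} - \bs{a})$ if and only if $\alpha - \lambda \in \spc(\bs{a})$.

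There is essentially no obstacle here; the only points requiring a moment's care are the sign bookkeeping in the shift identity and checking that the scalar substitution used for the second form is a genuine bijection of $\FFF$, which it is, being the affine involution $\lambda \mapsto \alpha - \lambda$. I would therefore keep the writeup to the single displayed identity followed by the two-line invertibility argument, and state the second form as an immediate relabeling rather than a separate computation.
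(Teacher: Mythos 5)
Your proposal is correct and follows essentially the same route as the paper's proof: both hinge on the identity $(\alpha \bs{e} - \bs{a}) - (\alpha - \lambda)\bs{e} = -(\bs{a} - \lambda\bs{e})$ together with the transfer of invertibility under negation, which the paper carries out by explicitly exhibiting the inverses $-\bs{z}$ and $-\bs{y}$ in the two directions. Your packaging of this as a single biconditional (plus the relabeling $\lambda \mapsto \alpha - \lambda$ for the second form) is just a tidier presentation of the identical argument, and your observation that only the unital ring structure is used is accurate.
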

\begin{proof}
    For all $\lambda \notin \spc(\bs{a})$, we have that $(\bs{a} - \lambda \bs{e})$ is invertible, so there exists some $\bs{z} \in \alg{A}$ such that $(\bs{a} - \lambda \bs{e}) \bs{z} = \bs{e}$. 
    Then, for all $\alpha \in \RR$ we have 
    \begin{align*}
        (\alpha \bs{e} -  \bs{a} - (\alpha - \lambda) \bs{e} ) (-\bs{z})
        &= ( -  \bs{a}  + \lambda \bs{e} ) (-\bs{z}) \\
        &= (\bs{a} - \lambda \bs{e}) \bs{z} 
        = \bs{e}
    \end{align*}
    Hence, for all $\alpha\in\CC$, $\CC \smallsetminus (\alpha - \spc(\bs{a})) \subseteq \CC \smallsetminus \spc(\alpha \bs{e} - \bs{a})$ where $\alpha - \spc(\bs{a}) \coloneqq \{ \alpha - \lambda ~|~ \lambda \in  \spc(\bs{a}) \}$.
    As a consequence, for all $\beta \in \spc(\alpha \bs{e} - \bs{a}) $ there is a $\lambda \in \spc(\bs{a})  $ such that $\beta = \alpha - \lambda$. 

    For the reverse direction, suppose that $\lambda \in \spc(\bs{a})$ and $\alpha - \lambda \notin \spc(\alpha \bs{e} - \bs{a})$. There exists a $\bs{y}$ such that $(\alpha \bs{e} - \bs{a} - (\alpha - \lambda)\bs{e})\bs{y}=\bs{e}$. So, $\bs{y}^{-1}=\alpha \bs{e} - \bs{a} - (\alpha - \lambda)\bs{e} = (\lambda \bs{e} - \bs{a}$, so $(a-\lambda\bs{e})^{-1} = -\bs{y}$. This shows that $a-\lambda\bs{e}$ is invertible, so $\lambda \notin \spc(\bs{a})$ which is a contradiction.
    \begingroup
    {
    }
    \endgroup
\end{proof}

\begin{proof}[Proof of~\Cref{lem:pos.geq.norm.bound}]
    Let $\bs{a},\bs{b} \in \alg{A}$.
    Assume that  $\bs{b} \geq_{\alg{A}} 0 $ then it is self adjoint, and following from \Cref{lem:spcrad.exists.norms} $\GNorm{\bs{a}}_{\alg{A}} - \GNorm{\bs{b}}_{\alg{A}} = \GNorm{\bs{a}}_{\alg{A}} - r(\bs{b}) $
    holds for all $\bs{a} \in \alg{A}$. 
    That is, $\GNorm{\bs{a}}_{\alg{A}} - \GNorm{\bs{b}}_{\alg{A}} \geq \GNorm{\bs{a}}_{\alg{A}} - \beta$ for all $\beta \in \spc(\bs{b})$.
    By \Cref{lem:helper.for.geqnormbound}   $\beta \in \spc(\bs{b})$ if and only if $\GNorm{\bs{a}}_{\alg{A}} - \beta \in \spc(\GNorm{\bs{a}}_{\alg{A}}\bs{e} -\bs{b}) $ thus
    \begin{equation}\label{eq:genupbound.shift}
        \forall \beta' \in \spc(\GNorm{\bs{a}}_{\alg{A}}\bs{e} -\bs{b}):~~~ \GNorm{\bs{a}}_{\alg{A}} - \GNorm{\bs{b}}_{\alg{A}} \geq \beta'
    \end{equation}

    If  $\bs{a} \geq_{\alg{A}} \bs{b}$ then $\bs{a}$ is also non-negative, in particular $\bs{a} = \bs{a}^*$ and it clearly holds that $\GNorm{\bs{a}}_{\alg{A}} \bs{e} - \bs{a}$ is self adjoint.
    Let $ \lambda \in \spc(\GNorm{\bs{a}}_{\alg{A}} \bs{e} - \bs{a})$. 
    By \Cref{lem:helper.for.geqnormbound},  there exists $\lambda' \in \spc(\bs{a})$ such that $\lambda = \GNorm{\bs{a}}_{\alg{A}} - \lambda'$.
    Now, $\lambda = \GNorm{\bs{a}}_{\alg{A}} - \lambda' \geq \GNorm{\bs{a}}_{\alg{A}} - \sup_{\lambda' \in \spc(\bs{a})} \lambda' \geq 0$ where the last inequality follows from \Cref{lem:spcrad.exists.norms}, so we have that $\lambda \geq 0$. 
    Therefore $\spc(\GNorm{\bs{a}}_{\alg{A}} \bs{e} - \bs{a}) $ is contained in the non-negative half of the real line, and we have that $\GNorm{\bs{a}}_{\alg{A}} \bs{e} - \bs{a} \geq_{\alg{A}} 0$.
    Combined with ~\Cref{eq:genupbound.shift}, the above implies that $\GNorm{\bs{a}}_{\alg{A}} - \GNorm{\bs{b}}_{\alg{A}} \geq 0$.
    \begingroup{
    }\endgroup
    \begingroup
    {
    
    %
    }
    \endgroup
\end{proof}

Borrowing notation of \cite{Conway2007}, we denote the collection of positive elements in a C*-algebra $\alg{A}$ by $\alg{A}_+$. We say that $\bs{x} \in \alg{A}$ is a upper (respectively, lower) bound on $A \subseteq \alg{A}$ if $\bs{a}\leq\bs{x}$ (respectively, $\bs{x}\leq \bs{a}$) for all $\bs{a} \in A$. It is easy to verify that if $A \subset \alg{A}_+$ be a finite set of non-negative elements, then $\sum_{\bs{a}\in A} a$ is an upper bound $A$.

\if0
\begin{lemma}\label{lem:sumofpos.is.upperbound}
    Let $\alg{A}$ be unital C* algebra, and $A \subset \alg{A}_+$ be a finite set of non-negative elements $\{ \bs{a}_n \}_{n=1}^N$.

    Then $\sum_{n=1}^N \bs{a}_n$ is an upper bound on $A$. 
\end{lemma}
\begin{proof}
    Suppose $N=1$ then reflexivity of $\geq_{\alg{A}}$ implies the result trivially. 
    Assume that the statement holds for all $N' < N$ and let $A = \{ \bs{a}_n \}_{n=1}^N$.
    By assumption, $\bs{a}' = \sum_{n=1}^{N-1} \bs{a}_n$ is an upper bound on $\{ \bs{a}_n \}_{n=1}^{N-1}$ and we write
    \begin{equation}\label{eq:helper.posets.bounds}
        \bs{a}' \geq_{\alg{A}} \bs{a}_k \qquad \textnormal{for all } k=1,\ldots,N-1
    \end{equation}
    Similarly, $\bs{a}_k + \bs{a}_N \geq_{\alg{A}} \bs{a}_j $ for all $k=1,\ldots,N-1$ and $j=1,\ldots,N$, thus, by adding $\bs{a}_N$ to both sides of the inequality \Cref{eq:helper.posets.bounds}
    $$
    \bs{a}' + \bs{a}_N \geq_{\alg{A}} \bs{a}_k + \bs{a}_N   \qquad \textnormal{for all } k=1,\ldots,N-1
    $$
    It follows from transitivity that $\sum_{n=1}^N \bs{a}_n = \bs{a}' + \bs{a}_N  \geq_{\alg{A}} \bs{a}_j  $ for all $j=1,\ldots,N$, i.e., $\sum_{n=1}^N \bs{a}_n $ is an upper bound on $\{ \bs{a}_n \}_{n=1}^N$
\end{proof}
\fi 

\begin{lemma}[Square Roots in \texorpdfstring{$\ell_\infty$}{ell infinity}]\label{def:ellinf.sqrt}
Let $\bs{a} \in \ellinf$ be a self-adjoint element. 
Then there exists a $\bs{b} \in \ellinf$ such that $\bs{b}^2 =\bs{b} \odot \bs{b} = \bs{a}$. 
Moreover, if $\bs{a} \geq_{\ellinf} 0$, then there is a unique, non-negative element $\sqrt{\bs{a}} \in \ellinf  $ such that $\bs{a} = \sqrt{\bs{a}}\odot \sqrt{\bs{a}}  = \sqrt{\bs{a}}^2 $ defined as the principal \textbf{square-root} of $\bs{a}$.
\end{lemma}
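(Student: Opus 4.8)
The plan is to reduce everything to the coordinates, exploiting that the algebra operation on $\ellinf$ is the pointwise product $\odot$ and that both the involution and the order $\geq_{\ellinf}$ are characterized entrywise. First I would record the two coordinatewise facts I need. The $*$-operation on $\ellinf$ is elementwise complex conjugation, so $\bs{a}$ self-adjoint means $a_j = \overline{a_j}$ for every $j$, i.e.\ each $a_j \in \RR$. For the order I would invoke the characterization established earlier in the text (from $\spc(\bs{a}) = \operatorname{cl}\{a_j\}_{j\in\ZZ}$), namely that $\bs{a} \geq_{\ellinf} 0$ holds if and only if every $a_j$ is a non-negative real number. With these in hand, the lemma becomes a statement about ordinary square roots of scalars, applied uniformly across $\ZZ$, and the only genuine thing to check is that the resulting sequence remains bounded.

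For the first assertion, given a self-adjoint $\bs{a}$ I would, for each $j$, pick a scalar $b_j \in \CC$ with $b_j^2 = a_j$; since each $a_j$ is real and $\FFF = \CC$, such a $b_j$ exists (real when $a_j \geq 0$, purely imaginary when $a_j < 0$), and in every case $|b_j| = \sqrt{|a_j|}$. Because $t \mapsto \sqrt{t}$ is increasing and continuous on $[0,\infty)$, it commutes with the supremum, so $\infNorm{\bs{b}} = \sup_{j\in\ZZ}\sqrt{|a_j|} = \sqrt{\sup_{j\in\ZZ}|a_j|} = \sqrt{\infNorm{\bs{a}}} < \infty$; hence $\bs{b} \in \ellinf$ and $\bs{b}\odot\bs{b} = \bs{a}$ by construction. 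For the second assertion, when $\bs{a} \geq_{\ellinf} 0$ every $a_j$ is a non-negative real, so I would define $\sqrt{\bs{a}}$ by $(\sqrt{\bs{a}})_j \coloneqq \sqrt{a_j}$ using the non-negative real root. The same supremum-commutation argument gives $\infNorm{\sqrt{\bs{a}}} = \sqrt{\infNorm{\bs{a}}} < \infty$, so $\sqrt{\bs{a}} \in \ellinf$; all its entries are non-negative reals, so the order characterization yields $\sqrt{\bs{a}} \geq_{\ellinf} 0$, and clearly $\sqrt{\bs{a}}\odot\sqrt{\bs{a}} = \bs{a}$.

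Uniqueness is then immediate from uniqueness of the scalar non-negative root: if $\bs{c} \geq_{\ellinf} 0$ satisfies $\bs{c}\odot\bs{c} = \bs{a}$, then each $c_j$ is a non-negative real with $c_j^2 = a_j$, forcing $c_j = \sqrt{a_j}$ for all $j$, hence $\bs{c} = \sqrt{\bs{a}}$. I do not expect a serious obstacle here; the proof is entirely pointwise. The two points requiring a line of care are (i) verifying that taking roots coordinatewise keeps the sequence in $\ellinf$, which rests solely on the commutation of the supremum with the monotone map $t\mapsto\sqrt{t}$, and (ii) noting that forming roots of possibly negative real entries in the first part uses $\FFF = \CC$; for the non-negative case this issue disappears and the construction stays within the reals.
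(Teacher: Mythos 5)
Your proof is correct and follows essentially the same route as the paper's: reduce everything to coordinates via the entrywise characterizations of the involution and of $\geq_{\ellinf}$, take real square roots on non-negative entries and purely imaginary ones on negative entries, and get uniqueness from uniqueness of the non-negative scalar root. In fact you supply two details the paper's proof leaves implicit---the check that the entrywise root stays bounded (via $\sup_j \sqrt{|a_j|} = \sqrt{\sup_j |a_j|}$) and the explicit uniqueness argument---so nothing is missing.
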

\begin{proof}
    Let $\bs{a} \in \ellinf$ and suppose that $\bs{a} = \bs{a}^*$, then $a_k \in \RR$ for all $k$.
    For all $k \in \ZZ$,
    if $a_k \geq 0$ we have $a_k = b_k^2$ where $b_k = \sqrt{a_k} \geq 0$ is the unique non-negative number such that $a_ = b_k^2$.  
    If $a_k < 0$ then  $a_k = - |a_k|$ hence we write $a_k = b_k^2$ where  $b_k = \pm i \sqrt{|a_k|}$.
    Clearly $\bs{a} = \bs{b} \odot \bs{b}$, and in case $\bs{a} \geq_{\ellinf} 0 $ we have $a_k \geq 0$ for all $k$ hence $\sqrt{\bs{a}} \coloneqq \bs{b}   \geq 0$ is the unique non-negative $\ellinf$ element such that $\bs{a} = \sqrt{\bs{a}} \odot \sqrt{\bs{a}}$. 
\end{proof}
\begin{corollary}[Square Roots in \texorpdfstring{$\cHs$}{H_*}]\label{def:cHs.sqrt}
    An element $\bs{x} \in \cHs$ is self-adjoint, if and only if $\bs{x} = \bs{y} \ff \bs{y}$ for some $\bs{y} \in \cHs$.
    If $\bs{x} \geq_{\cHs} 0$ then there exists a unique $\sqrt{\bs{x}} \geq_{\cHs} 0$ such that $\bs{x} = \sqrt{\bs{x}} \ff \sqrt{\bs{x}} = \sqrt{\bs{x}}^2$. 
\end{corollary}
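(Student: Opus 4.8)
The plan is to transfer the whole statement through the isometric $*$-isomorphism $\uppi \colon \cHs \to \ellinf$ established in \Cref{lem:cHs.star.alg}, reducing both assertions to the scalar square-root result \Cref{def:ellinf.sqrt}. The three features of $\uppi$ I would lean on are: it intertwines the products, $\uppi(\bs{a} \ff \bs{b}) = \uppi(\bs{a}) \odot \uppi(\bs{b})$ (it is an algebra homomorphism by \Cref{lem:pi.is.homo} and $\ff$ agrees with composition on $\cHs$); it intertwines the involutions, $\uppi(\bs{a}^*) = \uppi(\bs{a})^*$ (\Cref{lem:pi.star.iso}); and it matches the orders, $\bs{a} \geq_{\cHs} 0$ iff $\uppi(\bs{a}) \geq_{\ellinf} 0$ (\Cref{lem:rel.invariance}). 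Writing $\hbs{x} \coloneqq \uppi(\bs{x})$ throughout, every identity involving $\ff$, $*$, and $\geq$ in $\cHs$ becomes the corresponding pointwise statement in $\ellinf$, and conversely; this is the single move that does all the work.

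For the self-adjoint equivalence I would argue as follows. If $\bs{x} = \bs{x}^*$, then $\hbs{x} = \hbs{x}^*$ since $\uppi$ is a $*$-map, so $\hbs{x}$ is a self-adjoint sequence and \Cref{def:ellinf.sqrt} furnishes $\hbs{b} \in \ellinf$ with $\hbs{b} \odot \hbs{b} = \hbs{x}$. Setting $\bs{y} \coloneqq \uppi^{-1}(\hbs{b})$ and using that $\uppi$ intertwines products gives $\bs{y} \ff \bs{y} = \uppi^{-1}(\hbs{b} \odot \hbs{b}) = \uppi^{-1}(\hbs{x}) = \bs{x}$, which settles the forward direction. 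For the converse, $\bs{x} = \bs{y} \ff \bs{y}$ transforms to $\hbs{x} = \hbs{y} \odot \hbs{y}$, and self-adjointness of $\bs{x}$ is equivalent to $\hbs{x}$ having real coordinates; I would therefore read the equivalence with $\bs{y}$ taken to be the root supplied by \Cref{def:ellinf.sqrt}, each coordinate of whose $\uppi$-image is real or purely imaginary, so that $\hbs{x} = \hbs{y} \odot \hbs{y}$ is automatically a real sequence and $\bs{x}^* = \uppi^{-1}(\hbs{x}^*) = \bs{x}$.

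For the positive case, which is the part actually used downstream in the SVD arguments, the reduction is clean and fully pointwise. Given $\bs{x} \geq_{\cHs} 0$, \Cref{lem:rel.invariance} yields $\hbs{x} \geq_{\ellinf} 0$, so by \Cref{def:ellinf.sqrt} there is a \emph{unique} non-negative $\sqrt{\hbs{x}} \in \ellinf$ with $\sqrt{\hbs{x}} \odot \sqrt{\hbs{x}} = \hbs{x}$. I would then define $\sqrt{\bs{x}} \coloneqq \uppi^{-1}(\sqrt{\hbs{x}})$; it is non-negative by \Cref{lem:rel.invariance} and satisfies $\sqrt{\bs{x}} \ff \sqrt{\bs{x}} = \uppi^{-1}(\sqrt{\hbs{x}} \odot \sqrt{\hbs{x}}) = \bs{x}$. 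Uniqueness transfers verbatim: if $\bs{z} \geq_{\cHs} 0$ and $\bs{z} \ff \bs{z} = \bs{x}$, then $\uppi(\bs{z}) \geq_{\ellinf} 0$ and $\uppi(\bs{z}) \odot \uppi(\bs{z}) = \hbs{x}$, so the uniqueness clause of \Cref{def:ellinf.sqrt} forces $\uppi(\bs{z}) = \sqrt{\hbs{x}}$ and hence $\bs{z} = \sqrt{\bs{x}}$.

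The main obstacle is not the positive statement, which reduces mechanically once the correspondences above are in place, but rather keeping the order relation and the involution correctly aligned under $\uppi$; this is precisely what \Cref{lem:rel.invariance} and \Cref{lem:pi.star.iso} buy us, so the essential analytic work has already been done and only bookkeeping remains. The one genuinely delicate point I would take care with is the converse of the self-adjoint equivalence, since scalar squaring does not preserve reality for an arbitrary $\bs{y}$: the equivalence must be understood with $\bs{y}$ the particular root produced by \Cref{def:ellinf.sqrt}, for which $\bs{y} \ff \bs{y}$ is self-adjoint by construction.
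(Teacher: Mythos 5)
Your proof takes essentially the same route as the paper, whose entire proof of this corollary is the one-liner ``Trivial -- follows *-iso $(\ellinf,\odot) \cong (\cHs, \ff)$''; you have simply spelled out that transfer through $\uppi$ in full, using exactly the three ingredients (product intertwining, \Cref{lem:pi.star.iso}, and \Cref{lem:rel.invariance}) the paper implicitly relies on. Your closing caveat is also well taken: the ``if'' direction of the self-adjointness equivalence fails for arbitrary $\bs{y}$ (a square $\bs{y} \ff \bs{y}$ of a generic complex quasitube need not be self-adjoint), so the statement must be read with $\bs{y}$ the particular root furnished by \Cref{def:ellinf.sqrt} -- a subtlety the paper's one-line proof silently glosses over.
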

\begin{proof}
    Trivial - follows *-iso $(\ellinf,\odot) \cong (\cHs, \ff)$.
\end{proof}

\begin{definition}[\texorpdfstring{$\cHs$}{H_*}-Absolute Value]\label{def:absval.chs.appendix}
    Let $\bs{x} \in \cHs$, the $\cHs$-absolute value of $\bs{x}$ is defined as $|\bs{x}| \coloneqq \sqrt{\bs{x}^* \ff \bs{x}}$. 
\end{definition}

\begin{lemma}\label{lem:absval.chs.ex.un.appendix}
    Let $\bs{x} \in \cHs$, then $| \bs{x} |$ is well defined and in $\cHs$, and we have $|\bs{x}|^2= \bs{x}^* \ff \bs{x}$.
\end{lemma}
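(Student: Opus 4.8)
The plan is to reduce the entire statement to the existence-and-uniqueness result for square roots in $\cHs$ already recorded in \Cref{def:cHs.sqrt}. By \Cref{def:absval.chs.appendix} we have $|\bs{x}| = \sqrt{\bs{x}^* \ff \bs{x}}$, so the lemma asserts precisely that the argument $\bs{x}^* \ff \bs{x}$ lies in the regime where that square root is defined and unique, namely that $\bs{x}^* \ff \bs{x}$ is a \emph{non-negative} element of $\cHs$. Once $\bs{x}^* \ff \bs{x} \geq_{\cHs} 0$ is established, \Cref{def:cHs.sqrt} immediately supplies a unique non-negative $\sqrt{\bs{x}^* \ff \bs{x}} \in \cHs$ satisfying $(\sqrt{\bs{x}^* \ff \bs{x}})^2 = \bs{x}^* \ff \bs{x}$; this simultaneously gives well-definedness (via uniqueness), membership in $\cHs$, and the identity $|\bs{x}|^2 = \bs{x}^* \ff \bs{x}$.

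Thus the only genuine content is showing $\bs{x}^* \ff \bs{x} \geq_{\cHs} 0$, and I would offer one of two equally short routes, recalling first that $\cHs$ is a commutative, unital C*-algebra by \Cref{lem:cHs.star.alg}. The abstract route uses the characterization of positive elements in a C*-algebra (the remark following \Cref{def:cstar.positive}, citing \cite[Theorem 3.6]{Conway2007}): an element is positive exactly when it has the form $\bs{b} \ff \bs{b}^*$. Taking $\bs{b} = \bs{x}^*$ and using the involution identity $(\bs{x}^*)^* = \bs{x}$ together with commutativity of $\ff$ yields $\bs{b} \ff \bs{b}^* = \bs{x}^* \ff \bs{x}$, so $\bs{x}^* \ff \bs{x} \geq_{\cHs} 0$. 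The concrete route pushes forward through the isometric *-isomorphism $\uppi$ of \Cref{lem:cHs.star.alg}: since $\uppi(\bs{x}^* \ff \bs{x}) = \overline{\uppi(\bs{x})} \odot \uppi(\bs{x})$ has only non-negative real entries, it is non-negative in $\ell_\infty$, and \Cref{lem:rel.invariance} transports non-negativity back to $\cHs$.

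To finish, I would assemble the pieces: self-adjointness of $\bs{x}^* \ff \bs{x}$ is automatic (applying $(\bs{y} \ff \bs{z})^* = \bs{z}^* \ff \bs{y}^*$ at the quasitube level with $(\bs{x}^*)^* = \bs{x}$ gives $(\bs{x}^* \ff \bs{x})^* = \bs{x}^* \ff \bs{x}$), its spectrum lies in $[0,\infty)$ by the positivity just shown, and \Cref{def:cHs.sqrt} then delivers the unique non-negative square root. I do not anticipate a real obstacle: the statement is essentially a repackaging of \Cref{def:cHs.sqrt} once positivity of $\bs{x}^* \ff \bs{x}$ is observed. The only care required is to confirm that $\bs{x}^* \ff \bs{x}$ is non-negative, not merely self-adjoint, so that the \emph{unique non-negative} branch of the square root in \Cref{def:cHs.sqrt} applies and $|\bs{x}|$ is unambiguous.
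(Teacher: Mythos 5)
Your proposal is correct and its ``abstract route'' is precisely the paper's own proof: positivity of $\bs{x}^* \ff \bs{x}$ via \cite[Chapter VIII, Theorem 3.6]{Conway2007}, followed by an application of \Cref{def:cHs.sqrt} to obtain the unique non-negative square root and the identity $|\bs{x}|^2 = \bs{x}^* \ff \bs{x}$. The alternative concrete route you sketch (transporting positivity through $\uppi$ to $\ell_\infty$ and back via \Cref{lem:rel.invariance}) is also valid and consistent with the paper's machinery, but the paper does not need it.
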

\begin{proof}
    Following from \cite[Chapter VIII, Theorem 3.6]{Conway2007}, we have $\bs{x}^* \ff \bs{x} \geq_{\cHs} 0$.  \Cref{def:cHs.sqrt} implies that the square root of $\bs{x}^* \ff \bs{x}$ exists and in $\cHs$, and that $|\bs{x}|^2= \sqrt{\bs{x}^* \ff \bs{x}}^2= \bs{x}^* \ff \bs{x}$.
\end{proof}

\if0
\begin{lemma}[\texorpdfstring{$\exists! \sqrt{\cdot}$}{The existence and uniqueness of square roots}]\label{lem:cstar.sqrt.eu}
    Let $\bs{x} \geq_{\alg{A}} 0$ then  $\spc(\bs{x}) \subseteq [0,\infty)$.
\end{lemma}
\fi 

\if0
\paragraph{Interesting subspaces of bounded sequences.}

Denote by $\alg{C} \subset \ell_\infty$ the space of convergent sequences over $\FFF$, and $\alg{C}_0 \subset \alg{C}$ the space of eventually zero sequences. 
To clarify: $\bs{x} \in \alg{C}$ is a sequence such $\lim_{|k| \to \infty } x_k = \lim_{k \to \infty } x_k = \lim_{k \to \infty } x_{-k} = x$ for some $x \in \FFF$.

\begin{lemma}\label{lem:c0.c.are.closed.subs.appendix}
    Both $\alg{C}_{0}$ and $\alg{C}$ are closed subspaces of $\ellinf$ (complete w.r.t $\infNorm{\cdot}$).
\end{lemma}
\begin{proof}
    wikipedia
\end{proof}

\begin{claim}
    The space $\alg{C}_0 $ together with operations inherited from $\ell_\infty$, is a C* algebra
\end{claim}

\begin{claim}
    The space $\alg{C} $ together with operations inherited from $\ell_\infty$, is a unital C* algebra
\end{claim}
\begin{lemma}
    $\alg{C} $ is the smallest unital C* algebra that contains $\alg{C}_0$ as a *-subalgebra
\end{lemma}

\begin{proof}
    Let $\alg{D}$ be a unital C* algebra such that $\alg{C}_0 \subset \alg{D}$ is a *-ideal. 
    
    Given any convergent sequence $\bs{c} \in \alg{C}$ with a limit $c \in \FFF$, we have that $\bs{c}' = \bs{c} - c \bs{1} $ converges to 0 hence $\bs{c}' \in \alg{C}_0$, and clearly $\bs{c} \in \alg{D}$.
    Thus $\alg{C} \subset \alg{D}$, that is, $\alg{C}$ is contained in any unital C* algebra which contains $\alg{C}_0$, hence, $\alg{C}$ is the smallest unital C* algebra containing $\alg{C}_0$. 
\end{proof}

\begin{corollary}
    Let $\cH$ be a separable Hilbert space, and $\ff$ as defined in \Cref{def:F-transform} and Eq.~\Cref{def:ff.qtm}. 
    Then $\alg{C}$ is the smallest, unital C* algebra in which $\cH$ is embedded as *-ideal.
\end{corollary}
\fi 

\subsection{Hilbert C*-module structure}\label{sec:cstar.module}

First, we recall some basic definition from abstract algebra
\begin{definition}[Module over ring]\label{def:module.over.ring}
    Let $(\alg{R}, \odot)$ be a ring.  
    A \textbf{left $\bs{\alg{R}}$-module} $E$ is an Abelian group $(E,+)$ with additional operation $\cdot \colon \alg{R} \xx E \to E$ such that for all $\bs{q},\bs{p} \in \alg{R}$ and $\mat{X}, \mat{Y} \in E$ it holds that
    \begin{enumerate}
        \item $\bs{q}\cdot (\mat{X} + \mat{Y}) = \bs{q} \cdot \mat{X} + \bs{q} \cdot \mat{Y} $ \label{item:module.rmul.distributive.over.madd}
        \item $(\bs{q} + \bs{p}) \cdot \mat{X} = \bs{q}\cdot \mat{X} + \bs{p} \cdot \mat{X}$ \label{item:module.rmul.distributive.over.radd}
        \item $(\bs{q} \odot \bs{p}) \cdot \mat{X} = \bs{q}\cdot(  \bs{p} \cdot \mat{X})$ \label{item:module.associative.rmul}
        \item If $\alg{R}$ has a unity $\bs{1}$, then $\bs{1} \cdot \mat{X} = \mat{X}$ \label{item:module.unitary.rmul}
    \end{enumerate}

    A subset $B$ of an $\alg{R}$ module $E$ is called a \textbf{basis} for $E$ if it is an $\alg{R}$-linearly independent generating set:
    \begin{enumerate}
    \item Linearly independent: For every finite set $\bs{B}_1 , \ldots , \mat{B}_n \in B$, we have $\sum_{j=1}^n \bs{a}_j \cdot \mat{B}_j = 0$ if and only if $\bs{a}_1 = \cdots = \bs{a}_n = 0$.
    \item Generating: For all $\mat{X} \in E$, we have $\mat{X} = \sum_{\mat{B}_\beta \in B} \bs{a}_{\beta} \cdot  \mat{B}_\beta$ where the set of indices $\beta$ such that $\bs{a}_\beta \neq 0$ is finite. 
    \end{enumerate}
    An $\alg{R}$-module $E$ is said to be \textbf{free} if it contains a basis.
    A ring $\alg{R}$ has \textbf{invariant basis number (IBN) property} if for all $m,n\in \NN$ it holds that $\alg{R}^m \cong \alg{R}^n \iff m=n$. Note that any commutative ring statisifies the IBN property. 
    Suppose that $\alg{R}$ has the IBN property, then the \textbf{rank} of an $\alg{R}$ module $E$ is the cardinality of any basis of $E$ (all of which are of the same cardinality).
\end{definition}

An elementary result from linear algebra states there is an isomorphism between linear transformations and matrices of the underlying field. This also holds for modules over commutative rings. The proof is essentially the same as for matrices. 
\begin{lemma}\label{lem:Rlin.iff.matmul}
    Let $(\alg{R}, \odot)$ be a commutative ring and $m,p$ be positive integers.
    A mapping $T$ between the (free) $\alg{R}$-modules $\alg{R}^p$ and $\alg{R}^m$ is an $\alg{R}$-module homomorphism ($\alg{R}$-linear) if and only if there exists an $\alg{R}$-valued matrix $\tens{T} \in \alg{R}^{m \xx p}$ with entries $\{ \bs{t}_{hj} \}_{(h,j)\in[m]\xx[p]}$ such that 
    \begin{align*}
        T(\mat{X}) 
        &= \tens{T} \mat{X} 
        \coloneqq {\sum}_{h=1}^m {\sum}_{j=1}^p \bs{t}_{hj} \odot \bs{x}_j \cdot \mat{E}_{(m)}^{(h)}
    \end{align*}
    for all $\mat{X} \in \alg{R}^p$, where  $\mat{E}_{(M)}^{(K)}$ denotes the $K$-th element of the standard basis in $\alg{R}^M$.
    {That is, \newline $\Hom_{\alg{R}}(\alg{R}^p,\alg{R}^m) {\cong} \alg{R}^{m \xx p}$.}
\end{lemma}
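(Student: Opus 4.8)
The plan is to establish the claimed correspondence $\Hom_{\alg{R}}(\alg{R}^p,\alg{R}^m) \cong \alg{R}^{m \xx p}$ by the standard ``matrices are linear maps'' argument, adapted to modules over a commutative ring. There are two directions to verify, after which the bijection must be shown to respect the $\alg{R}$-module structure on both sides. Throughout I would lean on the fact, built into \Cref{def:module.over.ring}, that $\alg{R}^p$ and $\alg{R}^m$ are free with standard bases $\{\mat{E}_{(p)}^{(j)}\}_{j=1}^p$ and $\{\mat{E}_{(m)}^{(h)}\}_{h=1}^m$, so that every element has a unique coordinate expansion.

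First the easy direction: given any matrix $\tens{T} \in \alg{R}^{m \xx p}$, I would show that the map $\mat{X} \mapsto \tens{T}\mat{X}$ defined by the displayed formula is an $\alg{R}$-module homomorphism. This is a direct check of the axioms of \Cref{def:module.over.ring}: additivity in $\mat{X}$ follows from the distributivity of $\odot$ over addition in $\alg{R}$, and $\alg{R}$-homogeneity $\tens{T}(\bs{a}\cdot\mat{X}) = \bs{a}\cdot(\tens{T}\mat{X})$ follows from associativity of $\odot$. No commutativity is needed in this direction.

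The substance is the converse. Given an $\alg{R}$-linear $T\colon \alg{R}^p \to \alg{R}^m$, every $\mat{X}\in\alg{R}^p$ has the unique expansion $\mat{X} = \sum_{j=1}^p \bs{x}_j \cdot \mat{E}_{(p)}^{(j)}$. I would define the entries $\bs{t}_{hj}\in\alg{R}$ by expanding each image $T(\mat{E}_{(p)}^{(j)}) \in \alg{R}^m$ in the standard basis of $\alg{R}^m$, namely $T(\mat{E}_{(p)}^{(j)}) = \sum_{h=1}^m \bs{t}_{hj}\cdot \mat{E}_{(m)}^{(h)}$, which determines the $\bs{t}_{hj}$ uniquely by freeness of $\alg{R}^m$. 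Applying $\alg{R}$-linearity of $T$ to the expansion of $\mat{X}$ then yields $T(\mat{X}) = \sum_{j=1}^p \bs{x}_j\cdot T(\mat{E}_{(p)}^{(j)}) = \sum_{h=1}^m\sum_{j=1}^p (\bs{x}_j\odot\bs{t}_{hj})\cdot\mat{E}_{(m)}^{(h)}$. This is exactly the step where commutativity of $\alg{R}$ is genuinely used: rewriting $\bs{x}_j\odot\bs{t}_{hj} = \bs{t}_{hj}\odot\bs{x}_j$ reproduces the matrix-product formula verbatim, so $T$ coincides with the map induced by $\tens{T}$.

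Finally, I would observe that $T \mapsto \tens{T}$ and $\tens{T}\mapsto(\mat{X}\mapsto\tens{T}\mat{X})$ are mutually inverse: uniqueness of the coordinate expansion shows the matrix recovered from $\mat{X}\mapsto\tens{T}\mat{X}$ is $\tens{T}$ again, while the computation above shows any $\alg{R}$-linear $T$ equals the map induced by its matrix. Both assignments are manifestly additive and $\alg{R}$-homogeneous in the entries, so the bijection is an $\alg{R}$-module isomorphism, giving $\Hom_{\alg{R}}(\alg{R}^p,\alg{R}^m)\cong\alg{R}^{m \xx p}$. There is no real obstacle here beyond bookkeeping; the only conceptually load-bearing point is the reindexing $\bs{x}_j\odot\bs{t}_{hj}=\bs{t}_{hj}\odot\bs{x}_j$, which is precisely why the commutativity hypothesis on $\alg{R}$ cannot be dropped.
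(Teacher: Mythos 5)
Your proposal is correct and takes essentially the same route as the paper's proof: check that a matrix induces an $\alg{R}$-linear map, and conversely expand each $T(\mat{E}_{(p)}^{(j)})$ in the standard basis of $\alg{R}^m$ to extract the entries $\bs{t}_{hj}$, then use freeness and $\alg{R}$-linearity to identify $T$ with the induced matrix map. One small correction to your commentary: commutativity is also needed in the \emph{forward} direction, since $\tens{T}(\bs{a}\cdot\mat{X}) = \bs{a}\cdot(\tens{T}\mat{X})$ requires $\bs{t}_{hj}\odot\bs{a} = \bs{a}\odot\bs{t}_{hj}$ (associativity alone only yields $(\bs{t}_{hj}\odot\bs{a})\odot\bs{x}_j$, not $(\bs{a}\odot\bs{t}_{hj})\odot\bs{x}_j$), so it is not the case that only the converse step uses the commutativity hypothesis.
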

\if0
\begin{proof}
    Let $\tens{T} \in \alg{R}^{m \xx p}$, and recall that $\mat{X} = \sum_{j=1}^p \bs{x}_j \cdot \mat{E}_{(p)}^{(j)}$ for $\mat{X} \in \alg{R}^p$.
    Let $\bs{a},\bs{b} \in \alg{R}$ and $\mat{X},\mat{Y} \in \alg{R}^p$, then
    \begin{align*}
        \tens{T} (\bs{a} \cdot \mat{X} + \bs{b} \cdot \mat{Y})
        &= {\sum}_{h=1}^m {\sum}_{j=1}^p \bs{t}_{hj} \odot (\bs{a} \odot \bs{x}_j + \bs{b} \odot \bs{y}_j) \cdot \mat{E}_{(m)}^{(h)} \\
        &= {\sum}_{h=1}^m {\sum}_{j=1}^p  \bs{a} \odot \bs{t}_{hj} \odot \bs{x}_j \cdot \mat{E}_{(m)}^{(h)} + \bs{b} \odot \bs{t}_{hj} \odot \bs{y}_j \cdot \mat{E}_{(m)}^{(h)} \\
        &= \bs{a} \cdot  {\sum}_{h=1}^m {\sum}_{j=1}^p  \bs{t}_{hj} \odot \bs{x}_j \cdot \mat{E}_{(m)}^{(h)} 
        + \bs{b} \cdot {\sum}_{h=1}^m {\sum}_{j=1}^p  \bs{t}_{hj} \odot \bs{y}_j \cdot \mat{E}_{(m)}^{(h)} \\
        &= \bs{a} \cdot \tens{T} \mat{X} + \bs{b} \cdot \tens{T} \mat{Y}
    \end{align*}
    For the reverse direction, suppose that $T \colon \alg{R}^p \to \alg{R}^m$ is $\alg{R}$-linear.
    Define $\mat{T}^{(j)} = T(\mat{E}_{(p)}^{(j)}) \in \alg{R}^m$ for all $j=1,\ldots,p$.
    Then $\mat{T}^{(j)} = \sum_{h=1}^m \bs{t}_{h}^{(j)} \cdot \mat{E}_{(m)}^{(h)}$.
    By the $\alg{R}$ linearity it holds that
        $T(\mat{X}) 
        = \sum_{j=1}^p \bs{x}_j \cdot T(\mat{E}_{(p)}^{(j)})$ 
    for all $\mat{X} \in \alg{R}^p$.
    As a result, we write
    \begin{align*}
        T(\mat{X}) 
        = {\sum}_{j=1}^p \bs{x}_j \cdot \mat{T}^{(j)} 
        = {\sum}_{j=1}^p \bs{x}_j \cdot {\sum}_{h=1}^m \bs{t}_{h}^{(j)} \cdot \mat{E}_{(m)}^{(h)} 
        = {\sum}_{h=1}^m  {\sum}_{j=1}^p \bs{t}_{h}^{(j)} \odot \bs{x}_j \cdot  \mat{E}_{(m)}^{(h)} 
    \end{align*}
    Define $\tens{T} \in \alg{R}^{m \xx p}$ as the matrix whose $(h,j)$-th entry is $\bs{t}_{h}^{(j)}$ and note that $T(\mat{X}) = \tens{T} \matX$ for all $\mat{X} \in \alg{R}^p$.
\end{proof}
\fi

\subsubsection{The module of quasitubal slices}
For our purposes, all underlying rings are commutative and all modules are both left and right modules unless stated otherwise. 

\begin{lemma}
    The set $\cHs^p$ of $p$-tuples $\mat{X} = (\bs{x}_1, \ldots, \bs{x}_p) $ where $\bs{x}_k \in \cHs$ for all $k \in [p]$, together with the operations $\mat{X} + \mat{Y} = (\bs{x}_1 + \bs{y}_1, \ldots, \bs{x}_p + \bs{y}_p)$ and $\bs{q} \ff \mat{X} = (\bs{q} \ff \bs{x}_1, \ldots, \bs{q} \ff \bs{x}_p)$ for all $\matX,\matY \in \cHs^p$ and $\bs{q} \in \cHs$,
    is a free $\cHs$-module of rank $p$. 
\end{lemma}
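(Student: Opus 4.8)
The plan is to verify the module axioms of \Cref{def:module.over.ring} directly and then exhibit an explicit basis of cardinality $p$, invoking the invariant basis number property to pin down the rank. First I would establish that $(\cHs^p, +)$ is an abelian group. Since $\cHs$ is a commutative, unital C*-algebra by \Cref{lem:cHs.star.alg}, its underlying additive structure is an abelian group, and componentwise addition makes $\cHs^p$ an abelian group with identity $(0,\ldots,0)$ and inverses $(-\bs{x}_1,\ldots,-\bs{x}_p)$. Then I would check the four axioms for the action $\bs{q}\ff\matX$. Each reduces, coordinate by coordinate, to a corresponding identity in the ring $(\cHs,\ff)$: distributivity of $\ff$ over addition yields axioms \ref{item:module.rmul.distributive.over.madd} and \ref{item:module.rmul.distributive.over.radd}, associativity of $\ff$ yields \ref{item:module.associative.rmul}, and the fact that $\bs{e}$ is the multiplicative unit of $\cHs$ yields \ref{item:module.unitary.rmul}. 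This part is entirely routine, since every required identity holds componentwise in $\cHs$.

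Next I would exhibit the standard basis $\{\mat{E}_{(p)}^{(k)}\}_{k=1}^p$, where $\mat{E}_{(p)}^{(k)}$ is the tuple with $\bs{e}$ in coordinate $k$ and $0$ elsewhere. For the generating property, given $\matX = (\bs{x}_1,\ldots,\bs{x}_p)$, using $\bs{e}\ff\bs{x}_k = \bs{x}_k$ I would write $\matX = \sum_{k=1}^p \bs{x}_k \ff \mat{E}_{(p)}^{(k)}$, a finite $\cHs$-linear combination. For linear independence, suppose $\sum_{k=1}^p \bs{a}_k \ff \mat{E}_{(p)}^{(k)} = 0$; reading off coordinate $k$ gives $\bs{a}_k \ff \bs{e} = \bs{a}_k = 0$, so every coefficient vanishes. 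Hence $\{\mat{E}_{(p)}^{(k)}\}_{k=1}^p$ is a basis and $\cHs^p$ is a free $\cHs$-module.

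Finally, to conclude that the rank is exactly $p$, I would invoke the invariant basis number property. Since $\cHs$ is commutative by \Cref{lem:cHs.star.alg}, it satisfies IBN as noted in \Cref{def:module.over.ring}, so the rank is well defined and equals the cardinality $p$ of the basis just constructed.

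I do not anticipate a genuine obstacle here: every step is a componentwise transcription of the commutative, unital C*-algebra axioms of $\cHs$. The only point that requires care is explicitly invoking commutativity of $\cHs$ to justify IBN, which is what makes ``rank $p$'' meaningful rather than merely asserting the existence of a basis of size $p$.
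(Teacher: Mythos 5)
Your proposal is correct and follows essentially the same route as the paper: verify the module axioms componentwise, exhibit the standard basis with $\bs{e}$ in one coordinate and $0$ elsewhere (which the paper denotes $\mat{\Phi}_{(p)}^{(k)}$), and invoke the IBN property of the commutative ring $\cHs$ to conclude the rank is exactly $p$. The only difference is that you spell out the routine verifications the paper dismisses as immediate.
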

\begin{proof}
    It immediately follows from the definition of $\ff$-multiplication and the properties of $\cHs$ that $\cHs^p$ is an $\cHs$-module.
    Consider the set $\{ \mat{\Phi}_{(p)}^{(k)} \}_{k = 1}^{p}$ where 
    \begin{equation}\label{eq:cHs.std.bases}
        \mat{\Phi}_{(p)}^{(k)} = (\bs{0}, \ldots, \bs{0}, \underset{k\textnormal{'th coordinate}}{\bs{e}}, \bs{0}, \ldots, \bs{0})
    \end{equation}
    then clearly $\{ \mat{\Phi}_{(p)}^{(k)} \}_{k = 1}^{p} \subset \cHs^p$ is a generating set for $\cHs^p$ that is $\cHs$-linearly independent, therefore, a basis.
    Since $\cHs$ is a nonzero, commutative ring, it has the IBN property, thus all bases of $\cHs^p$ have the same cardinality - $p$, and $\cHs^p$ is a free module of rank $p$ over $\cHs$.
\end{proof}
In what follows, we will denote the $j$-th standard basis element of \smash{$\ellinf^{p}$} by \smash{$\mat{E}_{(p)}^{(j)}$} or \smash{$\mat{E}^{(j)}$} in case the module's rank is clear from context. 
Correspondingly, we denote by \smash{$\bs{\Phi}_{(p)}^{(j)} \in \cHs^p$}  the $j$-th element in the standard basis for \smash{$\cHs^p$}, and we have, $$\bs{\Phi}_{(p)}^{(j)} \coloneqq \matE_{(p)}^{(j)} \xx_3 \uppi^{-1}.$$  We write  $\bs{\Phi}^{(j)}$ if the dimension is clear from the context.

\begin{definition}[Pre Hilbert Module]\label{def:pre.hilbert.module.PHM}
    Let $\alg{A}$ be a C*-algebra over $\FFF$ with a norm denoted by $\GNorm{\cdot}_{\alg{A}}$.
    A \textbf{pre-Hilbert $\alg{A}$-module}, or, pre-Hilbert module (PHM) over $\alg{A}$, is a complex vector space $M$ that is also an $\alg{A}$-module, equipped with a mapping
    $\dotmp{\cdot,\cdot} \colon M \xx M \to \alg{A}$, called `$\alg{A}$\textbf{-valued inner product}' (or $\alg{A}$-inner-product) if for all $\bs{a},\bs{b} \in \alg{A}$ and $\mat{X},\mat{Y},\mat{Z} \in M$
    \begin{enumerate}
        \item $\dotmp{\mat{X}, \bs{a}  \mat{Y} + \bs{b}  \mat{Z}} = \bs{a} \dotmp{\mat{X}, \mat{Y}} + \bs{b} \dotmp{\mat{X}, \mat{Z}} $ \label{item:avalinnerp.2nd.arg.lin}
        \item $\dotmp{\mat{X},\mat{Y}} = \dotmp{\mat{Y},\mat{X}}^*$ \label{item:avalinnerp.adj.sym}
        \item $\dotmp{\mat{X},\mat{X}} \geq_{\alg{A}} 0$ for all $\mat{X} \in M$  and $\dotmp{\mat{X},\mat{X}} = 0 \iff \mat{X} = 0$.  \label{item:avalinnerp.posdef}
    \end{enumerate}

    Let $\alg{A}$ be a C*-algebra and $M$ be a PHM over $\alg{A}$ with inner product $\dotmp{\cdot , \cdot} \colon M \xx M \to \alg{A}$.
    The \textbf{$\alg{A}$-absolute value} of any $\mat{X} \in M$, is defined as 
    \begin{equation}\label{def:PHM.Aabs}
        | \mat{X} |_M  \coloneqq \sqrt{\dotmp{\mat{X},\mat{X}}}
    \end{equation}
    that is,  the unique, non-negative (hence self adjoint) element $| \mat{X} |_M \in \alg{A}$  such that $| \mat{X} |_M \cdot | \mat{X} |_M^* = | \mat{X} |_M^2 = \dotmp{\mat{X},\mat{X}}$, and the \textbf{real $M$-norm} of $\mat{X}$ is 
    \begin{equation}\label{def:PHM.Rnorm}
         \GNorm{\mat{X}}_{M} \coloneqq  \GNorm{| \mat{X} |_M}_{\alg{A}}
    \end{equation}
    Note that the identity $\GNorm{\mat{X}}_{M} = \GNorm{\dotmp{\mat{X},\mat{X}}}_{\alg{A}}^{1/2}$ holds since $\GNormS{| \mat{X} |_M}_{\alg{A}} = \GNorm{| \mat{X} |_M^2}_{\alg{A}} = \GNorm{\dotmp{\mat{X},\mat{X}}}_{\alg{A}}$.
\end{definition}

\begin{lemma}[Cauchy-Schwartz in PHM \texorpdfstring{\cite[Proposition 1.1]{Lance1995}}{\cite{Lance1995}}]\label{lem:PHM.cs.ineq}
    Let $\alg{A}$ be a C*-algebra and $M$ be a PHM over $\alg{A}$ with inner product $\dotmp{\cdot , \cdot} \colon M \xx M \to \alg{A}$.
    Then 
    $$
    |\dotmp{\mat{X},\mat{Y}}|^2 \leq_{\alg{A}} \GNormS{\mat{X}}_M \dotmp{\mat{Y}, \mat{Y}}
    $$
    holds for all $\mat{X},\mat{Y} \in M$.
\end{lemma}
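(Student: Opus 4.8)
The plan is to adapt the classical Cauchy--Schwarz argument to the $\alg{A}$-valued setting, exploiting that in our situation $\alg{A} = \cHs$ is \emph{commutative}. Write $\bs{b} \coloneqq \dotmp{\mat{X},\mat{Y}} \in \alg{A}$, so that $\bs{b}^* = \dotmp{\mat{Y},\mat{X}}$, and set $c \coloneqq \GNorm{\dotmp{\mat{X},\mat{X}}}_{\alg{A}} = \GNormS{\mat{X}}_M \ge 0$. If $c = 0$, then $\dotmp{\mat{X},\mat{X}} = 0$, so by positive-definiteness of the $\alg{A}$-valued inner product (\Cref{def:pre.hilbert.module.PHM}) we get $\mat{X} = 0$ and hence $\bs{b} = 0$, making the inequality trivial. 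So I would assume $c > 0$ from here on.

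First I would invoke the positivity axiom: for every $\bs{a} \in \alg{A}$ the module element $\mat{Z} \coloneqq \bs{a}\cdot\mat{X} - \mat{Y}$ satisfies $\dotmp{\mat{Z},\mat{Z}} \geq_{\alg{A}} 0$. Expanding with the sesquilinearity and adjoint-symmetry axioms of \Cref{def:pre.hilbert.module.PHM}, and using that $\dotmp{\mat{X},\mat{X}}$ is self-adjoint, this becomes
\[
\bs{a}\cdot\dotmp{\mat{X},\mat{X}}\cdot\bs{a}^* - \bs{b}\cdot\bs{a}^* - \bs{a}\cdot\bs{b}^* + \dotmp{\mat{Y},\mat{Y}} \geq_{\alg{A}} 0 .
\]
Next I would replace $\dotmp{\mat{X},\mat{X}}$ by a scalar bound. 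Since $\dotmp{\mat{X},\mat{X}}$ is self-adjoint, \Cref{lem:spcrad.exists.norms} gives $\dotmp{\mat{X},\mat{X}} \leq_{\alg{A}} c\,\bs{e}$, and because conjugation $\bs{p}\mapsto \bs{a}\cdot\bs{p}\cdot\bs{a}^*$ is order-preserving (if $\bs{p} = \bs{s}\bs{s}^* \geq_{\alg{A}} 0$ then $\bs{a}\cdot\bs{p}\cdot\bs{a}^* = (\bs{a}\bs{s})(\bs{a}\bs{s})^* \geq_{\alg{A}} 0$), I obtain $\bs{a}\cdot\dotmp{\mat{X},\mat{X}}\cdot\bs{a}^* \leq_{\alg{A}} c\,\bs{a}\bs{a}^*$.

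Substituting this bound and then specializing the test element to $\bs{a} \coloneqq c^{-1}\bs{b}$, each of the three $\bs{b}$-terms collapses to $c^{-1}\bs{b}\bs{b}^*$ (here commutativity of $\cHs$ lets me reorder products freely), and the inequality telescopes to
\[
\dotmp{\mat{Y},\mat{Y}} - c^{-1}\,\bs{b}\cdot\bs{b}^* \geq_{\alg{A}} 0 .
\]
Finally, commutativity yields $\bs{b}\cdot\bs{b}^* = \bs{b}^*\cdot\bs{b} = |\bs{b}|^2 = |\dotmp{\mat{X},\mat{Y}}|^2$ (\Cref{def:absval.chs.appendix}), so multiplying through by $c = \GNormS{\mat{X}}_M$ gives $|\dotmp{\mat{X},\mat{Y}}|^2 \leq_{\alg{A}} \GNormS{\mat{X}}_M\,\dotmp{\mat{Y},\mat{Y}}$, as claimed.

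The step I expect to be most delicate is reconciling the product ordering with the definition of the absolute value: the naive expansion produces $\bs{b}\bs{b}^*$, whereas the stated quantity $|\dotmp{\mat{X},\mat{Y}}|^2$ equals $\bs{b}^*\bs{b}$, and it is exactly the commutativity of $\cHs$ (\Cref{lem:cHs.star.alg}) that makes these coincide — so the telescoping above is special to our commutative C*-algebra rather than to a general Hilbert module. Since the inequality in fact holds verbatim for arbitrary (possibly noncommutative) Hilbert C*-modules, where one instead chooses $\bs{a} = c^{-1}\dotmp{\mat{X},\mat{Y}}$ on the appropriate side, one may alternatively simply cite \cite[Proposition 1.1]{Lance1995}.
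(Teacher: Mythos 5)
Your proof is correct on the paper's actual scope, and it takes a genuinely different route, because the paper never proves \Cref{lem:PHM.cs.ineq} at all: the result is imported by citation to \cite[Proposition 1.1]{Lance1995}, with no proof environment following the statement. Your argument is self-contained and, pleasingly, uses only machinery the paper has already built: the expansion of $\dotmp{\bs{a}\cdot\mat{X}-\mat{Y},\,\bs{a}\cdot\mat{X}-\mat{Y}} \geq_{\alg{A}} 0$ is exactly right under the paper's axioms (linearity in the second slot plus $\dotmp{\mat{X},\mat{Y}}=\dotmp{\mat{Y},\mat{X}}^*$ really do give $\bs{a}\dotmp{\mat{X},\mat{X}}\bs{a}^* - \bs{b}\bs{a}^* - \bs{a}\bs{b}^* + \dotmp{\mat{Y},\mat{Y}}$); the bound $\dotmp{\mat{X},\mat{X}} \leq_{\alg{A}} c\,\bs{e}$ follows from \Cref{lem:spcrad.exists.norms} together with the spectral shift of \Cref{lem:helper.for.geqnormbound}, and is in fact established verbatim inside the paper's proof of \Cref{lem:pos.geq.norm.bound}; and order-preservation of $\bs{p}\mapsto\bs{a}\bs{p}\bs{a}^*$ follows from the characterization $\bs{p}\geq_{\alg{A}}0 \iff \bs{p}=\bs{s}\bs{s}^*$ that the paper cites from Conway. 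What the citation buys the paper is brevity and full generality (noncommutative, non-unital $\alg{A}$); what your proof buys is transparency at the price of assuming $\alg{A}$ commutative and unital — which is in fact the paper's implicit standing assumption, since the order $\geq_{\alg{A}}$ is only defined (\Cref{def:cstar.positive}) for commutative unital C*-algebras, and the lemma is only ever applied with $\alg{A}=\cHs$.

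Two corrections to your commentary, neither fatal to the proof. First, commutativity is not what makes the three cross terms collapse: with $\bs{a}=c^{-1}\bs{b}$, each of $\bs{b}\bs{a}^*$, $\bs{a}\bs{b}^*$ and $c\,\bs{a}\bs{a}^*$ equals $c^{-1}\bs{b}\bs{b}^*$ with no reordering at all; commutativity enters only at the very last step, to identify $\bs{b}\bs{b}^*$ with $|\bs{b}|^2=\bs{b}^*\bs{b}$. Second, your closing claim that the inequality holds \emph{verbatim} for arbitrary noncommutative Hilbert C*-modules is false under the paper's conventions. What your argument (and Lance's, after translating his right-module conventions to the paper's left-module ones) yields in general is $\dotmp{\mat{X},\mat{Y}}\dotmp{\mat{X},\mat{Y}}^* \leq_{\alg{A}} \GNormS{\mat{X}}_M \dotmp{\mat{Y},\mat{Y}}$, and the starred-on-the-left form in the statement can genuinely fail: take $\alg{A}=M=M_2(\CC)$ with $\dotmp{\bs{x},\bs{y}}\coloneqq\bs{y}\bs{x}^*$, which satisfies all three axioms of \Cref{def:pre.hilbert.module.PHM}, and $\bs{x}=e_{11}$, $\bs{y}=e_{21}$ (matrix units); then $\dotmp{\bs{x},\bs{y}}^*\dotmp{\bs{x},\bs{y}}=e_{11}$ while $\GNormS{\bs{x}}_M\dotmp{\bs{y},\bs{y}}=e_{22}$, and $e_{22}-e_{11}$ is not positive. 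So the commutativity of $\cHs$ (\Cref{lem:cHs.star.alg}) is not a cosmetic convenience here — it is precisely what licenses the form in which the lemma is stated, and your instinct to flag the $\bs{b}\bs{b}^*$ versus $\bs{b}^*\bs{b}$ issue as the delicate point was the right one.
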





By~\Cref{def:pre.hilbert.module.PHM} and~\Cref{def:PHM.Rnorm}, any PHM $M$ is a normed complex vector space. We have the following:
\begin{definition}[Hilbert C* Module]\label{def:Hcstar.module}
    A PHM $M$ over a C*-algebra $\alg{A}$, equipped with the $\alg{A}$-inner-product $\dotmp{\cdot,\cdot} \colon M \xx M \to \alg{A}$ is a \textbf{Hilbert C*-module} over $\alg{A}$ if $M$ is complete w.r.t the the real $M$-norm. 
\end{definition}

\begin{theorem}\label{thm:chsp.is.cstar.hilbert.module}
    The set $\cHs^p$ with the mapping $\dotmp{\mat{X}, \mat{Y}} \coloneqq \mat{X}^* \ff \mat{Y}$ is a Hilbert C*-module over $\cHs$. 
\end{theorem}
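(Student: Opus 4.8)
The plan is to verify the two ingredients of \Cref{def:Hcstar.module}: first, that $\dotmp{\mat{X},\mat{Y}} \coloneqq \mat{X}^* \ff \mat{Y} = \sum_{k=1}^p \bs{x}_k^* \ff \bs{y}_k$ is a genuine $\cHs$-valued inner product in the sense of \Cref{def:pre.hilbert.module.PHM}, and second, that $\cHs^p$ is complete with respect to the induced real $M$-norm of \Cref{def:PHM.Rnorm}. That $\cHs^p$ is a free $\cHs$-module of rank $p$ has already been shown, and $\dotmp{\mat{X},\mat{Y}}$ indeed lands in $\cHs$ since it is a finite sum of products of quasitubes. Throughout I would move between $\cHs$ and $\ell_\infty$ via the isometric $*$-isomorphism $\uppi$ (\Cref{lemma:hstar.ellinf.isoiso.cstar.uppi,lem:pi.star.iso}), which carries $\ff$ to the elementwise product $\odot$ and $\cHsNorm{\cdot}$ to $\infNorm{\cdot}$.

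For the inner-product axioms, $\cHs$-linearity in the second slot, namely $\dotmp{\mat{X},\, \bs{a}\ff\mat{Y}+\bs{b}\ff\mat{Z}} = \bs{a}\ff\dotmp{\mat{X},\mat{Y}} + \bs{b}\ff\dotmp{\mat{X},\mat{Z}}$, follows by expanding the defining sum and pulling $\bs{a},\bs{b}$ outside, which is legitimate because $(\cHs,\ff)$ is commutative (\Cref{lem:cHs.star.alg}). Adjoint symmetry $\dotmp{\mat{X},\mat{Y}} = \dotmp{\mat{Y},\mat{X}}^*$ is immediate from $(\bs{a}\ff\bs{b})^* = \bs{b}^*\ff\bs{a}^*$ together with $(\bs{a}^*)^*=\bs{a}$ applied termwise. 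For positivity I would write $\dotmp{\mat{X},\mat{X}} = \sum_k \bs{x}_k^*\ff\bs{x}_k$ and pass to the transform domain: since $\uppi(\bs{x}_k^*\ff\bs{x}_k) = \overline{\hat{\bs{x}}_k}\odot\hat{\bs{x}}_k = |\hat{\bs{x}}_k|^2$, the image $\uppi(\dotmp{\mat{X},\mat{X}}) = \sum_k |\hat{\bs{x}}_k|^2 \in \ell_\infty$ is a nonnegative sequence, whence $\dotmp{\mat{X},\mat{X}}\geq_{\cHs}0$ by \Cref{lem:rel.invariance}. Definiteness follows because $\sum_k|\hat{\bs{x}}_k|^2$ is the zero sequence precisely when every $\hat{\bs{x}}_k$ vanishes, i.e.\ when $\mat{X}=0$.

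The main work is completeness. Using the identity $\GNorm{\mat{X}}_M^2 = \cHsNorm{\dotmp{\mat{X},\mat{X}}}$ and the isometry of $\uppi$, I would first compute the norm explicitly as $\GNorm{\mat{X}}_M^2 = \infNorm{\sum_k |\hat{\bs{x}}_k|^2} = \sup_{n\in\ZZ}\sum_{k=1}^p |\hat{x}_k(n)|^2$. The key step is then to show this norm is equivalent to the product (maximum) norm $\max_{k\in[p]}\cHsNorm{\bs{x}_k} = \max_k \infNorm{\hat{\bs{x}}_k}$: commuting the finite maximum over $k$ with the supremum over $n$ yields $\max_k\cHsNorm{\bs{x}_k}^2 \leq \GNorm{\mat{X}}_M^2 \leq p\max_k\cHsNorm{\bs{x}_k}^2$. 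Since $\cHs$ is a C*-algebra and hence complete (\Cref{lem:cHs.star.alg}), the finite product $\cHs^p$ is complete in the maximum norm, and equivalence of norms transfers this completeness to the real $M$-norm; equivalently, one identifies $(\cHs^p,\GNorm{\cdot}_M)$ isometrically with the Banach space of bounded $\FFF^p$-valued sequences on $\ZZ$ under the supremum of Euclidean norms. I expect this norm-equivalence (or the explicit isometric identification) to be the crux of the argument, whereas the three inner-product axioms are routine once the computations are pushed through $\uppi$.
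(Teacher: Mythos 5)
Your proof is correct, and its skeleton --- verify the PHM axioms with positivity checked in the transform domain, then reduce completeness of $\cHs^p$ to completeness of $\cHs$ --- parallels the paper's; the mechanism for the completeness step is where you genuinely diverge. The paper never writes the module norm as a supremum over $\ZZ$: it works intrinsically in the C*-algebra, using the C*-identity to get $\GNorm{\mat{Y}}_{\cHs^p}^2 = \cHsNormd{{\sum}_{j} \bs{y}_j^*\ff\bs{y}_j}$ and then the order relation ${\sum}_{j} \bs{y}_j^*\ff\bs{y}_j \geq_{\cHs} \bs{y}_r^*\ff\bs{y}_r$ together with norm monotonicity (\Cref{lem:pos.geq.norm.bound}) to obtain your lower bound $\cHsNormS{\bs{y}_r} \leq \GNorm{\mat{Y}}_{\cHs^p}^2$; coordinates of a Cauchy sequence are then Cauchy in $\cHs$ and converge there, with the final convergence in the module norm left as immediate (implicitly your upper bound $\GNorm{\cdot}_{\cHs^p}^2 \leq p\max_k \cHsNormS{\cdot}$). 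You reach the same two-sided estimate instead by the explicit computation $\GNorm{\mat{X}}_{\cHs^p}^2 = \sup_{n\in\ZZ}\sum_{k=1}^p |\wh{x}_k(n)|^2$ in $\ell_\infty$ and commuting the supremum with the finite maximum. Your route buys concreteness: the norm equivalence with $\max_k\cHsNorm{\bs{x}_k}$ and the isometric identification with bounded $\FFF^p$-valued sequences are stated explicitly, and \Cref{lem:pos.geq.norm.bound} is not needed at all. The paper's route buys generality: the identical argument works for $\alg{A}^p$ over any commutative unital C*-algebra $\alg{A}$, with no concrete $\ell_\infty$ model available. Your positivity argument is likewise a mild shortcut --- you transform the whole sum $\sum_k \wh{\bs{x}}_k^*\odot\wh{\bs{x}}_k$ at once and invoke \Cref{lem:rel.invariance}, whereas the paper first uses the partial order to reduce to a single quasitube and only then passes to the transform domain; both are sound.
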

\begin{proof}
    To show that $\dotmp{\cdot, \cdot}$ is an $\cHs$-inner-product, 
    note that~\Cref{item:avalinnerp.2nd.arg.lin,item:avalinnerp.adj.sym} clearly hold.
    
    Let $\mat{X} \in \cHs^p$, then
     $\mat{X}^* \ff \mat{X} = \sum_{j=1}^p \bs{x}_j^* \ff \bs{x}_j$ is an upper bound on $\{ \bs{x}_j^* \ff \bs{x}_j \}_{j=1}^p$ and, in particular, a non-negative element. 
    If $\mat{X}^* \ff \mat{X} = 0$ then $0 \geq_{\cHs} \bs{x}_j^* \ff \bs{x}_j$ therefore $\bs{x}_j^* \ff \bs{x}_j = 0$ for all $j$ and it suffices to show that $\bs{x}^* \ff \bs{x} = 0$ if and only if $\bs{x} = 0$.
    Let $\bs{x} \in \cHs$.
    Then by \Cref{lem:pi.star.iso} $|\bs{x}|^2 \coloneqq \bs{x}^* \ff \bs{x} = \uppi^{-1}(\hbs{x})^* \ff \uppi^{-1}(\hbs{x}) = \uppi^{-1}(\hbs{x}^* \odot\hbs{x})$
    is nonzero if and only if $|\hbs{x}|^2 \coloneqq \hbs{x}^* \odot \hbs{x} \in \ell_\infty$ is nonzero.
    Clearly, $|\hbs{x}|^2 = 0$ if an only if $\hbs{x} = 0$, thus $|\bs{x}|^2 = 0$ is equivalent to $\bs{x} = \uppi^{-1}(0) = 0$. 

    Next, we need to show that the space $\cHs^p$ is complete w.r.t $\GNorm{\cdot}_{\cHs^p}$.
    Let $\{\mat{X}^{(j)}\}_{j \in \NN} \subset \cHs^p$ be a Cauchy sequence, then for any $n \in \NN$, there exists $N = N_n \in \NN$ such that for all $m > N$ and $k \in \NN$ we have $\GNorm{\mat{X}^{(m)} - \mat{X}^{(m+k)} }_{\cHs^p} < n^{-1/2}$. 
    By setting $\mat{Y}^{(m,k)} \coloneqq \mat{X}^{(m)} - \mat{X}^{(m+k)}$, and using the definitions in \Cref{def:PHM.Aabs,def:PHM.Rnorm} we get 
    $$
    \GNormS{\mat{Y}^{(m,k)}}_{\cHs^p}
    =
    \cHsNormSd{\sqrt{{\sum}_{j=1}^p {\bs{y}_j^{(m,k)}}^* \ff {\bs{y}_j^{(m,k)}} }} 
    = 
    \cHsNormSd{{\sum}_{j=1}^p {\bs{y}_j^{(m,k)}}^* \ff {\bs{y}_j^{(m,k)}}}
    $$
    where the last equality follows from the C*-identity. 
    Clearly, $\sum_{j=1}^p {\bs{y}_j^{(m,k)}}^* \ff {\bs{y}_j^{(m,k)}} \geq_{\cHs} {\bs{y}_r^{(m,k)}}^* \ff {\bs{y}_r^{(m,k)}}$ for all $r =1,\ldots,p$, and by \Cref{lem:pos.geq.norm.bound} we obtain that
    \begin{equation}\label{eq:opnorm.cauchy.bound}
    \cHsNormd{{\sum}_{j=1}^p {\bs{y}_j^{(m,k)}}^* \ff {\bs{y}_j^{(m,k)}}} \geq  \cHsNorm{{\bs{y}_r^{(m,k)}}^* \ff {\bs{y}_r^{(m,k)}}}
    \end{equation}
    By construction,
    $$
    \cHsNormd{{\sum}_{j=1}^p {\bs{y}_j^{(m,k)}}^* \ff {\bs{y}_j^{(m,k)}}} 
    = \GNormS{\mat{X}^{(m)} - \mat{X}^{(m+k)} }_{\cHs^p}  
    $$
    and 
    $$
    {\bs{y}_r^{(m,k)}}^* \ff {\bs{y}_r^{(m,k)}} 
    = 
    (\bs{x}_r^{(m)} - \bs{x}_r^{(m+k)})^* \ff (\bs{x}_r^{(m)} - \bs{x}_r^{(m+k)})
    $$
    thus \Cref{eq:opnorm.cauchy.bound} becomes 
    $\cHsNormS{\bs{x}_r^{(m)} - \bs{x}_r^{(m+k)}} \leq \GNormS{\mat{X}^{(m)} - \mat{X}^{(m+k)} }_{\cHs^p} \leq  n^{-1}$ for all $r \in [p]$, that is, $\{\bs{x}_r^{(m)} \}_{m \in \NN}$ is a Cauchy sequence with respect to $\cHsNorm{\cdot}$ for all $r \in [p]$.
    Combined with the fact that $\cHs$ is C*-algebra (in particular - Banach space), the above implies that for all $r \in [p]$ there exists $\bs{x}_r \in \cHs$ such that $\lim_{m \to \infty} \bs{x}_r^{(m)} = \bs{x}_r$. 
    Define $\mat{X} \in \cHs^p$ as the quasitubal slice whose $r$th entry is $\bs{x}_r$ and have that $\lim_{j \to \infty} \GNorm{\mat{X}^{(j)} - \mat{X}}_{\cHs^p} = 0$ as an immediate consequence. 
\end{proof}

\if00
\begingroup{
\subsubsection{The module of quasitubal tensors}
The following construction is meant to generalize \Cref{thm:chsp.is.cstar.hilbert.module} and establish a Hilbert C*-module structure over the space of $m \xx p$ quasitubal tensors. 
We begin with a matrix mimetic generalization of the trace operator. 
\begin{definition}[Quasitubal Trace]\label{def:quasitubal.tensor.trace}
    Let $\alg{R}$ be a C*-algebra. 
    The $\alg{R}$-trace of $\tens{X} \in \alg{R}^{p \xx p} $ is defined by $\trace{\tX} = {\sum}_{j=1}^{p} \bs{x}_{jj} \in \alg{R}$.
    In particular, the quasitubal trace of a quasitubal tensor $\tX \in \cHs^{p \xx p}$ is the $\cHs$-trace of $\tX$.
\end{definition}
Noting that for any commutative C*-algebra $\alg{A}$, the set $\alg{A}^{p \xx p}$ with the obvious (elementwise) addition and scalar multiplication by $\alg{A}$ elements is a free $\alg{A}$-module, and observe the following result:
\begin{lemma}\label{lemma:trace.is.module.hom}
    The operator $\trc \colon \alg{A}^{p \xx p} \to \alg{A}$ in \Cref{def:quasitubal.tensor.trace} is an $\alg{A}$-linear mapping, hence $\trc \in \operatorname{Hom}(\alg{A}^{p \xx p},\alg{A}) $ and in particular $\trc \in \alg{A}^{p \xx p}_*$. 
\end{lemma}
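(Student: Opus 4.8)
The plan is to verify directly that $\trc$ satisfies the defining property of an $\alg{A}$-module homomorphism, namely that $\trace{\bs{a} \cdot \tX + \bs{b} \cdot \tY} = \bs{a} \odot \trace{\tX} + \bs{b} \odot \trace{\tY}$ for all $\bs{a}, \bs{b} \in \alg{A}$ and $\tX, \tY \in \alg{A}^{p \xx p}$. Once this is in hand, membership in $\Hom(\alg{A}^{p \xx p}, \alg{A})$ is immediate, and the claim $\trc \in \alg{A}^{p \xx p}_*$ follows from \Cref{def:dual.module}, which identifies the dual module with the set of $\alg{A}$-module homomorphisms into $\alg{A}$ (viewing $\alg{A}$ as a module over itself).

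First I would recall, as noted immediately before the lemma, that $\alg{A}^{p \xx p}$ carries the free $\alg{A}$-module structure with \emph{elementwise} operations: the $(i,j)$ entry of $\tX + \tY$ is $\bs{x}_{ij} + \bs{y}_{ij}$, and the $(i,j)$ entry of $\bs{a} \cdot \tX$ is $\bs{a} \odot \bs{x}_{ij}$, where $\odot$ is the ring multiplication of $\alg{A}$. Consequently the $(j,j)$ diagonal entry of $\bs{a} \cdot \tX + \bs{b} \cdot \tY$ is $\bs{a} \odot \bs{x}_{jj} + \bs{b} \odot \bs{y}_{jj}$. Substituting into \Cref{def:quasitubal.tensor.trace} and factoring gives
\begin{align*}
    \trace{\bs{a} \cdot \tX + \bs{b} \cdot \tY}
    &= {\sum}_{j=1}^{p} \left( \bs{a} \odot \bs{x}_{jj} + \bs{b} \odot \bs{y}_{jj} \right) \\
    &= \bs{a} \odot {\sum}_{j=1}^{p} \bs{x}_{jj} + \bs{b} \odot {\sum}_{j=1}^{p} \bs{y}_{jj} \\
    &= \bs{a} \odot \trace{\tX} + \bs{b} \odot \trace{\tY},
\end{align*}
where the middle equality uses only the distributivity of $\odot$ over addition in $\alg{A}$ (and the pulling of the common factor out of a finite sum). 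Taking $\bs{a} = \bs{b} = \bs{e}$ recovers additivity, and taking $\bs{b} = 0$ recovers $\alg{A}$-homogeneity, so $\trc$ is $\alg{A}$-linear.

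The final step is to invoke \Cref{def:dual.module}: since $\trc$ is an $\alg{A}$-module homomorphism from $\alg{A}^{p \xx p}$ to $\alg{A}$, it lies in $\Hom_{\alg{A}}(\alg{A}^{p \xx p}, \alg{A}) = \alg{A}^{p \xx p}_*$, which is exactly the asserted conclusion. I expect no genuine obstacle in this argument: the statement is a one-line distributivity computation together with the bookkeeping of the elementwise module operations, and the only point requiring care is correctly citing the dual-module definition to land the membership claim. It is worth remarking that commutativity of $\alg{A}$ is not actually needed for the homomorphism property itself—distributivity alone suffices—so the lemma holds \emph{a fortiori} in the present commutative setting.
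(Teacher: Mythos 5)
Your proof is correct and follows essentially the same route as the paper's: expand $\trace{\bs{a}\tX + \bs{b}\tY}$ entrywise along the diagonal, factor the scalars out of the finite sum by distributivity, and conclude membership in $\Hom_{\alg{A}}(\alg{A}^{p\xx p},\alg{A})$ via the dual-module definition. Your added remark that commutativity of $\alg{A}$ is not needed is a fair observation but changes nothing substantive.
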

\begin{proof}
    Let $\tX, \tY \in \alg{A}^{p \xx p}$ and $\bs{a}, \bs{b} \in \alg{A}$ then 
    \begin{align*}
        \trace{\bs{a} \tX + \bs{b} \tY}
        &= {\sum}_{j=1}^p [\bs{a} \tX + \bs{b} \tY]_{jj} 
        = {\sum}_{j=1}^p \bs{a} \bs{x}_{jj} + \bs{b} \bs{y}_{jj} \\
        &=  \bs{a} {\sum}_{j=1}^p \bs{x}_{jj} + \bs{b} {\sum}_{j=1}^p \bs{y}_{jj} 
        = \bs{a}\trace{\tX} + \bs{b} \trace{\tY}
    \end{align*}
\end{proof}
\newcommand{\cHsmp}{\cHs^{m \xx p}}
\newcommand{\dotchsmp}[1]{\ensuremath{\dotmp{#1}_{\cHsmp}}}
\newcommand{\dotlmp}[1]{\ensuremath{\dotmp{#1}_{\ell_{\infty}^{m \xx p}}}}

Next, consider the $\cHs$-module $\cHs^{m \xx p}$ and define 
\begin{equation}\label{eq:trc.innerprod.mxxp}
    \dotchsmp{\tX , \tY} \coloneqq \trace{\tX^* \ff \tY}
\end{equation}
\begin{theorem}\label{thm:chs.mp.is.hilbert.cs.module}
    The $\cHs$-module  $\cHs^{m \xx p}$, together with  $\dotchsmp{\cdot , \cdot}$ in \Cref{eq:trc.innerprod.mxxp} is a  Hilbert C*-module over $\cHs$. 
    As a consequence, $\ell_{\infty}^{m \xx p}$ equipped with $\dotlmp{\thX , \thY} \coloneqq \trace{ {\thX^{\bf{H}}} \triangle \thY}$ is a Hilbert C*-module over $\ellinf$. 
\end{theorem}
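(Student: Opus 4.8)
The plan is to sidestep a from-scratch verification of the pre-Hilbert module axioms by reducing the claim to \Cref{thm:chsp.is.cstar.hilbert.module}. The key observation is that, as an $\cHs$-module, $\cHsmp$ is isomorphic to $\cHs^{mp}$ through the vectorization map $\ovec$ that stacks the $mp$ entries $\bs{x}_{hj}$ of a quasitubal tensor into a single slice in $\cHs^{mp}$; this map is clearly $\cHs$-linear and bijective. First I would unfold the inner product: since $[\tX^*]_{jh} = \bs{x}_{hj}^*$, the product $\tX^* \ff \tY \in \cHs^{p \xx p}$ has $(j,j)$-entry $\sum_{h=1}^m \bs{x}_{hj}^* \ff \bs{y}_{hj}$, whence
\[
    \dotchsmp{\tX, \tY} = \trace{\tX^* \ff \tY} = {\sum}_{j=1}^p {\sum}_{h=1}^m \bs{x}_{hj}^* \ff \bs{y}_{hj}.
\]
This is precisely the Hilbert C*-module inner product $\ovec(\tX)^* \ff \ovec(\tY)$ on $\cHs^{mp}$ from \Cref{thm:chsp.is.cstar.hilbert.module}. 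Consequently $\dotchsmp{\cdot,\cdot}$ inherits the $\cHs$-valued inner product axioms of \Cref{def:pre.hilbert.module.PHM} and completeness in the real module norm directly from \Cref{thm:chsp.is.cstar.hilbert.module} applied with rank $mp$, which already settles that $(\cHsmp, \dotchsmp{\cdot,\cdot})$ is a Hilbert C*-module over $\cHs$.

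Should a self-contained argument be preferred, I would verify the three axioms of \Cref{def:pre.hilbert.module.PHM} directly. Linearity in the second slot is immediate from the $\cHs$-linearity of $\trc$ (\Cref{lemma:trace.is.module.hom}) together with bilinearity of $\ff$. Conjugate symmetry follows from $(\tX^* \ff \tY)^* = \tY^* \ff \tX$ and the identity $\trace{\tZ^*} = \trace{\tZ}^*$, which holds because $\trc$ is a sum of diagonal entries and $*$ acts entrywise. Positivity holds because each summand $\bs{x}_{hj}^* \ff \bs{x}_{hj}$ is non-negative in $\cHs$, so their finite sum is non-negative; the definiteness clause then reduces, exactly as in the proof of \Cref{thm:chsp.is.cstar.hilbert.module}, to the fact that $\bs{x}^* \ff \bs{x} = 0$ forces $\bs{x} = 0$. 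Completeness is argued verbatim as there: for a Cauchy sequence $\{\mat{X}^{(n)}\}$ one bounds each entrywise gap by $\cHsNorm{\bs{x}_{hj}^{(m)} - \bs{x}_{hj}^{(m+k)}} \le \GNorm{\mat{X}^{(m)} - \mat{X}^{(m+k)}}_{\cHsmp}$ using \Cref{lem:pos.geq.norm.bound}, invokes Banach completeness of $\cHs$ to produce entrywise limits, and assembles them into the limiting tensor.

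For the consequence about $\ell_\infty^{m \xx p}$, I would transport the structure through the isometric *-isomorphism $\uppi \colon \cHs \to \ell_\infty$ of \Cref{lem:cHs.star.alg}. Applied entrywise, $\uppi$ induces a bijection $\cHsmp \to \ell_\infty^{m \xx p}$ that carries $\ff$ to the facewise product $\vartriangle$ and the $\ff$-conjugate transpose to $\thX^{\mathbf{H}}$, and that commutes with the trace by linearity; hence $\uppi(\dotchsmp{\tX, \tY}) = \trace{\thX^{\mathbf{H}} \vartriangle \thY} = \dotlmp{\thX, \thY}$. Because $\uppi$ preserves positivity (\Cref{lem:rel.invariance}) and norms, the inner-product axioms and completeness transfer verbatim, so $(\ell_\infty^{m \xx p}, \dotlmp{\cdot,\cdot})$ is a Hilbert C*-module over $\ell_\infty$.

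The main obstacle is essentially bookkeeping rather than a genuine difficulty: one must confirm that summing the diagonal of $\tX^* \ff \tY$ really does recover the full double sum $\sum_{h,j} \bs{x}_{hj}^* \ff \bs{y}_{hj}$ --- i.e., that the off-diagonal cross terms never enter --- so that the trace inner product coincides with the flattened $\cHs^{mp}$ inner product. Once that index identity is pinned down, every analytic ingredient (non-negativity of $\bs{x}^* \ff \bs{x}$, the order-to-norm bound of \Cref{lem:pos.geq.norm.bound}, and Banach completeness of $\cHs$) has already been supplied by the earlier results, so no new analytic content is required.
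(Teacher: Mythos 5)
Your proposal is correct and takes essentially the same route as the paper: the paper's entire proof is the one-line observation that flattening (vectorization) gives an $\cHs$-module isomorphism between $\cHs^{m \xx p}$ and $\cHs^{mp}$ which respects the trace inner product, so everything follows from \Cref{thm:chsp.is.cstar.hilbert.module}. Your unfolding of $\trace{\tX^* \ff \tY}$ into the double sum $\sum_{j,h} \bs{x}_{hj}^* \ff \bs{y}_{hj}$, and the transport of the $\ell_\infty^{m \xx p}$ statement through the entrywise $\uppi$, simply make explicit the bookkeeping the paper leaves implicit.
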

\begin{proof}
    Trivial due to the isometric isomorphism between $\cHs^{m \xx p}$ and $\cHs^{mp}$ induced by flattening (vectorization), which respects the inner product $\dotchsmp{\cdot , \cdot}$.
    \if0
    The properties of the quasitubal-tensor $\ff$ product, together with \Cref{lemma:trace.is.module.hom} imply that $\dotmp{\cdot , \cdot}_{\ellinf^{m \xx p}}$ satisfy \Cref{item:avalinnerp.2nd.arg.lin,item:avalinnerp.adj.sym}. 
    The positivity (\Cref{item:avalinnerp.posdef}) of $\dotchsmp{\cdot , \cdot}$ is obtained by noting that $[\tX^* \ff \tX]_{jj} = \dotmp{\mat{X}_j, \mat{X}_j}_{\cHs^p}$ which is positive by \Cref{thm:chsp.is.cstar.hilbert.module}.
    Moreover $\trace{\tX^* \ff \tX} \geq_{\cHs} \mat{X}_j^* \ff \mat{X}_j $ for all $j$, thus $\dotchsmp{\tX , \tX} = 0$ imply that $\max_{j} \mat{X}_j^* \ff \mat{X}_j = 0 $ and it follows \Cref{thm:chsp.is.cstar.hilbert.module} that $\max_{j} |\mat{X}_j|_{\cHs^p} = 0$ and we have that $\mat{X}_j = 0$. 
    So $\tX = 0$ if $\dotchsmp{\tX , \tX} = 0$ (the reverse direction trivially holds), meaning that $\dotchsmp{\cdot , \cdot}$ is positive definite. 
    Thus far, we have shown that $\cHsmp$ is a PHM. 
    For completeness, consider a Cauchy sequence $\{ \tX^{(n)} \}_{n \in \NN}$
    then for any $n \in \NN$, there exists $N = N_n \in \NN$ such that for all $k > N$ and $k' \in \NN$ we have $\GNorm{\tX^{(k)} - \tX^{(k+k')} }_{\cHsmp} < n^{-1/2}$. 
    By setting $\tens{Y}^{(k,k')} \coloneqq \tens{X}^{(k)} - \tens{X}^{(k+k')}$, and using the definitions in \Cref{def:PHM.Aabs,def:PHM.Rnorm} we get 
    $$
    \GNormS{\tens{Y}^{(k,k')}}_{\cHsmp}
    =
    \cHsNormSd{\sqrt{ \trace{{\tens{Y}^{(k,k')}}^* \ff \tens{Y}^{(k,k')} } } }
    = 
    \cHsNormd{ \trace{{\tens{Y}^{(k,k')}}^* \ff \tens{Y}^{(k,k')} }  }
    $$
    where the last equality follows from the C* identity. 
    As before, the following holds for all $r=1,\ldots,p$
    \begin{align*}
        \trace{{\tens{Y}^{(k,k')}}^* \ff \tens{Y}^{(k,k')} } 
        &= {\sum}_{j=1}^p {\mat{Y}_j^{(k,k')}}^* \ff \mat{Y}_j^{(k,k')} 
        \geq_{\cHs} {\mat{Y}_r^{(k,k')}}^* \ff \mat{Y}_r^{(k,k')}  
    \end{align*}
    thus by \Cref{lem:pos.geq.norm.bound} $n^{-1} > \GNormS{\tens{Y}^{(k,k')}}_{\cHsmp} \geq \max_{r=1,\ldots,p} \GNormS{\mat{Y}_{r}^{(k,k')}}_{\cHs^p}$. 
    As a result, we have that the sequences $\{ \mat{Y}_{r}^{(k,k')} \}_{k \in \NN} \subset \cHs^p$ converge to 0 in $\GNorm{\cdot}_{\cHs^p}$ for all $r=1,\ldots,p$. 
    This imply that the sequences $\{ \mat{X}_{j}^{(k)} \}_{k \in \NN} \subset \cHs^p$ are Cauchy sequences for all $j=1,\ldots, p $. 
    Since $\cHs^p$ is a Hilbert C* module over $\cHs$ (~\Cref{thm:chsp.is.cstar.hilbert.module}), for each $j = 1,\ldots,p$ there exists $\mat{X}_j \in \cHs^p$ such that $\lim_{k \to \infty} \GNorm{\mat{X}_j^{(k)} - \mat{X}_j}_{\cHs^p} = 0$. 
    Set $\tX \in \cHsmp$ as the quasitubal tensor whose $j$'th lateral slice (column) is $\mat{X}_j$, and it immediately follows that $\lim_{k \to \infty} \GNorm{\tX^{(k)} - \tX}_{\cHsmp} = 0 $.

    For the corollary, note that $\ellinf^{m \xx p} $ is a free $\ellinf$ module, and that $\dotlmp{\cdot, \cdot}$ is an $\ellinf$-valued inner product follows from similar arguments stated for $\dotchsmp{\cdot,\cdot}$, hence $\ellinf^{m \xx p}, \dotlmp{\cdot,\cdot}$ is a PHM over $\ellinf$.
    For completeness, let $\tX, \tY \in \cHsmp$, and note that 
    \begin{align*}
        \dotlmp{\tX \xx_3 \uppi , \tY \xx_3 \uppi} 
        = {\sum}_{j=1}^p \hbs{x}_{jj}^* \odot \hbs{y}_{jj} 
        = \uppi^{-1} ({\sum}_{j=1}^p \bs{x}_{jj}^* \ff \bs{y}_{jj}) 
        = \uppi^{-1}(\dotchsmp{\tX, \tY})
    \end{align*}
    Since $\uppi$ is an isometric isomorphism, we have that $\{ \thX^{(n)} \}_{n \in \NN} \subset \ellinf^{m \xx p}$ is Cauchy (that is, with respect to $\GNorm{\cdot}_{\ellinf^{m \xx p}}$) if and only if $\{ \tX^{(n)} = \thX^{(n)} \xx_3 \uppi^{-1} \}_{n \in \NN} \subset \cHsmp$ converges to a limit $\tX \in \cHsmp$.
    Recall that $\GNorm{\thX^{(n)} - \thX}_{\ellinf^{m \xx p}} = \GNorm{(\tX^{(n)} - \tX )\xx_3 \uppi}_{\ellinf^{m \xx p}} = \GNorm{\tX^{(n)} - \tX}_{\cHsmp}$ and thus $\{ \thX^{(n)} \}_{n \in \NN} \subset \ellinf^{m \xx p}$ is Cauchy if and only if it converges to a limit point in $\ellinf^{m \xx p}$.
    \fi
\end{proof}
\begin{remark}\label{rem:chs.mp.absval.module}
    According to \Cref{def:PHM.Aabs}, the $\cHs$-absolute value induced by $\dotchsmp{\cdot , \cdot}$ is 
    $$
    |\tX|_{\cHs^{m \xx p}} \coloneqq \sqrt{\dotchsmp{\tX , \tX}} = \sqrt{\trace{\tX^* \ff \tX}}
    $$
\end{remark}
The next result establishes the relations between the adjoint mapping to $T$ and the $\ff$ conjugate transpose of the quasitubal tensor $\tens{T}$ that represents it.
\begin{lemma}\label{lemma:adjoint.map.and.transpose}
    For any $\cHs$-linear mapping $T\colon \cHs^p \to \cHs^m$, there exists a unique $\cHs$-linear mapping $T^* \colon \cHs^m \to \cHs^p$ such that
    $$
    \forall \mat{X} \in \cHs^p ,\mat{Y}\in \cHs^m :~~ 
    \dotmp{T(\mat{X}), \mat{Y}}_{\cHs^m} 
    = \dotmp{\mat{X}, T^*(\mat{Y})}_{\cHs^p}
    $$
    that is, the adjoint operator to $T$.
    Furthermore, let $\tens{T} \in \cHs^{m \xx p}$ denote the quasitubal tensor such that $T(\mat{X}) = \tens{T} \ff \mat{X}$ for all $\mat{X} \in \cHs^p$, then for all $\mat{Y} \in \cHs^m$ it holds that $T^*(\mat{Y}) = \tens{T}^* \ff \mat{Y}$ where $\tens{T}^*$ is the $\ff$-conjugate transpose of $\tens{T}$ (\Cref{eq:def.ff.conj.transpose}).
\end{lemma}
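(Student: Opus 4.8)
The plan is to construct $T^*$ explicitly from the $\ff$-conjugate transpose and then verify the adjunction identity by a short direct computation, leaving uniqueness to the positive-definiteness of the module inner product. I would stress at the outset that, in contrast to general Hilbert C*-modules where a bounded module map need not admit an adjoint, here existence is automatic: $\cHs^p$ and $\cHs^m$ are \emph{free} $\cHs$-modules of finite rank, so every $\cHs$-linear map is represented by a matrix over $\cHs$ (\Cref{lem:Rlin.iff.matmul}), and the $\ff$-conjugate transpose of that matrix furnishes the adjoint concretely.

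First I would apply \Cref{lem:Rlin.iff.matmul} to obtain the quasitubal tensor $\tens{T} \in \cHs^{m \xx p}$ representing $T$, so that $T(\mat{X}) = \tens{T} \ff \mat{X}$ for all $\mat{X} \in \cHs^p$. I then \emph{define} $T^*$ by $T^*(\mat{Y}) \coloneqq \tens{T}^* \ff \mat{Y}$, where $\tens{T}^* \in \cHs^{p \xx m}$ is the $\ff$-conjugate transpose of \Cref{eq:def.ff.conj.transpose}. Applying \Cref{lem:Rlin.iff.matmul} in the reverse direction, multiplication by the fixed tensor $\tens{T}^*$ is an $\cHs$-module homomorphism, so $T^* \colon \cHs^m \to \cHs^p$ is $\cHs$-linear as required.

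The adjunction identity is then essentially one computation. Unwinding the module inner product $\dotmp{\mat{A}, \mat{B}} = \mat{A}^* \ff \mat{B}$ and using the antimultiplicativity $(\tX \ff \tY)^* = \tY^* \ff \tX^*$ recorded after \Cref{eq:def.ff.conj.transpose}, I get
\begin{align*}
    \dotmp{T(\mat{X}), \mat{Y}}_{\cHs^m}
    = (\tens{T} \ff \mat{X})^* \ff \mat{Y}
    = \mat{X}^* \ff \tens{T}^* \ff \mat{Y}.
\end{align*}
Since the tensor $\ff$-product is just matrix multiplication over the commutative ring $\cHs$, it is associative, so
\begin{align*}
    \dotmp{\mat{X}, T^*(\mat{Y})}_{\cHs^p}
    = \mat{X}^* \ff (\tens{T}^* \ff \mat{Y})
    = \mat{X}^* \ff \tens{T}^* \ff \mat{Y},
\end{align*}
and the two expressions coincide for all $\mat{X} \in \cHs^p$, $\mat{Y} \in \cHs^m$. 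This proves existence together with the claimed formula $T^*(\mat{Y}) = \tens{T}^* \ff \mat{Y}$.

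For uniqueness, I would suppose $S \colon \cHs^m \to \cHs^p$ is any $\cHs$-linear map satisfying the same adjunction. Fixing $\mat{Y}$ and setting $\mat{Z} \coloneqq T^*(\mat{Y}) - S(\mat{Y})$, subtracting the two adjunction identities gives $\dotmp{\mat{X}, \mat{Z}}_{\cHs^p} = 0$ for every $\mat{X} \in \cHs^p$; choosing $\mat{X} = \mat{Z}$ and invoking positive-definiteness of the $\cHs$-valued inner product (property~\ref{item:avalinnerp.posdef} of \Cref{def:pre.hilbert.module.PHM}, established for $\cHs^p$ in \Cref{thm:chsp.is.cstar.hilbert.module}) forces $\dotmp{\mat{Z}, \mat{Z}} = 0$, hence $\mat{Z} = 0$. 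As $\mat{Y}$ was arbitrary, $S = T^*$. The computation is routine; the only point needing a word of care — the mild ``obstacle'' — is that antimultiplicativity of $*$ and associativity of $\ff$ must be applied to products of tensors and slices rather than scalars, which is justified by passing to the $\uppi$-transform domain, where $\ff$ becomes facewise (slicewise) matrix multiplication and $*$ becomes the frontal-slice conjugate transpose.
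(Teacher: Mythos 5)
Your proposal is correct and takes essentially the same route as the paper's proof: both obtain $\tens{T}$ from \Cref{lem:Rlin.iff.matmul}, define $T^*(\mat{Y}) = \tens{T}^* \ff \mat{Y}$, and verify the adjunction via the identical computation $(\tens{T} \ff \mat{X})^* \ff \mat{Y} = \mat{X}^* \ff (\tens{T}^* \ff \mat{Y})$. The only difference is cosmetic and lies in uniqueness, where you fix $\mat{Y}$, set $\mat{Z} = T^*(\mat{Y}) - S(\mat{Y})$ and invoke positive-definiteness with $\mat{X} = \mat{Z}$, whereas the paper represents the competing adjoint as a tensor $\tens{T}'$ and concludes equality of tensors; your variant actually makes explicit a step the paper leaves implicit.
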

\begin{proof}
    By \Cref{lem:Rlin.iff.matmul}, there exists $\tens{T} \in \cHs^{m \xx p}$
    such that $T(\mat{X}) = \tens{T} \ff \matX$ 
    for all $\mat{X} \in \cHs^p$.
    Write 
    \begin{align*}
        \dotmp{T(\mat{X}), \mat{Y}}_{\cHs^m} 
        &= \dotmp{\tens{T} \ff \mat{X}, \mat{Y}}_{\cHs^m} \\
        &= (\tens{T} \ff \mat{X})^* \ff \mat{Y} \\
        &= \mat{X}^* \ff (\tens{T}^* \ff \mat{Y}) 
        = \dotmp{\mat{X}, \tens{T}^* \ff \mat{Y}}_{\cHs^p}
    \end{align*}

    By \Cref{lem:Rlin.iff.matmul}, the mapping defined by $T^* (\mat{Y}) = \tens{T}^* \ff \mat{Y}$ for all $\mat{Y} \in \cHs^m$ is a $\cHs$-linear mapping from $\cHs^m$ to $\cHs^p$.
    For uniqueness, suppose that  $T' \colon \cHs^m \mapsto \cHs^p$ is a $\cHs$-linear map such that $
    \dotmp{T(\mat{X}), \mat{Y}}_{\cHs^m} 
    = \dotmp{\mat{X}, T'(\mat{Y})}_{\cHs^p}$ for all $\mat{X} \in \cHs^p $ and $\mat{Y}\in \cHs^m$.
    Let $\tens{T}'$ denote the quasitubal tensor representing $T'$, then for all $\mat{X} \in \cHs^p, \mat{Y}\in \cHs^m$ it holds that 
    $\dotmp{\mat{X}, (\tens{T} - \tens{T}') \ff \mat{Y}}_{\cHs^p} = 0$ hence $\tens{T} = \tens{T}'$ and the adjoint is uniquely defined.
\end{proof}

Note that $ \bs{\Phi}^{(j)}_{(m)} \ff \bs{\Phi}^{(h)}_{(p)}{}^*$ is the $m \xx p$ quasitubal tensor whose $j,h$ entry is the unit element $\bs{e}$ and zero for all other entries.
For any $\tX \in \cHs^{m \xx p}$ we have $\tX = \sum_{j=1}^m \sum_{j'=1}^p \bs{x}_{j,j'} \ff \bs{\Phi}^{(j)}_{(m)} \ff \bs{\Phi}^{(j')}_{(p)}{}^*$, therefore $\{ \bs{\Phi}^{(j)}_{(m)} \ff \bs{\Phi}^{(j')}_{(p)}{}^* | j \in [m] ,j' \in [p] \}$ is a spanning set for $\cHs^{m \xx p}$, and since the modules $\cHs^{m \xx p}$ and $\cHs^{mp}$ are isomorphic they have equal ranks that is $mp$, i.e., the spanning set is a minimal one, hence a basis. 

Observe that given any bases \smash{$\{ \mat{A}^{(j)} \}_{j=1}^m \subset \cHs^m, \{ \mat{B}^{(j)} \}_{j=1}^p \subset \cHs^p$}, 
we can write $\bs{\Phi}^{(j)}_{(m)} = \sum_{j'=1}^m \dotmp{\mat{A}^{(j')},\bs{\Phi}^{(j)}_{(m)}} \ff \mat{A}^{(j')} $ and 
$\bs{\Phi}^{(h)}_{(p)} = \sum_{h'=1}^p \dotmp{\mat{B}^{(h')},\bs{\Phi}^{(h)}_{(p)}} \ff \mat{B}^{(h')} $.
Thus, 
$$\bs{\Phi}^{(j)}_{(m)} \ff \bs{\Phi}^{(h)}_{(p)}{}^* = \sum_{j'=1}^m  \sum_{h'=1}^p \dotmp{\mat{A}^{(j')},\bs{\Phi}^{(j)}_{(m)}}  \ff \dotmp{\bs{\Phi}^{(h)}_{(p)}, \mat{B}^{(h')}} \ff \mat{A}^{(j')} \ff \mat{B}^{(h')}{}^*$$
therefore,
the set \smash{$\{ \tens{C}^{(j')}_{(j)} ~|~ j\in [m],j'\in[p] \}$} where $\tens{C}^{(j')}_{(j)} = {\matA^{(j)}} \ff {\matB^{(j')}}^*$ is a basis for $\cHs^{m \xx p}$.
Importantly, if $\{ \mat{A}^{(j)} \}_{j=1}^m$ and $\{ \mat{B}^{(j)} \}_{j=1}^p$ are unit normalized, $\ff$-orthogonal bases for $\cHs^m$ and $\cHs^p$ respectively, i.e., $\dotmp{\mat{A}^{(j)}, \mat{A}^{(j')}}_{\cHs^m} = \delta_{jj'} \bs{e}, \dotmp{\mat{B}^{(h)}, \mat{B}^{(h')}}_{\cHs^p} = \delta_{hh'} \bs{e}$ , then 
\begin{align*}
    \dotchsmp{\tens{C}^{(h)}_{(j)}, \tens{C}^{(h')}_{(j')}} 
    &= \trace{{\matB^{(j)}} \ff {\matA^{(h)}}^* \ff \matA^{(h')} \ff {\matB^{(j')}}^* } 
    = \delta_{h,h'} \trace{{\matB^{(j)}} \ff {\matB^{(j')}}^* }
\end{align*}
and since $\trace{{\matB^{(j)}} \ff {\matB^{(j')}}^* } = \sum_{k=1}^p {\matB^{(j')}_k}^* \ff \matB^{(j)}_k = {\matB^{(j')}}^* \ff \matB^{(j)} $, we have \smash{$\trace{{\matB^{(j)}} \ff {\matB^{(j')}}^* } = \delta_{j,j'} \bs{e}$} so {$ \dotchsmp{\tens{C}^{(h)}_{(j)}, \tens{C}^{(h')}_{(j')}}  = \delta_{h,h'} \delta_{j,j'} \bs{e} $}, that is, {$\{ \tens{C}^{(j')}_{(j)} \}$} is a unit normalized, $\ff$-orthogonal basis for $\cHsmp$.
Furthermore, if $\{ \tens{C}^{(h)}_{(j)} \}_{h \in [m],j\in[p]}$ is a unit normalized, $\ff$-orthogonal basis for the module $\cHsmp$, then $\{ \bs{\phi}^{(k)} \ff \tens{C}^{(h)}_{(j)} \}_{h \in [m],j\in[p],k \in \ZZ}$ is an orthonormal basis for the Hilbert space $\cH^{m \xx p}$ whose inner product is 
\begin{equation}\label{eq:def:chmp.hilbert.inner.product}
    \dotph{\tX, \tY} \coloneqq \dotp{\tX, \tY}_{\cH^{m \xx p}} 
    \coloneqq \sum_{j=1}^p \dotph{\tX_{:,j}, \tY_{:,j} } 
    =  \sum_{j=1}^p \sum_{h=1}^m \dotph{\bs{x}_{h,j}, \bs{y}_{h,j}}
\end{equation}
which clearly induces the norm defined in \Cref{def:quasitubal.tensor.norm}.
To see this, note that 
\begin{align*}
    \dotchsmp{\bs{\phi}^{(k)} \ff \tens{C}^{(h)}_{(j)} , \bs{\phi}^{(k')} \ff \tens{C}^{(h')}_{(j')} } 
    &= \bs{\phi}^{(k)} \ff \bs{\phi}^{(k')} \ff \dotchsmp{ \tens{C}^{(h)}_{(j)} ,  \tens{C}^{(h')}_{(j')} }  \\
    &= \delta_{kk'} \bs{\phi}^{(k)} \ff (\delta_{hh'} \delta_{jj'} \bs{e}) \\
    &= \delta_{kk'}  \delta_{hh'} \delta_{jj'} \bs{\phi}^{(k)} 
\end{align*}
Hence 
\begin{align*}
    \dotp{\bs{\phi}^{(k)} \ff \tens{C}^{(h)}_{(j)} , \bs{\phi}^{(k')} \ff \tens{C}^{(h')}_{(j')}}_{\cH^{m \xx p}}
    &= \sum_{t=1}^p \dotph{\bs{\phi}^{(k)} \ff [\tens{C}^{(h)}_{(j)}]_{:,t} , \bs{\phi}^{(k')} \ff [\tens{C}^{(h')}_{(j')}]_{:,t}} \\
    &=  \dotph{\bs{\phi}^{(k)} \ff  , \bs{\phi}^{(k')} \ff \sum_{t=1}^p ([\tens{C}^{(h)}_{(j)}]_{:,t})^* \ff [\tens{C}^{(h')}_{(j')}]_{:,t}} \\
    &= \dotph{\bs{\phi}^{(k)} \ff  , \bs{\phi}^{(k')} \ff \trace{(\tens{C}^{(h)}_{(j)})^* \ff \tens{C}^{(h')}_{(j')} } } \\
    &= \dotph{\bs{\phi}^{(k)} \ff  , \bs{\phi}^{(k')} \ff (\delta_{jj'} \delta_{hh'} \bs{e})}\\
    &= \delta_{jj'} \delta_{hh'} \dotph{\bs{\phi}^{(k)}, \bs{\phi}^{(k')} }\\
    &= \delta_{jj'} \delta_{hh'} \delta_{kk'}
\end{align*}

In light of the definition in \Cref{eq:def:chmp.hilbert.inner.product} for the inner product on the Hilbert space $\cH^{m \xx p}$ which covers the particular case of $\cH^p$, we revisit \Cref{lemma:adjoint.map.and.transpose} with the following observation.
\begin{lemma}\label{lemma:adjoint.map.and.transpose2}
    Let $T \colon \cHs^{p} \to \cHs^m$ be a $\cHs$-linear mapping, and denote by $\tens{T} \in \cHs^{m \xx p}$  the quasitubal tensor such that $T(\mat{X}) = \tens{T} \ff \mat{X}$ for all $\mat{X} \in \cHs^{p}$.
    Then $T$ defines (via restriction) a bounded linear operator from $\cH^p$ to $\cH^m$ and we have 
    $$
    \forall \mat{X} \in \cH^p, \mat{Y} \in \cH^m,: ~~\dotph{\tens{T} \ff \mat{X} , \matY} = \dotph{\mat{X}, \tens{T}^* \ff \mat{Y}} 
    $$
    where the inner product above is given by \Cref{eq:def:chmp.hilbert.inner.product}. 
    That is, the adjoint operator to $T$, is defined by the $\ff$-conjugate transpose of the quasitubal tensor associated with $T$. 
\end{lemma}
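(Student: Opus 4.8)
My plan is to prove the two assertions in turn, working throughout in the $\uppi$-transform domain, where $\ff$-multiplication is facewise and the desired identity collapses to the ordinary finite-dimensional matrix adjoint. First I would fix, via \Cref{lem:Rlin.iff.matmul}, the quasitubal tensor $\tens{T} \in \cHs^{m \xx p}$ with $T(\matX) = \tens{T} \ff \matX$. To see that the restriction maps $\cH^p$ into $\cH^m$, recall from \Cref{lem:cH.2sided.star.ideal.in.cHs} that the embedded copy $\cH \subseteq \cHs$ is a two-sided ideal; hence each entry $(\tens{T} \ff \matX)_i = \sum_{j=1}^p \bs{t}_{ij} \ff \bs{x}_j$ of the image is a finite sum of products of a quasitube with an element of $\cH$, so it lies in $\cH$, giving $\tens{T} \ff \matX \in \cH^m$ whenever $\matX \in \cH^p$. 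Boundedness is precisely the estimate already carried out in the paragraph preceding \Cref{lem:qtopnorm.and.linf2norm}, which yields $\cHNorm{\tens{T} \ff \matX} \leq A\, \cHNorm{\matX}$ for a finite constant $A$, and $\FFF$-linearity is inherited from the $\cHs$-linearity of $T$; so the restriction is a bounded linear operator between the Hilbert spaces $\cH^p$ and $\cH^m$.

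For the adjoint identity I would expand both inner products frequency by frequency. Writing $\wh{\matX}_{:,k} \in \FFF^p$ and $\wh{\matY}_{:,k} \in \FFF^m$ for the $k$-th frontal slices of the transforms, the fact that $F$ is an isometric isomorphism together with \Cref{eq:def:chmp.hilbert.inner.product} gives, after regrouping an absolutely convergent sum exactly as in the proof of \Cref{lemma:ffuni.norm.preserve},
\[
\dotph{\matX, \matY} = \sum_{k \in \ZZ} \langle \wh{\matX}_{:,k}, \wh{\matY}_{:,k} \rangle,
\]
where $\langle \cdot, \cdot \rangle$ denotes the standard inner product on $\FFF^p$. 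Since $\ff$-multiplication is facewise in the transform domain (\Cref{def:quasi.ff.redef.matmul}), we have $\wh{\tens{T} \ff \matX}_{:,k} = \wh{\tens{T}}_{:,:,k}\, \wh{\matX}_{:,k}$, so applying the ordinary matrix adjoint in $\FFF^m$ slicewise yields
\[
\dotph{\tens{T} \ff \matX, \matY} = \sum_{k \in \ZZ} \langle \wh{\tens{T}}_{:,:,k} \wh{\matX}_{:,k}, \wh{\matY}_{:,k} \rangle = \sum_{k \in \ZZ} \langle \wh{\matX}_{:,k}, (\wh{\tens{T}}_{:,:,k})^{\CT} \wh{\matY}_{:,k} \rangle.
\]
Finally, the defining relation $(\wh{\tens{T}}_{:,:,k})^{\CT} = \wh{\tens{T}^*}_{:,:,k}$ of the $\ff$-conjugate transpose (\Cref{eq:def.ff.conj.transpose}), together with the facewise rule, gives $(\wh{\tens{T}}_{:,:,k})^{\CT} \wh{\matY}_{:,k} = \wh{\tens{T}^* \ff \matY}_{:,k}$, so the right-hand sum collapses back to $\dotph{\matX, \tens{T}^* \ff \matY}$, which is the claimed identity.

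I expect the only genuine subtlety to be the legitimacy of the frequency-wise decomposition: it hinges on all four objects $\matX$, $\matY$, $\tens{T} \ff \matX$, $\tens{T}^* \ff \matY$ lying in $\cH$-spaces of finite $\cH$-norm, so that each slicewise series converges absolutely (by Cauchy--Schwarz) and the regrouping is valid — which is exactly why the first part, establishing that the range sits inside $\cH^m$, must be settled before the computation can begin. As a cross-check I would note the equivalent module-theoretic route: apply the coordinate-summation functional $\bs{q} \mapsto \sum_{k \in \ZZ} \wh{q}_k$ — which is finite on $\ff$-products of $\cH$-elements since their transforms lie in $\ell_1$ — to the $\cHs$-valued identity $(\tens{T} \ff \matX)^* \ff \matY = \matX^* \ff (\tens{T}^* \ff \matY)$ of \Cref{lemma:adjoint.map.and.transpose}, and then track the resulting complex conjugation to recover the same statement.
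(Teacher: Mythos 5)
Your proof is correct, but it takes a genuinely different route from the paper's. The paper argues \emph{entrywise} over the finite matrix indices: it expands $\dotph{\tens{T}\ff\matX,\matY}$ as $\sum_{h=1}^m\dotph{\sum_{j=1}^p \bs{t}_{hj}\ff\bs{x}_j,\bs{y}_h}$ and then applies, tube by tube, the quasitube adjoint relation $\dotph{\bs{t}_{hj}\ff\bs{x}_j,\bs{y}_h}=\dotph{\bs{x}_j,\bs{t}_{hj}^*\ff\bs{y}_h}$ (which comes from \Cref{lem:pi.star.iso} and the definition of the quasitube conjugate), before reassembling the result into $\dotph{\matX,\tens{T}^*\ff\matY}$ using the standard basis slices $\bs{\Phi}_{(p)}^{(j)}$. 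You instead work \emph{facewise} in the $\uppi$-transform domain: you decompose both inner products frequency-by-frequency over $k\in\ZZ$ and reduce each slice to the ordinary matrix-adjoint identity $\langle \wh{\tens{T}}_{:,:,k}\wh{\matX}_{:,k},\wh{\matY}_{:,k}\rangle = \langle\wh{\matX}_{:,k},(\wh{\tens{T}}_{:,:,k})^{\CT}\wh{\matY}_{:,k}\rangle$, together with \Cref{eq:def.ff.conj.transpose}. The trade-off is clear: the paper's entrywise computation is shorter and keeps all sums finite (over $[m]\xx[p]$), so no convergence or regrouping issues arise, but it silently presupposes that the restriction of $T$ to $\cH^p$ lands in $\cH^m$ so that the $\cH$-inner products are even defined; your version pays for the infinite facewise decomposition with an explicit absolute-convergence check, but in exchange you establish the first assertion of the lemma (well-definedness and boundedness of the restriction, via the ideal property of \Cref{lem:cH.2sided.star.ideal.in.cHs} and the estimate preceding \Cref{lem:qtopnorm.and.linf2norm}), which the paper's proof omits entirely. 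Your closing cross-check via the $\cHs$-valued identity of \Cref{lemma:adjoint.map.and.transpose} is also sound, since the transforms of $\ff$-products of $\cH$-elements are indeed summable and the summation functional recovers the $\cH$-inner product up to conjugation.
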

\begin{proof}
    \begin{align*}
        \dotph{T(\mat{X}), \mat{Y}} 
        &= \dotph{\tens{T} \ff \mat{X}, \mat{Y}} \\
        &= {\sum}_{h=1}^m \dotph{\tens{T}_{h,:} \ff \mat{X}, \bs{y}_{h}} \\
        &= {\sum}_{h=1}^m   \dotph{{\sum}_{j=1}^p \bs{t}_{hj} \ff \bs{x}_j , \bs{y}_{h}} \\
        &=   {\sum}_{j=1}^p \dotph{ \bs{x}_j , {\sum}_{h=1}^m \bs{t}_{hj}^* \ff \bs{y}_{h}} \\
        &=    \dotph{ {\sum}_{j=1}^p \bs{x}_j \ff \bs{\Phi}_{(p)}^{(j)} , {\sum}_{j'=1}^p  {\sum}_{h=1}^m \bs{t}_{hj'}^* \ff \bs{y}_{h} \ff \bs{\Phi}_{(p)}^{(j')}  } \\
        &= \dotph{ \mat{X}, \tens{T}^* \ff \mat{Y}}
    \end{align*}
\end{proof}


\subsubsection{The C*-algebra of square quasitubal tensors}\label{sec:square.qttensors.Cstar.alg}

\begin{lemma}\label{lem:chs.pp.is.cstar}
    The set $\cHs^{p \xx p}$ together with the $\ff$ product and the operator norm is a non commutative, unital, C*-algebra over $\FFF$. 
\end{lemma}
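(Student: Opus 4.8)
The plan is to push the entire structure through the $\uppi$-transform (\Cref{def:pi-transform.quasitensor}) into the transform domain, where a square quasitubal tensor becomes a bounded sequence of matrices, and then recognize the result as the $\ell_\infty$-direct sum of the finite-dimensional matrix C*-algebra $(\FFF^{p \xx p}, \cdot, \CT, \TNorm{\cdot})$. Concretely, I would introduce
\[
\mathcal{M} \coloneqq \Bigl\{ \{A_k\}_{k \in \ZZ} : A_k \in \FFF^{p \xx p},\ \sup_{k \in \ZZ} \TNorm{A_k} < \infty \Bigr\},
\]
equipped with slicewise addition and scalar multiplication, slicewise matrix multiplication $(\{A_k\}\{B_k\})_k = A_k B_k$, the slicewise conjugate-transpose involution $(\{A_k\})^* = \{A_k^\CT\}$, and the norm $\GNorm{\{A_k\}} = \sup_{k\in\ZZ}\TNorm{A_k}$. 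The first step is to check that the facewise transform $\Theta\colon \tX \mapsto \{\thX_{:,:,k}\}_{k\in\ZZ}$ is a bijection from $\cHs^{p \xx p}$ onto $\mathcal{M}$: each entry of $\tX$ is a quasitube, so by \Cref{lem:cHs.star.alg} its transform lies in $\ell_\infty$, and since $\TNorm{\thX_{:,:,k}} \le \FNorm{\thX_{:,:,k}} \le p\,\max_{i,j}\infNorm{\hbs{x}_{ij}}$ the image sequence is bounded; conversely any bounded matrix sequence has bounded entries, each pulled back to a quasitube by $\uppi^{-1}$.

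The second step is to verify that $\Theta$ intertwines all of the relevant structure, reducing the claim to showing $\mathcal{M}$ is a unital C*-algebra. Multiplication is handled by \Cref{def:quasi.ff.redef}: the frontal slices of $\widehat{\tX\ff\tY}$ are exactly $\thX_{:,:,k}\thY_{:,:,k}$, matching the product of $\mathcal{M}$. The involution is handled by \Cref{eq:def.ff.conj.transpose}, which gives $\widehat{\tX^*}_{:,:,k} = (\thX_{:,:,k})^\CT$; the unit is handled by \Cref{lem:ff.unitary.identities}, which identifies $\widehat{\tens{I}_p}$ with the constant sequence $\matI_p$; and the norm is handled by \Cref{lem:qtopnorm.and.linf2norm}, which gives $\opNorm{\tX} = \sup_k\TNorm{\thX_{:,:,k}}$. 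Hence $\Theta$ is an isometric $*$-isomorphism of $\FFF$-$*$-algebras, and it suffices to establish the C*-axioms on $\mathcal{M}$.

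The third step is the verification for $\mathcal{M}$, which is the standard fact that an $\ell_\infty$-direct sum of C*-algebras is a C*-algebra, specialized to the matrix factor $\FFF^{p\xx p}$. Associativity and bilinearity of the product, together with the constant identity unit, are slicewise; submultiplicativity follows from $\TNorm{A_kB_k}\le\TNorm{A_k}\TNorm{B_k}$ and taking suprema; and the C*-identity follows from the matrix identity $\TNormS{A_k} = \TNorm{A_k^\CT A_k}$ combined with $\sup_k\TNormS{A_k} = (\sup_k\TNorm{A_k})^2$. Non-commutativity is inherited from $\FFF^{p\xx p}$, which is non-commutative for $p\ge 2$. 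The point needing the most care is completeness: a Cauchy sequence in $\mathcal{M}$ is uniformly Cauchy across slices, so for each fixed $k$ the slices form a Cauchy sequence in the finite-dimensional (hence complete) space $\FFF^{p\xx p}$; the slicewise limits assemble into a sequence whose norms stay uniformly bounded by the Cauchy estimate, so the limit lies in $\mathcal{M}$ and the convergence holds in the sup norm.

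I expect the main obstacle to be precisely the two non-transferred points, namely pinning down the image of $\Theta$ as exactly $\mathcal{M}$ and the completeness argument; both are routine once the spectral/Frobenius norm equivalence on $\FFF^{p\xx p}$ and finite-dimensionality of the matrix factors are invoked, after which every remaining axiom transfers verbatim through the isometric $*$-isomorphism $\Theta$.
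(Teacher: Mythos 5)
Your proof is correct, but it takes a genuinely different route from the paper's. You push everything through the slicewise transform $\Theta\colon \tX \mapsto \{\thX_{:,:,k}\}_{k\in\ZZ}$ and identify $\cHs^{p \xx p}$ isometrically and $*$-isomorphically with the $\ell_\infty$-product of countably many copies of the matrix C*-algebra $\FFF^{p \xx p}$, then verify all C*-axioms (submultiplicativity, the C*-identity, completeness) slicewise; this is in effect the matrix-level analogue of the paper's own proof that $\cHs \cong \ell_\infty$ (\Cref{lem:cHs.star.alg}), and it correctly leans on \Cref{def:quasi.ff.redef}, \Cref{eq:def.ff.conj.transpose}, \Cref{lem:ff.unitary.identities}, and \Cref{lem:qtopnorm.and.linf2norm} for the intertwining. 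The paper instead stays in the quasitubal domain: it gets the C*-identity $\opNorm{\tX^* \ff \tX} = \opNormS{\tX}$ by writing the q-SVD of $\tX$ (\Cref{thm:quasitubal.svd.exists}, \Cref{cor:qt.best.lowrank.frob}) and reducing to the non-negative singular quasitubes, and it gets completeness by proving the norm equivalence $\opNorm{\tX} \leq \GNorm{\tX}_{\cHs^{p\xx p}} \leq \sqrt{r}\,\opNorm{\tX}$ (with $r \leq p$ the q-rank) and importing completeness from the Hilbert C*-module structure of \Cref{thm:chs.mp.is.hilbert.cs.module}. Your argument is more elementary and self-contained---it needs neither the q-SVD nor the module machinery, and your direct $\epsilon$-argument for completeness of the sup-norm product is the standard one---while the paper's proof showcases the decomposition and module tools it has just built and reuses their completeness result rather than reproving it. One small point both treatments share: non-commutativity holds only for $p \geq 2$ (for $p=1$ the algebra is $\cHs$ itself, which is commutative); you at least flag the restriction $p \geq 2$, which the paper's proof does not address at all.
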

\begin{proof}
    The fact that $\cHs^{p \xx p}$ is a vector space over $\FFF$, combined with the $\cHs$-linearity of the operation $\tY \mapsto \tX^* \ff \tY$ which makes it also $\FFF$-linear, directly imply that $\cHs^{p \xx p}$ is an algebra.
    Clearly $\tI_p$ is a unit element.
    Let $\tX \in \cHs^{p \xx p}$ and write the qtSVD $\tX = \tU \ff \tS \ff \tV^*$. Then, 
    \begin{align*}
        \opNorm{\tX^* \ff \tX} 
        &= \opNorm{\tS^* \ff \tS} 
        = \max_{j \in [p]} \GNorm{|\bs{s}_j|_{\cHs}^2}_{\cHs} \\
        &= \max_{j \in [p]} \GNormS{|\bs{s}_j|_{\cHs}}_{\cHs} 
        =\max_{j \in [p]} \GNormS{\bs{s}_j}_{\cHs} 
    \end{align*}
    where the last transition follows from the non negativity of $\bs{s}_j$ (\Cref{def:quasitubal.svd}), and the second to last follows the C*-identity for the C*-algebra $\cHs$. 
    By \Cref{cor:qt.best.lowrank.frob},  $\opNorm{ \tX} = \opNorm{\tS} = \max_{j \in [p]} \GNorm{\bs{s}_j}_{\cHs}$, hence $\opNorm{\tX^* \ff \tX} = \opNorm{\tS}^2 = \opNorm{ \tX}^2$.
    Also, note that
    \begin{align*}
        \GNormS{\tX}_{\cHs^{p \xx p}} 
        &= \cHsNormd{|\tX|^2_{\cHs^{p \xx p}}}
        = \cHsNormd{|\tS|^2_{\cHs^{p \xx p}}}
        = \cHsNormd{ \sum_{j=1}^p |\bs{s}_j|^2_{\cHs} }
    \end{align*}
    On one hand, we have $\GNorm{ \sum_{j=1}^p |\bs{s}_j|^2_{\cHs} }_{\cHs}  \geq \GNormS{\bs{s}_1}_{\cHs} = \opNorm{\tX}^2$ and on the other, $\GNorm{ \sum_{j=1}^p |\bs{s}_j|^2_{\cHs} }_{\cHs}  \leq \sum_{j=1}^p  \GNormS{\bs{s}_j}_{\cHs}$. 
     Hence 
    $$
    \opNorm{\tX} \leq \GNorm{\tX}_{\cHs^{p \xx p}} \leq \sqrt{r} \opNorm{\tX}
    $$
    where $r$ denotes the q-rank of $\tX$ under $\ff$. 
    
    As a result, for any sequence $ = \{ \tX^{(n)} \}_{n \in \NN} \subset \cHs^{p \xx p}$ we have that 1) if $\{ \tX^{(n)} \}_{n \in \NN}$  is a Cauchy sequence w.r.t $\opNorm{\cdot}$, then it is also a Cauchy sequence w.r.t $\GNorm{\cdot}_{\cHs^{p \xx p}} $, and 2) if $\{ \tX^{(n)} \}_{n \in \NN}$ converges in $\GNorm{\cdot}_{\cHs^{p \xx p}}$ to limit $\tX$ then also $\lim_{n \to \infty } \opNorm{\tX^{(n)} - \tX} = 0$.
    Combined together with \Cref{thm:chs.mp.is.hilbert.cs.module} we get that any Cauchy sequence in $\cHs^{p \xx p}$ w.r.t $\opNorm{\cdot}$ converges, in $\opNorm{\cdot}$ to a limit in $\cHs^{p \xx p}$. 
\end{proof}

Given $\tX \in \cHs^{m \xx p}$, with q-SVD $\tX = \tU \ff \tS \ff \tV^*$, it is clear that $\bs{a} \in \cHs$ is a singular quasitube of $\tX$, i.e., $\bs{a} = \bs{s}_{j,j}$ for some $j \in [p]$, only if $| \bs{a} |^2$  is a singular quasitube of $\tX^* \ff \tX$.
To see this, we simply write the q-SVD of $\tX^* \ff \tX$, which, turns out to be an `eigendecomposition', $\tX^* \ff \tX = \tV \ff \tS^* \ff \tS \ff \tV^*$, and note that for all $j \in [p]$
\begin{align*}
    [\tS^* \ff \tS]_{j,j} 
    &= {\sum}_{j'=1}^p \bs{s}_{jj'}^* \ff \bs{s}_{j'j} \\
    &= \bs{s}_{jj}^* \ff \bs{s}_{jj} 
\end{align*}
where the last transition is due to that $\tS$ is f-diagonal.
Hence, a singular quasitube of $\tX$ is a (principal) square-root of a singular quasitube of $\tX^* \ff \tX $ (and of $\tX \ff \tX^* \in \cHs^{m \xx m}$ due to the same arguments).
\Cref{lem:chs.pp.is.cstar} tells that $\tX^* \ff \tX$ is an element of a unital C*-algebra, thus admits a spectrum, and we have the following lemma.
\begin{lemma}\label{lem:singvals.in.spc}
    Let $\tA $ be a self-adjoint element in the C*-algebra of square quasitubal tensors $\cHs^{p \xx p}$.
    Then, if $\tA \ff \mat{Z} =\lambda \mat{Z}$ for $\lambda \in \FFF$ and $\mat{Z} \in \cHs^p$, then $\lambda \in \spc(\tA)$. 
\end{lemma}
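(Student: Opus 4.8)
The plan is to argue by contraposition: I will show that if $\lambda \notin \spc(\tA)$ then the only $\mat{Z} \in \cHs^p$ satisfying $\tA \ff \mat{Z} = \lambda \mat{Z}$ is $\mat{Z} = 0$, so that any nonzero solution forces $\lambda \in \spc(\tA)$. (Here $\mat{Z}$ is understood to be a nonzero slice, the ``eigen-slice''; for $\mat{Z} = 0$ the claim is vacuous.) First I would recall from \Cref{lem:chs.pp.is.cstar} that $\cHs^{p \xx p}$ is a unital C*-algebra whose unit is the identity quasitubal tensor $\tI_p$, so by the definition of the spectrum, $\lambda \in \spc(\tA)$ exactly when $\tA - \lambda \tI_p$ fails to be invertible in $\cHs^{p \xx p}$. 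I would then rewrite the hypothesis $\tA \ff \mat{Z} = \lambda \mat{Z}$ as the single identity $(\tA - \lambda \tI_p) \ff \mat{Z} = 0$, using $\tI_p \ff \mat{Z} = \mat{Z}$ and distributivity of $\ff$ over addition.

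Next I would assume, for contradiction, that $\lambda \notin \spc(\tA)$, so there is a two-sided inverse $\tB \in \cHs^{p \xx p}$ with $\tB \ff (\tA - \lambda \tI_p) = \tI_p$. Left-multiplying $(\tA - \lambda \tI_p) \ff \mat{Z} = 0$ by $\tB$ and using the associativity of the mixed products gives
\[
\mat{Z} = \tI_p \ff \mat{Z} = \bigl(\tB \ff (\tA - \lambda \tI_p)\bigr) \ff \mat{Z} = \tB \ff \bigl((\tA - \lambda \tI_p) \ff \mat{Z}\bigr) = \tB \ff 0 = 0,
\]
contradicting $\mat{Z} \neq 0$. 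Hence $\tA - \lambda \tI_p$ is not invertible and $\lambda \in \spc(\tA)$.

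The single point that requires care---and the only genuine obstacle---is the associativity step $\bigl(\tB \ff (\tA - \lambda \tI_p)\bigr) \ff \mat{Z} = \tB \ff \bigl((\tA - \lambda \tI_p) \ff \mat{Z}\bigr)$, i.e.\ that the action of the matrix algebra $\cHs^{p \xx p}$ on the slice module $\cHs^p$ is a genuine left-module action. This is exactly what \Cref{lem:Rlin.iff.matmul} provides: it identifies $\cHs^{p \xx p}$ with $\Hom_{\cHs}(\cHs^p, \cHs^p)$, with $\ff$ corresponding to composition of $\cHs$-linear maps, so $\ff$-multiplying a slice by a tensor is function application and the displayed associativity is just $(S \circ T)(\mat{Z}) = S(T(\mat{Z}))$; alternatively it follows by passing to the $\uppi$-transform, where $\ff$ becomes facewise matrix multiplication over $\ell_\infty$ and associativity is the associativity of ordinary matrix multiplication in each frontal slice. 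I would also note that self-adjointness of $\tA$ is not actually used in this direction---it enters only to guarantee $\spc(\tA) \subseteq \RR$ and is recorded in the hypothesis for the way the lemma is applied in the sequel.
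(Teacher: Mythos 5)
Your proof is correct, and it takes a genuinely different route from the paper's. You give the abstract ``eigenvalues lie in the spectrum'' argument: if $\tA - \lambda \tI_p$ had an inverse $\tB \in \cHs^{p \xx p}$, then applying $\tB$ to $(\tA - \lambda \tI_p) \ff \mat{Z} = 0$ and invoking unitality and associativity of the action of $\cHs^{p \xx p}$ on $\cHs^p$ forces $\mat{Z} = 0$. The associativity step you flag is indeed legitimate: the action is matrix multiplication over the commutative ring $\cHs$, so $(\tB \ff \tC) \ff \mat{Z} = \tB \ff (\tC \ff \mat{Z})$ and $\tI_p \ff \mat{Z} = \mat{Z}$ hold entrywise, exactly as justified either through \Cref{lem:Rlin.iff.matmul} or by passing to the $\uppi$-transform where $\ff$ is facewise matrix multiplication. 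The paper instead works through the spectral decomposition: it writes $\tA = \tV \ff \tG \ff \tV^*$ (this is where self-adjointness enters), substitutes $\mat{Y} = \tV^* \ff \mat{Z}$, and expands in the basis $\{\bs{\phi}^{(k)} \ff \bs{\Phi}^{(j)}\}$ to show that $\lambda$ must equal some transformed diagonal entry $\wh{g}_{j,j}(k)$; it then argues that invertibility of $\tA - \lambda \tI$ would force $|\wh{g}_{j,j}(k) - \lambda| \geq \opNorm{\tB}^{-1} > 0$ uniformly, a contradiction. Your argument is shorter and more general---it needs neither self-adjointness nor the existence of a q-SVD, and you are right that self-adjointness is inessential for this implication. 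What the paper's heavier route buys is the concrete identification of eigenvalues with the diagonal entries $\wh{g}_{j,j}(k)$ of the transformed tensor, which is precisely the link exploited right after the lemma to conclude that the squared singular quasitube values of $\tX$ sit inside $\spc(\tX^* \ff \tX) \cap \spc(\tX \ff \tX^*)$.
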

\begin{proof}
    Write $\tA = \tV \ff \tG \ff \tV^*$. 
    Let $\mat{Z} $ be a nonzero quasitubal slice in $\cHs^p$ and suppose that $\tA \ff \mat{Z} =\lambda \mat{Z}$ for some $\lambda \in \FFF$, then 
    \begin{align*}
        \lambda \mat{Z}
        &= \tV \ff {\sum}_{k \in \ZZ} {\sum}_{j=1}^p \wh{g}_{j,j}(k) \bs{\phi}^{(k)} \ff \bs{\Phi}^{(j)} \ff \bs{\Phi}^{(j)}{}^*  \ff \tV^* \ff \mat{Z} 
    \end{align*}
    Write $\mat{Z} = \tV \ff \mat{Y}$ (where $\mat{Y} = \tV^* \ff \mat{Z} \in \cHs^p$), and have 
    \begin{align*}
        \lambda  \mat{Y}
        &= {\sum}_{k \in \ZZ} {\sum}_{j=1}^p \wh{g}_{j,j}(k) \bs{\phi}^{(k)} \ff \bs{\Phi}^{(j)} \ff \bs{\Phi}^{(j)}{}^*  \ff \mat{Y} \\
        &= {\sum}_{k \in \ZZ} {\sum}_{j=1}^p \wh{g}_{j,j}(k) \wh{y}_{j}(k) \bs{\phi}^{(k)} \ff \bs{\Phi}^{(j)}  
    \end{align*}
    Recall that $\mat{Y} = \sum_{k \in \ZZ } \sum_{j=1}^p \wh{y}_{j}(k) \bs{\phi}^{(k)} \ff \bs{\Phi}^{(j)}$ hence 
    $$
    {\sum}_{k \in \ZZ } {\sum}_{j=1}^p (\lambda - \wh{g}_{j,j}(k))  \wh{y}_{j}(k) \bs{\phi}^{(k)} \ff \bs{\Phi}^{(j)} = 0
    $$
    and since $\bs{\phi}^{(k)} \ff \bs{\Phi}^{(j)}$ are independent, it must hold that $\wh{g}_{j,j}(k) = \lambda$ for all $j,k$ such that $\wh{y}_{j}(k) \neq 0$. 
    Assuming that $\mat{Z} \neq 0$ implies $\mat{Y} \neq 0$, therefore, a pair of indices $j,k$ such that $\wh{g}_{j,j}(k) = \lambda$ exists. 

    Assume by contradiction that $\lambda \notin \spc(\tA)$. Then there exists $\tB \in \cHs^{p \xx p}$ such that $\tI = (\tA - \lambda \tI) \ff \tB$. 
    Moreover, we have that $\tB = \tV \ff \sum_{k \in \ZZ} \sum_{j=1}^p (\wh{g}_{j,j}(k) - \lambda)^{-1} \bs{\phi}^{(k)} \ff \bs{\Phi}^{(j)} \ff \bs{\Phi}^{(j)}{}^*  \ff \tV^*$ is a bounded linear operator, and in particular,  $|(\wh{g}_{j,j}(k) - \lambda)^{-1} | \leq \opNorm{\tB}$ for all $j,k$.
    As a result, $|\wh{g}_{j,j}(k) - \lambda| > \opNorm{\tB}^{-1} $ abrogating the possibility for the existence of a nonzero $\mat{Z} \in \cHs^p$ such that $\tA \ff \mat{Z} = \lambda \mat{Z}$. 
    In conclusion, $\lambda$ is an eigenvalue of $\tA$ only if $\lambda \in \spc(\tA)$. 
\end{proof}
\Cref{lem:singvals.in.spc} establishes a connection between the singular values of $\tX \in \cHs^{m \xx p}$ and the spectra of $\tX^* \ff \tX \in \cHs^{p \xx p}$ and $\tX \ff \tX^* \in \cHs^{m \xx m}$, that is, $\operatorname{Image} \sigmaset_{\tX}^2 \subseteq  \spc (\tX^* \ff \tX)  \cap \spc (\tX \ff \tX^*)$ where $\sigmaset_{\tX}^2 \colon [\min(p,m)] \xx \ZZ \to \RR$ is the mapping
\begingroup{
}\endgroup
\begin{equation}\label{eq:sigmaset2.chmp}
    \sigmaset_{\tX}^2(j,k) = |\thS_{j,j,k}|^2 \quad \text{for all } (j,k) \in [\min(m,p)] \xx \ZZ
\end{equation}

Adhering to standard terminology, we say that $\alpha \in \RR$ is an accumulation point of the mapping $\sigmaset_{\tX}^2$, if for any neighborhood $A$ of $\alpha$, there are infinitely many $(j,k)$ such that $\sigmaset_{\tX}^2(j,k) \in A$.  
Importantly, for $\tX \in \cH^{m \xx p}$ we have the following lemma: 
\begin{lemma}\label{lem:sigmaset.X.in.H.has.no.acc}
    Let $\tX \in \cH^{m \xx p}$, then $0$ is the only accumulation point of the mapping $\sigmaset_{\tX}^2$ defined in \Cref{eq:sigmaset2.chmp}. 
    As a corollary, the mapping $\sigmaset_{\tX}^2$ attains a maximum on any non-empty subset of indices $K \subseteq [\min(p,m)] \xx \ZZ$. 
\end{lemma}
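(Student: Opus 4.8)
The plan is to reduce everything to a single finiteness fact: the values of $\sigmaset_\tX^2$ form a summable family over the countable index set $[\min(m,p)] \xx \ZZ$. First I would recall the identity $\cHNormS{\tX} = \sum_{k \in \ZZ} \FNormS{\thX_{:,:,k}}$ established in the proof of \Cref{lemma:ffuni.norm.preserve}, which is available precisely because $\tX \in \cH^{m \xx p}$ forces $\cHNorm{\tX} < \infty$. Since the $\uppi$-transform agrees with the $F$-transform on tubes in $\cH$, each frontal slice $\thX_{:,:,k}$ is an ordinary matrix in $\FFF^{m \xx p}$ whose squared Frobenius norm equals the sum of its squared singular values, so $\FNormS{\thX_{:,:,k}} = \sum_{j=1}^{\min(m,p)} \sigmaset_\tX^2(j,k)$. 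Combining these gives
\[
\sum_{(j,k) \in [\min(m,p)] \xx \ZZ} \sigmaset_\tX^2(j,k) = \cHNormS{\tX} < \infty.
\]

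From this summability, the key observation is that for every $\epsilon > 0$ the level set $\{(j,k) : \sigmaset_\tX^2(j,k) > \epsilon\}$ must be finite, since infinitely many terms exceeding $\epsilon$ would force the sum to diverge. I would then establish the ``only accumulation point'' claim in two directions. For any candidate $\alpha > 0$, choosing the neighborhood $(\alpha/2, 3\alpha/2)$ shows that only finitely many indices can land inside it (all such indices satisfy $\sigmaset_\tX^2(j,k) > \alpha/2$), so $\alpha$ fails the definition of an accumulation point. Conversely, since the index set is infinite while only finitely many values exceed any fixed $\epsilon$, every neighborhood of $0$ captures all but finitely many indices; hence $0$ is genuinely an accumulation point, and it is the only one.

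For the corollary, I would set $M \coloneqq \sup_{(j,k) \in K} \sigmaset_\tX^2(j,k)$ for a nonempty $K$. If $M = 0$ the maximum is attained trivially, since the values are non-negative. If $M > 0$, the subset $S \coloneqq \{(j,k) \in K : \sigmaset_\tX^2(j,k) > M/2\}$ is nonempty by the definition of the supremum and finite by the level-set bound above; the supremum over $K$ therefore coincides with the maximum over the finite set $S$, which is attained.

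The argument is essentially routine once the summability identity is in place, so I do not anticipate a serious obstacle. The one point that requires care is the very first step: $\sigmaset_\tX^2$ is defined through the quasitubal SVD (hence through the $\uppi$-transform), while the hypothesis $\tX \in \cH^{m \xx p}$ is a statement about tubes. Making sure that the $\uppi$-/$F$-transform consistency lets me import the finite Frobenius-sum identity from \Cref{lemma:ffuni.norm.preserve} is the crux; after that, the summability of a non-negative family over a countable set does all the work.
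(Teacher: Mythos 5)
Your proof is correct. Both it and the paper's proof rest on the same foundational fact — the summability $\sum_{(j,k)} \sigmaset_{\tX}^2(j,k) = \cHNormS{\tS} = \cHNormS{\tX} < \infty$, which requires the f-unitarity of the q-SVD factors (\Cref{lem:ff.unitary.identities}) so that each frontal slice of $\thS$ carries the singular values of $\thX_{:,:,k}$; you flag this consistency point correctly and the paper uses it in the equivalent form $\cHNorm{\tX} = \cHNorm{\tS}$. Where you genuinely diverge is in the organization. The paper proves the accumulation-point claim directly by contradiction (infinitely many indices with $\sigmaset_{\tX}^2 > \alpha/2$ would make the sum diverge), and then handles the attainment corollary with a comparatively heavy argument: a neighborhood $A_0$ of the supremum meeting only finitely many values, a monotone maximizing subsequence $\beta_n$, and a $\limsup$ comparison between $\sup_K$ and $\max_{C_0}$. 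You instead extract the finiteness of the super-level sets $\{(j,k) : \sigmaset_{\tX}^2(j,k) > \epsilon\}$ as the single pivot and derive both claims from it; your corollary proof ($\sup_K$ equals the maximum over the finite nonempty set of indices exceeding $M/2$) is shorter and more elementary, avoiding the subsequence/limsup machinery entirely. It is worth noting that the paper only establishes level-set finiteness afterwards, in \Cref{lem:levelsets.are.finite}, as a \emph{consequence} of this lemma — so your route inverts that logical dependency, which is harmless (your level-set argument uses only summability, not the lemma being proved) and arguably cleaner, since \Cref{lem:levelsets.are.finite} then comes for free.
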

\begin{proof}
    Write $\tX = \tU \ff \tS \ff \tV^*$. 
    If $\tX = 0$ then $\wh{\tS}_{j,j,k} = 0$ for all $j,k$ and the statement is clear.

    Otherwise, there are $j,k$ such that $\wh{\tS}_{j,j,k} \neq 0$. 
    Let $\alpha \geq 0$ be an accumulation point of $\sigmaset_{\tX}^2$.
    Given $\varepsilon > 0$, denote by $\{(j_{a}^{(\varepsilon)}, k_{a}^{(\varepsilon)})\}_{a = 1}^\infty \subset [\min(m,p)] \xx \ZZ$ the set of indices such that $| |\wh{\tS}_{j_{a}^{(\varepsilon)}, j_{a}^{(\varepsilon)}, k_{a}^{(\varepsilon)}}|^2 - \alpha | < \varepsilon/2$, or, 
    $$
    \alpha -\varepsilon/2 < |\wh{\tS}_{j_{a}^{(\varepsilon)}, j_{a}^{(\varepsilon)}, k_{a}^{(\varepsilon)}}|^2  < \alpha + \varepsilon/2~~.
    $$
    For all $\varepsilon > 0$ we have
    \begin{align*}
        \cHNormS{\tX} = \cHNormS{\tS}
        &= {\sum}_{j = 1}^p {\sum}_{k \in \ZZ}|\wh{\tS}_{j,j,k}|^2 \\
        &\geq {\sum}_{a =1}^\infty |\wh{\tS}_{j_{a}^{(\varepsilon)}, j_{a}^{(\varepsilon)}, k_{a}^{(\varepsilon)}}|^2
    \end{align*}
    By taking $\varepsilon = \alpha$ we obtain 
    \begin{align*}
        \cHNormS{\tX} 
        &\geq
        {\sum}_{a =1}^\infty |\wh{\tS}_{j_{a}^{(\alpha)}, j_{a}^{(\alpha)}, k_{a}^{(\alpha)}}|^2 \\
        &\geq {\sum}_{a =1}^\infty \alpha / 2
    \end{align*}
    thus $\alpha = 0$ (otherwise $\tX \notin \cH^{m \xx p}$), and we have shown that the only possible accumulation point of $\sigmaset_{\tX}^2$ is 0.
    Since $\cHNormS{\tS}$ is finite,
    we have that the series $\sum_{n=1}^\infty |\wh{\tS}_{c_n^{(\operatorname{lat})}, c_n^{(\operatorname{lat})}, c_n^{(\operatorname{front})}} |^2$ converges for any ordering 
    $c \colon n \mapsto (c_n^{(\operatorname{lat})},c_n^{(\operatorname{front})}) \in [\min(m,p)]\xx \ZZ$ of $[\min(m,p)]\xx \ZZ$.
    Therefore, given any ordering (bijection)  $c \colon \NN \to [\min(m,p)]\xx \ZZ$ we have $\lim_{n\to \infty} |\wh{\tS}_{c_n^{(\operatorname{lat})}, c_n^{(\operatorname{lat})}, c_n^{(\operatorname{front})}} |^2 = 0$ thus $0$ is necessarily an accumulation point of $\sigmaset_{\tX}^2$.

    For the corollary, let $K \subseteq [\min(p,m)] \xx \ZZ$ be a subset, and let $\alpha = \sup_{(j,k) \in K} |\wh{\tS}_{j,j,k}|^2 $.
    Our goal is to show that $\alpha = |\wh{\tS}_{j',j',k'}|^2$ for some $(j',k') \in  K$.
    
    If $\alpha$ is an accumulation point of $\sigmaset_{\tX}^2$, then by the above $\alpha = 0$.
    Since $\sigmaset_{\tX}^2$ is non-negative, we get that $\sigmaset_{\tX}^2(j,k) = 0 $ for all $(j,k) \in K$, i.e., $0 = \alpha = \max_{(j,k) \in K} \sigmaset_{\tX}^2(j,k) $ and the result follows. 
    Otherwise, according to the definition of accumulation point of a mapping, there exists a neighborhood $A_0$ of $\alpha$, such that the set of indices $(j,k) \in [\min(m,p)] \xx \ZZ$ for which $\sigmaset_{\tX}^2(j,k) \in A_0$ is of finite cardinality. 
    In particular, we have that the set  $C_0 \coloneqq \{ (j,k) \in K ~:~  \sigmaset_{\tX}^2(j,k) \in A_0 \}$ is finite.

    From the definition of supremum it follows that for all $n \in \NN$ there exists $(j_n,k_n) \in K$ such that 
    $$
    \alpha - 1/n < \beta_n \leq \alpha ~~ \text{where }  \beta_n \coloneqq |\wh{\tS}_{j_n,j_n,k_n}|^2 = \sigmaset_{\tX}^2(j_n,k_n)
    $$
    Assume without loss of generality that $\beta_n$ is monotonically increasing (otherwise replace it by a monotonically increasing subsequence that surely exists), then $\beta_n$ converges to $\alpha$. 
    As a result, there exists $N_0 \in \NN$ such that $\beta_n \in A_0$ for all $n \geq N_0$. 
    By construction $\beta_n = \sigmaset_{\tX}^2(j_n,k_n) = |\wh{\tS}_{j_n,j_n,k_n}|^2 $ therefore $(j_n,k_n) \in C_0$ for all $n \geq N_0$. 
    Observe that 
    \begin{equation}
        \alpha = \limsup_{n \to \infty} \beta_n 
        = \limsup_{n \to \infty} \sigmaset_{\tX}^2(j_n,k_n) 
        \leq \sup \left\{ \sigmaset_{\tX}^2(j,k) ~:~ (j,k) \in C_0 \right\}  \label{eq:no.accpoints.eq1}
    \end{equation}
    Following from the finiteness of $C_0$, we obtain that  $\max \{ \sigmaset_{\tX}^2(j,k) ~:~ (j,k) \in C_0 \} $ exists, and since $C_0 \subseteq K$ we get that 
    \begin{equation} \label{eq:no.accpoints.eq2}
    \max \left\{ \sigmaset_{\tX}^2(j,k) ~:~ (j,k) \in C_0 \right\} 
    \leq \sup \left\{  \sigmaset_{\tX}^2(j,k) ~:~ (j,k) \in  K \right\} 
    = \alpha
    \end{equation}
    Combining the inequalities of \Cref{eq:no.accpoints.eq1,eq:no.accpoints.eq2}, we get
    $$
    \alpha \leq \max \left\{ \sigmaset_{\tX}^2(j,k) ~:~ (j,k) \in C_0 \right\} \leq \alpha
    $$
    that is, there exists $(j',k') \in K$ such that $\alpha = \sigmaset_{\tX}^2(j',k')$.
    
    In conclusion, the mapping $\sigmaset_{\tX}^2$ attains a maximum on any subset $K \subset [\min(p,m)] \xx \ZZ$.
\end{proof}
Note that for any $\tX = \tU \ff \tS \ff \tV^* \in \cHs^{m \xx p}$, we have that $\tS_{j,j} \geq_{\cHs} 0$, i.e., $\wh{\tS}_{j,j,k} \geq 0$ for all $j,k$, and the mapping $\wh{\tS}_{j,j,k} \mapsto |\wh{\tS}_{j,j,k}|^2$ is a one to one correspondence. 
Thus, for any $\tX \in \cH^{m \xx p}$, the set $\{ \wh{\tS}_{j,j,k} ~:~  (j,k) \in [\min(m,p)] \xx \ZZ \}$ has a single accumulation point that is $0$.

Another direct consequence of~\Cref{lem:sigmaset.X.in.H.has.no.acc} is the following result
\begin{lemma}\label{lem:levelsets.are.finite}
    Let $\tX \in \cH^{m \xx p}$ with q-SVD $\tX = \tU \ff \tS \ff \tV^*$. 
    Define 
    \begin{equation}\label{eq:def.sublevel.set.sigma}
        S_{\tX}(\alpha) \coloneqq \{ \wh{\tS}_{j,j,k} ~:~  \wh{\tS}_{j,j,k} > \alpha \} \qquad \text{for all } \alpha \geq 0
    \end{equation}
    Then $S_{\tX}(\alpha)$ is finite for any $\alpha > 0$.
\end{lemma}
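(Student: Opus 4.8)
The plan is to argue by contradiction, leaning entirely on the square-summability of the singular values that is encoded in $\tX$ having finite $\cH$-norm. First I would reduce the claim about the level set $S_{\tX}(\alpha)$ of the transform-domain diagonal entries $\wh{\tS}_{j,j,k}$ to a statement about the super-level set of $\sigmaset_{\tX}^2$. Since the singular quasitubes satisfy $\bs{s}_j \geq_{\cHs} 0$ (by \Cref{def:quasitubal.svd}), \Cref{lem:rel.invariance} gives $\wh{\tS}_{j,j,k} \geq 0$ for all $(j,k)$, so $\wh{\tS}_{j,j,k} > \alpha$ holds if and only if $\sigmaset_{\tX}^2(j,k) = |\wh{\tS}_{j,j,k}|^2 > \alpha^2$. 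Hence the cardinality of $S_{\tX}(\alpha)$ equals the number of index pairs $(j,k) \in [\min(m,p)] \xx \ZZ$ with $\sigmaset_{\tX}^2(j,k) > \alpha^2$, and it suffices to show this count is finite for every $\alpha > 0$.

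Next I would invoke the norm identity already derived in the proof of \Cref{lem:sigmaset.X.in.H.has.no.acc}, namely $\cHNormS{\tX} = \cHNormS{\tS} = \sum_{j} \sum_{k \in \ZZ} |\wh{\tS}_{j,j,k}|^2 = \sum_{(j,k)} \sigmaset_{\tX}^2(j,k)$, which is finite precisely because $\tX \in \cH^{m \xx p}$. Suppose toward a contradiction that $S_{\tX}(\alpha)$ were infinite for some $\alpha > 0$. Then there would be infinitely many pairs $(j,k)$ with $\sigmaset_{\tX}^2(j,k) > \alpha^2 > 0$, so the (non-negative, unordered) sum $\sum_{(j,k)} \sigmaset_{\tX}^2(j,k)$ would dominate an infinite sum of terms each exceeding the fixed positive constant $\alpha^2$, forcing $\cHNormS{\tX} = \infty$. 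This contradicts $\tX \in \cH^{m \xx p}$, so $S_{\tX}(\alpha)$ must be finite.

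There is essentially no hard step here: this lemma is a short corollary of the square-summability that \Cref{lem:sigmaset.X.in.H.has.no.acc} and the $\cH$-norm formula already supply. The only point requiring a line of justification — and the closest thing to an obstacle — is the reduction in the first paragraph, i.e.\ confirming that the level set of $\wh{\tS}_{j,j,k}$ coincides with the super-level set of $\sigmaset_{\tX}^2$, which is why I would make the non-negativity of the transform-domain singular entries explicit via \Cref{lem:rel.invariance}. As an alternative route I could instead argue that an infinite super-level set $\{\sigmaset_{\tX}^2(j,k) > \alpha^2\}$ lives in the bounded interval $(\alpha^2, \opNorm{\tX}^2]$ and therefore admits a positive accumulation point, contradicting the fact that $0$ is the only accumulation point of $\sigmaset_{\tX}^2$ from \Cref{lem:sigmaset.X.in.H.has.no.acc}; however, the direct summability argument is shorter and self-contained, so I would present that one.
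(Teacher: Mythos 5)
Your proof is correct, but it takes a genuinely different route from the paper's. The paper argues via accumulation points: it observes that $S_{\tX}(\alpha) \subseteq [\alpha, \opNorm{\tX}]$, so an infinite set of such values would have a convergent subsequence (Bolzano--Weierstrass) with limit $\beta \geq \alpha > 0$, making $\beta^2 > 0$ a positive accumulation point of $\sigmaset_{\tX}^2$ and contradicting \Cref{lem:sigmaset.X.in.H.has.no.acc} --- this is precisely the ``alternative route'' you sketched and set aside at the end. Your primary argument instead bypasses the accumulation-point machinery entirely and goes straight to square-summability: infinitely many index pairs with $\sigmaset_{\tX}^2(j,k) > \alpha^2$ would force $\cHNormS{\tX} = \sum_{(j,k)} \sigmaset_{\tX}^2(j,k) = \infty$, contradicting $\tX \in \cH^{m \xx p}$. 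Your version is more elementary and self-contained (it needs only the norm identity $\cHNormS{\tX} = \cHNormS{\tS}$ and non-negativity of the transformed singular entries via \Cref{def:quasitubal.svd} and \Cref{lem:rel.invariance}), whereas the paper's version reuses the lemma it has just proved, keeping the chain of results tightly coupled; both ultimately rest on the same finiteness of $\cHNormS{\tX}$, since \Cref{lem:sigmaset.X.in.H.has.no.acc} is itself proved by summability. One small inaccuracy worth fixing: since $S_{\tX}(\alpha)$ is a set of \emph{values}, its cardinality is at most --- not necessarily equal to --- the number of index pairs in the super-level set (distinct indices may share a value); the inequality goes in the direction you need, so the conclusion is unaffected.
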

\begin{proof}
    Let $\alpha > 0$ and note that $S_{\tX}(\alpha) \subseteq [\alpha, \opNorm{\tX}]$ thus an infinite sequence of values contained in $S_{\tX}(\alpha)$ must have a convergent subsequence $\{ \beta_n \}_{n=1}^\infty $ whose limit $\beta $ is such that $\beta \geq \alpha >0$ and in particular, $\beta$ is an accumulation point of the mapping $(j,k) \mapsto \wh{\tS}_{j,j,k}$ therefore $\beta^2 > 0$ is  an accumulation point of $\sigmaset_{\tX}^2$, in contradiction to~\Cref{lem:sigmaset.X.in.H.has.no.acc}.
\end{proof}


\begin{definition}\label{def:descending.order.seq}
    Let $\tX = \tU \ff \tS \ff \tV^*  \in \cH^{m \xx p}$ be non-zero.
    Define $\{ \sigma_n \}_{n=1}^\infty $ with $\sigma_n = \wh{\tS}_{l_n,l_n,t_n}$ where 
    \begin{equation}\label{eq:def.lt_1.max}
        (l_1,t_1) = \argmax_{(l,t) \in [\min(p,m)] \xx \ZZ} \wh{\tS}_{l,l,t} 
    \end{equation}
    and for all $n=2,3,\ldots$
    \begin{align}
        C_n 
        &=  ([\min(p,m)] \xx \ZZ) \smallsetminus \{ (l_{n'},t_{n'}) ~:~ n' =1, \ldots ,n-1 \} 
        = C_{n-1} \smallsetminus \{ (l_{n-1},t_{n-1}) \} \label{eq:def.C_n.subset} \\ 
        (l_n,t_n) 
        &= \argmax_{(l,t) \in C_n} \wh{\tS}_{l,l,t} \label{eq:def.lt_n.max}
    \end{align}

    In case of multiple maximizers for \Cref{eq:def.lt_1.max,eq:def.lt_n.max}, we take $(l_n,t_n)$ that is smallest with respect to lexicographic order (with the second coordinate preceding).
\end{definition}
\begin{lemma}\label{lemma:ordering.arbitrary.small}%
    Let $\tX \in \cH^{m \xx p}$ with q-SVD $\tX = \tU \ff \tS \ff \tV^* $ and $\{ \sigma_n \}_{n=1}^{\infty} $ the sequence whose construction is given by~\Cref{def:descending.order.seq}.
    Then 
    \begin{equation}\label{eq:lem.sigma_n.dec.order}
        \sigma_1 \geq \sigma_2 \geq \cdots 
    \end{equation}
    and for a all $\varepsilon > 0$, there exists $N \in \NN$ such that $\sigma_{n} < \varepsilon$ for all $n \geq N$.
\end{lemma}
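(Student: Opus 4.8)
The plan is to treat the two claims separately: monotonicity will follow directly from the nested structure of the index sets $C_n$, while the decay to zero will follow from the square-summability of the singular values that was established en route to \Cref{lem:sigmaset.X.in.H.has.no.acc}.

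First, for \Cref{eq:lem.sigma_n.dec.order}, I would observe that the construction in \Cref{def:descending.order.seq} produces a decreasing chain of index sets: setting $C_1 \coloneqq [\min(p,m)] \xx \ZZ$, the recursion \Cref{eq:def.C_n.subset} gives $C_{n} = C_{n-1} \smallsetminus \{(l_{n-1},t_{n-1})\} \subseteq C_{n-1}$ for all $n \geq 2$. Each $\sigma_n = \wh{\tS}_{l_n,l_n,t_n}$ is, by \Cref{eq:def.lt_1.max,eq:def.lt_n.max}, the maximum of the mapping $(l,t)\mapsto \wh{\tS}_{l,l,t}$ over $C_n$; the corollary of \Cref{lem:sigmaset.X.in.H.has.no.acc} guarantees this maximum is attained (squaring being monotone on the non-negative values $\wh{\tS}_{l,l,t}$), so each $\sigma_n$ is well defined. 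Since the maximum of a function over a subset cannot exceed its maximum over a superset, $\sigma_n = \max_{C_n} \wh{\tS} \leq \max_{C_{n-1}} \wh{\tS} = \sigma_{n-1}$, which is exactly the monotonicity claim.

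Next, for the decay, I would argue by contradiction. As $\{\sigma_n\}$ is non-increasing and bounded below by $0$, it converges to some $L \geq 0$; suppose $L > 0$. The key quantitative input is that $\cHNormS{\tX} = \cHNormS{\tS} = \sum_{(j,k)} |\wh{\tS}_{j,j,k}|^2 < \infty$, which is precisely the square-summability shown inside the proof of \Cref{lem:sigmaset.X.in.H.has.no.acc}. For any threshold $\alpha > 0$ this forces the super-level index set $I(\alpha) \coloneqq \{(j,k) : \wh{\tS}_{j,j,k} > \alpha\}$ to be finite, since $|I(\alpha)|\,\alpha^2 \leq \sum_{(j,k)\in I(\alpha)} |\wh{\tS}_{j,j,k}|^2 \leq \cHNormS{\tX}$, giving $|I(\alpha)| \leq \cHNormS{\tX}/\alpha^2$. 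Now, because the sequence is non-increasing with limit $L$, every term satisfies $\sigma_n \geq L > L/2$, so each chosen index $(l_n,t_n)$ lies in $I(L/2)$; but by \Cref{eq:def.C_n.subset} these index pairs are pairwise distinct, producing infinitely many distinct elements of the finite set $I(L/2)$, a contradiction. Hence $L = 0$, and translating this back gives: for every $\varepsilon > 0$ there is $N$ with $\sigma_n < \varepsilon$ for all $n \geq N$.

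I expect the only genuinely delicate point to be keeping the bookkeeping at the level of indices rather than values: the numbers $\wh{\tS}_{l_n,l_n,t_n}$ may repeat across different $n$, so the contradiction must be extracted from the distinctness of the index pairs $(l_n,t_n)$ (guaranteed by the set removal in \Cref{eq:def.C_n.subset}) together with finiteness of the \emph{index-level} super-level sets, rather than from finiteness of the value set $S_{\tX}(\cdot)$ in \Cref{lem:levelsets.are.finite}. Everything else is a routine consequence of monotone convergence and the square-summability already in hand.
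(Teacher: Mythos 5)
Your proof is correct. The monotonicity half coincides with the paper's: both rest on $C_n \subseteq C_{n-1}$ and the fact that a maximum over a subset cannot exceed the maximum over the superset, with existence of the maxima supplied by the corollary of \Cref{lem:sigmaset.X.in.H.has.no.acc}. The decay half is where you diverge. The paper argues directly: since the index pairs $(l_n,t_n)$ are pairwise distinct, $\sum_{n=1}^\infty |\sigma_n|^2 = \cHNormS{\tS} = \cHNormS{\tX} < \infty$, so for any $\varepsilon>0$ the tail $\sum_{n' \geq N_0} |\sigma_{n'}|^2$ can be made at most $\varepsilon^2/4$, and each $|\sigma_n|^2$ with $n \geq N_0$ is dominated by that tail, giving $\sigma_n < \varepsilon$ --- in short, ``terms of a convergent series tend to zero.'' You instead pass to the monotone limit $L$ and rule out $L>0$ by pigeonhole: the super-level index set $I(L/2)$ has cardinality at most $\cHNormS{\tX}/(L/2)^2$, yet it would have to contain the infinitely many distinct pairs $(l_n,t_n)$. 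Both arguments consume the same input (square-summability of the diagonal entries of $\wh{\tS}$); the paper's is shorter, while yours is slightly more robust at one point: it needs only the inequality $\sum_n |\sigma_n|^2 \leq \cHNormS{\tX}$, which is immediate from distinctness of the indices, whereas the paper's asserted equality $\sum_n |\sigma_n|^2 = \cHNormS{\tX}$ tacitly presumes the greedy enumeration eventually visits every index carrying a nonzero entry --- true, but not justified there. Your closing remark about keeping the bookkeeping at the level of index pairs rather than values (so repeated values cause no trouble, and the value-level finiteness of \Cref{lem:levelsets.are.finite} is not the right tool) is exactly the right care to take.
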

\begin{proof}
    First, note that the maximum in \Cref{eq:def.lt_1.max} and \Cref{eq:def.lt_n.max} both exist due to \Cref{lem:sigmaset.X.in.H.has.no.acc}.
    If multiple indices yield the same positive maximum, this set must be finite, and any ordering will work, and in particular the lexicographic ordering we use. 

    By construction, $\sigma_{n} \in \{  \wh{\tS}_{l,l,t} :  (l,t) \in C_n \} \subseteq \{  \wh{\tS}_{l,l,t} :  (l,t) \in C_{n-1} \}$ hence, following from \Cref{eq:def.C_n.subset} we have $\sigma_n \leq \max_{(l,t) \in C_{n-1} }  \wh{\tS}_{l,l,t} = \sigma_{n-1}$, therefore \Cref{eq:lem.sigma_n.dec.order} holds.
    For the second part, note that $\sum_{n=1}^\infty |\sigma_n|^2 = \cHNormS{\tX}$ is a convergent series thus for any $\varepsilon > 0$ there exists $N_0 \in \NN$ such that $\sum_{n = N_0}^\infty |\sigma_n|^2 \leq \varepsilon^2/4$ and for all $n \geq N_0$ it is clear that $|\sigma_n|^2 \leq \sum_{{n'} = N_0}^\infty |\sigma_{n'}|^2$ thus $\sigma_n < \varepsilon$.
\end{proof}



\paragraph{Implicit ranks.}
Previous works, e.g., \cite{KilmerPNAS}, define the implicit rank of a finite dimensional tubal tensor $\tX \in \FFF^{m \xx p \xx n}$, as  the number of non-zero entries in $\thS$ where $\tX = \tU \mm \tS \mm \tV^{\bf{H}}$  is the t-SVD of $\tX$ under $\mm$.
As a straightforward extension, we define the implicit rank of a quasitubal tensor $\tX = \tU \ff \tS \ff \tV^* \in \cHs^{m \xx p}$ as the cardinality of the set $\{ (l,t) \in [\min(p,m)] \xx \ZZ ~|~ \wh{s}_l^{(t)} \neq 0 \}$ which may be infinite, such as the case where $\tX = \tI$ in which all diagonal entries of the transformed singular quasitubal tensor are 1.


Recall that by \Cref{lem:Rlin.iff.matmul} a quasitubal tensor represents an $\cHs$-linear mapping (and vice versa), and that $\cHs$-linear mappings are, in particular, linear maps between vector spaces over $\FFF$.
 
\begin{definition}[Rank of a mapping]\label{def:chs.linmap.rank}
    Let $T \colon \cHs^p \to \cHs^m$ be a $\cHs$-linear mapping, and denote by $\operatorname{Image}_{\FFF} T \subseteq \cHs^m$ the image of $T$ when considered as a vector space over $\FFF$.
    The rank of $T$ is defined as $\rank_{\FFF,\ff} T \coloneqq  \dim \operatorname{Image}_{\FFF} T$. 
\end{definition}
We will use $\rank_{\FFF,\ff} T$ and $\rank_{\FFF,\ff} \tX$ interchangeably for referring to the rank of the mapping $T$ whose quasitubal tensor representation is given by $\tX$. 
The following lemma shows that both the rank of $\tX$ and the $\cHs$-linear mapping it defines are, in fact, the implicit rank of $\tX$ under $\ff$. 

\begin{lemma}\label{lem:Frank.is.imprank}
    Let $\tX \in \cHs^{m \xx p}$ be a quasitubal tensor and $T = T_{\tX} \colon \cHs^p \to \cHs^m$ be the $\cHs$-linear mapping defined by $T(\matY) = \tX \ff \matY$ for all $\matY \in \cHs^p$. 
    Then the implicit rank of $\tX$ is $\rank_{\FFF,\ff} T$.
\end{lemma}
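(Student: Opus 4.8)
The plan is to pass to the transform domain, where the whole statement becomes a frontal-slicewise matrix computation. Writing $A_k \coloneqq \thX_{:,:,k} \in \FFF^{m \xx p}$ for each $k \in \ZZ$, \Cref{def:quasi.ff.redef} tells us that $\tX \ff \matY$ has $k$-th transformed frontal slice $A_k \wh{\matY}_{:,k}$, with $\wh{\matY}_{:,k} \in \FFF^p$. First I would record the bookkeeping identity that the implicit rank of $\tX$ equals $\sum_{k \in \ZZ} \rnk(A_k)$: for each fixed $k$ the nonzero diagonal entries $\wh{s}_l^{(k)} = \wh{\tS}_{l,l,k}$ are exactly the nonzero singular values of $A_k$, so their count is $\rnk(A_k)$, and summing over $k$ gives the claim. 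Thus it suffices to compare $\dim_\FFF \operatorname{Image}_\FFF T$ with $\sum_{k} \rnk(A_k)$.

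For the lower bound $\rank_{\FFF,\ff} T \geq \sum_k \rnk(A_k)$ I would produce explicit image vectors. For $k \in \ZZ$ and $i \in [p]$ let $\matE^{(k,i)} \coloneqq \bs{\phi}^{(k)} \ff \bs{\Phi}^{(i)}_{(p)} \in \cHs^p$ be the finitely supported slice whose $\uppi$-transform is $e_i$ in frontal slice $k$ and zero elsewhere. Then $T(\matE^{(k,i)})$ is supported only in frontal slice $k$, where it equals the $i$-th column of $A_k$. As $i$ ranges over $[p]$ these columns span $\operatorname{col}(A_k)$, so I can select $\rnk(A_k)$ of them forming a basis of $\operatorname{col}(A_k)$. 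The resulting family, taken over all $k$, is $\FFF$-linearly independent: vectors in distinct frontal slices are independent because their supports are disjoint, and within a slice they form a basis. Its cardinality is $\sum_k \rnk(A_k)$, giving the lower bound.

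When the implicit rank is finite the matching upper bound is immediate: only finitely many $A_k$ are nonzero, so for every $\matY \in \cHs^p$ the transform of $T(\matY)$ is supported on the finite set $\{k : A_k \neq 0\}$ with $k$-th slice lying in $\operatorname{col}(A_k)$. Hence $\operatorname{Image}_\FFF T \subseteq \bigoplus_{k \,:\, A_k \neq 0} \operatorname{col}(A_k)$, whose $\FFF$-dimension is precisely $\sum_k \rnk(A_k)$; combined with the lower bound this yields the equality $\rank_{\FFF,\ff} T = \sum_k \rnk(A_k)$, which is the implicit rank. When the implicit rank is infinite, the lower bound already forces $\rank_{\FFF,\ff} T$ to be infinite, so the two quantities agree in being infinite.

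The delicate step — the one I expect to be the main obstacle — is the upper bound in the genuinely infinite-rank regime, and I would state it with care. Finitely supported preimages generate only finitely supported image elements, so the vectors $T(\matE^{(k,i)})$ cannot literally span $\operatorname{Image}_\FFF T$ once infinitely many $A_k$ are nonzero; for instance when $\tX = \tens{I}$ the image is all of $\cHs^p$, whose Hamel dimension strictly exceeds the countable implicit rank. Because the intended application in the next section concerns tensors with a \emph{finite} representation, I would therefore read the result as the assertion that $\rank_{\FFF,\ff} T$ is finite if and only if the implicit rank is finite, in which case the two coincide — exactly the content that the construction above establishes cleanly, and the form that is actually used downstream.
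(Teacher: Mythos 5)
Your proof is correct, and it reaches the paper's conclusion by a different and, in one respect, more careful route. The paper argues globally through the q-SVD: writing $\tX = \tU \ff \tS \ff \tV^*$, indexing the nonzero transformed singular entries by $\{(j_n,k_n)\}_{n=1}^{q}$, and expanding $\matY = \tV \ff \tV^* \ff \matY$, it shows $\tX \ff \matY = \sum_{n=1}^{q} \alpha_n \wh{s}_{j_n}^{(k_n)} \bs{\phi}^{(k_n)} \ff \tU_{:,j_n}$ and then concludes that $\operatorname{Image}_{\FFF} T$ \emph{equals} the span of the $\ff$-orthogonal, hence $\FFF$-independent, family $\{\bs{\phi}^{(k_n)} \ff \tU_{:,j_n}\}_{n=1}^{q}$, so its dimension is $q$. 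You instead work slicewise in the transform domain: the lower bound comes from pushing the finitely supported slices $\bs{\phi}^{(k)} \ff \bs{\Phi}^{(i)}_{(p)}$ through $T$ and selecting column bases of the matrices $A_k = \thX_{:,:,k}$, and the matching upper bound (in the finite case) from the containment of the image in $\bigoplus_{k}\operatorname{col}(A_k)$. The two-sided bound costs a little more bookkeeping, but it never asserts an exact description of the image, which is precisely where the paper's argument is fragile.

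Indeed, the caveat in your final paragraph is not mere prudence; it pinpoints a genuine gap in the paper's own proof when $q$ is infinite. In that regime the displayed expansion of $\tX \ff \matY$ is an infinite sum, so the image contains elements that are not finite $\FFF$-linear combinations of the $\bs{\phi}^{(k_n)} \ff \tU_{:,j_n}$, and the asserted identity between the image and their algebraic span fails. Your example $\tX = \tI$ shows the discrepancy is one of cardinality: the implicit rank is countable, while $\operatorname{Image}_{\FFF} T = \cHs^{p}$ has Hamel dimension at least $2^{\aleph_0}$. So the lemma, read as an equality of cardinals, is literally true only when one of the two quantities is finite — exactly the statement you prove (finite iff finite, with equality in that case), and exactly the case invoked downstream, e.g. in \Cref{thm:best.finite.approx.cH}, where a tensor of finite $\rank_{\FFF,\ff}$ is expanded as a finite sum of rank-one terms. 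Your reading is the correct repair of the statement, and your argument establishes it rigorously.
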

\begin{proof}
    Write $\tX = \tU \ff \tS \ff \tV^*$ and  
    $I = \{(j,k) \in [\min(p,m)] \xx \ZZ ~|~  \thS_{j,j,k} \neq 0 \}$. 
    Note that $I$ is of at most countable cardinality, and write $I = \{ (j_n,k_n) \}_{n=1}^q$ where $q$ may be finite or infinite.
    Then $\tX = {\sum}_{n=1}^q \thS_{j_n,j_n,k_n} \bs{\phi}^{({k_n})} \ff \tU_{:,{j_n}} \ff \tV_{:,{j_n}}^*$.
    Note that $\tI = \tV \ff \tV^* = \sum_{k \in \ZZ} \sum_{j =1}^p \bs{\phi}^{({k})} \ff \tV_{:,{j}} \ff \tV_{:,{j}}^* $, therefore, 
    $$\matY = \tV \ff \tV^* \ff \matY  = {\sum}_{n=1}^q  \bs{\phi}^{({k_n})} \ff (\tV_{:,{j_n}}^* \ff \matY) \ff \tV_{:,{j_n}} +  {\sum}_{(j',k') \notin I}  \bs{\phi}^{({k'})} \ff (\tV_{:,{j'}}^* \ff \matY) \ff \tV_{:,{j'}}$$ 
    for all $\matY \in \cHs^p$.
    As a result, 
    \begin{align*}
        \tX \ff \matY 
        &= {\sum}_{n=1}^q \wh{s}_{j_n}^{({k_n})} \bs{\phi}^{({k_n})} \ff \tU_{:,{j_n}} \ff \tV_{:,{j_n}}^* \ff {\sum}_{n'=1}^q  \bs{\phi}^{({k_{n'}})} \ff (\tV_{:,{j_{n'}}}^* \ff \matY) \ff \tV_{:,{j_{n'}}} \\
        &= {\sum}_{n=1}^q \wh{s}_{j_n}^{({k_n})} \bs{\phi}^{({k_n})} \ff (\tV_{:,{j_{n}}}^* \ff \matY) \ff \tU_{:,{j_n}}
    \end{align*}
    Note that $\bs{\phi}^{({k_n})} \ff (\tV_{:,{j_{n}}}^* \ff \matY) = \alpha_n \bs{\phi}^{({k_n})}$ for some $\alpha_n \in \FFF$, hence the above becomes
    \begin{align*}
        \tX \ff \matY 
        &= {\sum}_{n=1}^q \alpha_n \thS_{j_n,j_n,k_n}  \bs{\phi}^{({k_n})} \ff \tU_{:,{j_n}}
    \end{align*}
    which implies that $\operatorname{Image}_{\FFF} T = \operatorname{span}(\{ \bs{\phi}^{({k_n})}  \ff \tU_{:,{j_n}} \}_{n=1}^{q})$.
    Since $(\bs{\phi}^{({k_n})}  \ff \tU_{:,{j_n}})^* \ff (\bs{\phi}^{({k_{n'}})}  \ff \tU_{:,{j_{n'}}}) \neq 0 $ if and only if $n =n'$ the elements in $\{ \bs{\phi}^{({k_n})}  \ff \tU_{:,{j_n}} \}_{n=1}^{q}$ are linearly independent, thus $\dim \operatorname{Image}_{\FFF} T = \dim \operatorname{span}(\{ \bs{\phi}^{({k_n})}  \ff \tU_{:,{j_n}} \}_{n=1}^{q}) = q$.
\end{proof}

\begin{lemma}
    Let $\tX = \tX^{(1)} \ff \tX^{(2)} \in \cHs^{m \xx p}$ where $\tX^{(1)} \in \cHs^{m \xx r}$ and $ \tX^{(2)} \in \cHs^{r \xx p} $. 
    Then $\rank_{\FFF,\ff} \tX \leq \min(\rank_{\FFF,\ff} \tX^{(1)},\rank_{\FFF,\ff} \tX^{(2)})$
\end{lemma}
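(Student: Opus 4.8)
The plan is to reduce this to the elementary fact that the rank of a composition of two $\FFF$-linear maps is bounded by the rank of each factor. First I would recall that, by \Cref{def:chs.linmap.rank} together with \Cref{lem:Rlin.iff.matmul}, the quantity $\rank_{\FFF,\ff}\tX$ is by definition the dimension $\dim_{\FFF}\operatorname{Image}_{\FFF} T_{\tX}$ of the image of the $\cHs$-linear (hence, by the linearity built into \Cref{lem:cHs.are.F.linear}, also $\FFF$-linear) operator $T_{\tX}\colon\cHs^p\to\cHs^m$ given by $\matY\mapsto\tX\ff\matY$, and similarly for $\tX^{(1)}$ and $\tX^{(2)}$. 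The crucial structural observation is that $T_{\tX}=T_{\tX^{(1)}}\circ T_{\tX^{(2)}}$: since the $\ff$-product of quasitubal tensors is just matrix multiplication over the commutative unital ring $\cHs$, it is associative, so $\tX\ff\matY=\tX^{(1)}\ff(\tX^{(2)}\ff\matY)=T_{\tX^{(1)}}(T_{\tX^{(2)}}(\matY))$ for every $\matY\in\cHs^p$.

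With this identity in hand, both bounds follow from viewing everything as vector spaces over $\FFF$. For the bound by $\rank_{\FFF,\ff}\tX^{(1)}$, I would note that every element of $\operatorname{Image}_{\FFF}T_{\tX}$ has the form $T_{\tX^{(1)}}(T_{\tX^{(2)}}(\matY))$ and hence lies in $\operatorname{Image}_{\FFF}T_{\tX^{(1)}}$; thus $\operatorname{Image}_{\FFF}T_{\tX}\subseteq\operatorname{Image}_{\FFF}T_{\tX^{(1)}}$, and comparing $\FFF$-dimensions gives $\rank_{\FFF,\ff}\tX\le\rank_{\FFF,\ff}\tX^{(1)}$. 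For the bound by $\rank_{\FFF,\ff}\tX^{(2)}$, I would write $\operatorname{Image}_{\FFF}T_{\tX}=T_{\tX^{(1)}}\bigl(\operatorname{Image}_{\FFF}T_{\tX^{(2)}}\bigr)$, the image of the subspace $\operatorname{Image}_{\FFF}T_{\tX^{(2)}}$ under the $\FFF$-linear map $T_{\tX^{(1)}}$; since a linear map cannot increase dimension, $\dim_{\FFF}\operatorname{Image}_{\FFF}T_{\tX}\le\dim_{\FFF}\operatorname{Image}_{\FFF}T_{\tX^{(2)}}=\rank_{\FFF,\ff}\tX^{(2)}$. Combining the two inequalities yields the asserted $\rank_{\FFF,\ff}\tX\le\min(\rank_{\FFF,\ff}\tX^{(1)},\rank_{\FFF,\ff}\tX^{(2)})$.

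There is essentially no hard analytic content here; the only points requiring care are bookkeeping rather than substance. The first is verifying that $\ff$ of tensors genuinely realizes composition of the associated operators, which is the associativity of matrix multiplication over the commutative unital ring $\cHs$ established in \Cref{lem:cHs.star.alg} and is precisely what \Cref{lem:Rlin.iff.matmul} is designed to encode. The second is that the image dimensions may be infinite (for instance $\rank_{\FFF,\ff}\tI=\infty$, as noted in the implicit-rank discussion), so I would phrase the dimension comparisons as inequalities between possibly infinite cardinals of $\FFF$-bases, for which both ``a subspace has dimension at most that of the ambient space'' and ``a linear image has dimension at most that of its domain'' remain valid. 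No appeal to the q-SVD, to positivity, or to any norm estimate is needed: once the composition identity $T_{\tX}=T_{\tX^{(1)}}\circ T_{\tX^{(2)}}$ is in place, the statement is purely a vector-space fact.
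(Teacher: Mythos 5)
Your proof is correct, but it follows a genuinely different route from the paper's. The paper works slice-wise in the transform domain: since $\thX_{:,:,k} = \thX^{(1)}_{:,:,k}\,\thX^{(2)}_{:,:,k}$ for every $k \in \ZZ$, the classical matrix inequality gives $\rank \thX_{:,:,k} \leq \min(\rank \thX^{(1)}_{:,:,k}, \rank \thX^{(2)}_{:,:,k})$, and summing over $k$ (using $\sum_k \min(\cdot,\cdot) \leq \min(\sum_k \cdot, \sum_k \cdot)$) yields the claim via the identification $\rank_{\FFF,\ff}\tX = \sum_{k\in\ZZ}\rank\thX_{:,:,k}$, which is the content of \Cref{lem:Frank.is.imprank} and ultimately rests on the q-SVD. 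Your argument instead stays entirely at the level of the definition in \Cref{def:chs.linmap.rank}: the factorization $T_{\tX}=T_{\tX^{(1)}}\circ T_{\tX^{(2)}}$ (associativity of matrix multiplication over the unital commutative ring $\cHs$, per \Cref{lem:cHs.star.alg} and \Cref{lem:Rlin.iff.matmul}) plus the two standard facts that a linear image sits inside the image of the outer map and that an $\FFF$-linear map cannot increase dimension, both of which survive infinite cardinalities as you note. What each approach buys: yours is more elementary and more general — it needs no q-SVD, no transform domain, and would hold verbatim for module homomorphisms over any commutative unital ring — whereas the paper's slice-wise argument is shorter given the machinery already in place and implicitly proves the finer, entrywise statement that the multi-rank of $\tX$ is dominated by the minimum of the factors' multi-ranks, information your global dimension count does not recover. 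Your handling of the $\FFF$-linearity point is also sound, though strictly speaking \Cref{lem:cHs.are.F.linear} concerns operators on $\cH$; for $T_{\tX^{(1)}}$ the cleaner justification is the one-line unital argument ($T(\alpha\matY)=T((\alpha\bs{e})\ff\matY)=(\alpha\bs{e})\ff T(\matY)=\alpha T(\matY)$), which is available precisely because $\cHs$, unlike $\cH$, has a unit.
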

\begin{proof}
    For all $k \in \ZZ$, the matrix rank of $\thX_{:,:,k}$ is bounded by $\min(\rank \thX^{(1)}_{:,:,k}, \rank \thX^{(2)}_{:,:,k})$. 
    We have 
    $\rank \thX = \sum_{k \in \ZZ} \rank \thX_{:,:,k} \leq \sum_{k \in \ZZ} \min(\rank \thX^{(1)}_{:,:,k}, \rank \thX^{(2)}_{:,:,k}) \leq \min(\sum_{k \in \ZZ} \rank \thX^{(1)}_{:,:,k}, \sum_{k \in \ZZ} \rank \thX^{(2)}_{:,:,k}) = \min(\rank_{\FFF,\ff} \tX^{(1)},\rank_{\FFF,\ff} \tX^{(2)})$.
\end{proof}

}\endgroup
\else
Commented out the construction of a module structure on $\cHs^{m \xx p}$
\fi

\newcommand*\ruleline[1]{\par\noindent\raisebox{.8ex}{\makebox[\linewidth]{\hrulefill\hspace{1ex}\raisebox{-.8ex}{#1}\hspace{1ex}\hrulefill}}}


\section{Optimal Finitely Representable Rank Truncations}\label{sec:finite.rank.approximation}

In this section we are interested in finding a best, finite, implicit rank approximation.
We prove our second Eckart-Young-like result, showing that explicitly truncating at a given rank budget, gives the optimal approximation with the specified implicit rank. 
Importantly, quasitubal tensors of finite implicit rank are finitely representable, i.e., these are finite dimensional tubal tensors embedded in $\cHs^{m \xx p}$ via inclusion. 
Therefore, explicit rank truncation are finitely representable as well. 

The problem of finding a best approximation for $\tX \in \cHs^{m \xx p}$ with rank up to $q$, is stated as 
\begin{equation}\label{eq:optprob.lowrank.approx}
    \text{minimize} \GNorm{\tX - \tY}_{\square} \text{ subjected to } \rank_{\FFF,\ff} \tY \leq q
\end{equation}
where $\GNorm{\cdot}_{\square}$ may be any norm defined on a subspace of $\cHs^{m \xx p}$ that contains $\tX$ and $\tY$.

For general $\tX \in \cHs^{m \xx p}$, the only guarantee is that $\opNorm{\tX}$ and the equivalent $\cHsNorm{\tX}$ are finite. 
In this case, best low-rank approximations, i.e., minimizers of \Cref{eq:optprob.lowrank.approx}, may be meaningless. 
For example, consider the identity operator $\bs{e} \in \cHs$. Then for any finite rank quasitube $\bs{y} = \sum_{k=1}^q \wh{y}_{t_k} \bs{\phi}^{(t_k)} \in \cHs$, write $P_{\bs{y}} = \sum_{k=1}^q \bs{\phi}^{(t_k)}$ and have that
\begin{align*}
    \opNorm{\bs{e} - \bs{y}}
    &= \opNorm{P_{\bs{y}} \ff \bs{e} - \bs{y} + (\bs{e} - P_{\bs{y}} ) \ff \bs{e} } \\
    &\geq \max(\opNorm{P_{\bs{y}} \ff \bs{e} - \bs{y}}, \opNorm{\bs{e} - P_{\bs{y}}}) \\
    &=  \max(\opNorm{P_{\bs{y}} \ff \bs{e} - \bs{y}}, 1)
\end{align*}
ergo, the minimum of $\opNorm{\bs{e} - \bs{y}}$ for any finite rank $\bs{y}$ is bounded from below by 1. We see that even when $q\to\infty$, we do not have a rank-$q$ approximation of $\bs{e}$ that is arbitrarily good.

For $\tX \in \cH^{m \xx p}$ we expect a finite residual $\cHNorm{\tX - \tY}$ for any $\tY $ with $\rank_{\FFF,\ff} \tY \leq q$. In particular, when $q\to\infty$, we can have a rank-$q$ approximation that is arbitrarily good, as the following theorem shows.

\begin{theorem}\label{thm:best.finite.approx.cH}
    Let $\tX \in \cH^{m \xx p}$, with a q-SVD $\tX = \tU \ff \tS \ff \tV^*$ and let $q \in \NN$ be a finite integer.
    Define the \textbf{explicit rank $q$ truncation} of $\tX$ as the quasitubal tensor
    \begin{equation}\label{def:eq.best.irank.q.trunk}
        \tX_{[q]} \coloneqq {\sum}_{n=1}^q \wh{s}^{(t_{n})}_{l_{n}} \bs{\phi}^{(t_{n})} \ff \tU_{:,l_{n}} \ff \tV_{:,l_{n}}^*
    \end{equation}
    where the mapping $ n \mapsto (l_{n},t_{n})$ is one-to-one mapping from $\NN$ onto a subset of $[p] \xx \ZZ$ such that
    \begin{equation}\label{prop:eq.descending.order}
        \wh{s}^{(t_{1})}_{l_{1}} \geq \wh{s}^{(t_{2})}_{l_{2}} \geq \cdots \geq \wh{s}^{(t_{q})}_{l_{q}} \geq \wh{s}^{(t_{q+1})}_{l_{q+1}} \geq \cdots \geq 0 \quad \text{with } \wh{s}^{(t)}_{l} \coloneqq \thS_{l,l,t}
    \end{equation}
    Then $\rank_{\FFF,\ff} \tX_{[q]} \leq q$.
    Moreover, it holds that 
    \begin{equation}\label{eq:state.best.irank}
        \forall \tY \in \cH^{m \xx p}, \rank_{\FFF,\ff} \tY \leq q \qquad \cHNorm{\tX - \tX_{[q]}} \leq \cHNorm{\tX - \tY} 
    \end{equation}
    That is, $\tX_{[q]}$ is a optimal implicit rank $q$ approximation of $\tX$ in $\cHNorm{\cdot}$.

    Importantly, $\lim_{q \to \infty} \opNorm{\tX - \tX_{[q]}}= \lim_{q \to \infty} \cHNorm{\tX - \tX_{[q]}} = 0$.
\end{theorem}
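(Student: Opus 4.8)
The plan is to push everything into the transform domain, where the $\cH$-norm decouples across frontal slices and the classical matrix Eckart--Young--Mirsky theorem applies slice by slice. Recall from the proof of \Cref{lemma:ffuni.norm.preserve} that for $\tX, \tY \in \cH^{m \xx p}$ one has $\cHNormS{\tX - \tY} = \sum_{k \in \ZZ} \FNormS{\thX_{:,:,k} - \thY_{:,:,k}}$, each summand being a finite $m \xx p$ matrix Frobenius norm. The rank constraint decouples in the same way: by \Cref{lem:Frank.is.imprank}, $\rank_{\FFF,\ff}\tY$ equals the implicit rank $\sum_{k\in\ZZ}\rank(\thY_{:,:,k})$. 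Together these turn the minimization in \Cref{eq:optprob.lowrank.approx} into a separable matrix approximation problem tied together only by a single budget constraint on the total rank. The first claim is then immediate: $\tX_{[q]}$ is a sum of $q$ rank-one terms each supported at a single index $(l_n,t_n)$ of the transform domain, so its implicit rank is at most $q$ and \Cref{lem:Frank.is.imprank} gives $\rank_{\FFF,\ff}\tX_{[q]} \le q$.

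For optimality I would fix any $\tY$ with $\rank_{\FFF,\ff}\tY \le q$, set $r_k \coloneqq \rank(\thY_{:,:,k})$ so that $\sum_k r_k \le q$, and apply matrix Eckart--Young--Mirsky in each slice to get $\FNormS{\thX_{:,:,k} - \thY_{:,:,k}} \ge \sum_{i > r_k}(\thS_{i,i,k})^2$. Summing over $k$ yields
\[\cHNormS{\tX - \tY} \ge \cHNormS{\tX} - \sum_{k\in\ZZ}\sum_{i \le r_k}(\thS_{i,i,k})^2.\]
The remaining, and principal, step is to show that the subtracted sum is largest for the globally top $q$ singular values. The point is that any feasible $\tY$ keeps, within slice $k$, a \emph{prefix} of the sorted values $\thS_{1,1,k} \ge \thS_{2,2,k} \ge \cdots$; thus the kept set is a selection of at most $q$ of the numbers $\{\thS_{l,l,t}\}$, and the sum of their squares is at most the sum of squares of the $q$ largest, namely $\sum_{n=1}^q \sigma_n^2$ with $\sigma_n = \wh{s}^{(t_n)}_{l_n}$ from \Cref{def:descending.order.seq}. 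Conversely, the global top-$q$ selection is itself feasible: if $\thS_{i,i,t}$ lies among the top $q$, then every $\thS_{i',i',t}$ with $i' < i$ is at least as large and hence also among the top $q$ under the lexicographic tie-breaking of \Cref{def:descending.order.seq}, so the kept entries genuinely form a per-slice prefix. Since $\tX_{[q]}$ realizes exactly this selection, $\cHNormS{\tX - \tX_{[q]}} = \cHNormS{\tX} - \sum_{n=1}^q\sigma_n^2$, and the displayed bound gives $\cHNorm{\tX - \tX_{[q]}} \le \cHNorm{\tX - \tY}$.

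The two limits then fall out of the ordering. For the $\cH$-norm, $\cHNormS{\tX - \tX_{[q]}} = \sum_{n>q}\sigma_n^2$ is the tail of the convergent series $\sum_n \sigma_n^2 = \cHNormS{\tX} < \infty$, which forces it to $0$. For the operator norm, \Cref{lem:qtopnorm.and.linf2norm} gives $\opNorm{\tX - \tX_{[q]}} = \sup_{k} \TNorm{\thX_{:,:,k} - \wh{\tX_{[q]}}_{:,:,k}}$, which equals the largest discarded value $\sigma_{q+1}$; \Cref{lemma:ordering.arbitrary.small} sends this to $0$. I expect the combinatorial optimality step --- simultaneously proving that the greedy global top-$q$ choice respects the per-slice prefix constraint and dominates every other allocation over a countable, infinitely-sliced index set, and that this top-$q$ set is even well defined --- to be the main obstacle. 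This is exactly where the no-accumulation-point facts of \Cref{lem:sigmaset.X.in.H.has.no.acc,lemma:ordering.arbitrary.small} are essential, since without them a supremum of singular values over an infinite index set need not be attained and the greedy selection could fail to exist.
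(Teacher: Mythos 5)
Your proposal is correct and follows essentially the same route as the paper's proof: pass to the transform domain where both the $\cH$-norm and the implicit rank decouple across frontal slices, apply the matrix Eckart--Young theorem slice by slice, and finish with the combinatorial observation that any selection of at most $q$ squared singular values is dominated by the global top-$q$ selection, which is well defined precisely because of \Cref{lem:sigmaset.X.in.H.has.no.acc} and \Cref{lemma:ordering.arbitrary.small}. The only difference is organizational: the paper first pads $\tY$ to rank exactly $q$ via an orthonormal basis adapted to its q-SVD and then compares against an intermediate per-slice truncation $\wt{\tY}$ through an indicator-function exchange argument, whereas your single inequality chain $\cHNormS{\tX - \tY} \geq \cHNormS{\tX} - \sum_{n=1}^{q}\sigma_n^2 = \cHNormS{\tX - \tX_{[q]}}$ reaches the same conclusion more directly.
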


Before proving \Cref{thm:best.finite.approx.cH}, recall the statement above about the finite representability of quasitubal tensors whose implicit rank is finite.
Given $\tX \in \cH^{m \xx p}$ with $\rank_{\FFF,\ff} \tX = q$, \Cref{thm:best.finite.approx.cH} shows that $\tX = \sum_{n=1}^q \wh{s}_{l_n}^{(t_n)} \bs{\phi}^{(t_{n})} \ff \tU_{:,l_n} \ff \tV_{:,l_n}^*$.
Note that since $\{ t_n \}_{n=1}^q$ is finite set of integers, it must be bounded, and there are integers $b,B$ such that   $-b \leq t_n \leq B $ for all $n=1,\ldots,q$.
Therefore, there exists a \textbf{finite dimensional, tubal} tensor $\wh{\tA} \in \FFF^{m \xx p}_{B+b+1}$ such that $\thX \in \ellinf^{m \xx p}$ is obtained by embedding  $\wh{\tA}$ in $\ellinf^{m \xx p}$ 
using a trivial inclusion map: first, by padding both sides of the frontal axis with an infinite sequence of zeros, then shifting them by  $-b$.
An appealing consequence of this property is that, in applied contexts, finite-rank truncations can be computed, expressed, stored, and manipulated.

\begin{proof}
    The mapping $n \mapsto (l_n,t_n)$ given in \Cref{def:descending.order.seq} is constructed such that \Cref{prop:eq.descending.order} holds and we have 
    $$
    \tX = {\sum}_{n=1}^{\infty} \wh{\sigma}_{n} \bs{\phi}^{(t_n)} \ff \tU_{:,l_n} \ff \tV_{:, l_n}^*
    $$
    with $\wh{\sigma}_{n} = \wh{s}_{l_n}^{(t_n)} = \wh{\tS}_{l_n,l_n,t_n}$ being non-negative reals  such that $\wh{\sigma}_{1} \geq \wh{\sigma}_{2} \geq \cdots$.


    Let $Q$ denote (the possibly infinite quantity) $\rank_{\FFF, \ff} \tX$.
    For any $q \geq Q$ we have $\tX_{[q]} = \tX $ which is the best possible approximation. 
    Otherwise, 
    let $\tY \in \cH^{m \xx p}$ be a quasitubal tensor with $\rank_{\FFF, \ff} \tY = q' \leq q$, and write 
    \begin{equation}\label{eq:tenY.blr.proof1}
        \tY = \sum_{k=1}^{q'} \wh{g}_{\lambda_k}^{(\tau_k)} \bs{\phi}^{(\tau_k)} \ff \tens{R}_{:,\lambda_k} \ff \tW_{:,\lambda_k}^*
    \end{equation}
    where $\tY  = \tens{R} \ff \tens{G} \ff \tens{W}^*$ is the full q-SVD of $\tY$.
    
    Since $\tens{R}$ and $ \tens{W}$ are $\ff$-unitary we have that $\{ \tens{R}_{:,j} \ff \tens{W}_{:,j'}^* ~|~  j \in [m], j' \in [p] \}$ is a unit normalized $\ff$-orthogonal basis in $\cHs^{m \xx p}$,
    therefore, it is clear that 
    $\{ \bs{\phi}^{(k)} \ff  \tens{R}_{:,j} \ff \tens{W}_{:,j'}^* ~|~ k \in \ZZ, j \in [m], j' \in [p] \}$
    is an orthonormal basis for the Hilbert space $\cH^{m \xx p}$ considered
    with the inner product defined in \Cref{eq:def:chmp.hilbert.inner.product}

    For  $k = 1,\ldots,q'$, denote the $k$-th rank-1 quasitubal tensor in \Cref{eq:tenY.blr.proof1}  by 
    $\tB^{(k)} = \bs{\phi}^{(\tau_k)} \ff \tens{R}_{:,\lambda_k} \ff \tW_{:,\lambda_k}^* \in \cH^{m \xx p}$.
    Consider the mapping $T_{\tY} (\tZ) = \tY \ff \tZ$ for all $\tZ \in  \cHs^{p \xx p} $. 
    Since $\cH$ is a *-ideal in $\cHs$ we get that 
    $T_{\tY} \colon \cHs^{p \xx p} \to \cH^{m \xx p}$ is a $\cHs$-linear, thus in particular linear.
    Note that $\{ \tB^{(k)}  \}_{k=1}^{q'}$ is an orthonormal basis for the image of $T_{\tY}$ in $\cH^{m \xx p}$, and let  
    $\{ \tB^{(k)}  \}_{k=q'+1}^{\infty}$ be be an orthonormal basis for the orthogonal complement of the image of $T_{\tY}$ in $\cH^{m \xx p}$. 
    Then, for any $\tA \in \cH^{m \xx p}$ it is possible to write $\tA = \sum_{k=1}^{\infty} \alpha_k \tB^{(k)}$ where $\alpha_k \in \FFF$.

    Write $\tX = \sum_{k=1}^{\infty} \alpha_k \tB^{(k)}$ then
    \begin{align*}
        \cHNormS{\tX - \tY }
        &=\cHNormS{{\sum}_{k= 1}^{q'} (\alpha_k - \wh{g}_{\lambda_k}^{(\tau_k)} ) \tB^{(k)} + {\sum}_{k= q'+ 1}^{\infty} \alpha_k \tB^{(k)}  } \\
        &= {\sum}_{k= 1}^{q'} |\alpha_k - \wh{g}_{\lambda_k}^{(\tau_k)} |^2 + {\sum}_{k= q' + 1}^{\infty} |\alpha_k|^2 
    \end{align*}

    Observe that if $q'<q$, then the quasitubal tensor $\wt{\tY} = \tY + \alpha_{q'+1}  \tB^{(q'+1)}$ is of rank at most $q$ and that
    \begin{align*}
        \cHNormS{\tX - \wt{\tY} }
        &= {\sum}_{k= 1}^{q'} |\alpha_k - \wh{g}_{\lambda_k}^{(\tau_k)} |^2 + {\sum}_{k= q' + 2}^{\infty} |\alpha_k|^2 \\
        &\leq \cHNormS{\tX - \tY }
    \end{align*}
    therefore $\wt{\tY}$ exhibits smaller approximation error.

    Suppose, $q'=\rank_{\FFF, \ff} \tY=q$ and let $\rrho$ denote the multi-rank of $\tY$. 
    There are at most $q$ frontal slices such that $\thY_{:,:,k} \neq 0$.
    Let $R = \{ k_1,\ldots,k_t \} \subset \ZZ$ denote the set of nonzero frontal slices of $\thY$.
    Then for all $n \in R$ we have $\cHNormS{\thX_{:,:,n} - \thY_{:,:,n}} \geq \cHNormS{\thX_{:,:,n} - [\thX_{:,:,n}]_{\rho_{n}} }$ where $[\thX_{:,:,n}]_{\rho_{n}}$ denotes the optimal  rank $\rho_{n}$ approximation of the matrix $\thX_{:,:,n}$. 
    
    Define $\wt{\tY} = \sum_{n\in R}  (\bs{\phi}^{(n)} \ff \tX)_{[\rho_{n}]}$ then 
    \begin{align*}
        \cHNormS{\tX - \wt{\tY}} 
        &= \sum_{n \in R} \cHNormS{\thX_{:,:,n} - [\thX_{:,:,n}]_{\rho_{n}}} + \sum_{n \notin R} \cHNormS{\thX_{:,:,n}} \\
        &\leq \sum_{n \in R} \cHNormS{\thX_{:,:,n} - \thY_{:,:,n}} + \sum_{n \notin R} \cHNormS{\thX_{:,:,n}} \\
        &= \cHNormS{\tX - \tY}
    \end{align*}
    and note that 
    \begin{align*}
        \cHNormS{\tX - \wt{\tY}}
        &= {\sum}_{k \in \ZZ} {\sum}_{j=1}^{\min(m,p)}  (1 - \upchi_{_{R}}(k) \upchi_{[\rho_k]}(j)) |\wh{s}_{j}^{(k)}|^2 \\
        &= {\sum}_{n=1}^{\infty} (1 - \upchi_{_{R}}(t_n) \upchi_{[\rho_{t_n}]}(l_n))  |\wh{s}_{l_n}^{(t_n)}|^2
    \end{align*}
    Let $N_q \subset \NN$ denote the set of numbers $n\leq q$ such that $\upchi_{_{R}}(t_n) \upchi_{[\rho_{t_n}]}(l_n) = 1$. 
    For each $n \in [q] \smallsetminus N_q$ there exists $n' > q$ such that $\upchi_{_{R}}(t_{n'}) \upchi_{[\rho_{t_{n'}}]}(l_{n'}) = 1$ and since $\wh{s}_{l_n}^{(t_n)} \geq  \wh{s}_{l_{n'}}^{(t_{n'})}$ we have 
    \begin{align*}
        \cHNormS{\tX - \wt{\tY}}
        &= {\sum}_{n=1}^{\infty}   |\wh{s}_{l_n}^{(t_n)}|^2
        - {\sum}_{n \in N_q} |\wh{s}_{l_n}^{(t_n)}|^2 
        -{\sum}_{n \notin N_q} \upchi_{_{R}}(t_n) \upchi_{[\rho_{t_n}]}(l_n) |\wh{s}_{l_n}^{(t_n)}|^2 \\
        &\geq {\sum}_{n=1}^{\infty}   |\wh{s}_{l_n}^{(t_n)}|^2
        - {\sum}_{n \in N_q} |\wh{s}_{l_n}^{(t_n)}|^2 
        -{\sum}_{n = 1}^q  (1 - \upchi_{_{N_q}}(n)) |\wh{s}_{l_n}^{(t_n)}|^2 \\
        &= {\sum}_{n=1}^{\infty}   |\wh{s}_{l_n}^{(t_n)}|^2 - {\sum}_{n = 1}^q |\wh{s}_{l_n}^{(t_n)}|^2 
        = \cHNormS{\tX - \tX_{[q]}} 
    \end{align*}
    So, we have shown that $\cHNormS{\tX - \tY} \geq \cHNormS{\tX - \tX_{[q]}}$ as desired.
    
    Let $\varepsilon > 0$, then by \Cref{lemma:ordering.arbitrary.small} there exists an $N \in \NN$ such that for all $n > N$ we have $\wh{s}_{l_n}^{(t_n)} \leq \varepsilon$ then
    \begin{align*}
        \opNorm{\tX - \tX_{[q]}} 
        &= \opNorm{\tU \ff \sum_{n=q+1}^{\infty} \wh{s}_{l_n}^{(t_n)} \bs{\phi}^{(t_n)} \ff \bs{\Phi}_{(m)}^{(l_n)} \ff \bs{\Phi}_{(p)}^{(l_n)}{}^* \ff \tV^*} \\
        &= \opNorm{\sum_{n=q+1}^{\infty} \wh{s}_{l_n}^{(t_n)} \bs{\phi}^{(t_n)} \ff \bs{\Phi}_{(m)}^{(l_n)} \ff \bs{\Phi}_{(p)}^{(l_n)}{}^*} \\ 
        &= \sup_{n > q} \wh{s}_{l_n}^{(t_n)}
    \end{align*}
    therefore $\opNorm{\tX - \tX_{[q]}} \leq \varepsilon$.
    This shows that $\lim_{q \to \infty} \opNorm{\tX - \tX_{[q]}} = 0 $. 
    Next, write
    \begin{align*}
        \cHNormS{\tX - \tX_{[q]}} 
        &= \sum_{n=q+1}^{\infty} |\wh{s}_{l_n}^{(t_n)}|^2 \\
        &= \cHNormS{\tS} - \sum_{n=1}^{q} |\wh{s}_{l_n}^{(t_n)}|^2
    \end{align*}
    then since $\lim_{q \to \infty }\sum_{n=1}^{q} |\wh{s}_{l_n}^{(t_n)}|^2 = \cHNormS{\tS}$ it is clear that $\lim_{q \to \infty}\cHNorm{\tX - \tX_{[q]}} = 0$.

\end{proof}

\section{Computational Point of View}
Note the obvious fact that no computer is capable of explicitly representing or storing  infinite dimensional objects. 
Yet, many (routine) computations involve infinite dimensional objects. 
Such tasks are attainable whenever an object can be approximated by representable (finite) approximations to an arbitrary precision, thus hopefully keeping the error of the resulting computational task bellow some desired error budget.

For quasitubal tensors over separable Hilbert space, \Cref{thm:best.finite.approx.cH} provides just that, since given $\tX \in \cH^{m \xx p}$,  $\{ \tX_{[q]} \}_{q \in \NN}$ converges to $\tX$ uniformly, with the fastest rate of convergence achieved for a sequence whose $k$'th element's rank is bounded by $k$. 
Unfortunately, the existence of a best rank $q$ approximation for a quasitubal tensor over separable Hilbert space given by~\Cref{thm:best.finite.approx.cH} does not imply that such approximation is easy to compute.

Given $\tX \in \cH^{m \xx p}$ and a target rank $q \in \NN$, worst case bounds on the number of arithmetic operations (e.g., flops) involved in computing $\tX_{[q]}$ are subjected to the context and premise in which we operate.
In the infinite dimensional case, we have no way to store entire transform domain images, so one must assume that $\thX$ is given as a mapping $k \mapsto \thX_{:,:,k}$ rather than an array of matrices which can be stored in memory (or disk). 
Hence, computing the transform domain SVD for all frontal slices is impossible.

In cases where the operations $\matZ \mapsto \tX \ff \matZ$ and $\matY \mapsto \tX^* \ff \matY$ are computationally inexpensive, then it might be worthwhile to directly apply iterative methods in order to compute the $q$ leading components. 

When the evaluation of $\thX_{:,:,k}$ is significantly more efficient than $\tX \ff \matY$ for arbitrary $\matY \in \cH^p$, it is possible to find a ``bandlimited'' approximation of $\tX$ that shares the $q$ leading components using the following procedure.

Define $\thY^{(1)} \in \ellinf^{m \xx p}$ such that $\thY^{(1)}_{:,:,k} = \sum_{k=-B_1}^{B_1} \bs{e}^{(k)} \odot \thX_{:,:,k}$ then $\bs{\phi}^{(k)} \ff \tY^{(1)} = \bs{\phi}^{(k)} \ff \tX$ for all $k=-B_1,\ldots,B_1$ and zero otherwise, where $B_1 \in \NN$ is such that
$\opNorm{\tY^{(1)}}^2 > \cHNormS{\tX} - \cHNormS{\tY^{(1)}}  $.
Let $k_1 \in \ZZ$ be (the unknown) integer such that $k_1 = \argmax_{k \in \ZZ} \TNorm{\thX_{:,:,k}} $, that is, $\TNorm{\thX_{:,:,k_1}} = \opNorm{\tX}$.
Note that 
\begin{align*}
    \cHNormS{\tX} - \cHNormS{\tY^{(1)}} 
    &= \sum_{k \in \ZZ} \FNormS{\thX_{:,:,k}} - \sum_{k = -B_1}^{B_1} \FNormS{\thX_{:,:,k}} 
    = \sum_{k \in \ZZ}  \FNormS{(1 - \chi_{|k| \leq B_1}(k)) \thX_{:,:,k}}  \\
    &= \sum_{k \in \ZZ}  \FNormS{\thX_{:,:,k} - \chi_{|k| \leq B_1}(k) \thX_{:,:,k}}  
    = \sum_{k \in \ZZ}  \FNormS{\thX_{:,:,k} - \wh{\tY}^{(1)}_{:,:,k}}  \\
    &\geq  \sum_{k \in \ZZ} \GNorm{\thX_{:,:,k} - \wh{\tY}^{(1)}_{:,:,k}}_2^2 
    \geq \sup_{k \in \ZZ} \GNorm{\thX_{:,:,k} - \wh{\tY}^{(1)}_{:,:,k}}_2^2  \\
    &= \opNormS{\thX - \thY^{(1)}} = \opNormS{\tX - \tY^{(1)}}
\end{align*}
Since $\opNorm{\tY^{(1)}}^2 > \cHNormS{\tX} - \cHNormS{\tY^{(1)}}$ we get that  $\opNorm{\tY^{(1)}} > \opNorm{\tX - \tY^{(1)}}$ holds, i.e., $\opNorm{\tX} \geq  \opNorm{\tY^{(1)}} > \max_{|k| > B_1} \TNorm{\thX_{:,:,k}}$, hence $k_1 \leq B_1$ and we have $\tY^{(1)}_{[1]} = \tX_{[1]}$.

For $j = 2,3,\ldots,q$, define $\tX^{(j)} = \tX - \tX_{[j-1]}$, and $\thY^{(j)} \in \ellinf^{m \xx p}$ such that $\thY^{(j)}_{:,:,k} = \thX^{(j)}_{:,:,k}$ for all $k=-B_j,\ldots,B_j$ where $B_j \in \NN$ is such that
$\opNorm{\tY^{(j)}}^2 > \cHNormS{\tX^{(j)}} - \cHNormS{\tY^{(j)}}$, following the explanation above, $\tY^{(j)}_{[1]} = \tX^{(j)}_{[1]} $, thus $\tX_{[j]} = \tX_{[j-1]} + \tY^{(j)}_{[1]}$.

\subsection{Numerical Demonstration}
As a numerical illustration, we consider an instance of the problem of approximating a family of curves in $\RR^{20}$ by a finite dimensional tubal tensor.

We start with a base curve $\bs{\gamma} \colon [-1,1] \to \RR^3$ given by
$$
\bs{\gamma}(t) = \begin{bmatrix}
\cos(\tau_1 + 3t) / (1.5 - 0.5\cos(2t)) \\
\sin(\tau_2 + 2t) / (1.5 - 0.5\cos(3t)) \\
\cos(\tau_3 + 2t)
\end{bmatrix}
$$
where $\tau_1, \tau_2, \tau_3 \in \RR$ are randomly chosen parameters. We then generate two families of curves, $\bs{x}^{(1)},\bs{x}^{(2)}$ by applying the following transformations to $\bs{\gamma}$:
\begin{align*}
\bs{x}^{(1)}_j(t) &= \mat{W}^{(1)}_2 (\mat{W}^{(1)}_1 \bs{\gamma}(t) + \mat{a}_j ) +  \mat{b}_j \\
\bs{x}^{(2)}_j(t) &= \mat{W}^{(2)}_2 (\mat{W}^{(2)}_1 \bs{\gamma}(t) + \mat{a}_j) +  \mat{b}_j
\end{align*}
where the matrices $\mat{W}^{(1)}_1, \mat{W}^{(2)}_1 \in \RR^{200 \times 3}$ and $\mat{W}^{(1)}_2, \mat{W}^{(2)}_2 \in \RR^{20 \times 200}$ are randomly generated and are consistent across all $j$. 
The vectors $\mat{a}_j \in \RR^{200}$ and $\mat{b}_j \in \RR^{20}$ are randomly generated for each $j$. 

We generate 40 random instances of $\bs{x}^{(1)}$ and $\bs{x}^{(2)}$ and stack them into a $80 \xx 20$ quasitubal tensor $\tX$.
\Cref{fig:exp.3dproj,fig:exp.mpcurves} illustrate the structure of the original families of curves.

\begin{figure}[H]
    \centering
    \subcaptionbox{\label{fig:exp.3dproj}}[3in]
    {\includegraphics[width=2in,height=2in]{./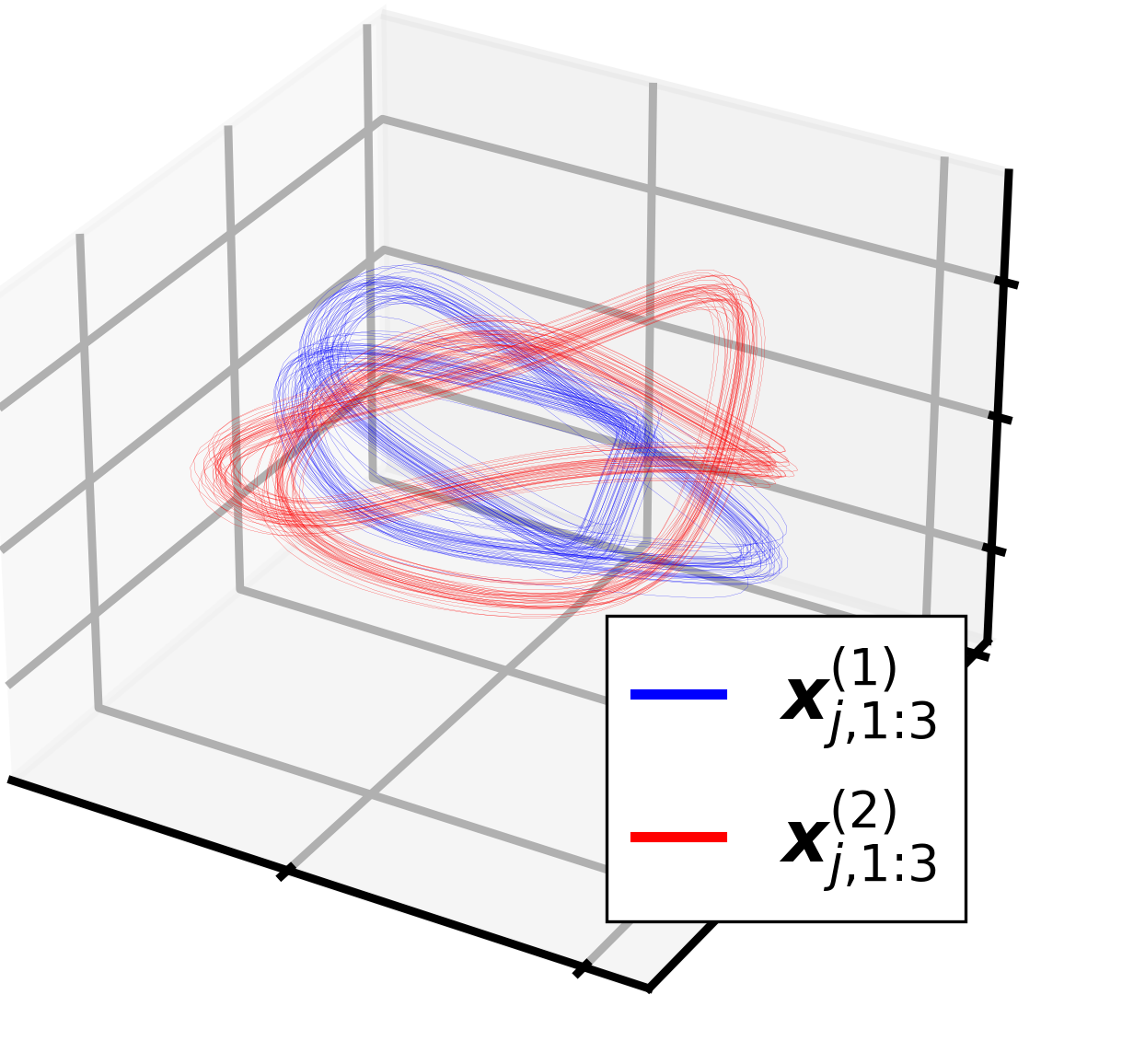}\vspace{-0.5cm}}
    \subcaptionbox{\label{fig:exp.mpcurves}}[3in]
    {\includegraphics[width=2in,height=2in]{./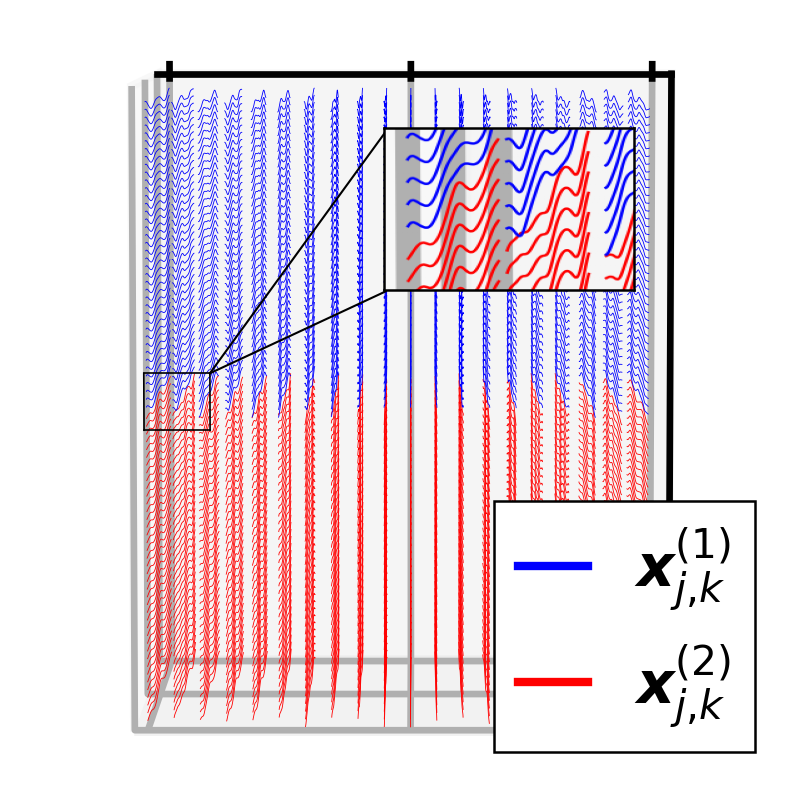}\vspace{-0.5cm}}
    \caption{Description of the original families of multidimensional curves. \Cref{fig:exp.3dproj} shows the trajectory of the first 3 coordinates of the each curve. \Cref{fig:exp.mpcurves} represents each curve as a horizontal collection of 20 ``inward'' facing, 1d functions hence highlighting the `matrix of tubes' structure. Inset shows a close-up view of the first two components of selected representative functions in each of the two curve families. In both graphs, each curve is a single quasitube $\tX_{j,k}$ for $j=1,\ldots,80$ and $k=1,\ldots,20$.}
    \label{fig:illustration.curve}
\end{figure}

The univariate components in $\bs{\gamma}$ are continuous on the compact $[-1,1]$, as a result, the entries $\bs{x}_{hj}$ that are defined by (pointwise) linear combinations of $\bs{\gamma}$-components,  are continuous on $[-1,1]$ and as such are square integrable. 
Therefore we assume that $\tX \in \cH^{80 \xx 20}$, where $\cH$ is a Hilbert space of square integrable functions on $[-1,1]$ endowed with some inner product.



Taking $\cH$ to be the space of square integrable functions equipped with $\dotph{f,g} = \frac{1}{2}\int_{-1}^{1} \bar{f} g $ and the convolution operation as binary multiplication, determines the orthonormal basis $F = \{ \bs{\phi}^{(k)} \}_{k \in \ZZ}  $ where $\bs{\phi}^{(k)} (t) = \exp(i \pi k t) $.
Consequently, the $F$-transform is a mapping between a function $f \in \cH$ and the coefficients in its Fourier series. 
In this particular case, the univariate components in $\bs{\gamma}$ are smooth and periodic functions, thus, the entries of $\tX$ which are (pointwise) linear combinations of $\bs{\gamma}$ are also smooth periodic functions, hence the decrease of Fourier coefficients is expected to be rapid. 
However, for general, not neccessarily periodic, $C^\infty$ functions, this particular construction is prone to exhibit some undesired phenomena (e.g., Gibbs), thus we turn in our demonstration to a less restrictive choice, that is, Chebychev polynomials.


Recall that the Chebyshev polynomials (of the first kind) are defined as $T_{n+1}(x) = 2x T_n(x) - T_{n-1}(x)$ with $T_0(x) = 1, T_1(x) = x$. 
A well-known property of Chebyshev polynomials is that they are orthogonal with respect to the weight function ${t} \mapsto 1/\sqrt{1-t^2}$ on $[-1,1]$, that is $\dotph{T_k,T_j} = \int_{-1}^{1} T_k(t) T_j(t) / \sqrt{1-t^2} \, dt = 0$ for $k \neq j$.
Using the normalization factors $w_0 = \sqrt{\pi}$ and $w_k = \sqrt{\pi/2}$ for all $k \geq 1$, we have $\dotph{T_k/ w_k, T_j/ w_j} = \delta_{kj}$, hence the weighted Chebyshev polynomials form an orthonormal basis for square integrable functions on $[-1,1]$ with respect to the inner product $\dotph{f,g} = \int_{-1}^{1} f(t) g(t) / \sqrt{1-t^2} \, dt$, hence we define $\bs{\phi}^{(n)}(t) = T_n(t) / w_n$ for $n \in \NN$.
Standard, high-level software packages for numerical computing such as Chebfun~\cite{Battles2004} (for MATLAB), or the Python's numeric computing echo-system \cite{Harris2020,Virtanen2020}, provide efficient implementations for working with Chebyshev polynomials.
In this case, we used numpy's chebfit function for computing the coefficients in a Chebyshev polynomial expansion of a function given its values at sample points (which we set as the Chebyshev nodes).

Recall that $\cHNormS{\tX} = \sum_{h=1}^m \sum_{j=1}^p \cHNormS{\bs{x}_{hj}}$. 
$$
\cHNormS{\tX} = \ltNormS{\wh{\tX}} = {\sum}_{n=1}^\infty \FNormS{\wh{\tX}_{:,:,n}}.
$$
Therefore, estimation of the required number of frontal slices needed to approximate $\tX$ with relative error bounded by $\varepsilon$ amounts to finding the minimal $N$ such that 
$$
\frac{ \cHNorm{\tX} -  \sqrt{{\sum}_{n=1}^N \ltNormS{\wh{\tX}_{:,:,n}}}}{ \cHNorm{\tX}} \leq \varepsilon
$$

In our experiment, the setup allows for applying symbolic routines (and even analytic methods) for computing the exact value of the integrals $\cHNormS{\bs{x}_{hj}}$. 
Nevertheless, we have used a common, general purpose implementation (quad function in scipy) with setting both the absolute and relative epsilon parameters to $10^{-12}$, to simulate real life situations where access to analytic expressions of the functions may not be available.
Therefore, we do not expect our estimation of $\cHNormS{\tX}$ to be perfect, and as a consequence, the error of the low rank approximation of $\tX$, that is measured relatively to the estimation of $\cHNormS{\tX}$ based on this numeric integration,  is not expected to be smaller than $10^{-12}$.

\Cref{fig:scre} shows that most of the energy is captured by the first 100 frontal slices, and the improvement stops at around 100 frontal slices. 
We see that our relative error does not drop below $10^{-13}$, as expected, due to the possible errors in the numerical integration scheme.
Computing the qSVD of $\tX$ with respect to first 80 Chebyshev polynomials is equivalent to computing the facewise tubal SVD of a finite tubal tensor  $\wt{\tX} \in \RR^{80 \xx 20 \xx 80}$, whose implicit rank is (trivially) bounded by $20 \cdot 80 $. 

In \Cref{fig:scre.rank1}, we see that the number of rank-1 components needed for  reconstruction  of the signal is no more than 180, thus, the implicit rank is significantly lower than the trivial upper bound.

\begin{figure}[H]
    \centering
    \subcaptionbox{\label{fig:scre}}{\includegraphics[height=2in]{./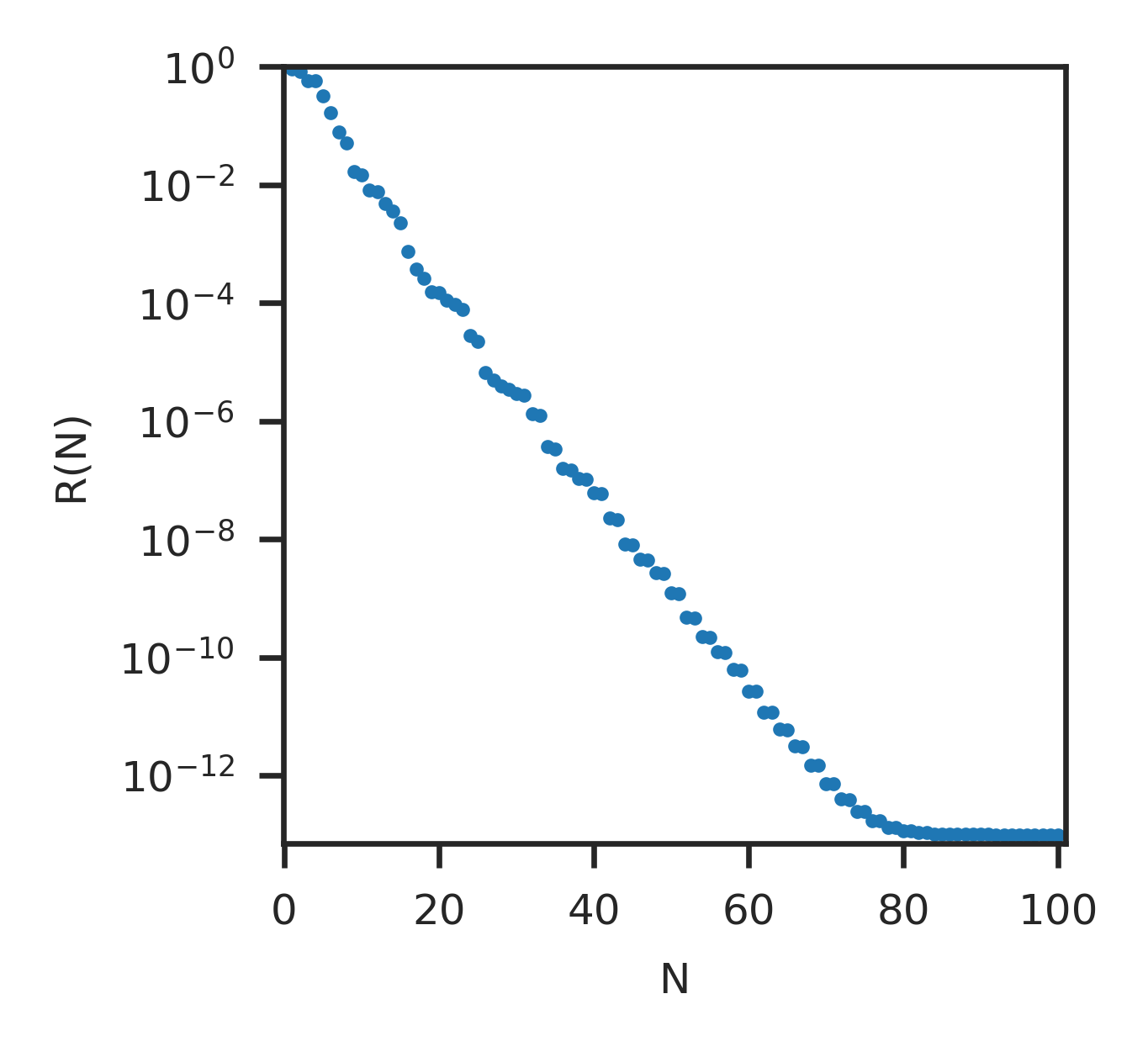}}\hfill
    \subcaptionbox{\label{fig:scre.rank1}}{\includegraphics[height=2in]{./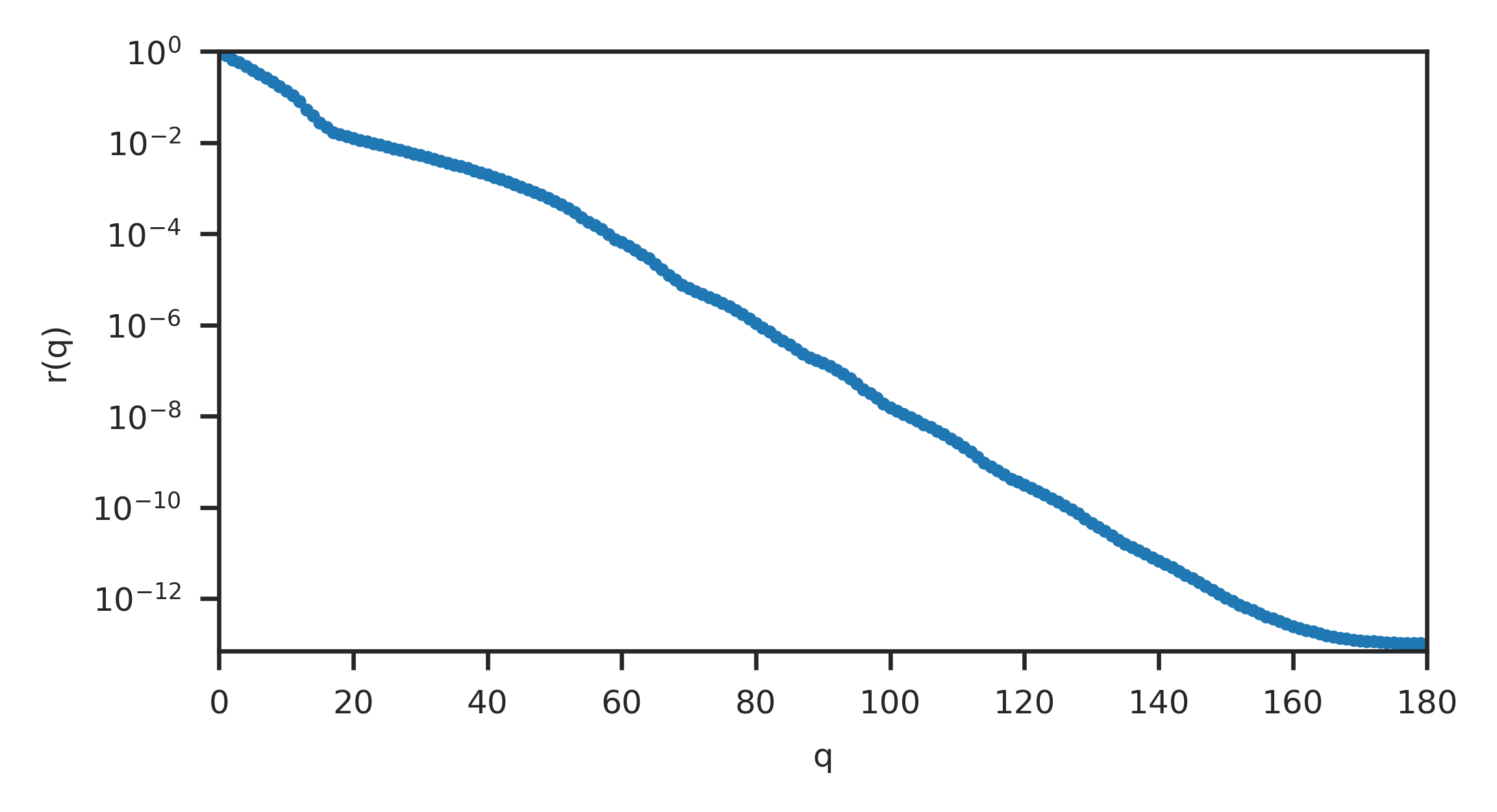}}
    
    \caption{Approximation quality as a function of truncation size. \Cref{fig:scre} depicts the error obtained by truncation of the signal to the first $N$ frontal slices, relative to the original signal, $R(N) = (\cHNormS{\tX} - \FNormS{\thX_{:,:,1:N}}) / \cHNormS{\tX}$.
    \Cref{fig:scre.rank1} describes the relative approximation error of the best, rank-$q$ truncation,  $r(q) = (\cHNormS{\tX} - \FNormS{\thX_{[q]}}) / \cHNormS{\tX}$.}\label{fig:scre.rec}
\end{figure}

Visual inspection of the reconstruction affirms this the case, as we clearly see that best approximations of rank 10 \Cref{fig:rec10}  and 20 \Cref{fig:rec20}, corresponding to relative approximation error greater than $10^{-1}$ and $10^{-2}$ respectively \Cref{fig:scre.rank1},  do not produce a reasonable reconstruction of the signal (not even periodic), whereas rank-160 approximation \Cref{fig:rec.all} for which the relative error is around $10^{-12}$ result in a visually satisfactory reconstruction.   

\begin{figure}[H]
    \centering
    \subcaptionbox{\label{fig:rec10}}{\includegraphics[height=1.9in]{./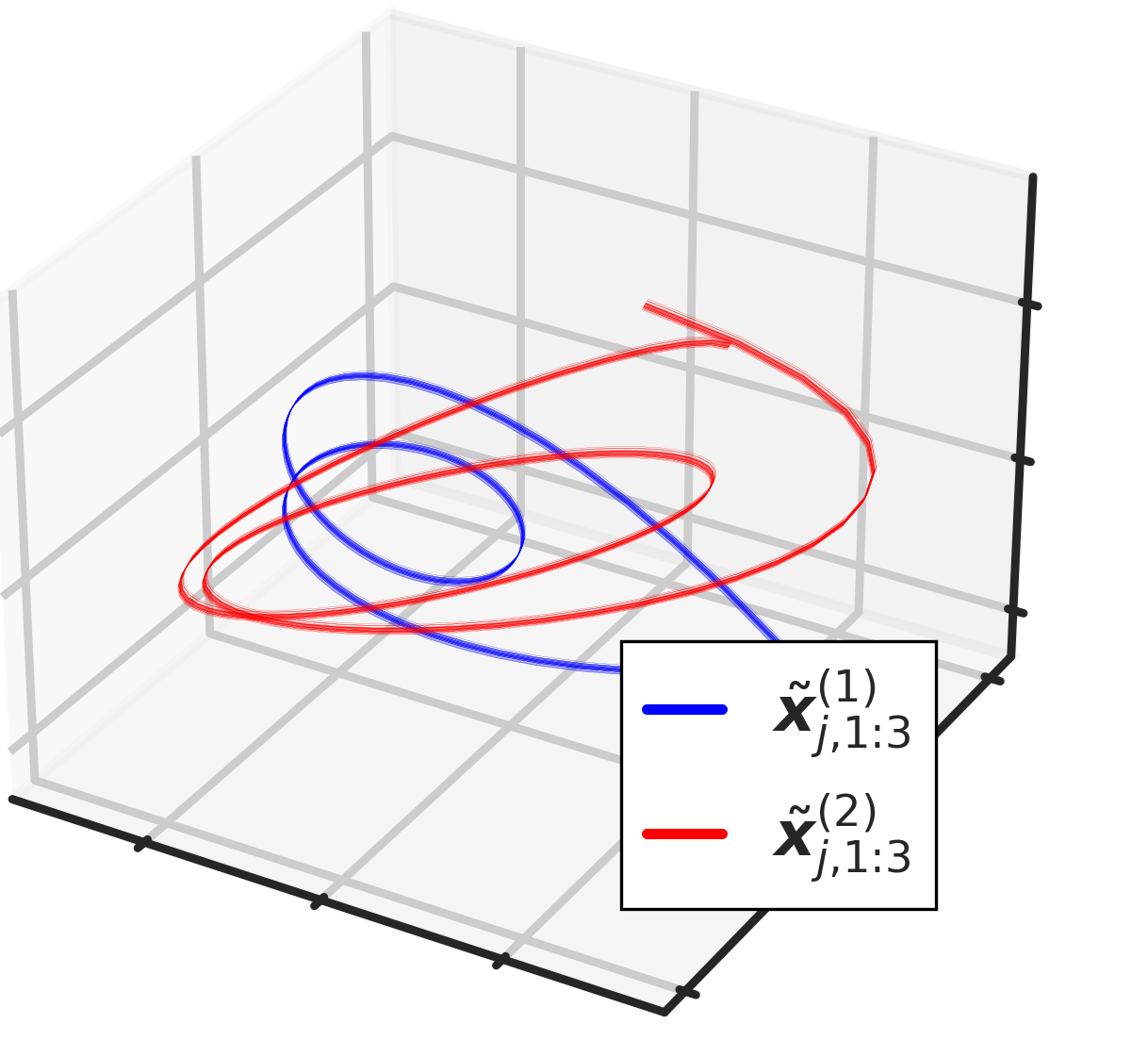}}\hfill
    \subcaptionbox{\label{fig:rec20}}{\includegraphics[height=1.9in]{./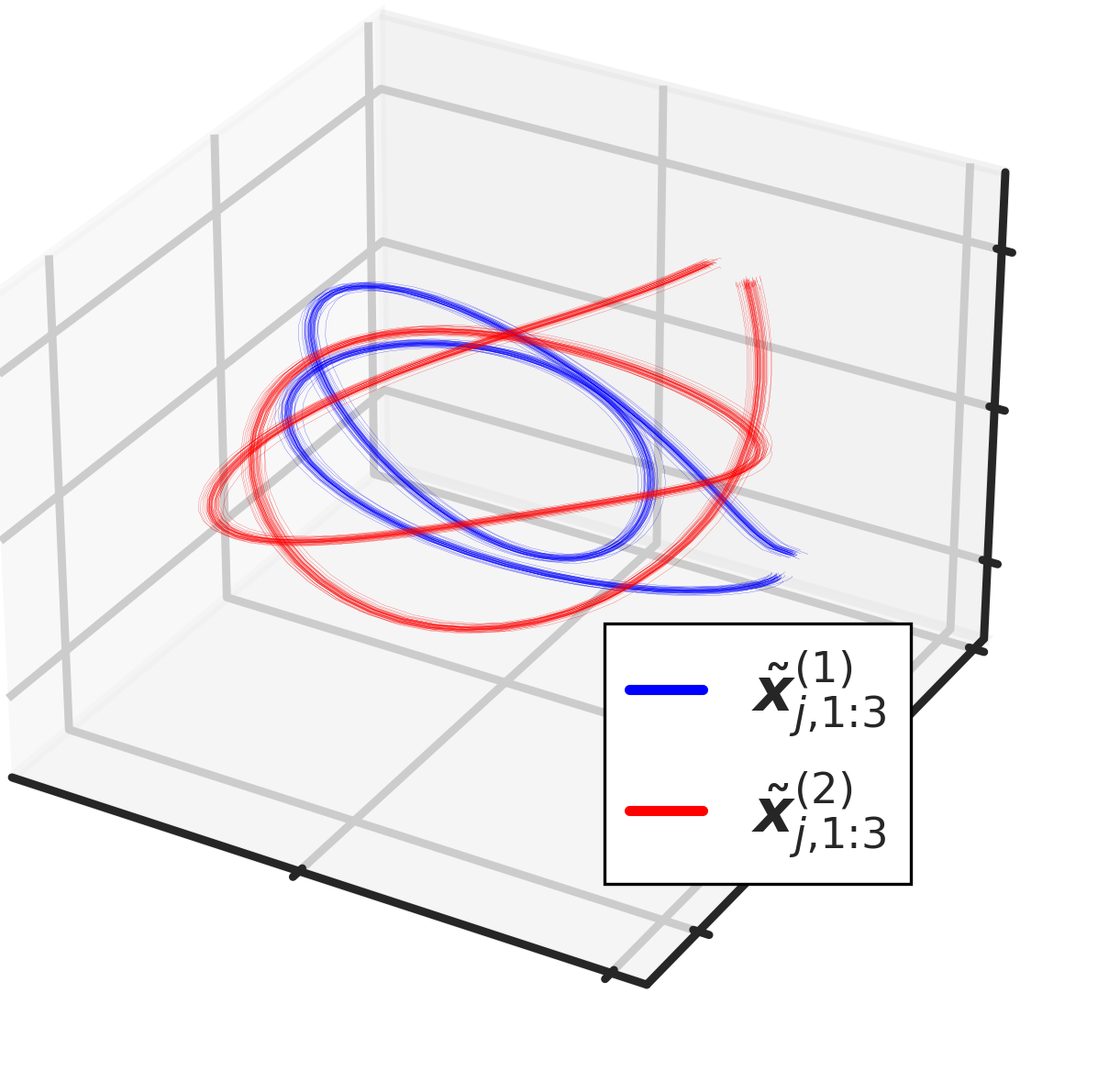}}\hfill
    \subcaptionbox{\label{fig:rec.all}}{\includegraphics[height=1.9in]{./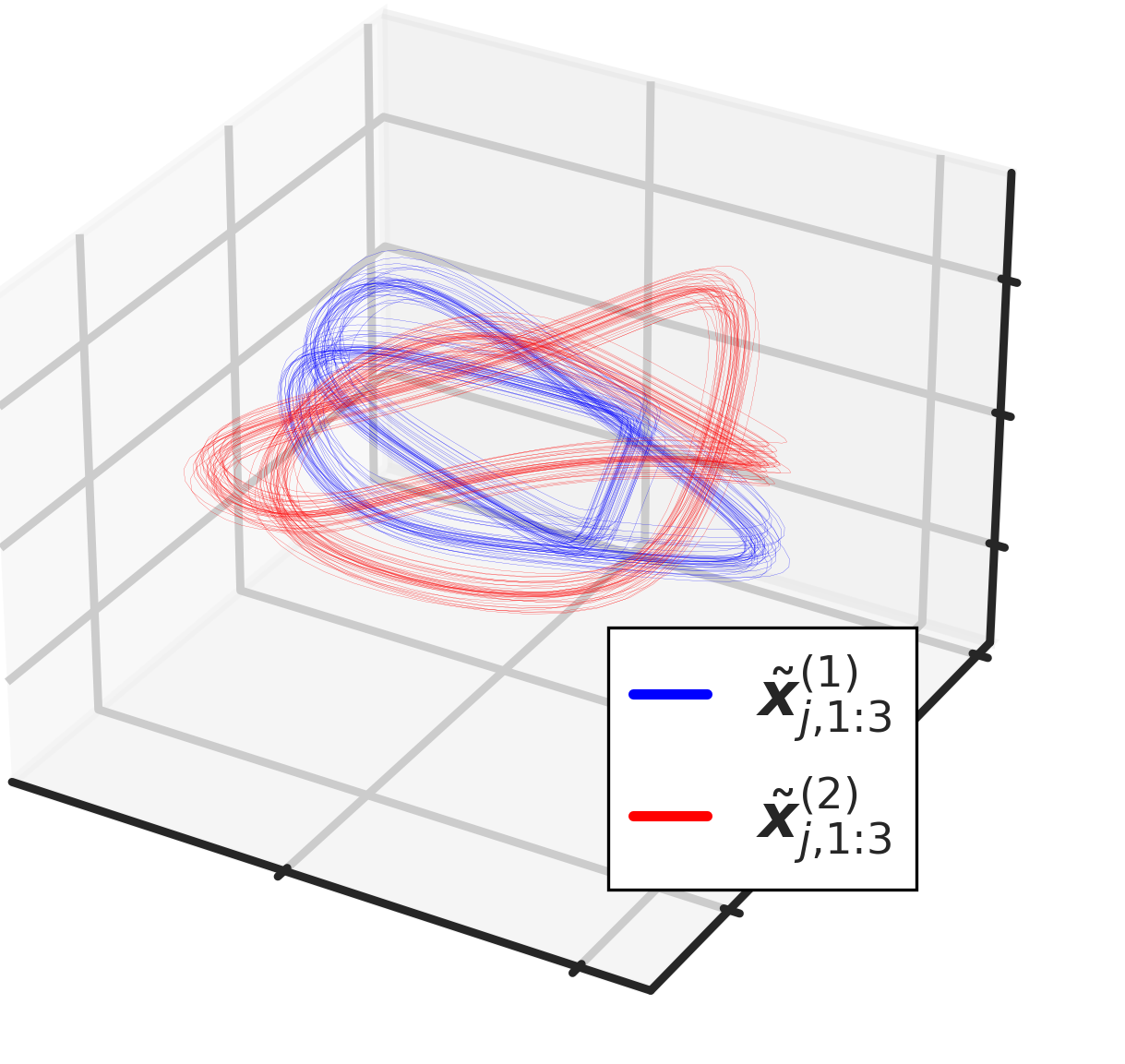}}
    
    \caption{
    Three dimensional curves, corresponding to the first 3 components in the reconstructed signal, using 10 components (\Cref{fig:rec10}), 20 (\Cref{fig:rec20}) and 160 (\Cref{fig:rec.all}).
    }\label{fig:rec.curves}
\end{figure}


Works such as~\cite{Mor2022,Han2023} have used low dimensional  representations to form an informative latent space representation of data.
Concretely, the TCAM method in~\cite{Mor2022} used the first columns of a matrix $\wh{\matZ} $ obtained by mode 1 unfolding of the tubal tensor $\tZ = \tU \Mprod \tS$.
Doing the same for the qSVD, we obtain a matrix $\wh{\matZ}_{[q]} = \thU_{:,1:l_q,1:t_q} \triangle \thU_{1:l_q,1:l_1,1:t_q}$.
We have noticed, for example, that the first and fourth components in of the domain transform, separate the two families of curves as shown in \Cref{fig:exp1.scatter}.
This interpretation for \Cref{fig:exp1.scatter} and the conditions under which it holds are beyond the scope of this work, and will be addressed in future work.

\begin{figure}[H]
    \centering
    \includegraphics[width=2in,height=2in]{./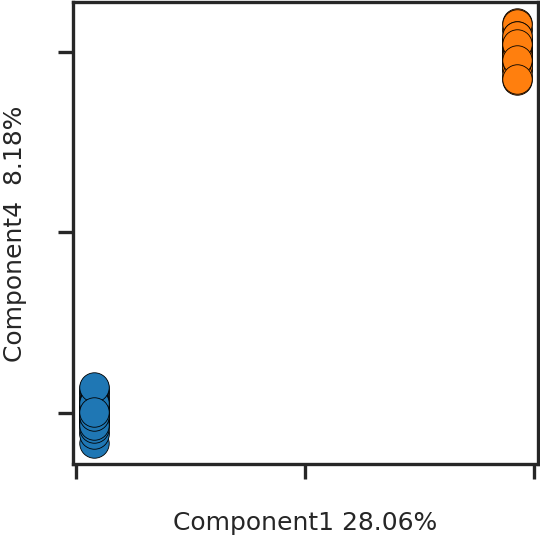}
    \caption{ A scatter plot of the first and fourth components of the domain transform, i.e., $\wh{\tZ}_{:,l_1,1:t_1}$ and $\wh{\tZ}_{:,l_3,1:t_3}$. The two families of curves are clearly separated.} \label{fig:exp1.scatter}
\end{figure}


\section{Conclusions and Future Direction}
This work lays the theoretical foundation for a quasitubal tensor framework, which extends matrix-mimetic tubal tensor framework to infinite dimensional, separable Hilbert spaces, i.e. quasitensor.

Since no Hilbert space can form a unital algebra, we set our construction in a larger, that is, the \textit{quasitubal} algebra which is a commutative, unital algebra, containing the separable Hilbert space as an ideal. 
Once endowed with a commutative algebra structure, a separable Hilbert space can be considered as a module over the (non unital) ring that is itself.
Then, the quasitubal algebra is the dual module of the separable Hilbert space.
We show that the quasitubal algebra is a commutative, unital C*-algebra, and quasitubal tensors are, therefore, homomorphisms between Hilbert C*-modules over this algebra.
As such, they realize the algebraic and geometric notions of the finite dimensional tubal tensor framework, namely, identity, adjoint and orthogonality, which are the key ingredients for the existence of a quasitubal singular value decomposition.
As for the qSVD, we established that for any separable Hilbert space (and implied inner product), and a tubal multiplication $\ff$ for which certain properties hold, 
the optimal low rank approximation of a quasitubal tensor is given by the truncations of its qSVD, i.e., prove Eckart-Young-like results. 


 The Eckart-Young-like results are tied directly to how we define the qausitubal product, which, in turn, is defined by an orthonormal basis $F$. Like the finite-dimension tubal case, i.e., based on tSVD and tSVDM, the quality of  of the resulting approximations depends on  the choice of tubal multiplication (equivalently, the choice of the orthonormal basis for the domain transform). 
Finding the most suitable orthonormal basis (and inner product) for the input tensor is equivalent to finding a global sense optimal low rank approximation for it (optimal CP decomposition), a problem that is generally very difficult to solve, even in the finite dimensional case.
In the infinite dimensional case, the situation is similarly difficult. 
Nevertheless, the theoretical framework we provide allows us to reason about the proper choice of the domain transform, in terms of the properties one expects from the input, as we showed in our numerical example. 

Looking forward, there are several directions to explore.
From the practical perspective, there are several immediate applications of the quasitubal tensor framework that we believe are worth exploring: modeling and analyzing linear-time-invariant systems, data driven identification, control and model order reduction of continuous time dynamical systems, and so on. From a theoretical point of view, several questions remain. For example, how to define and establish optimality results for non separable Hilbert spaces, or Banach spaces with semi-definite inner product.

\phantomsection{}

\bibliographystyle{abbrv}

\bibliography{./refs}

\newpage

\end{document}